\newcolumntype{L}[1]{>{\raggedright\let\newline\\\arraybackslash\hspace{0pt}}m{#1}}
\newcolumntype{C}[1]{>{\centering\let\newline\\\arraybackslash\hspace{0pt}}m{#1}}
\newcolumntype{R}[1]{>{\raggedleft\let\newline\\\arraybackslash\hspace{0pt}}m{#1}}
\DeclareMathOperator{\Char}{char}
\DeclareMathOperator{\disc}{disc}
\DeclareMathOperator{\MaxSpec}{MaxSpec}
\DeclareMathOperator{\ord}{ord}
\DeclareMathOperator{\Per}{Per}
\DeclareMathOperator{\Res}{res}
\DeclareMathOperator{\res}{res}
\DeclareMathOperator{\Spec}{Spec}
\newcommand{\isom}{\cong}
\newcommand{\eps}{\varepsilon}
\renewcommand{\AA}{\mathbb{A}}
\newcommand{\CC}{\mathbb{C}}
\newcommand{\DD}{\mathbb{D}}
\newcommand{\FF}{\mathbb{F}}
\newcommand{\NN}{\mathbb{N}}
\newcommand{\PP}{\mathbb{P}}
\newcommand{\QQ}{\mathbb{Q}}
\newcommand{\RR}{\mathbb{R}}
\newcommand{\ZZ}{\mathbb{Z}}
\newcommand{\mcF}{\mathcal{F}}
\newcommand{\mcJ}{\mathcal{J}}
\newcommand{\mcM}{\mathcal{M}}
\newcommand{\mcO}{\mathcal{O}}
\newcommand{\mcR}{\mathcal{R}}
\newcommand{\mfX}{\mathfrak{X}}
\newcommand{\mfm}{\mathfrak{m}}
\newcommand{\mfp}{\mathfrak{p}}
\newcommand{\mfu}{\mathfrak{u}}
\newcommand{\mfv}{\mathfrak{v}}
\newcommand{\mfw}{\mathfrak{w}}
\newcommand{\mc}[1]{\mathcal{#1}}
\newcommand{\ol}[1]{\overline{#1}}
\newcommand{\mf}[1]{\mathfrak{#1}}
\theoremstyle{plain}
\newtheorem{thm}{Theorem}
\newtheorem{thmx}{Theorem}
\newtheorem{lemma}[thm]{Lemma}
\newtheorem{cor}[thm]{Corollary}
\newtheorem{prop}[thm]{Proposition}
\newtheorem{qn}[thm]{Question}
\theoremstyle{definition}
\newtheorem{defn}[thm]{Definition}
 \newtheorem{conjecture}[thm]{Conjecture}
\theoremstyle{remark}
\newtheorem{rem}[thm]{Remark}
\newtheorem{ex}[thm]{Example}
\numberwithin{equation}{section}
\numberwithin{thm}{section}
\begin{document}

\title{Reduction of dynatomic
 curves}
\author[Doyle]{John R. Doyle}
\address{John R. Doyle: University of Rochester, Rochester, NY 14627, USA}
\email{john.doyle@rochester.edu}
\author[Krieger]{Holly Krieger}
\address{Holly Krieger:
University of Cambridge,
Cambridge, CB3 0WB, England}
\email{hkrieger@dpmms.cam.ac.uk}
\author[Obus]{Andrew Obus}
\address{Andrew Obus: University of Virginia, Charlottesville, VA 22904, USA}
\email{obus@virginia.edu}
\author[Pries]{Rachel Pries}
\address{Rachel Pries: Colorado State University, Fort Collins, CO 80523, USA}
\email{pries@math.colostate.edu}
\author[Rubinstein-Salzedo]{Simon Rubinstein-Salzedo}
\address{Simon Rubinstein-Salzedo: Euler Circle, Palo Alto, CA 94306, USA}
\email{simon@eulercircle.com}
\author[West]{Lloyd West}
\address{Lloyd West: University of Virginia, Charlottesville, VA 22904, USA}
\email{lww8k@virginia.edu}
\date{\today}

\begin{abstract} 
The dynatomic modular curves $Y_1(n)$ parametrize polynomial maps together with a point of period $n$.
It is known that $Y_1(n)$ is smooth and irreducible in characteristic 0 for families of polynomial maps of the form $f_c(z) = z^m +c$ where $m\geq 2$. 
In the present paper, we build on the work of Morton to
partially characterize the primes $p$ for which the reduction modulo $p$ of $Y_1(n)$ remains smooth and/or irreducible. 
As an application, we give new examples of good reduction of $Y_1(n)$ for several primes dividing the ramification discriminant when $n=7,8,11$.
The proofs involve arithmetic and complex dynamics, reduction theory for curves, ramification theory, and the combinatorics of the Mandelbrot set.
\end{abstract}

\keywords{arithmetic dynamics, polynomial map, periodic point, dynatomic curve, reduction, kneading sequence}

\subjclass[2010]{37F45, 37P05, 37P35, 37P45, 11G20, 11S15, 14H30}

\maketitle

\section{Introduction}

We study the reduction modulo $p$ of the dynatomic modular curves, which parametrize polynomial maps together with a point of period $n$. 

For a self-morphism $f:\PP^1_K\to \PP^1_K$, defined over a field $K$, denote the $n$-fold iterate by $f^n$. A \emph{$K$-point of period $n$} for $f$ is a point $P\in \PP^1(K)$ such that $f^{n}(P) =P$; the set of such is denoted $\Per_n(f)(K)$. 

Consider a polynomial $f(x,c)\in K[x,c]$. This defines a one dimensional family $\mcF=\{f_c\}$ of self-morphisms of the projective line, $f_c:\PP^1_K \to \PP^1_K; \, x \mapsto f_c(x)=f(x,c)$. The most famous of these 
is the family of quadratic polynomial maps $f_c(x)=x^2+c$. Given such a family $\mcF$, there is an algebraic curve $Y^\mathrm{dyn}_{1,\mcF}(n)$, defined over $K$, whose $K$-points correspond to pairs $(f_c, P)$ consisting of a map $f_c$ with $c\in K$ together with a point $P\in \Per^*_n(f_c)(K)$ of (formal) period $n$, as defined in Definition \ref{defn:formal_period}. 

There is a branched cover \begin{eqnarray*}
\pi_1 : \, Y^\mathrm{dyn}_{1,\mcF}(n)&\to &\AA^1\\  (f_c,P)&\mapsto & c.
\end{eqnarray*} The curves $Y^\mathrm{dyn}_{1,\mcF}(n)$ are analogous to the classical modular curves $Y^\mathrm{ell}_1(N)$ in the case of torsion points on elliptic curves; accordingly they are called \emph{dynatomic modular curves}.  For ease of notation, we drop the superscript ``$\mathrm{dyn}$''  
and omit the reference to the family $\mcF$ where this causes no ambiguity. The curve classifying \emph{orbits} of (formal) period $n$ is denoted $Y_0(n)$; it is a $\ZZ/n\ZZ$-quotient of $Y_1(n)$, giving a tower of covers $Y_1(n)\to Y_0(n)\to \AA^1$. 

The curves $Y_1(n)$ are smooth and irreducible in characteristic 0 for families of the form $f_c(x) = x^m +c$ where $m\geq 2$ (\cite{bousch:thesis}, \cite{Morton96}). The chief purpose of the present paper is to work toward a characterization of the primes $p$ for which the reduction modulo $p$ of $Y_1(n)$ remains smooth and/or irreducible.

One motivation for this project is the conjecture of Morton and Silverman on uniform bounds for the number of periodic points (\cite{MortonSilverman94}): in \S\ref{section:UBC} we explain how the function field case of the conjecture could be deduced from sufficient information about the reduction of $Y_1(n)$. 

In \cite{Morton96}, Morton defines a discriminant $D_n$ (see Definition \ref{Ddeltan}), whose list of prime factors contains all the primes of bad reduction for $Y_1(n)$; our computational data indicates, however, that $Y_1(n)$ has good reduction at many of the primes dividing $D_n$. For example, for the family $f_c(x) = x^2 +c$, the discriminant $D_5$ has 7 odd prime factors (3, 5, 11, 31, 3701, 4217 and 86131), but only two of them (5 and 3701) are primes of bad reduction for $Y_1(5)$.

Here is a brief summary of the paper.  A more complete summary of the results is contained in \S \ref{Summary}.
The necessary dynamical background is contained in \S \ref{SDynatomicCurves}. 
In \S \ref{Sprelims}, we give some useful lemmas for detecting good and bad reduction.  In \S \ref{Sgood}, we reprove Morton's necessary criterion for bad primes, and we also give results relating the reduction of $Y_1(n)$ to that of $Y_0(n)$. In \S \ref{Sbad}, 
for the family $f_c(x) = x^m +c$, we prove that $Y_1(n)$ has bad reduction at any prime that does not divide $m$ and either divides $n$ or divides $D_n$ exactly once. When $n=5$, the primes $5$ and $3701$ in the above example fall into precisely these two cases. 

In \S \ref{Sirreducibility}, 
we consider the quadratic polynomial family $f_c(x)=x^2+c$ and a prime $p$ that divides $D_n$ exactly once. 
Although the reduction of $Y_0(n)$ may be singular in this case, we prove that it is nonetheless geometrically irreducible. 
This result is obtained from a study of the connectedness of the ``monodromy graph'' associated to $Y_0(n)$ in characteristic 0, which we defer to \S \ref{Smonodromy}; this is a result of independent interest in complex dynamics because of its connection to combinatorics of the Mandelbrot set.  
For applications to the uniform boundedness conjecture, geometrically irreducible reduction is as good as good reduction.

In \S \ref{Sc0c2}, we provide a conceptual explanation why $Y_1(n)$ can have good reduction modulo certain primes that divide $D_n$ and use this to give new examples of good reduction of $Y_1(n)$ for several primes dividing the ramification discriminant when $n=7,8,11$.

Data is included in Appendices \ref{Sdata} and \ref{Tmono}.

\section*{Acknowledgments}

\thanks{
We would like to thank the American Institute of Mathematics for hosting the workshop {{\it Galois theory of orbits in arithmetic dynamics}} where we started this collaboration. We would also like to thank Joseph Gunther, Joseph Silverman and Michael Stoll for helpful comments. Krieger's research was partially supported by NSF grant DMS-1303770. Obus's research was partially supported by NSF grants DMS-1265290 and DMS-1602054. Pries's research was partially supported by NSF grant DMS-1502227.  
}

\section{Organization and summary of main results} \label{Summary}

In this section, for simplicity,
we assume that 
$f_c(x) = x^m +c$. In \S \ref{SDynatomicCurves} we recall the definition and key properties of the dynatomic curves $Y_0(n)$ and $Y_1(n)$. For now, note that $Y_1(n)$ has a natural model that is smooth in characteristic $0$; see Definition \ref{Dquotient} for the case of $Y_0(n)$. The term \emph{good reduction modulo $p$} means that the corresponding curve is smooth in characteristic $p$. 

Following Morton \cite{Morton96}, there is a natural choice of polynomial $\delta_n(1,c)$ whose roots are the branch points of the cover $\pi_1: \, Y_1(n)\to \AA^1$. This polynomial has a factorization over $\ZZ$ of the form $\delta_n(1,c) = \prod_{\substack{d\mid n }} \Delta_{n,d}(c)$ such that:
for $d\mid n$ and $d \neq n$, the roots of $\Delta_{n,d}(c)$ are parabolic parameters
of formal period $n$ and exact period $d$, i.e. parameters for which an orbit of primitive period $n$ degenerates to an orbit of primitive period $d$; and
the roots of $\Delta_{n,n}(c)$ are primitive parabolic parameters, i.e. parameters for which two orbits of exact period $n$ collide.
See Definition~\ref{defn:factors} for details.
Writing $D_{n,d} := \disc \Delta_{n,d}(c)$ and $R_{n,d,e} := \res (\Delta_{n,d}(c),\Delta_{n,e}(c) )$, then the discriminant $D_n := \disc\, \delta_n(1,c)$
has a factorization 
\begin{equation*}
D_n= \prod_{\substack{d\mid n }} D_{n,d}\cdot \prod_{\substack{d\mid n, e\mid n \\ d < e}} R_{n,d,e}^2.
\end{equation*}

\subsection{Bad reduction}

We prove several results characterizing primes of bad reduction.
First, if $n > 3$, $p \mid n$, and $p \nmid m$, 
we prove that $Y_1(n)$ has bad reduction at $p$ in Proposition \ref{Ppdividesnbadreduction}.
Here is the main result about bad reduction:

\begin{thmx}\label{THM:v(disc)=1_implies_bad} (Theorem \ref{Tbad})
Let $p \nmid m$ be prime. 
If $\ord_p D_{n,n} = 1$, then the curve $Y_0(n)$ has bad reduction.
If $\ord_p D_n = 1$, then the curve $Y_1(n)$ has bad reduction.
\end{thmx}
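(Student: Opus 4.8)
The plan is to study the semistable reduction of the branched cover $\pi_1\colon\overline{Y_1(n)}\to\PP^1_c$ (resp.\ $\pi_0\colon\overline{Y_0(n)}\to\PP^1_c$) near the single $p$-adic degeneration of its branch locus, and to show that this degeneration can never be smoothed away over $\QQ_p$.

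First I would reduce to the tame case. If $p\mid n$ (and $n>3$) the conclusion is already Proposition~\ref{Ppdividesnbadreduction}; otherwise $p\nmid n$, and together with $p\nmid m$ this makes $\pi_0,\pi_1$ tamely ramified, since the ramification indices over the finite branch points divide $n$ and those over $c=\infty$ divide $m$, so the covers and their branch data specialize well to characteristic $p$. Next, from $\disc_c\delta_n(1,c)=D_n=\prod_d D_{n,d}\cdot\prod_{d<e}R_{n,d,e}^2$ and $\ord_p D_n=1$ one reads off that each $R_{n,d,e}$ is a $p$-adic unit and that $\ord_p D_{n,d_0}=1$ for exactly one $d_0\mid n$ (all other $D_{n,d}$ being units); hence $\delta_n(1,c)$ has exactly two roots $c_1\neq c_2$ that become congruent modulo $p$, they are the two roots of a quadratic factor of $\Delta_{n,d_0}$ over $\ZZ_p$ lying over some $\overline c_0\in\overline{\FF}_p$, and — because $\ord_p(\disc)=1$ is odd — they are conjugate over $\QQ_p$ in a ramified quadratic extension $L$ with $v_L(c_1-c_2)=1$; every other branch point of $\pi_1$ reduces to a point distinct from $\overline c_0$ and from the others. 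The case $\ord_p D_{n,n}=1$ is handled the same way, producing two roots $c_1\neq c_2$ of $\Delta_{n,n}$ colliding to $\overline c_0$ over a ramified quadratic with $v_L(c_1-c_2)=1$; here I make no assertion about the remaining branch points of $\pi_0$, which will not matter.

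Now I would form the minimal semistable model of the cover over $\mathcal O_L$ (or over $\overline{\ZZ}_p$). Since $v_L(c_1-c_2)=1$, a single blow-up of $\PP^1$ at $\overline c_0$ separates $c_1$ from $c_2$: the special fibre of the base becomes $\overline X_0\cup E$, a line $\overline X_0$ carrying the remaining (distinct) branch points, meeting an exceptional line $E$ carrying the distinct reductions $\overline c_1,\overline c_2$ at a node $\nu$. Write $\tau_i\in S_N$ for the local monodromy around $c_i$; it is nontrivial ($c_i$ is a genuine branch point) and $\tau_1,\tau_2$ have equal cycle type (they are Galois-conjugate over $L/\QQ_p$). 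The cover over $\overline X_0$ acquires monodromy $\tau_1\tau_2$ at $\nu$, while the cover over $E$ is branched exactly at the three points $\overline c_1$ ($\tau_1$), $\overline c_2$ ($\tau_2$), $\nu$ ($(\tau_1\tau_2)^{-1}$), and meets the rest of the special fibre only over $\nu$. Applying Riemann--Hurwitz on $E\cong\PP^1$ to each connected component of the $E$-cover yields a dichotomy: either (i) some component has positive genus, or has more than one point over $\nu$ — which one checks always happens when $\tau_1=\tau_2$, and more generally whenever $(\tau_1\tau_2)^{-1}$ fails to be a single cycle on that component — in which case the stable reduction is already reducible or nodal and $Y(n)$ has bad reduction even over $\overline{\QQ}_p$; or (ii) every $E$-component is a smooth rational curve meeting the rest in exactly one point, so these contract and the stable reduction is the single smooth curve $Z_0$ lying over $\overline X_0$, necessarily with $\tau_1\neq\tau_2$. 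In case (ii) I would invoke the ramified quadratic: the nontrivial $\sigma\in\Gal(L/\QQ_p)$ interchanges $c_1,c_2$, hence conjugates $\tau_1$ to $\tau_2$, so its action $\rho(\sigma)$ on the sheets satisfies $\rho(\sigma)\,\tau_1\,\rho(\sigma)^{-1}=\tau_2\neq\tau_1$, forcing $\rho(\sigma)\neq\mathrm{id}$; thus $\sigma$ acts on the smooth stable reduction $Z_0$ by a nontrivial automorphism over $\overline X_0$, the Galois action on the stable reduction does not become unramified, and $Y(n)$ has potentially good but not good reduction over $\QQ_p$. In all cases the reduction at $p$ is bad; for the $Y_0(n)$ statement, any further coincidences among the untouched branch points of $\pi_0$ only add more bad components and cannot restore smoothness.

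The hard part, I expect, will be the rigorous construction and bookkeeping of the semistable model of the cover — identifying the local monodromies with the ramification data carried by the factors $\Delta_{n,d}$, justifying the blow-up and contraction analysis (Abhyankar's lemma and the theory of admissible covers), and executing the Galois-descent step of case (ii) carefully. A secondary point is confirming that $\ord_p D_n=1$ indeed forces $p\nmid n$, so that the tame analysis applies, or else is subsumed by Proposition~\ref{Ppdividesnbadreduction}.
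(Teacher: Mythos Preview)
Your semistable-reduction approach is genuinely different from the paper's, but the Galois-descent step in case (ii) has a real gap that is not merely bookkeeping. You assert that $\rho(\sigma)\tau_1\rho(\sigma)^{-1}=\tau_2$, but the outer Galois action on geometric inertia is only well-defined up to conjugation in $\pi_1^{\mathrm{geom}}$: what you actually get is $\rho(\sigma)\tau_1\rho(\sigma)^{-1}=g\tau_2 g^{-1}$ for some $g$ in the geometric monodromy image $G\subseteq S_N$. Thus $\rho(\sigma)=\mathrm{id}$ only forces $\tau_1$ and $\tau_2$ to be \emph{$G$-conjugate}, not equal. For $Y_0(n)\to\PP^1$ the monodromy group $G$ is the full symmetric group (the finite branch points give transpositions and the cover is connected), so any two transpositions are $G$-conjugate and your criterion is vacuous. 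Concretely, a degree-$2$ example such as $z\mapsto (z^2-p)/z$ already shows that the admissible cover you build over $\overline{X}_0\cup E$ can be reducible or nodal while the abstract curve (here $\PP^1$) has perfectly good reduction --- the $\overline{X}_0$-cover itself may split and contract, so your dichotomy does not detect bad reduction of the underlying curve, let alone of the specific model $Y_i(n)_{\FF_p}$. Case (i) has the same defect for general covers; it happens to be harmless for $Y_0(n)$ only because the $\overline{X}_0$-cover there has high genus and is irreducible, but you do not argue this.

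The paper's proof avoids semistable models entirely. The key idea you are missing is to \emph{slice} at a carefully chosen $\ZZ_p$-point $c=a$: since $\ord_p D_{n,n}=1$, Lemma~\ref{Lalgnum} produces $a\in\ZZ_p$ (essentially the midpoint of $c_1,c_2$) with $v_p(\Delta_{n,n}(a))=1$, and Lemma~\ref{LDeltannval} identifies this with the degree $d_a$ of the discriminant of the fiber $Y_0(n)_{c=a}$ over $\ZZ_p^{ur}$. Over the formal disc around $\bar a$ the generic discriminant degree is $d_\eta=2$ (two simple branch points), so $d_a<d_\eta$; moreover the fiber over $c=c_1$ has exactly $\alpha-1=\alpha-d_a$ points. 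These are precisely the hypotheses of Corollary~\ref{Cbadredtest}(2), which then forces $Y_0(n)_{\FF_p}$ to be singular above $\bar a$. The same comparison, with $d_\eta=2(n-d)$ and $d_a=n-d$, handles $Y_1(n)$. This argument needs no tameness hypothesis, no reduction to $p\nmid n$, no semistable models, and no Galois descent; it is a direct local discriminant comparison via Fulton's inequality (Theorem~\ref{Lfultontest}).
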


\begin{rem}
In computational examples with $n\leq 6$ and $f_c(x) = x^2 + c$ the only odd primes of bad reduction 
for $Y_0(n)$ are precisely those accounted for in Theorem \ref{THM:v(disc)=1_implies_bad}.  Further computations with $n=7$ and $n=8$ confirm this observation for many small prime factors of $D_{n,n}$; namely for the primes appearing in Tables~\ref{table:data1-7}--\ref{table:data8}.  This explains the bad reduction of $Y_0(n)$ at ``mysterious large primes,'' such as $p = 3701$ for $n = 5$ and $p = 8029187$ for $n = 6$, though it does \emph{not} give a straightforward recipe for computing these primes! 
\end{rem}

Despite the fact that $Y_0(n)$ has singular reduction at primes that exactly divide the discriminant $D_{n,n}$, we prove the following theorem.

\begin{thmx}\label{THM:divides_once_then_irreducible} (Corollary \ref{CX0irred})
If $f_c(x) = x^2 + c$, and if an odd prime $p$ divides $D_{n,n}$ exactly once, then $Y_0(n)$ is geometrically irreducible modulo $p$.
\end{thmx}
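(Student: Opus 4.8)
The strategy is to reduce geometric irreducibility of the reduction to transitivity of a monodromy group, and then to a connectivity statement about the characteristic-$0$ monodromy graph of $Y_0(n)$ proved in \S\ref{Smonodromy}. First I would recall that, for the branched cover $\pi_0 \colon Y_0(n) \to \PP^1$ (taking the natural model of Definition \ref{Dquotient}, completed and normalised), geometric irreducibility over a field $k$ is equivalent to transitivity of the geometric monodromy group: the image of $\pi_1$ of $\PP^1_{\ol k}$ minus the branch locus acting on a fibre. In characteristic $0$ this group $G$ is transitive (since $Y_0(n)$ is irreducible there) and is generated by local monodromy elements $g_b$, one around each branch point $b$, each $b$ a root of some $\Delta_{n,d}(c)$ (the fibre over $c=\infty$ is treated separately). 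Since $p$ is odd and $p\nmid m = 2$, all ramification of $\pi_0$ over these branch points is tame, so one works over $\ZZ_p$ and applies the specialisation theory for the tame fundamental group: the hypothesis $\ord_p D_{n,n} = 1$ forces $\Delta_{n,n}(c)$ to be separable over $\QQ_p$ with exactly two of its roots $c_1, c_2$ reducing to a common (nodal) double root modulo $p$, all other branch points staying pairwise distinct modulo $p$; and then the geometric monodromy group $H$ of the special fibre is generated by the same $g_b$ for $b \notin \{c_1,c_2\}$ together with the single product $g_1 g_2$ (the loop around the residue disc containing $c_1$ and $c_2$). It remains to show that $H$ is still transitive.

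Next I would determine the local monodromy at a root $c_0$ of $\Delta_{n,n}(c)$: there two distinct orbits of exact period $n$ collide, and for a simple root -- which applies to $c_1$ and $c_2$, as $\ord_p D_{n,n} = 1$ forces $\Delta_{n,n}$ to be squarefree -- the element $g_{c_0}$ is a transposition of the two corresponding sheets, fixing all others (passing the index-$2$ ramification on $Y_1(n)$ through the $\ZZ/n$-quotient). Writing $g_1 = \tau_1$, $g_2 = \tau_2$ for these transpositions, an elementary group-theoretic lemma handles all but one case: for any set $X$ of permutations, $\langle X, \tau_1, \tau_2 \rangle$ and $\langle X, \tau_1\tau_2 \rangle$ have the same orbits unless $\tau_1 = \tau_2$. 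Indeed if $\tau_1, \tau_2$ are disjoint then $\tau_1\tau_2$ already maps each of the four points involved to its $\tau_i$-partner, and if they share a point then $\tau_1\tau_2$ is a $3$-cycle moving all three points; in both cases the orbit partition determined by $\langle X \rangle$ together with these edges is unchanged. Hence transitivity of $G$ forces transitivity of $H$, \emph{except} possibly when $\tau_1 = \tau_2$.

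The main obstacle is this degenerate case $\tau_1 = \tau_2 = \tau$, where $g_1 g_2 = 1$ and one loses the edge $\tau$ from the monodromy graph. Here transitivity of $H$ is equivalent to $\tau$ not being a bridge of the monodromy graph $\Gamma$ of $\pi_0$ -- the graph on the sheets of $\pi_0$ in which each branch point contributes a connected cluster of edges on each cycle of its local monodromy. The input I would need is therefore: \emph{no edge of $\Gamma$ coming from a primitive parabolic parameter is a bridge}, equivalently, $\Gamma$ stays connected after deleting any one such edge. This is precisely the content of \S\ref{Smonodromy}, which I expect to prove by translating ``collision of period-$n$ orbits of $x^2+c$ as $c$ varies'' into Morton's combinatorial description of the primitive parabolic parameters via kneading sequences and the pairing of hyperbolic components along the boundary of the Mandelbrot set, and then showing the resulting ``collision graph'' on period-$n$ orbits is $2$-edge-connected (or at least carries the required redundancy). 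Finally, if some $R_{n,d,e}$ is also divisible by $p$ so that further branch points collide modulo $p$, the same local analysis applies: the extra local monodromies are products of disjoint $(n/d)$-cycles, and replacing a pair of generators by their product does not change orbits unless the relevant cycles coincide, which is again controlled by the connectivity of $\Gamma$.
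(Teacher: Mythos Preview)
Your overall architecture---reduce irreducibility of the special fibre to transitivity of a monodromy action, then to a connectivity statement about the characteristic-$0$ monodromy graph proved in \S\ref{Smonodromy}---is exactly the paper's. But the mechanism you use to pass from characteristic $0$ to characteristic $p$ has a genuine gap, and the paper handles this step differently.

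The gap is your assertion that the geometric monodromy group $H$ of the special fibre is generated by the $g_b$ for $b\notin\{c_1,c_2\}$ \emph{together with $g_1g_2$}. You justify this by invoking ``specialisation theory for the tame fundamental group'', but that theory (SGA1~XIII) requires the branch divisor to be \'etale over the base (a relative NCD), which is precisely what fails at the collision. Over the collision point the special fibre is in fact singular (Theorem~\ref{Tbad}), and it is not obvious a priori what the local inertia action there is, nor that it coincides with any particular product $\tau_1\tau_2$ (which itself depends on path choices). Your heuristic is plausible and can be made rigorous in this simple nodal case, but it is not something you can cite; it would need its own argument.

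The paper sidesteps this entirely. Proposition~\ref{Pinertia} shows that at each \emph{non}-colliding branch point the cover stays smooth and tame in characteristic $p$ with the same inertia action as in characteristic $0$ (via Corollary~\ref{Cgoodredtest}); hence if the subgroup generated by inertia at only the non-colliding points is already transitive, the special fibre is irreducible. No claim is made about the collision point at all. The price is that one must remove \emph{both} edges $c_1,c_2$ from $\Gamma(n)$ and still have connectivity---and Theorem~\ref{thm:connected} proves exactly this (connectivity after removing any two finite edges), which is strictly stronger than the ``no bridge'' condition your argument would need in the degenerate case $\tau_1=\tau_2$.

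Finally, your last paragraph about $R_{n,d,e}$ is off target: by Proposition~\ref{prop:branchedCover}(3), the finite branch locus of $\pi_0:Y_0(n)\to\AA^1$ consists only of the roots of $\Delta_{n,n}$; roots of $\Delta_{n,d}$ for $d<n$ are branch points of $\pi_1$ but not of $\pi_0$. So the hypothesis $\ord_p D_{n,n}=1$ already guarantees (via Lemma~\ref{Lalgnum}) that exactly two branch points of $\pi_0$ collide, and no further case analysis is needed.
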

This theorem is deduced -- using the theory of the \'etale fundamental group in mixed characteristic -- from a result on the connectedness of the ``monodromy graph'' in characteristic 0. The result on the monodromy graph is proved in \S \ref{Smonodromy} using the combinatorics of the Mandelbrot set. 

In fact, if one can show that all odd primes of bad reduction of $Y_0(n)$ are explained by Theorem \ref{THM:v(disc)=1_implies_bad}, then one can use Theorem \ref{THM:divides_once_then_irreducible} to obtain uniform bounds on periodic points of $x^2 + c$ over function fields (see Propositions \ref{prop:UBCfromIrreducibility} and \ref{Puniformboundedness}).  We caution the reader that we have very limited evidence that Theorem \ref{THM:v(disc)=1_implies_bad} explains all of these primes, although it does for $n \leq 8$.

\subsection{Good reduction}

If $p \nmid m$ is a prime of bad reduction for $Y_1(n)$, Morton proved that $p \mid D_n$ \cite[Theorem 15]{Morton96}. 
We reprove and strengthen this result using general techniques relating reduction of branched covers to the geometry of the branch locus.

\begin{thmx}\label{THM:good_away_from_disc} (Proposition \ref{prop:Pbadimpliesdisc})
The primes $p\nmid m$ of bad reduction for the curve $Y_1(n)$ divide $D_n$. 
The primes $p\nmid m$ of bad reduction for $Y_0(n)$ divide $D_{n,n}$.
\end{thmx}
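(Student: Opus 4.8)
The plan is to view $\pi_1 \colon Y_1(n) \to \mathbb{A}^1_c$ (and $\pi_0 \colon Y_0(n) \to \mathbb{A}^1_c$) as branched covers and to apply a general principle: a smooth branched cover of $\mathbb{A}^1$ over a complete local ring with mixed characteristic $(0,p)$ acquires bad reduction only along primes where the branch locus degenerates, i.e. where two branch points collide or a branch point runs off to infinity. Concretely, I would spread out over $\mathbb{Z}_{(p)}$ (or the relevant ring of integers) and use that $\delta_n(1,c)$ cuts out the branch locus of $\pi_1$, with $\delta_n(1,c) = \prod_{d \mid n} \Delta_{n,d}(c)$. If $p \nmid m$ and $p \nmid D_n = \operatorname{disc} \delta_n(1,c)$, then reducing mod $p$ the polynomial $\delta_n(1,c)$ stays separable of the same degree, so the branch points of $\pi_1$ stay distinct and do not collide or escape to $\infty$ in characteristic $p$.

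First I would assemble the precise input from $\S\ref{Sprelims}$: a lemma to the effect that if $\mathcal{Y} \to \mathbb{P}^1$ is a branched cover over a DVR $R$ with fraction field of characteristic $0$ and residue characteristic $p$, such that (i) the special fiber of the branch divisor is reduced and étale of the expected degree over the residue field, and (ii) $p$ does not divide any ramification index, then $\mathcal{Y}$ has good reduction; this is the tame-ramification/Abhyankar's-lemma circle of ideas. The genus can be checked to be constant via Riemann–Hurwitz since the ramification data is unchanged. The hypothesis $p \nmid m$ is exactly what guarantees condition (ii): the ramification indices of $\pi_1$ divide $m$ (the degree of $f_c$), coming from the multiplier/derivative structure at parabolic parameters, so $p \nmid m$ forces tameness. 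The hypothesis $p \nmid D_n$ is exactly condition (i). For the $Y_0(n)$ statement, one repeats the argument with $\pi_0$, whose branch locus is cut out by $\Delta_{n,n}(c)$ up to the part already accounted for by the degeneration $Y_1 \to Y_0$; the relevant discriminant is then $D_{n,n} = \operatorname{disc} \Delta_{n,n}(c)$, and one must also check that the quotient map $Y_1(n) \to Y_0(n)$ by $\mathbb{Z}/n\mathbb{Z}$ behaves well mod $p$ — but since $n$ could be divisible by $p$, one should argue instead directly with the branch locus of $\pi_0$ rather than descending from $Y_1(n)$, or invoke the results of $\S\ref{Sgood}$ relating reductions of $Y_1(n)$ and $Y_0(n)$.

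The main obstacle is making rigorous the claim that $\delta_n(1,c)$ (resp.\ $\Delta_{n,n}(c)$) really is the branch divisor of $\pi_1$ (resp.\ $\pi_0$) \emph{integrally} — i.e.\ that no spurious branching appears over $\operatorname{Spec} \mathbb{Z}_{(p)}$ away from the vanishing of $\delta_n$, and that the cover is finite flat of the right degree so that Riemann–Hurwitz applies fiberwise. This requires knowing that the natural model of $Y_1(n)$ (the one that is smooth in characteristic $0$, from $\S\ref{SDynatomicCurves}$) behaves well after base change to $\mathbb{Z}_{(p)}$, together with Morton's description of the local structure of $\pi_1$ at the parabolic parameters. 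I would cite Morton's analysis for the shape of the ramification (cycles colliding $\Rightarrow$ ramification index dividing $m$) and then the rest is the formal "tame cover with constant branch data has good reduction" argument. A secondary subtlety is the behavior over $c = \infty$: one must confirm the cover is unramified over $\infty$ (or has ramification coprime to $p$ there), so that compactifying $\mathbb{A}^1$ to $\mathbb{P}^1$ introduces no new bad primes — this is where one uses that $\deg \delta_n(1,c)$ does not drop mod $p$, keeping the total branching constant.
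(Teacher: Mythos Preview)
Your overall strategy---show that if the branch locus of $\pi_i$ stays \'etale modulo $p$ then $Y_i(n)$ has good reduction---is the paper's approach. But the specific mechanism you invoke is wrong, and this is a genuine gap. You claim $p \nmid m$ forces tameness because ``the ramification indices of $\pi_1$ divide $m$.'' This is false: by Proposition~\ref{prop:branchedCover}, the ramification indices of $\pi_1$ at the \emph{finite} branch points are $2$ (above roots of $\Delta_{n,n}$) and $n/d$ (above roots of $\Delta_{n,d}$ for $d<n$); the index $m$ appears only above $\infty$. So $p \nmid m$ does not a priori rule out wild ramification at the finite branch points (e.g.\ when $p \mid n$), and your Abhyankar/Riemann--Hurwitz argument does not go through as stated.

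The paper fixes this not by verifying tameness in advance but by using Fulton's discriminant criterion (Theorem~\ref{Lfultontest}, packaged as Corollary~\ref{Cgoodredtest}): for the local extension $\widehat{\mathcal{O}}_{Y_1(n)_R,y}/\widehat{\mathcal{O}}_{\mathbb{A}^1_R,x}$, if at most one branch point of the characteristic-zero generic fiber specializes to $x$, then an equality of generic and special discriminant degrees (Proposition~\ref{Pfultontest}) forces the special fiber to be smooth, with tameness emerging as a \emph{conclusion} rather than a hypothesis. The condition $p \nmid D_n$ is exactly ``at most one branch point specializes to each $x$,'' and smoothness follows pointwise. The role of $p \nmid m$ is different from what you suggest: it guarantees that the special fiber is reduced and the model normal (Proposition~\ref{Pnormal}), that the projection is generically separable in characteristic $p$ (Proposition~\ref{Pgensep}), and that the model is smooth over $\infty$ (Proposition~\ref{Psamemodel}). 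Your instinct about $c=\infty$ was on target; your identification of where $m$ enters was not.
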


\begin{rem}
We show in Propositions \ref{PRnne} and \ref{P12} that the primes of bad reduction for $Y_1(n)$ in fact divide a smaller integer $D^{\prime}_n$ with $D^{\prime}_n\mid D_n$.
\end{rem}
 
\subsection{Comparison of $Y_1(n)$ and $Y_0(n)$}

We prove the following result connecting the geometry of $Y_1(n)$ with that of $Y_0(n)$. 

\begin{thmx}\label{THM:enough_to_know_X0_good}
(Propositions \ref{PX1goodX0good} and \ref{PX1irredX0irred})
Suppose $p \nmid nm(m-1)$.
\begin{enumerate}
\item If $n$ is prime, then $Y_1(n)$ has good reduction modulo $p$ if and only if 
$Y_0(n)$ has good reduction modulo $p$.
\item The curve $Y_1(n)$ is geometrically irreducible modulo $p$
if and only if $Y_0(n)$ is geometrically irreducible modulo $p$.
\end{enumerate}
\end{thmx}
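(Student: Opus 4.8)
The plan is to study the degree-$n$ cover $\phi\colon Y_1(n)\to Y_0(n)$ that replaces a marked periodic point $P$ by $f_c(P)$ and exhibits $Y_0(n)$ as the quotient $Y_1(n)/(\ZZ/n\ZZ)$. Since $p\nmid n$ this $\ZZ/n\ZZ$-action is tame, and its formation commutes with reduction modulo $p$, so the reduction of $Y_0(n)$ is the quotient by $\ZZ/n\ZZ$ of the reduction of $Y_1(n)$. Granting this, both ``only if'' directions are immediate: a quotient of a smooth curve by a finite group is normal, hence smooth, and the image of a geometrically irreducible scheme under a surjection is geometrically irreducible, so good (resp.\ geometrically irreducible) reduction of $Y_1(n)$ forces the same for $Y_0(n)$.

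For the converse directions one must locate the branch locus of $\phi$. A point $(f_c,P)\in Y_1(n)$ has nontrivial stabilizer precisely when $P$ has exact period $d\mid n$ with $d<n$, and then the stabilizer has order $n/d$; hence $\phi$ is branched exactly over the points of $Y_0(n)$ lying above the roots of $\Delta_{n,d}(c)$ for $d\mid n$, $d<n$, with (tame) ramification index $n/d$. The key claim is that inside the model of $Y_0(n)$ this branch locus extends to a divisor $B$ that is \'etale over the base. When $n$ is prime only $d=1$ occurs: the branch points of $\phi$ are the pairs $(f_c,P)$ with $P$ a fixed point of $f_c$ whose multiplier $mP^{m-1}$ is a primitive $n$-th root of unity. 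Two such can agree modulo $p$ only if their multipliers do, which is impossible for distinct primitive $n$-th roots of unity since $p\nmid n$, and, for a common multiplier $\zeta$, only if two $(m-1)$-st roots of $\zeta/m$ agree modulo $p$, which is impossible since $p\nmid m(m-1)$; moreover no new ramification appears modulo $p$ because $p\nmid m$ keeps $f_c$ separable. For composite $n$ one runs the analogous analysis on each stratum $d<n$, where the controlling data is the multiplier of a period-$d$ orbit --- a primitive $(n/d)$-st root of unity --- together with the orbit itself, again governed by $p\nmid nm(m-1)$.

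With $B$ known to be \'etale over the base, the conclusions follow from standard reduction theory for tame covers. For part (1), if $Y_0(n)$ has good reduction then Abhyankar's lemma extends the tame cyclic cover $\phi$ to a finite cover of a smooth model of $Y_0(n)$ whose total space is again smooth over the base, so $Y_1(n)$ has good reduction. For part (2), the reduction of $Y_1(n)$ is geometrically connected, being a proper flat specialization of a geometrically connected curve; so if it were reducible, its components would be permuted transitively by $\ZZ/n\ZZ$ with common stabilizer a proper subgroup $H\subsetneq\ZZ/n\ZZ$, and the intermediate cyclic cover $Y_1(n)/H\to Y_0(n)$, of degree $r=[\ZZ/n\ZZ:H]>1$ prime to $p$, would have a special fibre with $r$ components each mapping birationally to the irreducible curve $Y_0(n)$ modulo $p$. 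But $Y_1(n)/H\to Y_0(n)$ is ramified over a nonempty part of $B$ --- pick $d<n$ with $\Delta_{n,d}$ nonconstant whose inertia group $I_d$, of order $n/d$, is not contained in $H$ --- and since $B$ reduces to a nonzero reduced divisor, the reduction of $Y_1(n)/H$ is a tame cyclic cover of an irreducible curve branched over a nonzero reduced divisor, hence geometrically irreducible; this contradiction shows $Y_1(n)$ is geometrically irreducible modulo $p$.

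The main obstacle is the \'etale-over-the-base claim for the branch locus of $\phi$: converting ``orbit of exact period $d<n$ with multiplier a primitive $(n/d)$-st root of unity'' into explicit polynomial conditions on $(c,P)$ for the family $f_c(x)=x^m+c$, and showing that the resulting discriminants and resultants are divisible only by primes dividing $nm(m-1)$. A secondary difficulty, particular to part (2), is that $Y_0(n)$ may itself have bad reduction, so its model is only normal: one must check that deleting the (possibly singular) points of $B$ leaves the special fibre connected, and for composite $n$ one must verify the inertia-group combinatorics ensuring that every nontrivial cyclic quotient of $\phi$ stays ramified over a branch point that survives modulo $p$.
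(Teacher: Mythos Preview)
Your approach to part (1) is the paper's: the ramification points of $\phi\colon Y_1(n)\to Y_0(n)$ are the fixed points of $f_c$ with multiplier a primitive $n$-th root of unity (recorded explicitly as $z=(x_0,c_0)$ with $x_0=\zeta_{(m-1)n}/m^{1/(m-1)}$ in Lemma~\ref{Ltotram}), these remain distinct modulo $p$ since $p\nmid (m-1)n$, and good reduction of the tame cyclic cover follows. You invoke Abhyankar; the paper uses Fulton's discriminant criterion (Lemma~\ref{Lramcollision}, Corollary~\ref{Cgoodredtest}); same content.

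For part (2) your argument has a real gap. You claim the reduction of the intermediate cover $Y_1(n)/H\to Y_0(n)$ is ``a tame cyclic cover of an irreducible curve branched over a nonzero reduced divisor, hence geometrically irreducible.'' But that implication needs a totally ramified point on the \emph{normalization} of the special fiber, and your hypotheses do not supply one. The totally ramified point $z$ does reduce to $\bar z\in Y_1(n)_k$ with full stabilizer $\ZZ/n\ZZ$, but if $\bar z$ is singular it may lie on every component; on the normalization of $(Y_1(n)/H)_k$, the image of $\bar z$ then splits into one point on each of the $s$ components, which $\ZZ/s\ZZ$ permutes freely, so the normalized cover is the trivial \'etale $s$-sheeted cover --- no contradiction. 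That $B$ is \'etale over $\Spec R$ only says the branch points stay distinct; it says nothing about smoothness of $Y_0(n)$ or $Y_1(n)$ along them, and in part (2) $Y_0(n)$ is allowed to be singular. The ``secondary difficulty'' you flag is in fact the heart of the matter.

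The paper bypasses this by proving directly that $\bar z$ is a \emph{smooth} point of $Y_1(n)_k$. Since $\partial\Phi_n/\partial x$ vanishes at every ramification point of $\pi_1$, smoothness at $\bar z$ is equivalent to $\partial\Phi_n/\partial c(\bar z)\ne 0$. The identity (\ref{Etotram}) of Lemma~\ref{Ltotram},
\[
\frac{\partial\Phi_n}{\partial c}(z)\prod_{\substack{d\mid n\\ d\ne 1,n}}\Phi_d(z)\;=\;-\,\frac{(m-1)\,m^{1/(m-1)}\,n}{\zeta_n(\zeta_n-1)^2},
\]
shows the left side is a $p$-adic unit when $p\nmid nm(m-1)$, hence so is $\partial\Phi_n/\partial c(z)$. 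Now a smooth totally ramified point lies on a unique irreducible component, and since $\ZZ/n\ZZ$ fixes it while permuting the components transitively, there is only one component. This explicit derivative computation (adapted from a formula of Buff and Tan Lei) is the ingredient your proposal lacks.
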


\begin{rem}
In Theorem \ref{THM:enough_to_know_X0_good}:
\begin{enumerate}
\item Part (1) is false when $n$ is composite;
see, for example, $n=6$ and $p=67$ in
Remark \ref{rem:false_when_composite}. 

\item Part (2) 
is a strengthening of \cite[Proposition 17]{Morton96}.

\item Combining part (2) with Theorem \ref{THM:good_away_from_disc} shows that the reduction of $Y_1(n)$ modulo $p$ is geometrically irreducible modulo $p$ if $p\nmid nm(m-1) D_{n,n}$.
\end{enumerate}
\end{rem}

\subsection{New applications for good reduction}

In \S \ref{Sc0c2}, we restrict to the case $f_c(x)=x^2+c$ and analyze the reduction of $Y_0(n)$ 
above the points $c=0$ and $c=-2$. These values of $c$ are special because they yield maps that come from algebraic groups, namely power and Chebyshev maps, respectively.
We develop a geometric strategy to show that, for particular $n$ and $p$:
\begin{enumerate} \item $Y_0(n)$
does not obtain a singularity above $c=0$ or $c=-2$ when reduced modulo $p$,
\item and the power of $p$ dividing $D_{n,n}$ is fully explained by branch points colliding with $c=0$ or $c=-2$ modulo $p$. 
\end{enumerate}

This allows us to verify certain primes of good reduction computed in \cite{MortonX4}, \cite{FPS}, \cite{Stoll} when $n=4,5,6$.
Furthermore, in Theorem \ref{corol:Examples},
we prove that 
$Y_0(n)$ has good reduction modulo $p$, even 
though $p$ divides $D_{n,n}$ in the 
following new cases:
 
$Y_0(7)$ for $p=3,43,127$,

$Y_0(8)$ for $p=3,5,17,257$, and

$Y_0(11)$ for $p=3,23,89,683$.

\section{Dynatomic modular curves}\label{SDynatomicCurves}

In this section, we review facts about the moduli spaces for polynomial maps with periodic points.  Let $K$ be a field.

\subsection{Periodic points, dynatomic polynomials and dynatomic curves}

We begin by recalling the following standard definitions from dynamics (\cite[Chapter 4]{SilvermanBook}):

\begin{defn} Let $f: \PP^1 \to \PP^1 $ be a rational map defined over a field $K$. 
\begin{enumerate}
\item Say $P\in \PP^1(K)$ is a \emph{point of period $n$} for $f$ if $f^n(P) =P$. Denote the set of all $K$-points of period $n$ for $f$ by $\Per_n(f)(K)$. Also write $\Per(f)(K)=\bigcup_{n=1}^\infty \Per_n(f)(K)$; a point $P\in\Per(f)(K)$ is said to be a \emph{periodic point}.
\item For $P\in \Per_n(f)(K)$  the \emph{$n$-th multiplier} is defined to be $\lambda_n(P)=(f^n)^\prime(P)$. 
\item Say $P\in \PP^1(K)$ is a  \emph{point of primitive period $n$} for $f$ if $f^n(P) =P$ and $f^m(P)\neq P$ for $1\leq m<n$. 
\end{enumerate}
\end{defn}

\begin{rem}
By the chain rule, any two periodic points in the same orbit have the same multiplier, so it also makes sense to define the multiplier $\lambda_n(\mcO)$, where $\mcO$ is an {\it orbit} of periodic points. If $K=\CC$, one calls an $n$-cycle $\mcO$ {\it attracting}, {\it indifferent}, or {\it repelling} according to whether $|\lambda_n(\mcO)|$ is less than 1, equal to 1, or greater than 1, respectively.
\end{rem}

We now turn to the construction of the dynatomic curves $Y_1(n)$. Let $\mcF$ be a family of polynomials of the form $f_c(x)=f(x,c)\in \ZZ[x,c]$. Then, for any positive integer $n$, let $\Psi_{\mcF,n}(x,c) = f_c^n(x)-x \in \ZZ[x,c]$. A point $(z,b)$ on the variety defined by $\Psi_{\mcF,n}(x,c)$ in $\AA^2$ corresponds to a polynomial $f_b(x)$ together with a point $z$ of period $n$ for $f_b$. However, this variety is reducible, since $\Psi_{\mcF,d}(x,c)$ is a factor of $\Psi_{\mcF,n}(x,c)$ whenever $d\mid n$. 

Therefore one defines
\[
\Phi_{\mcF,n}(x,c)=\prod_{d\mid n} \Psi_{\mcF,d}(x,c)^{\mu(n/d)}.
\] 
By M\"obius inversion, there is a factorization 
\[
\Psi_{\mcF,n}(x,c)=\prod_{d\mid n} \Phi_{\mcF,d}(x,c).
\] 
It can be shown that $\Phi_{\mcF,n}(x,c)$ is a polynomial (see \cite[Theorem 4.5]{SilvermanBook}); it is called the \emph{$n$-th dynatomic polynomial for the family $\mcF$}.

\begin{defn} The \emph{dynatomic modular curve} is the subscheme ${Y}_{\mcF,1}(n)$ of $\AA^2_\ZZ$  defined by 
\[
\Phi_{\mcF,n}(x,c) = 0.
\]
\end{defn}

We shall generally omit the reference to the family $\mcF$ where unambiguous, writing $Y_1(n)$ for $Y_{\mcF,1}(n)$. We also use the shorter term ``dynatomic curve'' synonymously with ``dynatomic modular curve''. Given a scheme $S$, write $Y_1(n)_S$ for ${Y}_1(n)\times_\ZZ S$.  If $S = \Spec R$, we write $Y_1(n)_R$ for short. 

\begin{defn}
The \emph{reduction modulo $p$} of $Y_1(n)$ is the fiber $Y_{1}(n)_{\FF_p}$ of $Y_{1}(n)$ over the prime $p$ of $\ZZ$.
\end{defn}

In general, a point $(w,b)\in {Y}_1(n)(K)$ corresponds to a polynomial $f_b$ and a point $w\in K$ of primitive period $n$ for $f_b$. However, there are finitely many points where orbits of period $n$ degenerate to orbits of lower primitive period. One therefore adopts the following definition:

\begin{defn}\label{defn:formal_period} Let $f: \PP^1 \to \PP^1 $ be a polynomial map defined over $K$. Say $P\in \PP^1(K)$ is a   \emph{point of formal period $n$} for $f$ if $P$ is a point of primitive period $d$ where either
\begin{itemize}
\item $n=d$;
\item $n=ds$ and $\lambda_d(P)$ is a primitive $s$-th root of unity; or
\item $n=dsp^e$ and $\lambda_d(P)$ is a primitive $s$-th root of unity, $e\geq1$, and $p=\Char K$.
\end{itemize}
Denote the set of all $K$-points of formal period $n$ for $f$ by $\Per^*_n(f)(K)$.
\end{defn}

\begin{prop}[{\cite[Theorem 4.5]{SilvermanBook}}]\label{prop:moduli}
For a field $K$, the $K$-points of $
Y_1(n)_K$ are in bijection with the set 
\[
\left\{  (f_c, P)\; : \; c\in K, \; P\in\Per^*_n(f_c)(K)\right\}
\]
\end{prop}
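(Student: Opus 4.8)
The plan is to unwind the definitions and exhibit the bijection directly, checking that it is well defined in both directions. Recall $\Phi_{\mcF,n}(x,c) = \prod_{d \mid n} \Psi_{\mcF,d}(x,c)^{\mu(n/d)}$ and $\Psi_{\mcF,d}(x,c) = f_c^d(x) - x$. So a point $(P,c) \in Y_1(n)_K(K)$ is a pair $(P,c) \in \AA^2(K)$ with $\Phi_{\mcF,n}(P,c) = 0$. The map in question sends $(P,c) \mapsto (f_c, P)$. First I would observe that the content of the proposition is really the assertion that, for a fixed $c \in K$, the set of $P$ with $\Phi_{\mcF,n}(P,c) = 0$ coincides with $\Per^*_n(f_c)(K)$, i.e.\ the roots of the $n$-th dynatomic polynomial specialised at $c$ are exactly the points of formal period $n$ for $f_c$. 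Granting that, the bijection is immediate: forward, $(P,c) \mapsto (f_c,P)$; backward, $(f_c,P) \mapsto (P,c)$; these are mutually inverse, and well-definedness of each direction is precisely the claim about roots.

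The main step is therefore to identify the roots of $\Phi_{\mcF,n}(x,c_0)$ for a fixed $c_0 \in K$ with $\Per^*_n(f_{c_0})(K)$. This is where I would invoke \cite[Theorem 4.5]{SilvermanBook} (already cited in the excerpt for the very construction of $\Phi_{\mcF,n}$): that theorem establishes exactly this root-theoretic characterisation over a field, including the subtleties in Definition~\ref{defn:formal_period} about multipliers being roots of unity and the characteristic-$p$ phenomenon with the extra factor of $p^e$. The key points to extract are: (i) if $P$ has primitive period $d \mid n$ with $d \neq n$, then the multiplicity with which $P$ occurs as a root of $f_{c_0}^n(x) - x$ versus $f_{c_0}^d(x)-x$ is governed by the multiplier $\lambda_d(P)$, and the M\"obius-weighted product $\Phi_{\mcF,n}$ retains $P$ as a root precisely in the cases listed (primitive $s$-th root of unity with $n = ds$, or $n = dsp^e$); (ii) points of primitive period exactly $n$ are always roots of $\Phi_{\mcF,n}$.

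A secondary point is that one must know $\Phi_{\mcF,n}(x,c_0)$ is not identically zero, equivalently that specialising $c \mapsto c_0$ does not collapse the polynomial — otherwise ``roots'' is meaningless. This follows because $\Phi_{\mcF,n}(x,c)$ is a polynomial in $x$ with leading behaviour in $x$ controlled by the degree of $f_c^n$, and for the families of interest ($f_c(x) = x^m + c$, or more generally any $f(x,c) \in \ZZ[x,c]$ with $\deg_x f \geq 2$ and leading coefficient a nonzero constant) this leading term survives specialisation. One should also note that $\PP^1(K)$ versus $\AA^1(K)$ is harmless here: since $f_c$ is a polynomial, $\infty$ is a totally ramified fixed point and never has primitive period $n > 1$, and for $n = 1$ one checks $\Phi_{\mcF,1}$ does not vanish at $\infty$ in the relevant normalisation; so all points of formal period $n$ lie in $\AA^1$.

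I expect the only real obstacle to be bookkeeping rather than ideas: carefully matching the three bullets of Definition~\ref{defn:formal_period} against the multiplicity computations underlying Silverman's Theorem~4.5, particularly the characteristic-$p$ case where an $s$-th root of unity can be ``inseparably inflated'' by a factor $p^e$. Since the excerpt permits us to assume Proposition~\ref{prop:moduli}'s input (the fact that $\Phi_{\mcF,n}$ is a polynomial, with its defining property), the cleanest exposition is to cite \cite[Theorem 4.5]{SilvermanBook} for the root characterisation and then simply assemble the bijection in a sentence or two; the proof is essentially a definition-chase once that input is in hand.
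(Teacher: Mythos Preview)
Your proposal is correct and matches the paper's approach: the paper gives no proof at all, simply citing \cite[Theorem 4.5]{SilvermanBook} in the statement header, and your write-up likewise reduces the claim to that reference after a short definition-chase. The extra commentary you provide (nonvanishing under specialisation, the point at $\infty$, the characteristic-$p$ bookkeeping) is accurate but goes beyond what the paper records.
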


\subsection{The moduli of periodic orbits}

There is a morphism $\pi_1: {Y}_1(n)\to\AA^1$ defined by projection onto the second coordinate
$c$.  The cover $\pi_1:\; {Y}_{1}(n)\to \AA^1$ has a cyclic subgroup of automorphisms $\langle \sigma \rangle \isom \ZZ /n \ZZ$, where $\sigma$ acts by $(x,\, c)\mapsto (f_c(x), \, c)$. Hence, $\langle \sigma \rangle$ acts on $Y_1(n)_S$ for any scheme $S$. The quotient under this action is a moduli space for pairs $(f_c, \mcO)$ consisting of a polynomial $f_c$ together with an orbit $\mcO$ of formal period $n$. Note that $\mcO$ may be defined over $K$ even if it contains points that are not. 

\begin{defn}\label{Dquotient}
For any ring $R$, let $Y_0(n)_R= Y_1(n)_R/\langle \sigma \rangle $.
\end{defn}

There are morphisms $\varphi$ and $\pi_0$ as in the diagram:
\[\xymatrix{& Y_1(n)\ar[dl]_{\ZZ/n\ZZ}^{\varphi}\ar[dd]^{\pi_1} \\ Y_0(n)\ar[dr]_{\pi_0} \\ & \AA^1}\]

\subsection{Completions of the dynatomic curves}

Let $K$ be a field. It is sometimes convenient in later proofs to work with a completion, $X_1(n)_K$, of $Y_1(n)_K$, which we now define. The completion that we use is the one that is regular at infinity.

A family of polynomials $\mcF$ is \emph{homogenous} if it is of the form $\mcF = \{ f(x,c)\}$, such that $f(x,u^r)\in \ZZ [x,u]$ is a homogenous polynomial of degree $m$ in $u$ and $x$, for some positive integer $r$, and such that $f(x,0)=x^m$. 
 
\begin{defn}\label{defn:standardModelX1} 

Let $\mcF$ be a homogenous family of polynomials. Let $K$ be a field such that $\disc f(x,1)\neq 0$ in $K$. Let $V$ be the singular locus of $Y_i(n)_K$, and let $W$ be the image of $V$ under $\pi_i:Y_i(n)_K\to \AA^1_K$ for $i \in \{0,1\}$. Embed $\AA^1_K$ in $\PP^1_K$ and let $Z_i(n)_K$ be the normalization of $\PP^1\setminus W$ in the total ring of fractions of $Y_i(n)_K$. Then we define $X_i(n)_K$ to be the result of gluing $Z_i(n)_K$ to $Y_i(n)_K$ along $Y_i(n)_K\setminus V$. 
\end{defn}

\begin{rem}
By \cite[Proposition 10]{Morton96}, under the hypotheses of the definition, the points at infinity in $X_1(n)_{\overline{K}} \setminus Y_1(n)_{\overline{K}}$ correspond to the periodic sequences $\{\zeta_j:\; j\geq 0\}$ that are of exact period $n$, where each $\zeta_j$ is a root of $f(x,1)=0$ in $\overline{K}$. 
\end{rem}

It is occasionally convenient to have a model of the curves $X_i(n)$ over a mixed characteristic dvr. We define one as follows:

\begin{defn}
Let $R$ be a characteristic $(0, p)$ discrete valuation ring with field of fractions $K$ and residue field $k$. Suppose that $\mcF = \{ f(x,c)\}$ is a homogenous family of polynomials with $p\nmid \disc f(x,1)$ and such that $Y_i(n)$ is smooth and irreducible in characteristic zero. Then we define $\mfX_i(n)_R$ to be the normalization of $\PP^1_R$ in the function field extension $K(Y_i(n)_K)/K(c)$. The special and generic fibers of $\mfX_i(n)_R$ are written $\mfX_i(n)_k$ and $\mfX_i(n)_K$, respectively.  
\end{defn}

\begin{rem}
Since $\mfX_i(n)$ is normal, it is smooth in codimension $1$.  In particular, the generic fiber $\mfX_i(n)_K$ is the smooth curve $X_i(n)_K$.
The special fiber $\mfX_0(n)_k$ is a quotient of $\mfX_1(n)_k$ by $\ZZ/n\ZZ$; see Remark~\ref{RY0nR}.
\end{rem}

\begin{rem}
Under the assumptions of the definition, the affine part of $\mfX_1(n)_R$ is none other than the scheme $Y_1(n)_R$, and thus its special fiber is just $Y_1(n)_k$; see Proposition \ref{Psamemodel}. 
\end{rem} 

\subsection{Parabolic parameters}\label{Sparabolic}

The branched cover of curves $\pi_1: \, Y_1(n)_K \to \AA_K^1$ has degree $\nu(n)= \sum_{d|n} m^d \mu(n/d)$, where $m$ is the degree in $x$ of $f_c(x)$. For generic $c$, the map $f_c$ has $\nu(n)$ points of primitive period $n$ that fall into $r(n)= \nu(n)/n$ orbits; these constitute the fiber of $\pi$ above $c$. For finitely many $c$,
the fiber of $\pi_1$ above $c$ consists of fewer than $\nu(n)$ points; such parameters are called \emph{parabolic parameters}. The points of multiplicity greater than $1$ in the fiber of $\pi_1$ above a parabolic parameter $c$ are called \emph{parabolic points} and the orbit of such a point under $f_c$ is called a \emph{parabolic orbit}. In characteristic zero, there is at most one parabolic orbit in any fiber of $\pi_1$; see Proposition~\ref{prop:branchedCover}(3). 

\begin{rem}\label{rem:multiplier1} For example, if $f_c$ has an orbit $\mcO$ whose primitive period $d$ is strictly less than its formal period $n$, then 
$\mcO$ is a parabolic orbit. In this case, $\lambda_d(\mcO)$ is a primitive $n/d$-th root of unity (assuming $\Char K \nmid n$) and $\lambda_n(\mcO) = \lambda_d(\mcO)^{n/d} = 1$. 
\end{rem}

\begin{defn}\label{Ddeltan} For the generic polynomial $f_c$, let $\alpha_1,\dots,\alpha_{r}$  be representatives for the $r= \nu(n)/n$ orbits of formal period $n$, i.e. $\alpha_i$ is a choice of point in the $i$-th orbit. Define
\[
\delta_n(x,c) = \prod_{i=1}^{r} (x-\lambda_n(\alpha_i)) \ {\rm and} \ D_n = \disc (\delta_n(1,c))
.\]
By \cite[Theorem A]{MortonVivaldi}, $\delta_n(x,c)\in \ZZ[x,c]$, and so $D_n \in {\mathbb Z}$.
\end{defn}
  
Recall the following definitions from \cite{MortonVivaldi}:

\begin{defn}\label{defn:factors} Let $C_m(x)$ denote the $m$-th cyclotomic polynomial and $\Res$ the resultant
with respect to $x$.
For $d\mid n$ and $d \neq n$, define
\[
\Delta_{n,d}(c) = \Res_x(C_{n/d}(x), \, \delta_d(x,c)).
\]
By Remark \ref{rem:multiplier1}, $\delta_n(1,c)$ is divisible by $\Delta_{n,d}(c)$. Define $\Delta_{n,n}(c)$ by 
\begin{equation}\label{factorizationOfDelta1}
\delta_n(1,c) = \Delta_{n,n}(c) \prod_{\substack{d\mid n \\ d \neq n}} \Delta_{n,d}(c).
\end{equation}
There is therefore a factorization
\begin{equation}
D_n= \prod_{\substack{d\mid n }} \disc(\Delta_{n,d})\cdot \prod_{\substack{d\mid n, e\mid n \\ d < e}} \Res(\Delta_{n,d},\Delta_{n,e})^2.
\end{equation}
Let $D_{n,d} = \disc(\Delta_{n,d})$ and $R_{n,d,e}=\Res(\Delta_{n,d},\Delta_{n,e})$.

\end{defn}

With this notation, we can state the following key result:

\begin{thm}[{\cite[Theorem A]{MortonVivaldi}}] \label{discriminantOfPhi} 
The $\Delta_{n,d}$ lie in $\ZZ[c]$ for all $d \mid n$, and satisfy the following identities:
\[
\Res_x ( \Phi_n(x,c), \Phi_d(x,c)) = \Delta_{n,d}(c)^{d}\quad\quad\quad \text{for $d\mid n$ and $d \neq n$} 
\]
and
\begin{equation}\label{eqn:discPhi}
\disc_x(\Phi_n(x,c)) = \pm  \Delta_{n,n}(c)^n \prod_{\substack{d\mid n \\ d \neq n}} \Delta_{n,d}(c)^{n-d}.
\end{equation}
Moreover, if $\alpha_1,\dots,\alpha_{r}$ are the representatives for the $r= \nu(n)/n$ orbits of formal period $n$ for $f_c$ and if 
\[
L_i(x) = \prod_{q=0}^{n-1} (x-f_c^q(\alpha_i)),
\]
then, for some unit $\eta$ in $\ZZ[c,\alpha_1,\dots,\alpha_{r}]$,
\begin{eqnarray}\label{eqn:meaningDelta_nn}
\Delta_{n,n}(c) = \eta \prod_{i\neq j} L_j(\alpha_i).
\end{eqnarray}

\end{thm}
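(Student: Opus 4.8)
The plan is to pass to the generic fibre — take $c$ transcendental and work over $\overline{\QQ(c)}$ — and to exploit throughout the factorization $\Phi_n(x,c)=\prod_{i=1}^{r}L_i(x)$ of the dynatomic polynomial into the $r=\nu(n)/n$ orbit polynomials $L_i(x)=\prod_{q=0}^{n-1}(x-f_c^q(\alpha_i))$. Since the resultant and discriminant are multiplicative over such a factorization, $\disc_x\Phi_n=\pm\prod_i\disc_x L_i\cdot\prod_{i\neq j}\Res_x(L_i,L_j)$ and $\Res_x(\Phi_n,\Phi_d)=\prod_{i}\prod_{j}\Res_x(L_i,L_j^{(d)})$, so everything reduces to (i) understanding $\disc_x L_i$ for a single orbit, (ii) understanding $\Res_x(L_i,L_j)$ for two orbits, and (iii) tracking how these quantities degenerate as $c$ specializes to a parabolic parameter.

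The bridge between the orbit geometry of $\Phi_n$ and the multiplier polynomial $\delta_n$ is the following computation. For generic $c$ the map $f_c$ is unramified along the cycle, so $L_i(x)$ divides $L_i(f_c(x))$; write $L_i(f_c(x))=L_i(x)M_i(x)$. Differentiating and evaluating at $\beta_q:=f_c^q(\alpha_i)$ gives $L_i'(\beta_{q+1})\,f_c'(\beta_q)=L_i'(\beta_q)M_i(\beta_q)$; taking the product over $q\in\ZZ/n\ZZ$ the factors $L_i'(\beta_{q+1})/L_i'(\beta_q)$ telescope to $1$, leaving $\prod_q M_i(\beta_q)=\prod_q f_c'(\beta_q)=\lambda_n(\alpha_i)$. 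Combining this over all orbits (or, more directly, differentiating $f_c^n(x)-x=\prod_{d\mid n}\Phi_d(x,c)$ and taking the product over the roots of $\Phi_n$) yields the clean identity $\delta_n(1,c)^n=\pm\,\disc_x\Phi_n\cdot\prod_{d\mid n,\,d\neq n}\Res_x(\Phi_n,\Phi_d)$, which organizes the rest of the argument.

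Next I would pin down each factor by a zero-and-multiplicity count in $c$. By Remark~\ref{rem:multiplier1} and the definition of $\Delta_{n,d}$, for $d<n$ the polynomial $\Res_x(\Phi_n,\Phi_d)=\prod_{\Phi_n(\beta)=0}\Phi_d(\beta)$ vanishes exactly at the parameters $c_0$ where a formal-period-$n$ orbit degenerates onto a primitive-period-$d$ orbit; there the $e:=n/d$ points limiting to a given point of the $d$-orbit do so like $(c-c_0)^{1/e}$, so the order of vanishing is $n\cdot(1/e)=d$, and since these parabolic parameters are simple zeros of $\Delta_{n,d}$ one gets $\Res_x(\Phi_n,\Phi_d)=u\,\Delta_{n,d}^{d}$ with $u$ constant, forced to be $1$ by comparing degrees and leading coefficients (or by one specialization). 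The same Puiseux bookkeeping applied to $\disc_x L_i=\prod_{q<q'}(\beta_q-\beta_{q'})^2$ shows a degenerating $n$-orbit contributes order $d(e-1)=n-d$, while at a root of $\Delta_{n,n}$ two distinct exact-period-$n$ orbits collide $2$-to-$1$, contributing order $n$ to $\prod_{i\neq j}\Res_x(L_i,L_j)$ and nothing to the individual $\disc_x L_i$; matching zeros in $\disc_x\Phi_n=\pm\prod_i\disc_x L_i\cdot\prod_{i\neq j}\Res_x(L_i,L_j)$ gives $\disc_x\Phi_n=\pm\,\Delta_{n,n}^{n}\prod_{d<n}\Delta_{n,d}^{n-d}$, and reading off the orbit-orbit factor $\prod_{i\neq j}\Res_x(L_i,L_j)=\prod_{i\neq j}\prod_q L_j(f_c^q(\alpha_i))$ identifies $\Delta_{n,n}$ with $\prod_{i\neq j}L_j(\alpha_i)$ up to a unit $\eta\in\ZZ[c,\alpha_1,\dots,\alpha_r]$ (the unit absorbing the dependence on the choice of orbit representatives). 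Finally, integrality is cheap: $\Delta_{n,d}\in\ZZ[c]$ for $d<n$ because $\delta_d\in\ZZ[x,c]$ and $C_{n/d}\in\ZZ[x]$ and resultants of integer polynomials are integer polynomials, while $\Delta_{n,n}\in\ZZ[c]$ follows from $\delta_n(1,c)=\prod_{d\mid n}\Delta_{n,d}$ by Gauss's lemma once the $\Delta_{n,d}$ with $d<n$ are known to be pairwise coprime and primitive.

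The serious obstacle is the collection of genericity inputs used above: that at a root of $\Delta_{n,d}$ exactly one orbit degenerates and with the expected $e$-to-$1$ Puiseux behaviour, that at a root of $\Delta_{n,n}$ two exact-period-$n$ orbits collide transversally, and — most delicate — that all the $\Delta_{n,d}$ (including $d=n$) have simple zeros. These rest on a local analysis of the cover $\pi_1$ near parabolic parameters, using that in characteristic $0$ each fibre contains at most one parabolic orbit (Proposition~\ref{prop:branchedCover}(3)) and that the multiplier map $c\mapsto\lambda_d(\alpha_j(c))$ is non-degenerate; these are exactly the points where real work is required, and they are what Morton--Vivaldi supply. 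Everything else is formal manipulation of resultants and discriminants.
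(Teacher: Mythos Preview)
The paper does not prove this theorem; it quotes it from \cite{MortonVivaldi} without argument, so there is no proof here to compare against. Your outline is a faithful reconstruction of the structure of such a proof: the organising identity $\delta_n(1,c)^n=\pm\,\disc_x\Phi_n\cdot\prod_{d<n}\Res_x(\Phi_n,\Phi_d)$ (from differentiating $\Psi_n=\prod_{d\mid n}\Phi_d$ and taking the product over the zeros of $\Phi_n$) is correct, and your Puiseux bookkeeping at satellite and primitive parabolic parameters gives the right exponents $d$, $n-d$, and $n$. You also correctly isolate the substantive step --- establishing that the local picture (exactly one degenerating orbit at a satellite parameter, exactly two transversally colliding orbits at a primitive parameter, simple zeros of the $\Delta_{n,d}$) is as claimed --- and attribute it to Morton--Vivaldi.

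One caution: when you invoke Proposition~\ref{prop:branchedCover}(3) for the statement that each fibre carries at most one parabolic orbit, you are citing \cite{Morton96}, which postdates and may well rely on the very theorem you are sketching; the independent input is the local multiplier analysis in \cite{MortonVivaldi} itself (together with, for $x^m+c$, the simple-zeros result Theorem~\ref{BBLG2}). Two smaller points worth tightening: your Gauss-lemma step for $\Delta_{n,n}\in\ZZ[c]$ needs the $\Delta_{n,d}$ with $d<n$ to be primitive in $\ZZ[c]$, which is true but requires a leading-coefficient computation (cf.\ \cite[Proposition~3.4]{MortonVivaldi} or \cite[p.~331]{Morton96}); and for \eqref{eqn:meaningDelta_nn} you should check explicitly that $\prod_{i\neq j}L_j(\alpha_i)$ changes only by a unit of $\ZZ[c,\alpha_1,\dots,\alpha_r]$ under change of orbit representatives --- this is precisely why the statement allows $\eta$ to be a unit of that larger ring rather than $\pm1$.
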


Theorem~\ref{discriminantOfPhi} shows that the parabolic parameters are precisely the roots of $\delta_n(1,c)=0$. Moreover the factorization (\ref{eqn:discPhi})  can be interpreted as follows. For $d\mid n$ and $d \neq n$, the roots of $\Delta_{n,d}(c)$ are parabolic parameters for which an orbit of primitive period $n$ degenerates to an orbit of primitive period $d$; in this case, $\lambda_d(x)$ is a primitive $s$-th root of unity for any point $z$ in the parabolic orbit, where $s= n/(dp^e)$. For $d=n$, the identity (\ref{eqn:meaningDelta_nn}) shows that the roots of  $\Delta_{n,n}(c)$ are parameters for which two orbits of formal period $n$ collide. 

\begin{defn}\label{defn:satellite/primitive} Roots of $\Delta_{n,d}(c)$, for $d\neq n$, are called \emph{satellite parabolic parameters}.  Roots of $\Delta_{n,n}(c)$ are called \emph{primitive parabolic parameters}.
\end{defn}

\subsection{Smoothness and irreducibility}
The next hypothesis 
was introduced in \cite{Morton96}. 

\begin{defn} \label{defn:hypH}
Let $f(x,c)\in \ZZ[x,c]$ be a polynomial of degree $\alpha$ in $x$. 
The family $f_c(x)=f(x,c)$ satisfies \emph{hypothesis (H)} if:
\begin{enumerate}
\item $f(x,0)=x^{\alpha}$ and $f(x,u^m)$ is a homogeneous polynomial in $\ZZ[x,u]$ for some $m$;
\item $\gcd(\disc f(x,1), {\alpha}^n-1) =1$;
\item $\delta_n(1,c)$ has no multiple roots. 
\end{enumerate}
\end{defn}

Note that the family $f_c(x) = x^m + c$ satisfies parts (1) and (2) of hypothesis (H) for any $m \geq 2$, since $\disc(x^m+1)$ is a power of $m$ up to $\pm 1$.

\begin{thm}[{\cite[Theorem B]{Morton96}}] \label{thm:nonsing}
Let $f(x,c)$ be a family satisfying hypothesis (H). 
Suppose $D_n$ and $\disc f(x,1)$ are nonzero in $K$. Then $Y_1(n)_K$ is geometrically irreducible.  
\end{thm}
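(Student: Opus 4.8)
The plan is to deduce geometric irreducibility of $Y_1(n)_K$ from transitivity of the monodromy group of the branched cover $\pi_1$, reading off the branch locus from Theorem~\ref{discriminantOfPhi}. First I would pass to $\overline K$ and replace $Y_1(n)_{\overline K}$ by its smooth proper model $X_1(n)_{\overline K}$ of Definition~\ref{defn:standardModelX1} (the hypotheses $D_n\neq 0$ and $\disc f(x,1)\neq 0$ in $K$ make that definition available, and $Y_1(n)_{\overline K}$ is dense in $X_1(n)_{\overline K}$, so the two are simultaneously irreducible). A smooth curve is irreducible precisely when it is connected, so it suffices to show that the monodromy group $G\le S_{\nu(n)}$ of $\pi_1\colon X_1(n)_{\overline K}\to\PP^1_{\overline K}$ acts transitively on a generic fibre. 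Since $\sigma$ is a deck transformation of $\pi_1$ acting freely with $r(n)=\nu(n)/n$ orbits, it commutes with the monodromy, so $G$ preserves the partition into $\sigma$-orbits, and the induced action is the monodromy $\overline G\le S_{r(n)}$ of the quotient cover $\pi_0\colon X_0(n)_{\overline K}\to\PP^1_{\overline K}$. Transitivity of $G$ forces transitivity of $\overline G$ via the surjection $\varphi$; conversely, since $\Delta_{n,1}$ is a nonconstant factor of $\delta_n(1,c)$ and $D_n\neq 0$ in $K$, there is a parameter at which $f_c$ has a fixed point of multiplier a primitive $n$-th root of unity, the corresponding point of $X_1(n)_{\overline K}$ is $\sigma$-fixed, so $\varphi$ is totally ramified there and $X_1(n)_{\overline K}$ is connected whenever $X_0(n)_{\overline K}$ is (the case $n=1$ being trivial). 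So I reduce to showing that $\overline G$ is transitive on the $r(n)$ orbits of formal period $n$.

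Next I would pin down the local monodromy. By Theorem~\ref{discriminantOfPhi} the finite branch points of $\pi_1$, hence of $\pi_0$, are exactly the roots of $\delta_n(1,c)=\prod_{d\mid n}\Delta_{n,d}(c)$; hypothesis~(H)(3) together with $D_n\neq0$ in $K$ makes these roots simple and the $\Delta_{n,d}$ pairwise coprime and separable over $K$, and by Proposition~\ref{prop:branchedCover}(3) the ramification over each is concentrated in a single parabolic orbit. At a root of $\Delta_{n,n}$, identity~\eqref{eqn:meaningDelta_nn} displays a transverse collision of two orbits of formal period $n$, so the local monodromy of $\pi_0$ is a single transposition of those two orbits; at a root of $\Delta_{n,d}$ with $d<n$, the fibre acquires a parabolic orbit of exact period $d$ with multiplier a primitive $(n/d)$-th root of unity (Remark~\ref{rem:multiplier1}), and a local computation shows the monodromy cyclically permutes the orbits of formal period $n$ collapsing onto it. Over $c=\infty$ I would invoke hypothesis~(H)(1)--(2): by the remark following Definition~\ref{defn:standardModelX1} the points of $X_1(n)_{\overline K}$ there are the exact-period-$n$ sequences of roots of $f(x,1)$, and since $\disc f(x,1)\neq0$ and $\gcd(\disc f(x,1),\, m^n-1)=1$ with $m=\deg_x f$, the ramification is governed by the cyclic-shift structure on these sequences, giving a nontrivial monodromy permutation at $\infty$.

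It then remains to prove that these local permutations generate a transitive subgroup of $S_{r(n)}$, and this is where I expect the real work to lie. When $\Char K=0$ (in particular over $\CC$) this is exactly the connectedness of the ``monodromy graph'' whose vertices are the orbits of formal period $n$ and whose edges record the parabolic collisions above, which one extracts from the combinatorics of the parameter plane — how the period-$n$ hyperbolic components of the Mandelbrot set attach to one another and to lower-period components at parabolic roots — and which is carried out in \S\ref{Smonodromy}. For $K$ of arbitrary characteristic with $D_n\neq0$ in $K$ one cannot specialize this from characteristic $0$ and must instead argue characteristic-freely, combining the monodromy at $\infty$ with the transpositions at the simple roots of $\Delta_{n,n}$ and the cycles at the satellite parabolic parameters, the incidences being forced by the resultant identities of Theorem~\ref{discriminantOfPhi} and the separability of $\delta_n(1,c)$ over $K$; transitivity of $\overline G$, hence of $G$, then yields the geometric irreducibility of $Y_1(n)_K$.

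The main obstacle is precisely this transitivity step: the local monodromy computations at the parabolic parameters and at $\infty$ reduce to bookkeeping once hypothesis~(H) and Theorem~\ref{discriminantOfPhi} are in hand, but proving that the resulting permutations act transitively requires genuine global information about how the parabolic parameters are distributed among the period-$n$ orbits — the Mandelbrot-set combinatorics of \S\ref{Smonodromy} in characteristic $0$, and a careful characteristic-free substitute in general. A secondary technical point is checking that (H)(3) and $D_n\neq0$ really do make each primitive parabolic collision a simple transposition rather than a longer cycle or several simultaneous collisions, together with the exact determination of the ramification of $\pi_1$ over $c=\infty$.
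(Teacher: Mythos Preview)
The paper does not give its own proof of this statement: Theorem~\ref{thm:nonsing} is simply quoted as \cite[Theorem~B]{Morton96}, with the following remark noting a later strengthening in \cite{MortonGalois}. There is therefore nothing in this paper to compare your proposal against.

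That said, two comments on your sketch. First, your outline---reduce irreducibility to transitivity of the monodromy of $\pi_1$, then use the factorization of $\delta_n(1,c)$ and the ramification at $\infty$ to analyze the local monodromy---is the right shape, and is in the spirit of Morton's argument. But your appeal to \S\ref{Smonodromy} for the transitivity step is misplaced: that section treats only the quadratic family $f_c(x)=x^2+c$, works over $\CC$ via the combinatorics of the Mandelbrot set, and proves a much finer connectedness statement (surviving removal of two edges) than mere transitivity. It does not supply a characteristic-free argument for general families satisfying hypothesis~(H), which is what Theorem~\ref{thm:nonsing} asserts. Morton's proof instead hinges on the structure at infinity: under~(H), the points of $X_1(n)$ over $c=\infty$ are indexed by period-$n$ sequences in the roots of $f(x,1)$ (\cite[Proposition~10]{Morton96}), and the monodromy there is analyzed directly to force transitivity, with the arithmetic condition $\gcd(\disc f(x,1),\alpha^n-1)=1$ doing real work. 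Second, a small technical slip: $X_1(n)_{\overline K}$ as in Definition~\ref{defn:standardModelX1} is only guaranteed to be regular at infinity, not globally smooth, so ``smooth proper model'' overstates what is available; one should work with the normalization (or simply argue with the generically \'etale cover $\pi_1$ directly), as you effectively do anyway.
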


\begin{rem}
In \cite{MortonGalois} (Theorem B), Morton is able to prove Theorem \ref{thm:nonsing} above with part (ii) of hypothesis (H) replaced by ``$f(x,1)$ has distinct roots''.  
\end{rem}

\begin{thm}[{\cite[Chapter 3, Theorem 1, $m=2$ case]{bousch:thesis},\cite[Theorem 4.1, $m \geq 2$ case]{lau/schleicher:1994}}]\label{BBLG2} 
For the families of the form $f(x,c) = x^m+c$, the polynomial $\delta_n(1,c)$ has no multiple roots in characteristic 0. In particular, by Theorem \ref{thm:nonsing},  $Y_1(n)_\QQ$ is geometrically irreducible.
\end{thm}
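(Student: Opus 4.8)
First note that the last assertion will follow once the first is established: for $f(x,c)=x^m+c$ one has $\disc f(x,1)=\disc(x^m+1)=\pm m^m\ne 0$ in $\QQ$, parts (1)--(2) of hypothesis (H) hold as remarked above, and if $\delta_n(1,c)$ has no multiple roots then $D_n=\disc\delta_n(1,c)\ne 0$, so Theorem~\ref{thm:nonsing} yields that $Y_1(n)_\QQ$ is geometrically irreducible. Thus the real task is to show that $\delta_n(1,c)$ is squarefree over $\CC$, i.e.\ that every root $c_0\in\CC$ of $\delta_n(1,c)$ is simple.

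The plan is to recast ``simple root'' as a transversality statement about multipliers. Since $\delta_n(1,c)=\prod_i\bigl(1-\lambda_n(\alpha_i)\bigr)$ is the norm of the function $1-\lambda_n$ along $\pi_0\colon Y_0(n)\to\AA^1_c$, one has $\ord_{c_0}\delta_n(1,c)=\sum_P\ord_P(1-\lambda_n)$, the sum being over the points $P$ of the smooth model of $Y_0(n)_\CC$ lying above $c_0$; only points with $\lambda_n(P)=1$ contribute, and such a $P$ corresponds to an orbit of $f_{c_0}$ of formal period $n$ that is parabolic. By Proposition~\ref{prop:branchedCover}(3) there is at most one parabolic orbit in the fibre, hence---since $c_0$ is a root---exactly one, say $P_0$; so $\ord_{c_0}\delta_n(1,c)=\ord_{P_0}(1-\lambda_n)$, and everything reduces to showing that the multiplier morphism $\lambda_n$ is unramified at $P_0$, i.e.\ that $1-\lambda_n$ has a simple zero there. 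Concretely, if $P_0$ records an orbit $\mcO$ of primitive period $d\mid n$ with $\lambda_d(\mcO)$ equal to a primitive $(n/d)$-th root of unity $\zeta$ (so $\zeta=1$, $d=n$ in the primitive case), and if $e$ is the ramification index of $\pi_0$ at $P_0$, what must be shown is that the multiplier of the analytically continued orbit satisfies $\lambda_d-\zeta\sim(c-c_0)^{1/e}$ with nonzero leading coefficient as $c$ crosses $c_0$: for a satellite parabolic this is $e=1$ and the transversality $\tfrac{d}{dc}\lambda_d|_{c_0}\ne 0$, and for a primitive parabolic it is the corresponding non-degeneracy with $e$ equal to the number of period-$n$ orbits colliding at $c_0$.

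This transversality is the crux, and here complex dynamics is unavoidable. For $m=2$ I would appeal to the Douady--Hubbard theory of the Mandelbrot set, and for general $m$ to the analogous structure of the multibrot set $\mathcal{M}_m$: (i) the multiplier map on each hyperbolic component is proper holomorphic, of degree $1$ when $m=2$; (ii) it extends continuously to the boundary; and (iii) a parabolic parameter of the type in question lies on the boundary of exactly the expected hyperbolic components---the period-$d$ ``mother'' component together with its period-$n$ satellite in the satellite case, and the unique primitive component of period $n$ in the primitive case---and of no others. If the transversality failed at $c_0$, then $\lambda_n-1$ would vanish to order $>e$ at $P_0$, forcing $\lambda_n^{-1}(\DD)$ to acquire extra local sheets near $c_0$, and hence extra hyperbolic components adjacent to $c_0$, contradicting (iii). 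I expect (iii)---the ``no ghost components'' input into the combinatorics of $\mathcal{M}_m$---to be the genuinely hard point; an alternative that avoids some of it would be to prove $\tfrac{d}{dc}\lambda_d\ne 0$ directly, either via the holomorphic fixed-point index identity for the variation of a cycle multiplier in a one-parameter family, or via a quasiconformal surgery argument showing that a failure of transversality would let $f_{c_0}$ be deformed within a one-parameter family keeping the cycle parabolic, contradicting that parabolic parameters form a proper subvariety. With the transversality in hand, the reduction above shows every $c_0$ is a simple root, so $\delta_n(1,c)$ is squarefree.
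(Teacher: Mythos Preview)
The paper does not give its own proof of this theorem: it is quoted as an external result of Bousch (for $m=2$) and Lau--Schleicher (for general $m$), and the paper's only commentary is the remark that the proofs ``rely on complex analysis and properties of the Mandelbrot set,'' with Buff--Lei cited as an alternative. So there is no in-paper argument to compare against; what can be said is whether your sketch lines up with how the cited proofs actually go.

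On that score your outline is broadly correct in spirit: reduce to a transversality statement for the multiplier map at each parabolic parameter, and establish that transversality by complex-dynamical means (structure of hyperbolic components of the multibrot set, or a quasiconformal deformation argument). This is indeed the shape of the arguments in the cited references.

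There is, however, a genuine circularity in your reduction. You invoke Proposition~\ref{prop:branchedCover}(3) to assert that there is at most one parabolic orbit in the fibre over $c_0$. But Proposition~\ref{prop:branchedCover} is stated under Hypothesis~(H), and part~(3) of Hypothesis~(H) is precisely the assertion that $\delta_n(1,c)$ has no multiple roots --- the statement you are trying to prove. The same caveat applies to the implicit use of the branch structure of $\pi_0$ (e.g.\ that $\pi_0$ is unramified at satellite points) in your local analysis. The fix is to obtain the uniqueness of the parabolic orbit from complex dynamics directly: for $f_c(z)=z^m+c$ the only finite critical point is $0$, and every parabolic cycle must attract a critical point, so there is at most one parabolic cycle. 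This is exactly the kind of analytic input the cited proofs use, and it sidesteps the circular appeal to Proposition~\ref{prop:branchedCover}.

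Beyond that, your proposal is candid that the transversality step is the crux and is not carried out. That is accurate: the substantive content of the theorem lives entirely in that step, and your sketch of possible approaches (multiplier-map degree on hyperbolic components, no ``ghost'' components, or a surgery argument) is a fair summary of the literature rather than a proof. If you want a route that is closer to being writable in full, the Buff--Lei approach (already used elsewhere in the paper, e.g.\ in Lemma~\ref{Ltotram}) computes $\partial\Phi_n/\partial c$ at parabolic points explicitly and derives the needed nonvanishing from a concrete formula, which avoids some of the softer Mandelbrot-set topology you allude to.
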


\begin{thm}[{\cite[Expos\'e XIV, $m=2$ case]{DouadyHubbard}, \cite[Theorem 1.1, $m\geq 2$ case]{GaoOu}}]\label{Tmultibrotsmooth}
For the families of the form $f(x,c) = x^m+c$, the curve $Y_1(n)_\CC$ is smooth.
\end{thm}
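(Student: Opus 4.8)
The plan is to verify the Jacobian criterion for the affine plane curve $\Phi_n(x,c)=0$ at every point of $Y_1(n)(\CC)$, handling the parabolic points by a local--global comparison of discriminants. Recall that for $f_c(x)=x^m+c$ the polynomial $\Phi_n$ is monic in $x$ of degree $\nu(n)$, so $\disc_x\Phi_n\in\CC[c]$, and that a point of $Y_1(n)(\CC)$ is a pair $(z_0,c_0)$ with $z_0$ of formal period $n$ for $f_{c_0}$. Let $d\mid n$ be the exact period of $z_0$. If $\lambda_n(z_0)\neq 1$, then $d=n$ and $z_0$ is a zero of $\Phi_e(\,\cdot\,,c_0)$ for no proper divisor $e\mid n$; since $\Psi_n=\prod_{e\mid n}\Phi_e$ and $\partial_x\Psi_n(z_0,c_0)=\lambda_n(z_0)-1\neq 0$, we get $\partial_x\Phi_n(z_0,c_0)\neq 0$, so $(z_0,c_0)$ is a smooth point. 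It remains to treat the parabolic points, where $\lambda_n(z_0)=1$.

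At a parabolic point I first compute the multiplicity of $z_0$ as a zero of $\Phi_n(\,\cdot\,,c_0)$. Either $d=n$ (the primitive case) or $d<n$, in which case $\lambda_d(z_0)$ is a primitive $q$-th root of unity with $q=n/d\ge 2$ (the satellite case). In both cases $z_0$ is a fixed point of $f_{c_0}^n$ with multiplier $1$; by the Leau--Fatou flower theorem its attracting petals fall into cycles, one for each critical orbit captured by the parabolic cycle of $f_{c_0}$, and since $x^m+c$ has a single free critical point there is exactly one such cycle, consisting of $q$ petals (with $q=1$ in the primitive case). Hence $\Psi_n(\,\cdot\,,c_0)=f_{c_0}^n(x)-x$ vanishes to order $q+1$ at $z_0$. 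Removing the factor $\Phi_d(\,\cdot\,,c_0)$, which has a simple zero at $z_0$ when $d<n$ because $\partial_x\Psi_d(z_0,c_0)=\lambda_d(z_0)-1\neq 0$, and which is absent when $d=n$, we conclude that $\Phi_n(\,\cdot\,,c_0)$ vanishes at $z_0$ to order $\ell$, where $\ell=2$ if $d=n$ and $\ell=q$ if $d<n$. The same holds at each of the $d$ points of the parabolic orbit, and since the automorphism $\sigma$ permutes these points they play symmetric roles; moreover, by Proposition~\ref{prop:branchedCover}(3) they are the only multiple zeros of $\Phi_n(\,\cdot\,,c_0)$.

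The final step compares discriminants. By Theorem~\ref{discriminantOfPhi}, $\disc_x\Phi_n=\pm\,\Delta_{n,n}^{\,n}\prod_{d\mid n,\, d\neq n}\Delta_{n,d}^{\,n-d}$, and by Theorem~\ref{BBLG2} the product $\delta_n(1,c)=\prod_{d\mid n}\Delta_{n,d}$ has no repeated roots, so each $\Delta_{n,d}$ is separable and the $\Delta_{n,d}$ are pairwise coprime. Therefore $c_0$ is a simple root of exactly one of them, say $\Delta_{n,d_0}$, where $d_0$ is the period of the parabolic orbit, and $\ord_{c_0}\disc_x\Phi_n$ equals $n$ if $d_0=n$ and $n-d_0$ if $d_0<n$. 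On the other hand, after Weierstrass preparation of $\Phi_n$ at each of the $d_0$ orbit points, the local factor is a distinguished polynomial of degree $\ell$ over $\CC[[c-c_0]]$ whose $x$-discriminant has order at least $\ell-1$ in $c-c_0$, with equality if and only if the corresponding germ of $Y_1(n)$ is smooth. Adding up the equal contributions of the $d_0$ orbit points, which are the only multiple zeros of $\Phi_n(\,\cdot\,,c_0)$ and hence account for all of $\ord_{c_0}\disc_x\Phi_n$, gives
\[
d_0(\ell-1)\ \le\ \ord_{c_0}\disc_x\Phi_n,
\]
that is $n\le n$ if $d_0=n$ and $d_0(q-1)=n-d_0\le n-d_0$ if $d_0<n$. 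Equality is forced in both cases, so the germ of $Y_1(n)$ at $(z_0,c_0)$ is smooth; together with the non-parabolic case this proves that $Y_1(n)_\CC$ is smooth.

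The main obstacle, I expect, is the petal count: one must combine Fatou's theorem that each cycle of attracting petals captures a critical orbit with the fact that $x^m+c$ has exactly one free critical point, in order to pin down that $\Psi_n(\,\cdot\,,c_0)$ vanishes to order exactly $q+1$ at a parabolic point. This is where the special shape of the family enters, and where a naive argument for a general family breaks down; the remaining ingredients (Weierstrass preparation, the inequality $\ord\disc_x\ge\deg-1$ together with its equality case, and the bookkeeping using Theorems~\ref{discriminantOfPhi} and~\ref{BBLG2}) are routine once the multiplicity $\ell$ is known. Finally, since $\disc_x\Phi_n\not\equiv 0$ by Theorems~\ref{discriminantOfPhi} and~\ref{BBLG2}, the polynomial $\Phi_n$ is squarefree in $x$ and $Y_1(n)_\CC$ is a reduced curve; if one also wants a smooth compactification, the points at infinity of $X_1(n)_\CC$ are handled directly, as they correspond to period-$n$ sequences among the distinct roots of $x^{m-1}=1$.
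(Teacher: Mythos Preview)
The paper does not give its own proof of this theorem: it is quoted as a result from \cite{DouadyHubbard} and \cite{GaoOu}, with the remark that another proof appears in \cite{BuffLei14}. So there is no in-paper argument to compare against.

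Your argument is correct. The logic is sound: you reduce smoothness to the parabolic points, compute the local multiplicity $\ell$ of $\Phi_n(\,\cdot\,,c_0)$ at each parabolic point via the Leau--Fatou petal count together with Fatou's theorem that every petal cycle captures a critical point (which is the one place the unicritical hypothesis enters), and then match the resulting lower bound $d_0(\ell-1)$ for $\ord_{c_0}\disc_x\Phi_n$ against the exact value coming from Theorem~\ref{discriminantOfPhi} and the separability in Theorem~\ref{BBLG2}. The equality case of Fulton's inequality (your Weierstrass/local-discriminant step) then forces smoothness. This is essentially the characteristic-zero analogue of the good-reduction machinery the paper itself develops in \S\ref{Sprelims}--\S\ref{Sgood}, so it is a pleasant internal synthesis: you are deducing Theorem~\ref{Tmultibrotsmooth} from Theorem~\ref{BBLG2} using the paper's own tools plus standard parabolic dynamics.

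Two small remarks. First, your appeal to Proposition~\ref{prop:branchedCover}(3) to say that the parabolic orbit accounts for \emph{all} multiple zeroes of $\Phi_n(\,\cdot\,,c_0)$ could look circular, since that proposition describes the ramification of $\pi_1$; in fact Morton's Proposition~9 is proved by direct polynomial manipulation and does not assume smoothness, so the citation is harmless. But you can also bypass it entirely: the same Fatou argument you used for $k=1$ shows that a unicritical map has at most one parabolic cycle of any given formal period, so the $d_0$ orbit points are the only multiple zeroes. Second, your proof does not eliminate the complex-analytic input---it trades the direct smoothness arguments of \cite{DouadyHubbard, GaoOu, BuffLei14} for Theorem~\ref{BBLG2} (itself a deep result) plus the classical petal theory---but it does make the implication $\text{(\ref{BBLG2})}\Rightarrow\text{(\ref{Tmultibrotsmooth})}$ transparent and algebraic.
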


\begin{rem}
The proofs of Theorems \ref{BBLG2} and 
\ref{Tmultibrotsmooth} rely on complex analysis and properties of the Mandelbrot set. 
Another proof is in \cite[Theorems 1.1--1.2]{BuffLei14}. 
\end{rem}

\subsection{Dynatomic curves as branched covers}

Consider the tower of covers 

\[\xymatrix{& X_1(n)_K\ar[dl]_{\ZZ/n\ZZ}^{\varphi}\ar[dd]_{\pi_1}^{\deg = \nu(n)} \\ X_0(n)_K\ar[dr]^{\pi_0}_{\deg = \nu(n)/n} \\ & \PP^1_K.}\]

\begin{prop}\ \label{prop:branchedCover}
Let $\mcF$ be a family of polynomials satisfying \hyperref[defn:hypH]{Hypothesis (H)}. Let $K$ be an algebraically closed field over which $Y_1(n)_K$ is irreducible and $D_n \neq 0$.
\begin{enumerate}

\item There are $\nu(n)/m$ distinct points of $X_1(n)_K$ in the fiber of $\pi_1 = \pi_0 \circ \varphi$ above $\infty$ in $\PP^1$, each with ramification index $m$ under $\pi_1$. Moreover, these points are all defined over the splitting field of $f(x,1)$. Note: this holds without part (3) of \hyperref[defn:hypH]{Hypothesis (H)}.

\item The cover $\pi_1: Y_1(n)_K\to \AA^1_K$ 
(resp.\ $\pi_0: Y_0(n)_K\to \AA^1_K$)
is branched over the subset of $\AA^1_K$ consisting of the roots of $\delta_n(1,c)$
(resp.\ $\Delta_{n,n}(c)$).

\item
If $c$ is a root of $\Delta_{n,d}(c)$ for $d \leq n$, then there are $d$ ramification points in
$\pi_1^{-1}(c)$,
which form a single orbit for the action of $\ZZ/n\ZZ$. 
At each such point $z$:
\begin{enumerate}
\item 
If $d=n$, $\varphi$ is unramified at $z$ 
and $\pi_1$ has ramification index  2 at $z$. 
\item If $d<n$, $\varphi$ has ramification index $n/d$ at $z$ and $\varphi(z)$ is unramified under $\pi_0$. 
\end{enumerate}
\end{enumerate}
\end{prop}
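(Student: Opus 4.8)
The three parts are nearly independent, and I would prove them in the order (1), then the first assertion of (2), then (3), and finally deduce the second assertion of (2) from (3). Throughout, $K$ is algebraically closed with $Y_1(n)_K$ geometrically irreducible and $D_n\neq 0$; I also take $\Char K\nmid n$, so that Remark \ref{rem:multiplier1} applies and the quotient by $\langle\sigma\rangle$ is tame. For Part (1): the fiber of $\pi_1$ over $\infty$ is described by \cite[Proposition 10]{Morton96} (recorded in the Remark after Definition \ref{defn:standardModelX1}). The mechanism is that, along a branch of the curve on which $c\to\infty$, the orbit of a periodic point $P$ of period $n$ blows up at the rate $c^{1/m}$ — homogeneity of the family (Hypothesis (H)(1)) is exactly what makes the rescaling uniform along the orbit — and the rescaled orbit $\bigl(f_c^{\,j}(P)\,c^{-1/m}\bigr)_j$ converges to an exact-period-$n$ cyclic sequence $(\zeta_j)$ of roots of $f(x,1)$. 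Reading off a local uniformizer (a Newton-polygon computation on $\Phi_n(x,c)$ near $c=\infty$) shows that $c$ has a pole of order exactly $m$ at each point above $\infty$, i.e.\ ramification index $m$; since $\deg\pi_1=\nu(n)$ there are $\nu(n)/m$ of them, and they are defined over the splitting field of $f(x,1)$, since that is where the $\zeta_j$ live. Only parts (1) and (2) of Hypothesis (H) — homogeneity, and separability of $f(x,1)$ — are used, which is the content of the ``Note.''

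For the first assertion of Part (2): Hypothesis (H)(1) makes $f_c(x)$ monic in $x$, hence $f_c^n(x)$ and $\Phi_n(x,c)$ are monic in $x$, so $\pi_1\colon Y_1(n)_K\to\AA^1_K$ is finite flat of degree $\nu(n)$ and is étale over $c_0$ if and only if $\disc_x\Phi_n(c_0)\neq 0$. By \eqref{eqn:discPhi} of Theorem \ref{discriminantOfPhi}, $\disc_x\Phi_n$ equals, up to sign, $\Delta_{n,n}(c)^{\,n}\prod_{d\mid n,\;d\neq n}\Delta_{n,d}(c)^{\,n-d}$, with all exponents positive; hence it has the same radical as $\prod_{d\mid n}\Delta_{n,d}(c)=\delta_n(1,c)$, and the branch locus of $\pi_1$ in $\AA^1_K$ is precisely the zero set of $\delta_n(1,c)$.

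For Part (3): let $c_0$ be a root of $\delta_n(1,c)$; by Hypothesis (H)(3) it is a simple root, so it lies on exactly one factor $\Delta_{n,d}(c)$. Theorem \ref{discriminantOfPhi} together with Remark \ref{rem:multiplier1} identifies the fiber: $f_{c_0}$ has a single parabolic orbit $\mcO$, of $f_{c_0}$-primitive period $d$, and the ramification points of $\pi_1$ over $c_0$ are exactly the $d$ points of $\mcO$; since $\sigma$ acts on $\pi_1^{-1}(c_0)$ as $f_{c_0}$, they form a single $\ZZ/n\ZZ$-orbit. Suppose first $d<n$. Then $\lambda_d(\mcO)$ is a primitive $(n/d)$-th root of unity (Remark \ref{rem:multiplier1}), and I would analyze the parabolic bifurcation at a point $z\in\mcO$ to conclude that the branch of $X_1(n)$ over $z$ is smooth and $\pi_1$ has ramification index $n/d$ there; equivalently, $c_0$ being a simple root of $\delta_n(1,c)$ says precisely that the derivative at $c_0$ of $c\mapsto\lambda_n$ along the degenerating orbit is nonzero, which is the transversality needed for this. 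Since $\Char K\nmid n$, the tame quotient $\varphi$ has ramification index at $z$ equal to $|\mathrm{Stab}_{\langle\sigma\rangle}(z)|$; and $\sigma^j(z)=f_{c_0}^{\,j}(z)=z$ iff $d\mid j$, so $\mathrm{Stab}_{\langle\sigma\rangle}(z)=\langle\sigma^d\rangle$ has order $n/d$. Thus $e_z(\varphi)=n/d$, and since ramification indices multiply in the tower, $\varphi(z)$ is unramified under $\pi_0$. Suppose next $d=n$. Then $\mcO$ consists of $n$ points of $f_{c_0}$-primitive period $n$, and \eqref{eqn:meaningDelta_nn} exhibits $\mcO$ as the collision of two distinct formal-period-$n$ orbits; the same transversality input (now: $c_0$ a simple root of $\Delta_{n,n}$) shows $X_1(n)$ is smooth over $z$ with $\pi_1$ ramified of index $2$, while $\mathrm{Stab}_{\langle\sigma\rangle}(z)$ is trivial, so $\varphi$ is unramified at $z$ and $\pi_0$ is ramified of index $2$ at $\varphi(z)$. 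The second assertion of Part (2) now follows: $\pi_0$ is unramified over every root of every $\Delta_{n,d}$ with $d<n$, and (using also that $\pi_1$ is étale over points off $\delta_n(1,c)=0$) over every point off the $\Delta_{n,d}$'s, while it is ramified over every root of $\Delta_{n,n}$; so its branch locus in $\AA^1_K$ is the zero set of $\Delta_{n,n}(c)$.

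The main obstacle is the ramification-index computation for $\pi_1$ at the parabolic points in Part (3). A priori the multiplicity of a point of $\mcO$ as a root of $\Phi_n(x,c_0)$ is only known to be of the form $\nu\cdot(n/d)$ for some integer $\nu\geq 1$ (the multiplicity of the parabolic cycle), and likewise the double point in the case $d=n$ could a priori have higher multiplicity; ruling this out — i.e.\ showing that $X_1(n)$ is smooth over $c_0$ with exactly the stated ramification, that the degeneration is the generic one, and that any singularities of the affine model $Y_1(n)$ have genuinely been resolved in passing to $X_1(n)$ — is where the sharp form of Hypothesis (H) (simplicity of the roots of $\delta_n(1,c)$) and the Morton–Vivaldi discriminant identity \eqref{eqn:discPhi} really do the work. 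Everything else is bookkeeping with finite flat covers, the tame quotient formula, and the dynamical dictionary of Theorem \ref{discriminantOfPhi}.
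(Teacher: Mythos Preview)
The paper gives no argument of its own here; it simply cites \cite[Propositions 9 and 10]{Morton96}. Your sketch is a faithful reconstruction of the natural direct proof and uses precisely the ingredients Morton uses: the behavior at infinity from homogeneity (his Proposition 10), the discriminant identity \eqref{eqn:discPhi} to read off the branch locus of $\pi_1$, the dynamical interpretation of the factors $\Delta_{n,d}$ from Theorem \ref{discriminantOfPhi} to identify the parabolic orbit, the stabilizer computation for $\varphi$, and the transversality encoded in Hypothesis (H)(3) to pin down the ramification indices. Your organization---(1), then the $\pi_1$ half of (2), then (3), then deduce the $\pi_0$ half of (2)---is the right one and matches how Morton's Proposition 9 is structured.

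Two small remarks. First, the extra hypothesis $\Char K\nmid n$ is not in the statement and is not needed for the step where you invoke it: for a faithful cyclic action on a smooth curve the ramification index of the quotient map equals the order of the stabilizer regardless of tameness, so you can drop the word ``tame.'' (Where characteristic does intervene is in the clean form of Remark \ref{rem:multiplier1}; Definition \ref{defn:formal_period} handles the general case.) Second, you correctly flag that smoothness of $Y_1(n)$ at the parabolic points and the exact value of the $\pi_1$-ramification index are a single package, and that simplicity of the root of $\delta_n(1,c)$ is exactly the transversality that delivers both. Morton makes this precise by showing that the multiplier $\lambda_n$ is a local parameter on $Y_0(n)$ near the parabolic image (this is his Proposition 9; the paper quotes part (d) of it in the proof of Lemma \ref{LDeltannval}), from which the ramification indices $2$ and $n/d$ and the local smoothness unwind just as you describe.
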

\begin{proof}
See \cite[Proposition 9 and 10]{Morton96}.
\end{proof}

\subsection{Mandelbrot set}
In this section, we consider the well-studied family $f_c(x)=x^2+c$;  we refer the interested reader to \cite{Branner89}, for example, for a survey. For $K=\CC$, the base curve $\AA^1_\CC$
in the cover $\pi_1$ 
is known as the ``parameter space.'' The Mandelbrot set $\mcM$ is the subset of parameters $c$ for which the orbit of the critical point $0$ under iteration of $f_c$ is bounded. The Mandelbrot set is connected (\cite[Corollary 8.3]{DouadyHubbard}). A parameter $c\in \mcM$ for which $0$ converges to an attracting cycle of $f_c$ is called \emph{hyperbolic}. Each hyperbolic map $f_c$ has exactly one attracting cycle. On a given connected component of the set of hyperbolic parameters, the period of the attracting cycle is a constant, say $n$; such a component is called a \emph{hyperbolic component of period $n$}.  The various hyperbolic components form the distinctive ``bulbs'' of the Mandelbrot set. The following is a well-known result in complex dynamics:

\begin{prop}\label{prop:uniqueRoot}
Let $H$ be a hyperbolic component of period $n$. Define a map 
\[
\lambda_n: H \to\mathbb{D}
\] which takes a parameter $c\in H$ to the $n$-th multiplier of the unique attracting cycle of $f_c$.  Then $\lambda_n$ is a conformal isomorphism and it extends to a homeomorphism $\bar{\lambda}_n:\overline{H} \to \overline{\mathbb{D}}$.
\end{prop}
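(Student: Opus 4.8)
The plan is to prove Proposition~\ref{prop:uniqueRoot} following the classical Douady--Hubbard argument, which establishes that on each hyperbolic component $H$ of period $n$ the multiplier map $\lambda_n$ is a proper holomorphic map of degree one onto the open unit disk $\mathbb{D}$, and then that it extends to a homeomorphism of closures. First I would make precise the definition of $\lambda_n$ as a holomorphic function on $H$: for $c \in H$ the map $f_c$ has a unique attracting cycle, its period is the locally constant integer $n$, and by the holomorphic implicit function theorem the attracting cycle moves holomorphically with $c$ (it is the branch of $\Per_n(f_c)$ that stays attracting), so $\lambda_n(c) = (f_c^n)'$ evaluated along that cycle is a well-defined holomorphic function $H \to \mathbb{D}$. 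The content is that this map is a biholomorphism.

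The key steps, in order, are as follows. (1) \emph{Holomorphicity and target.} Verify $\lambda_n: H \to \mathbb{D}$ is holomorphic and that $|\lambda_n| < 1$ on $H$ by definition of ``attracting''. (2) \emph{Properness.} Show $\lambda_n$ is proper: if $c_k \to c_\infty \in \partial H$, then $|\lambda_n(c_k)| \to 1$. This uses the fact that $c_\infty$ cannot be a hyperbolic parameter of period $n$ (else it would lie in the open set $H$), combined with the stability theory of the Fatou set --- as long as the multiplier stays bounded away from the unit circle, the attracting cycle persists and $c_\infty$ would be interior to $H$. (3) \emph{Degree one.} A proper holomorphic map between disks has a well-defined degree; to see it is $1$, exhibit a single preimage of $0$: the center of $H$, i.e. the unique parameter for which $f_c$ has a superattracting cycle of period $n$ (the critical point $0$ is itself periodic of period $n$). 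Uniqueness of this parameter in $H$ is a finiteness/rigidity statement --- superattracting parameters of period $n$ are the roots of $f_c^n(0) = 0$ cut out appropriately, and Douady--Hubbard show exactly one lies in each period-$n$ component. Since $0$ is a regular value with one preimage, $\deg \lambda_n = 1$, hence $\lambda_n$ is a conformal isomorphism $H \xrightarrow{\sim} \mathbb{D}$. (4) \emph{Boundary extension.} Promote this to a homeomorphism $\overline{H} \to \overline{\mathbb{D}}$: this is where one invokes the local connectivity of $\partial H$ and the landing of parameter rays, or equivalently Douady--Hubbard's description of the boundary of a hyperbolic component, to get a continuous bijective (hence, by compactness, homeomorphic) extension $\bar\lambda_n$.

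I expect step (4), the boundary homeomorphism, to be the main obstacle, since it genuinely requires deep facts about the Mandelbrot set (local connectivity of $\partial H$, behavior at the root point and at parabolic boundary points where the cycle merges). In a paper like this I would not reprove it but cite it: the whole proposition is standard and due to Douady--Hubbard, so the honest ``proof'' is a precise statement plus a reference to \cite{DouadyHubbard} (and perhaps \cite{Branner89} for the exposition), with at most a sketch of steps (1)--(3), which are elementary consequences of holomorphic dependence and properness, and a pointer for step (4). Step (3)'s uniqueness-of-center claim is the only other place one must be slightly careful, but it too is part of the cited theory.
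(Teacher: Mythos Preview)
Your proposal is correct, and in fact the paper gives no proof at all: it introduces the proposition as ``a well-known result in complex dynamics'' and states it without argument or even an explicit citation. Your final paragraph anticipates this exactly --- the honest ``proof'' here is just a pointer to Douady--Hubbard --- so your instincts are right and your sketch of steps (1)--(4) goes well beyond what the paper itself supplies.
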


\begin{defn}
For a hyperbolic component $H$ of period $n$, the unique point $c_0 \in H$ with $\lambda_n = 1$ is called the \emph{root of the hyperbolic component $H$}.
\end{defn}

\begin{rem}\label{hyperbolicRootsAndRootsOf1}
The roots of hyperbolic components are precisely the parabolic parameters, i.e. the roots of $\delta_n(1,c)$.  Satellite  parabolic parameters, i.e. the roots of $\Delta_{n,d}(c)$ with $d\neq n$, are precisely the parameters $c$ such that $c$ lies on the boundary of some hyperbolic component $H$ of period $d$ and also is a root of a hyperbolic component of period $n$. Such parameters on the boundary of a given component $H$ biject under $\lambda_d$ with the primitive roots of unity in $\overline{\mathbb{D}}$. 
Primitive parabolic parameters, i.e. roots of $\Delta_{n,n}(c)$, occur at the cusps of the cardioid shaped hyperbolic components. 
\end{rem}

We discuss more properties of the Mandelbrot set in \textsection \ref{Smonodromy}.

\section{Remarks on uniform boundedness and gonality}\label{section:UBC} 

We state a version of the uniform boundedness conjecture of Morton and Silverman for one dimensional families of polynomials (\cite{MortonSilverman94}; see \cite{PoonenUB} for a generalized form). 

\begin{conjecture}[Uniform boundedness for one dimensional families of polynomials]\label{conj:UniformBoundedness} 
Fix positive integers $N$ and $d\geq 2$.
Let $k$ be a field.

\begin{enumerate}
\item 
Let $\mcF/\QQ$ be a one dimensional family of polynomials of degree $d$. There exists $B = B_{N,\mcF} >0$ such that 
\[
|\Per(f)(L)|<B
\]
for all number fields $L$ such that $[L:\QQ]\leq N $ and for all rational maps $f\in \mcF(L)$.

\item 
Let $\mcF/k$ be a one dimensional family of polynomials of degree $d$. There exists $B = B_{N,k,\mcF} >0$ such that 
\[
|\Per(f)(L)|<B 
\]
for all function fields $L=k(C) $, where $C/k$ is a curve with $ \gamma_k(C)\leq N$, and for all non-isotrivial maps $f\in \mcF(L)$. Here $\gamma_k(C)$ denotes the $k$-gonality of the curve $C/k$. 
\end{enumerate}
\end{conjecture}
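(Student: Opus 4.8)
\noindent The final assertion is a conjecture rather than a theorem, so what I would write is not a proof of it but a reduction of its function-field part~(2), for the family $f_c(x)=x^2+c$, to a single quantitative input about the reduction of the dynatomic curves --- this is exactly the role of Propositions~\ref{prop:UBCfromIrreducibility} and~\ref{Puniformboundedness} in \S\ref{section:UBC}. Part~(1), uniformity over number fields, is of a different nature: together with the gonality estimates below it would follow from a Bombieri--Lang-type statement (in the spirit of Caporaso--Harris--Mazur) or from level-uniform Mazur--Kamienny-type bounds, neither of which the reduction-theoretic results of this paper touch, so I would not attempt it here.

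\smallskip\noindent\textbf{Step 1 (reduction to a gonality bound).} Fix $N$, let $k$ be a field of characteristic $\neq 2$, let $L=k(C)$ with $\gamma_k(C)\le N$, suppose $f=f_c$ is non-isotrivial with $c\in L$, and suppose $P\in\PP^1(L)$ has exact period $n$ for $f_c$. Then the forward orbit of $P$ is an $L$-rational orbit of formal period $n$ (an orbit of exact period $n$ has formal period $n$ by Definition~\ref{defn:formal_period}), hence by the moduli interpretation of $Y_0(n)$ (the quotient description following Proposition~\ref{prop:moduli}) it corresponds to a $k$-morphism $\psi\colon C\to X_0(n)_k$ with $\pi_0\circ\psi=c\colon C\to\PP^1_k$; since $c$ is non-constant, so is $\psi$. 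I would then use the elementary gonality fact that a non-constant $k$-morphism $C\to X$ of smooth projective curves forces $\gamma_k(X)\le\gamma_k(C)$ (push a minimal $k$-rational pencil on $C$ forward to $X$), giving $\gamma_k(X_0(n)_k)\le N$. It therefore suffices to show that $\gamma_k(X_0(n)_k)\to\infty$ as $n\to\infty$, uniformly over all fields $k$ (equivalently, over $k=\overline{\QQ}$ and over $k=\overline{\FF_p}$ for every odd prime $p$). Granting this, fix $N_0=N_0(N)$ with $\gamma_k(X_0(n)_k)>N$ for all $n>N_0$; then $f_c$ has no point of exact period $n>N_0$, and
\[
|\Per(f)(L)|=\sum_{n\le N_0}\#\{\,\text{points of exact period }n\text{ for }f_c\,\}\ \le\ \sum_{n\le N_0}\#\pi_1^{-1}(c)\ \le\ \sum_{n\le N_0}\nu(n),
\]
a bound depending only on $N$, which is part~(2).

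\smallskip\noindent\textbf{Step 2 (the gonality bound).} In characteristic $0$, $X_1(n)$ is smooth and irreducible (Theorems~\ref{BBLG2} and~\ref{Tmultibrotsmooth}), hence so is $X_0(n)$, and $\pi_0$ is a degree-$\nu(n)/n$ cover of $\PP^1$ branched only over $\infty$ and over the roots of $\Delta_{n,n}$, with exactly one ramification point, of index $2$, above each such root (Proposition~\ref{prop:branchedCover}). Riemann--Hurwitz then gives $2g(X_0(n))-2=\deg\Delta_{n,n}+O(\nu(n)/n)$, and since $\deg\Delta_{n,n}=\deg\delta_n(1,c)-\sum_{d\mid n,\,d<n}\deg\Delta_{n,d}$ with $\deg\delta_n(1,c)=\Theta(\nu(n))$ and the satellite factors of total degree $o(\nu(n))$ (see \cite{MortonVivaldi}), one obtains $g(X_0(n))=\Theta(\nu(n))$. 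Applying the Castelnuovo--Severi inequality to $\pi_0$ and to a minimal pencil $\beta$ on $X_0(n)$ (which may be assumed independent, since a $\beta$ compatible with $\pi_0$ would already have degree $\ge\nu(n)/n$) gives $g(X_0(n))\le\bigl(\gamma(X_0(n))-1\bigr)\bigl(\nu(n)/n-1\bigr)$, hence $\gamma(X_0(n))\ge 1+n\,g(X_0(n))/(\nu(n)-n)=\Theta(n)\to\infty$. In odd characteristic $p$: at a prime of good reduction $X_0(n)_{\overline{\FF_p}}$ is smooth of the same genus, with $\pi_0$ of the same degree and ramification, so the argument goes through verbatim; at a prime of bad reduction --- which divides $D_{n,n}$ by Theorem~\ref{THM:good_away_from_disc} --- I would invoke the conditional hypothesis that every odd bad prime of $Y_0(n)$ divides $D_{n,n}$ \emph{exactly once}, so that by Theorem~\ref{THM:divides_once_then_irreducible} the reduction is geometrically irreducible, and the degeneration (a simple collision of two primitive parabolic parameters) is mild enough that the geometric genus of the normalization drops by only $O(n)=o(\nu(n))$; the normalization still carries the degree-$\nu(n)/n$ map $\pi_0$, so Castelnuovo--Severi again yields $\gamma\ge 1+n\bigl(\Theta(\nu(n))-O(n)\bigr)/(\nu(n)-n)=\Theta(n)$. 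This is the uniform growth required in Step~1.

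\smallskip\noindent\textbf{Main obstacle.} Two points are genuinely delicate, both in the characteristic-$p$ case of Step~2. The first is the conditional hypothesis itself: that Theorem~\ref{THM:v(disc)=1_implies_bad} accounts for \emph{all} odd primes of bad reduction of $Y_0(n)$. This is the gap the paper explicitly flags (it is verified only for $n\le 8$), and it cannot be dispensed with, because if $Y_0(n)_{\overline{\FF_p}}$ were geometrically reducible an $L$-point could land on a rational component and the gonality estimate would collapse entirely. The second, granting geometric irreducibility, is to prove that the geometric genus of the normalization of $X_0(n)_{\overline{\FF_p}}$ still has order $\nu(n)$ --- i.e.\ that a prime dividing $D_{n,n}$ exactly once forces only a mild (e.g.\ nodal) degeneration of the branched cover $\pi_0$ near the colliding branch points; this local analysis of the reduction of $\pi_0$ at a simple collision is the real technical heart of the deduction. (The characteristics dividing the degree --- for $f_c=x^2+c$, only $p=2$, where the dynamics degenerate --- need a separate argument, which I would supply by hand.)
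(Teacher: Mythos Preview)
You are right that this is a conjecture, not a theorem, and the paper does not prove it; it only gives conditional reductions of part~(2) in Propositions~\ref{prop:UBCfromIrreducibility} and~\ref{Puniformboundedness}. Your Step~1 matches the paper's argument essentially verbatim (Remark~\ref{rem:strategy} and the first paragraph of the proof of Proposition~\ref{prop:UBCfromIrreducibility}).

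Your Step~2, however, takes a genuinely different and harder route than the paper. You estimate $g(X_0(n))$ via Riemann--Hurwitz and then invoke Castelnuovo--Severi with the map $\pi_0$ to bound gonality from below; this forces you to control the geometric genus of the normalization after bad reduction, which you correctly flag as the ``real technical heart'' and do not actually carry out. The paper sidesteps this obstacle completely. Instead of going through the genus, it uses point-counting: by Proposition~\ref{prop:branchedCover}(1) and Proposition~\ref{Psamemodel} the curve $X_0(n)_{\FF}$ has at least $\nu(n)/(mn)$ smooth $\FF$-rational points at infinity, where $\FF$ is the splitting field of $f(x,1)$ over $\FF_p$; then Lemma~\ref{lem:pointsForGonality} gives $\gamma_{\FF}(X_0(n)_{\FF})\ge \nu(n)/(mn(|\FF|+1))$, which tends to infinity. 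This needs only geometric irreducibility of the reduction and smoothness at infinity --- no genus computation, no Castelnuovo--Severi, and no control of how badly the affine part degenerates. So the paper's reduction is strictly cleaner: the second of your two ``main obstacles'' simply does not arise, and the only remaining conditional input is geometric irreducibility of $X_0(n)$ modulo~$p$ (for all large~$n$ in Proposition~\ref{prop:UBCfromIrreducibility}, or for a well-chosen sequence $p_n$ with $\nu(n)/(np_n)\to\infty$ in Proposition~\ref{Puniformboundedness}).
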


In this section, we consider strategies for proving the function field version (2). For the quadratic family $f_c(x)= x^2+c$ over $\QQ$, the rational points of $X_0(n)$ have been computed for $n\leq 6$ (\cite{MortonX4}, \cite{FPS}, and, conditional on BSD \cite{Stoll}); but even in this simplest case, little is known about the uniform statement. 
This difficulty is due in part to lack of understanding about primes of bad reduction (see \cite[\S 5]{Stoll}); 
for the function field case this is the main difficulty, as we now explain. 
\begin{rem}\label{rem:strategy}
Let $\mcF/k$ be a 1-dimensional family of polynomial maps of degree $d$. Denote by $M(L,\mcF)$ the supremum of periods of any $L$-rational periodic point for a non-isotrivial map $f\in \mcF(L)$. Then a bound on $M(L,\mcF)$ gives a bound on $\Per(f)(L)$, since for each period $n$ there are at most $d^n$ points of period $n$ for $f$. Denote by $N(L,\mcF)$ the maximal period of any $L$-rational periodic \emph{orbit} for a non-isotrivial map $f\in \mcF(L)$. Then $M(L,\mcF)\leq N(L,\mcF)$. An orbit of exact period $n$ for a non-isotrivial map $f\in \mcF(L)$ corresponds to a non-constant point of $Y_0(n)(L)$. Therefore, to bound $|\Per(f)(L)|$, it suffices to find an $n_0$ such that $Y_0(n)(L)$ consists only of constant points (i.e., points defined over $k$) for all $n>n_0$.   
\end{rem}

\begin{prop}\label{prop:UBCfromIrreducibility}
Let $k$ be a field of characteristic $p$ with $p\nmid m$. Let $\mcF/k$ be a family satisfying parts (1)--(2) of \hyperref[defn:hypH]{hypothesis (H)} (Definition~\ref{defn:hypH}). Then part (2) of Conjecture \ref{conj:UniformBoundedness} holds for $\mcF/k$ if there exists $n_1$ such that reduction mod $p$ of $X_0(n)$ is geometrically irreducible for all $n> n_1$. 
\end{prop}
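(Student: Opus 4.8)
The plan is to reduce part (2) of Conjecture~\ref{conj:UniformBoundedness} for $\mcF/k$ to the arithmetic of the curves $X_0(n)$, following the strategy outlined in Remark~\ref{rem:strategy}. By that remark, it suffices to produce $n_0$ such that for every $n > n_0$ and every function field $L = k(C)$ with $\gamma_k(C) \le N$, the set $Y_0(n)(L)$ — equivalently $X_0(n)(L)$, up to the finitely many points at infinity and the finitely many degenerate points — contains only constant (i.e., $k$-rational) points. The point is that a non-constant $L$-point of $X_0(n)$ corresponds to a non-constant map $C \to X_0(n)_k$ over $k$, hence a dominant (after taking the image) map from $C$ to a component of the reduction $\mfX_0(n)_k$, which forces the gonality of that component to be at most $\gamma_k(C) \le N$.

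The key steps, in order, are as follows. First, I would invoke the hypothesis: there is $n_1$ so that for all $n > n_1$ the reduction $\mfX_0(n)_k$ is geometrically irreducible. Geometric irreducibility of the special fiber, combined with the fact (from the model $\mfX_0(n)_R$ when $k$ has mixed-characteristic lift, or directly in equal characteristic) that $\mfX_0(n)_k$ is a normal — hence smooth, since it is a curve — projective curve over $k$, lets me talk about its gonality $\gamma_k(\mfX_0(n)_k)$. Second, I would show this gonality grows without bound in $n$. The degree of $\pi_0 : X_0(n)_k \to \PP^1_k$ is $\nu(n)/n$, and using Proposition~\ref{prop:branchedCover}(1) the genus of $X_0(n)$ can be bounded below via Riemann–Hurwitz (the fiber over $\infty$ alone contributes substantial ramification, and the ramification over the parabolic parameters contributes more); since genus $\to \infty$ and gonality of a curve of genus $g$ is at least roughly $\sqrt{g}$ (or more crudely, any lower bound on gonality tending to infinity with $g$, e.g. via the Brill–Noether bound or simply $\gamma \ge 2$ forcing $g$ small), we get $\gamma_k(\mfX_0(n)_k) \to \infty$. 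Third, I would choose $n_0 \ge n_1$ large enough that $\gamma_k(\mfX_0(n)_k) > N$ for all $n > n_0$. Then any non-constant $C \to \mfX_0(n)_k$ would yield $\gamma_k(\mfX_0(n)_k) \le \gamma_k(C) \le N$, a contradiction; so $X_0(n)(L)$ has only constant points, and Remark~\ref{rem:strategy} finishes the bound on $|\Per(f)(L)|$.

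The main obstacle I expect is the gonality growth step: one must be careful about which field one computes gonality over (geometric vs.\ $k$-gonality), and the genus lower bound requires controlling the ramification of $\pi_0$ in characteristic $p$, not just in characteristic $0$. The hypothesis $p \nmid m$ is exactly what keeps the ramification over $\infty$ tame and of the expected index $m$ (Proposition~\ref{prop:branchedCover}(1) is stated to hold without part~(3) of Hypothesis~(H), which is the relevant robustness), so Riemann–Hurwitz in the special fiber behaves as in characteristic $0$ for that part of the branch locus; the contributions from parabolic parameters only help. A secondary subtlety is that $\mfX_0(n)_k$ being geometrically irreducible is not quite the same as $Y_0(n)_k$ being irreducible — one must pass between the affine curve $Y_0(n)_k$ and its regular completion $X_0(n)_k$, using that they differ only over the finite set $W$ of Definition~\ref{defn:standardModelX1}, and note that the finitely many points of $X_0(n) \setminus Y_0(n)$ and the finitely many degenerate orbit-points do not affect the conclusion that $Y_0(n)(L)$ is eventually all constant. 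Finally one should note that non-isotriviality of $f$ is what guarantees the corresponding point of $Y_0(n)(L)$ is non-constant, closing the loop with Remark~\ref{rem:strategy}.
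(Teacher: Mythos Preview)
Your overall strategy (reduce to showing $\gamma_k(X_0(n)_k) \to \infty$ via Remark~\ref{rem:strategy}) matches the paper, but the heart of your argument has two genuine gaps.

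First, you assert that $\mfX_0(n)_k$ is normal, hence smooth. This is false in general: $\mfX_0(n)_R$ is normal as a two-dimensional scheme, but its special fiber can be singular --- indeed, Theorem~\ref{Tbad} produces many primes at which $Y_0(n)$ (and hence $\mfX_0(n)_k$) has bad reduction while still being geometrically irreducible (Corollary~\ref{CX0irred}). The hypothesis of the proposition is only geometric irreducibility, not smoothness, so you cannot apply Riemann--Hurwitz to $\mfX_0(n)_k$ directly.

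Second, and more seriously, your gonality step is wrong: you claim that genus $g \to \infty$ forces gonality $\to \infty$ (``gonality of a curve of genus $g$ is at least roughly $\sqrt{g}$''). There is no such lower bound --- hyperelliptic curves have gonality $2$ in every genus. Brill--Noether gives an \emph{upper} bound on gonality in terms of genus, not a lower one. So even if you could compute the genus, it would not suffice.

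The paper circumvents both issues simultaneously. Rather than going through genus, it bounds gonality from below by counting \emph{smooth rational points}: over the splitting field $\FF$ of $f(x,1)$, the points at infinity on $X_0(n)_{\FF}$ are smooth (Proposition~\ref{Psamemodel}(2), using $p \nmid m$) and number at least $\nu(n)/(mn)$ by Proposition~\ref{prop:branchedCover}(1). Lemma~\ref{lem:pointsForGonality} then gives $\gamma_{\FF}(X_0(n)_\FF) \ge \nu(n)/(mn(|\FF|+1))$, which tends to infinity since $\nu(n)/n$ does. This works for possibly singular curves because only the smooth locus is used. One then transfers this to $\gamma_k$ via Lemma~\ref{lem:gonalityUnderFiniteGenExt}.
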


First recall the following two lemmas on gonality. Let $X^{ns}$ denote the non-singular points of a curve $X$.

\begin{lemma}\label{lem:pointsForGonality}
If $X/\FF_q$ is a geometrically irreducible curve, then $\gamma_{\FF_q}(X) \geq |X^{ns}(\FF_q)|/(q+1)$.
\end{lemma}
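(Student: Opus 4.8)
The plan is to bound the gonality of $X$ from below by counting, since a low-degree map to $\PP^1$ forces many fibers over $\FF_q$-rational points and each such fiber contributes at most the degree of the map in $\FF_q$-points. First I would let $d = \gamma_{\FF_q}(X)$ and fix a nonconstant morphism $g : X \to \PP^1_{\FF_q}$ of degree $d$ defined over $\FF_q$ realizing the gonality (this exists because $X$ is geometrically irreducible, so it is geometrically connected and the gonality is computed by an $\FF_q$-rational map after possibly passing to the smooth projective model; I would work with the smooth projective model $\widetilde{X}$, noting $X^{ns}(\FF_q) \subseteq \widetilde{X}(\FF_q)$). Then every $\FF_q$-point of $X^{ns}$ maps under $g$ to one of the $q+1$ points of $\PP^1(\FF_q)$, and the fiber of $g$ over any closed point of $\PP^1$ is a divisor of degree $d$ on $\widetilde{X}$, hence contains at most $d$ points rational over $\FF_q$.

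Counting then gives $|X^{ns}(\FF_q)| \le |\widetilde{X}(\FF_q)| \le d \cdot |\PP^1(\FF_q)| = d(q+1)$, which rearranges to $d \ge |X^{ns}(\FF_q)|/(q+1)$, i.e. $\gamma_{\FF_q}(X) \ge |X^{ns}(\FF_q)|/(q+1)$. The one subtlety to address carefully is the passage between the (possibly singular, affine) curve $X$ and its smooth projective model: the gonality $\gamma_{\FF_q}(X)$ is by definition (or convention in this paper) the $\FF_q$-gonality of the associated smooth projective curve, and a nonsingular $\FF_q$-point of $X$ gives an $\FF_q$-point of that model, so no points are lost; this is where I expect the only real care is needed, though it is routine. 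The rest is the pigeonhole/fiber-degree estimate described above.

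Alternatively, and perhaps more cleanly for the writeup, one can argue directly: given $g : \widetilde{X} \to \PP^1_{\FF_q}$ of degree $d$, pull back a point of $\PP^1(\FF_q)$ to get an effective $\FF_q$-rational divisor of degree $d$; summing over all $q+1$ such points accounts for every $\FF_q$-point of $\widetilde{X}$ at least once (in fact exactly once), yielding $|\widetilde{X}(\FF_q)| \le d(q+1)$. I anticipate no genuine obstacle here — the statement is essentially a counting lemma — so the main point of the proof is simply to record the fiber-degree bound and the identification of $\FF_q$-points of $X^{ns}$ with $\FF_q$-points of the smooth model.
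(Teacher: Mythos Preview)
Your proposal is correct and follows essentially the same counting argument as the paper: pick a degree-$d$ map to $\PP^1$ realizing the gonality, and bound $|X^{ns}(\FF_q)|$ by $d(q+1)$ since each of the $q+1$ points of $\PP^1(\FF_q)$ has at most $d$ rational preimages. The only cosmetic difference is that you pass through the smooth projective model $\widetilde{X}$ and use $X^{ns}(\FF_q)\subseteq\widetilde{X}(\FF_q)$, whereas the paper observes directly that a rational map $X\dashrightarrow\PP^1$ restricts to a morphism on $X^{ns}$ and counts preimages there; both routes yield the same inequality with no extra input.
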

\begin{proof}
If $\varphi: X \dashrightarrow \PP^1$ is a nonconstant rational map of degree $d$, then we have a morphism $\varphi: X^{ns} \to \PP^1$ and each of the $q+1$ points of $\PP^1(\FF_q)$ can have at most $d$ preimages under $\varphi$, so $d(q+1) \geq |X^{ns}(\FF_q)|$.
\end{proof}

\begin{lemma}\label{lem:gonalityUnderFiniteGenExt}
Consider an extension $k/\FF$ of a perfect field $\FF$. Suppose $X_n$ is a sequence of curves defined over $\FF$ with $\gamma_\FF(X_n)\to \infty$ as $n\to \infty$ and $X_n(\FF)\neq \varnothing$ for all $n$. Then $\gamma_k(X_n)\to \infty$ as $n\to \infty$. 
\end{lemma}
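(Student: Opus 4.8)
The plan is to prove the conclusion directly, by establishing that for each fixed integer $d\geq 1$ only finitely many $n$ satisfy $\gamma_k(X_n)\leq d$; granting this for all $d$ gives $\gamma_k(X_n)\to\infty$. Since $\gamma_k(X_n)\leq\gamma_\FF(X_n)$ always, it suffices to produce a bound $B=B(d)$, independent of $n$, such that $\gamma_k(X_n)\leq d$ forces $\gamma_\FF(X_n)\leq B$; comparing this with $\gamma_\FF(X_n)\to\infty$ then leaves only finitely many admissible $n$. The first reduction is to pass to an algebraic closure: the gonality of $X_n$ over an algebraically closed field does not depend on the choice of that field, so $\gamma_{\overline\FF}(X_n)=\gamma_{\overline k}(X_n)\leq\gamma_k(X_n)\leq d$. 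Hence it is enough to prove the following statement comparing geometric and arithmetic gonality: \emph{if $Y/\FF$ is a geometrically irreducible smooth projective curve with $Y(\FF)\neq\varnothing$ and $\gamma_{\overline\FF}(Y)\leq d$, then $\gamma_\FF(Y)\leq B(d)$.}

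Write $d'=\gamma_{\overline\FF}(Y)\leq d$. By minimality of $d'$, every degree-$d'$ pencil on $Y_{\overline\FF}$ is base-point-free, so $W^1_{d'}(Y)(\overline\FF)$ is exactly the nonempty set of degree-$d'$ morphisms $Y_{\overline\FF}\to\PP^1$, taken up to $\Aut\PP^1$. Since $Y$ has the $\FF$-rational point $P$, a Poincar\'e line bundle on $Y\times\operatorname{Pic}^{d'}_{Y/\FF}$ exists over $\FF$; thus $\operatorname{Pic}^{d'}_{Y/\FF}\cong\operatorname{Jac}_{Y/\FF}$ over $\FF$, its $\FF$-points are genuine line bundles over $\FF$, and the Brill--Noether locus $W^1_{d'}(Y)$ is a closed $\FF$-subscheme. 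If $W^1_{d'}(Y)^{\mathrm{red}}_{\overline\FF}$ is a single point, that point is $\Gal(\overline\FF/\FF)$-stable (here we use that $\FF$ is perfect), hence $W^1_{d'}(Y)(\FF)\neq\varnothing$, and the associated degree-$d'$ morphism $Y\to\PP^1_\FF$ shows $\gamma_\FF(Y)=d'\leq d$. The same conclusion holds whenever two of the degree-$d'$ pencils are individually Galois-stable, since then both give morphisms over $\FF$.

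It remains to treat the case that $Y_{\overline\FF}$ carries two distinct degree-$d'$ maps $f,g$ genuinely moved by Galois. If $f$ and $g$ are independent (equivalently $(f,g):Y_{\overline\FF}\to\PP^1\times\PP^1$ is birational onto its image), the Castelnuovo--Severi inequality gives $g(Y)\leq(d'-1)^2$, and then Riemann--Roch applied at $P$ produces a pencil $|(g(Y)+1)P|$ defined over $\FF$, whence $\gamma_\FF(Y)\leq g(Y)+1\leq(d-1)^2+1$. If no two of the degree-$d'$ pencils are independent, then they all factor through a common quotient; taking the largest one, $Y\to Z_0$ (so that $\FF(Z_0)$ is the subfield of $\overline\FF(Y)$ generated over $\FF$ by all degree-$d'$ pencils), this $Z_0$ is attached to the Galois-stable collection of those pencils, hence is defined over $\FF$, the cover $Y\to Z_0$ has degree $e\geq 2$, $Z_0$ acquires the $\FF$-rational point image of $P$, and $\gamma_{\overline\FF}(Z_0)\leq d'/e<d'$, so induction on $d'$ gives $\gamma_\FF(Y)\leq e\cdot B(d'/e)$. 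In every case $\gamma_\FF(Y)$ is bounded by an explicit function of $d$, completing the argument.

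The step I expect to be the real work is the last one: checking carefully that when the minimal pencils are not pairwise independent the reduction genuinely produces a common quotient \emph{over $\FF$} of \emph{strictly smaller} geometric gonality (in particular that the largest common quotient is a nontrivial cover, so the induction on $d'$ engages), which calls for some Brill--Noether and Riemann--Hurwitz bookkeeping around the possible configurations of pencils and their Galois conjugates. As an aside, the ``finitely generated'' hypothesis suggested by the lemma's label affords an alternative to passing through $\overline\FF$: write $k=\FF_1(V)$ with $\FF_1/\FF$ finite and $V$ geometrically integral over $\FF_1$, spread the degree-$\leq d$ map out over a dense open of $V$, use the section $\{P\}\times V$ to see that the resulting conic bundle has a rational section and hence is generically trivial, cut $V$ down to dimension $0$ by iterated Bertini sections (Poonen's Bertini theorem over finite fields when $\FF_1$ is finite) to obtain $\gamma_{\FF_1}(X_n)\leq d$, and finally descend $\FF_1/\FF$ by forming $(f^{\sigma_1},\dots,f^{\sigma_t}):X_{n,\widetilde{\FF_1}}\to(\PP^1)^t$ over the Galois closure and pushing it down to a Weil restriction of $\PP^1$; the combinatorial core is unchanged.
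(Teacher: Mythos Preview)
Your reduction is the same as the paper's: pass to the algebraic closure using that geometric gonality is independent of the choice of algebraically closed overfield, and then bound $\gamma_\FF$ in terms of $\gamma_{\overline\FF}$. The paper does the second step in one line by quoting Poonen's inequality $\gamma_{\overline\FF}(X)\ge\sqrt{\gamma_\FF(X)}$ (valid when $X(\FF)\ne\varnothing$; this is \cite[Theorem~2.5]{PoonenGonality}) together with $\gamma_{\overline\FF}(X)=\gamma_M(X)$ for any $M\supseteq\overline\FF$ (\cite[Proposition~1(ii)]{PoonenGonality}), giving directly $\gamma_k(X_n)\ge\gamma_{\overline\FF}(X_n)\ge\sqrt{\gamma_\FF(X_n)}\to\infty$. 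You instead attempt to reprove the bound $\gamma_\FF\le B(\gamma_{\overline\FF})$ from scratch via Brill--Noether loci and Castelnuovo--Severi.

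That attempted reproof has a real gap, precisely where you flag it. The assertion ``if no two minimal pencils are independent then they all factor through a common nontrivial quotient $Y\to Z_0$'' does not follow from pairwise dependence alone: knowing $\overline\FF(f)\cdot\overline\FF(g)\subsetneq\overline\FF(Y)$ for every pair $f,g$ does not force the full compositum $K_0=\prod_f\overline\FF(f)$ to be proper in $\overline\FF(Y)$. (For the underlying field-theoretic phenomenon, consider the seven index-$2$ intermediate fields in a $(\ZZ/2\ZZ)^3$-Galois extension: every pairwise compositum is proper, yet the total compositum is the top field.) So your $Z_0$ may coincide with $Y$, and the induction on $d'$ never starts. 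Standard proofs of Poonen's inequality avoid this trap---either by working with a single pencil $f$ and one conjugate $f^{\sigma}$ and iterating on the normalization of the image of $(f,f^{\sigma})$ when they are dependent, keeping track of the field of definition at each step, or by the Weil-restriction/norm construction you sketch in your closing aside, which is in fact closer to a workable route than the common-quotient idea. As written, then, you have correctly isolated the key inequality $\gamma_\FF\le B(\gamma_{\overline\FF})$ but not proved it; the paper sidesteps the difficulty entirely by citing the result.
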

\begin{proof}
Under an algebraic extension of fields $L/\FF$ we have $ \gamma_L(X_n) \geq \sqrt{\gamma_\FF(X_n)}$ (\cite[Theorem 2.5]{PoonenGonality}). In particular, $\gamma_{\overline{\FF}}(X_n) \geq \sqrt{\gamma_\FF(X_n)}$.  On the other hand, for any extension $M/\overline{\FF}$, we have $\gamma_{\overline{\FF}}(X_n) = {\gamma_M(X_n)}$ (by specialization; see  \cite[Proposition 1(ii)]{PoonenGonality}).  Since $k$ is contained in such an extension $M/\overline{\FF}$, we have $\gamma_k(X_n)\geq \gamma_M(X_n) = \gamma_{\overline{\FF}}(X_n) \geq \sqrt{\gamma_\FF(X_n)}$, whence the claim follows. 
\end{proof}

\begin{proof}[Proof of Proposition \ref{prop:UBCfromIrreducibility}]
Let $L$ be a function field, $L=k(C) $, where $C/k$ is a curve with $ \gamma_k(C)\leq N$. A nonconstant point of $X_0(n)(L)$ corresponds to a dominant rational map $C\dashrightarrow X_0(n)_k$. If $X_0(n)_k$ is geometrically irreducible, the existence of such a map implies that $\gamma_k(C)\geq \gamma_k(X_0(n))$ (see \cite[Proposition 1.1]{PoonenGonality}).  Therefore, by Remark \ref{rem:strategy}, it is enough to find $n_0$ such that $\gamma_k(X_0(n))> N$ for all  $n>n_0$.  That is, we need to show $ \gamma_k(X_0(n))\to \infty$ with $n$. 

Suppose $X_0(n)$ has irreducible reduction mod $p$ for all $n>n_1$. Let $\FF$ be the splitting field of $f(x,1)$ for $x^m + c$ over $\FF_p$. By Proposition \ref{prop:branchedCover}(1) and and Proposition \ref{Psamemodel} (note that we do not use the results of this section below, so there is no circularity in this forward reference), there are at least $\nu(n)/mn$ smooth points at infinity in $X_0(n)_{\FF_{p}}(\FF)$. So, by Lemma \ref{lem:pointsForGonality}, $\gamma_{\FF_p}(X_0(n)_{\FF_p}) \geq \gamma_{\FF}(X_0(n)_{\FF}) \geq \nu(n)/(mn(p^m+1))$ for all $n>n_1$. Since $\nu(n)/n\to \infty$ as $n\to \infty$, we have $\gamma_{\FF_p}(X_1(n)_{k})\to \infty$. Now by Lemma~\ref{lem:gonalityUnderFiniteGenExt}, $ \gamma_k(X_0(n))\to \infty$.
\end{proof}

For the function field part of Conjecture \ref{conj:UniformBoundedness} in characteristic 0, it suffices to find, for each $n$, one small prime of geometrically irreducible reduction, as in the next result.

\begin{prop}\label{Puniformboundedness}
Let $k$ be a field of characteristic zero.  Let $\mcF/k$ be a family satisfying parts (1) and (2) of \hyperref[defn:hypH]{hypothesis (H)}.
Then
Part (2) of Conjecture \ref{conj:UniformBoundedness} holds for $\mcF/k$ if there exists $n_0$ and a sequence $\{p_n\}_{n>n_0}$ of primes such that:
(1) $\nu(n)/np_n\to \infty$ as $n\to \infty$;  (2) $p_n\nmid m$ and 
(3) for $n>n_0$, the curve $X_0(n)$ has geometrically irreducible reduction mod $p_n$.
\end{prop}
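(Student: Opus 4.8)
The plan is to adapt the proof of Proposition~\ref{prop:UBCfromIrreducibility}: by Remark~\ref{rem:strategy} it is enough to show $\gamma_k(X_0(n)) \to \infty$ as $n \to \infty$, and I would deduce this by bounding $\gamma_k(X_0(n))$ from below using reduction modulo $p_n$ together with the point-counting estimate of Lemma~\ref{lem:pointsForGonality} applied to the smooth points at infinity. The one genuinely new feature compared with Proposition~\ref{prop:UBCfromIrreducibility} is that the prime $p_n$ varies with $n$, so the passage from characteristic $p_n$ back to characteristic $0$ must be carried out by specialization of gonality rather than by Lemma~\ref{lem:gonalityUnderFiniteGenExt}.

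First I would record the reduction to gonality growth, exactly as in Proposition~\ref{prop:UBCfromIrreducibility}. An $L$-rational orbit of exact period $n$ for a non-isotrivial $f \in \mcF(L)$, with $L = k(C)$, yields a nonconstant point of $Y_0(n)(L) \subseteq X_0(n)(L)$, hence a dominant rational map $C \dashrightarrow X_0(n)_k$. Since $X_0(n)_k$ is geometrically irreducible (hypothesis~(3) forces this, as a geometrically reducible curve cannot acquire a geometrically irreducible reduction), Poonen's gonality inequality \cite[Proposition~1.1]{PoonenGonality} gives $\gamma_k(C) \ge \gamma_k(X_0(n))$. So, by Remark~\ref{rem:strategy}, once $\gamma_k(X_0(n)) > N$ for all $n$ past some threshold, $|\Per(f)(L)|$ is bounded in terms of $N$, $k$ and $\mcF$ alone; it therefore suffices to prove $\gamma_k(X_0(n)) \to \infty$.

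For the lower bound, note that $X_0(n)$ is defined over $\QQ$ (indeed over $\ZZ$), so, $k$ having characteristic $0$, $\gamma_k(X_0(n)) \ge \gamma_{\overline{k}}(X_0(n)) = \gamma_{\overline{\QQ}}(X_0(n))$, the equality by base-invariance of geometric gonality \cite[Proposition~1(ii)]{PoonenGonality}. Then I would specialize: a gonal morphism of $X_0(n)$, defined over a number field $K_0 \supseteq \QQ$, extends over the localization of $\mcO_{K_0}$ at a prime above $p_n$ to a neighbourhood of the generic point of the special fiber of the normal model $\mfX_0(n)$ (normality makes the rational map to $\PP^1$ defined in codimension~$1$), and reduces there to a morphism of degree at most $\gamma_{\overline{\QQ}}(X_0(n))$ from the normalization of a geometrically irreducible curve over a finite field of characteristic $p_n$ (geometric irreducibility of the special fiber is hypothesis~(3)); hence $\gamma_{\overline{\QQ}}(X_0(n)) \ge \gamma_{\overline{\FF}_{p_n}}\bigl(X_0(n)_{\overline{\FF}_{p_n}}\bigr)$. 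To bound the right-hand side I would use the points at infinity: by Proposition~\ref{prop:branchedCover}(1) and Proposition~\ref{Psamemodel}, $X_0(n)$ has at least $\nu(n)/(mn)$ smooth points at infinity, all defined over the splitting field $\FF$ of $f(x,1)$ over $\FF_{p_n}$, a field whose degree $s = [\FF:\FF_{p_n}]$ is bounded in terms of $\mcF$ alone ($s \le m!$ in general, $s \le m$ for $f_c = x^m + c$). Thus $\gamma_\FF(X_0(n)_\FF) \ge \nu(n)/\bigl(mn(p_n^s+1)\bigr)$ by Lemma~\ref{lem:pointsForGonality}, and the inequality $\gamma_{\overline{\FF}_{p_n}}(\,\cdot\,) \ge \sqrt{\gamma_\FF(\,\cdot\,)}$ for the algebraic extension $\overline{\FF}_{p_n}/\FF$ (\cite[Theorem~2.5]{PoonenGonality}, as used in the proof of Lemma~\ref{lem:gonalityUnderFiniteGenExt}) gives
\[
\gamma_k(X_0(n)) \;\ge\; \gamma_{\overline{\FF}_{p_n}}\bigl(X_0(n)_{\overline{\FF}_{p_n}}\bigr) \;\ge\; \sqrt{\frac{\nu(n)}{mn\,(p_n^{s}+1)}},
\]
and this tends to infinity by hypothesis~(1), completing the argument.

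The step I expect to be the main obstacle is this last estimate, specifically the presence of $p_n^{s}$ rather than $p_n$: the plentiful smooth rational points are only available over the degree-$s$ extension $\FF$, so what the argument strictly yields is $\gamma_k(X_0(n)) \to \infty$ whenever $\nu(n)/(n\,p_n^{s}) \to \infty$. Hypothesis~(1) is exactly this in the case $s = 1$ (for instance when $f(x,1)$ splits modulo $p_n$); in general one must either strengthen~(1) to $\nu(n)/(n\,p_n^{s}) \to \infty$, or observe that for the families at hand $\nu(n)/n$ already grows exponentially — for $f_c = x^m + c$, $\nu(n) = \sum_{d\mid n} m^d\mu(n/d) \sim m^n$ — so that any sequence $p_n$ of subexponential size, which is what any realistic production of primes of geometrically irreducible reduction gives, satisfies both forms of the hypothesis at once. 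A secondary, routine technical point is to make the mixed-characteristic specialization of gonality precise: choosing a flat normal model, checking that the rational map to $\PP^1$ extends across the generic point of the special fiber, and that the degree does not increase under reduction.
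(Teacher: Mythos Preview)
Your proposal is correct and follows essentially the same strategy as the paper: reduce to showing $\gamma_k(X_0(n)) \to \infty$, bound gonality from below by passing to characteristic $p_n$ via a specialization argument, and use the smooth points at infinity together with Lemma~\ref{lem:pointsForGonality} to control the finite-field gonality.

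The differences are largely cosmetic. The paper isolates the mixed-characteristic specialization step as a separate Lemma (Lemma~\ref{lem:gonalityOfReduction}), proving $\gamma_K(X) \geq \gamma_{\FF_q}(X')$ for a smooth curve over a number field $K$ with geometrically irreducible reduction $X'$, by appeal to \cite{LiuLorenzini}; this is exactly the ``secondary, routine technical point'' you flag. The paper then runs the chain $\gamma_{\QQ}(X_0(n)) \geq \gamma_{\FF_{p_n}}(X_0(n)_{\FF_{p_n}}) \geq \gamma_{\FF}(X_0(n)_{\FF})$ and finally invokes Lemma~\ref{lem:gonalityUnderFiniteGenExt} to pass from $\QQ$ to $k$. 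You instead pass from $k$ to $\overline{\QQ}$ at the outset (via $\gamma_k \geq \gamma_{\overline{k}} = \gamma_{\overline{\QQ}}$), then specialize to $\overline{\FF}_{p_n}$, and pay a square-root loss to relate $\gamma_{\overline{\FF}_{p_n}}$ back to $\gamma_{\FF}$. The paper's route avoids the square root by staying over the prime field until the very end, but since both sides tend to infinity this costs nothing. Your observation about $p_n^s$ versus $p_n$ is well taken: the paper's proof silently uses the bound $|\FF| \leq p_n^m$ from the second paragraph of the proof of Proposition~\ref{prop:UBCfromIrreducibility}, so strictly speaking both arguments require $\nu(n)/(np_n^s) \to \infty$ for the fixed $s$ depending on $\mcF$; as you note, this is immaterial in practice since $\nu(n)/n$ grows exponentially.
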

\begin{lemma}\label{lem:gonalityOfReduction}
Let $X$ be a smooth curve over a number field $K$, and let $\mathfrak{p}$ be a prime of $K$ with residue field $\FF_q$, such that the reduction $X^\prime$ of $X$ modulo $\mathfrak{p}$ is geometrically irreducible. Then $\gamma_K(X)\geq \gamma_{\FF_q}(X^\prime)$. 
\end{lemma}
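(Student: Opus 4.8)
The plan is to realize an optimal morphism to $\PP^1$ over a discrete valuation ring and reduce it modulo $\mfp$, the only real point being that the degree cannot go up (cf.\ \cite{PoonenGonality}). First one reduces to the case that $X$ is projective, since replacing $X$ by its smooth projective model changes neither $\gamma_K(X)$ nor the reduction in question. Write $d=\gamma_K(X)$, fix a finite morphism $\varphi\colon X\to\PP^1_K$ of degree $d$, and let $R=\mathcal{O}_{K,\mfp}$, a discrete valuation ring with fraction field $K$ and residue field $\FF_q$. I would then take $\mathcal{X}$ to be the normalization of $\PP^1_R$ in the finite field extension $K(X)/K(t)$ determined by $\varphi$, where $t$ is a coordinate on $\PP^1$. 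Since $R$ is excellent the structure map $g\colon\mathcal{X}\to\PP^1_R$ is finite, of degree $d$ over the generic point; $\mathcal{X}$ is integral, normal, and torsion-free over $R$, hence flat over $R$; and its generic fibre is the normalization of $\PP^1_K$ in $K(X)$, namely $X$ itself. Thus $\mathcal{X}$ is a proper flat $R$-model of $X$ whose special fibre is the reduction $X'$.

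The next step is to show that reducing $g$ modulo $\mfp$ yields a finite morphism $g_{\FF_q}\colon X'\to\PP^1_{\FF_q}$ of degree at most $d$. Because $\PP^1_R$ is regular and $\mathcal{X}$ is a two-dimensional normal, hence Cohen--Macaulay, scheme, miracle flatness shows that $g$ is flat; therefore $g_*\mathcal{O}_{\mathcal{X}}$ is locally free of rank $d$ and $g_{\FF_q}$ is finite of degree $d$. (If one prefers to avoid miracle flatness, upper semicontinuity of the length of the fibres of $g$ already gives $\deg g_{\FF_q}\le d$, which is all that is needed.) In particular $g_{\FF_q}$ is surjective, so no component of $X'$ lying over $\PP^1_{\FF_q}$ is contracted to a point.

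Finally one passes to the smooth model of the reduction. By hypothesis $X'$ is geometrically irreducible, and since $\FF_q$ is perfect the reduced curve $X'_{\mathrm{red}}$ is geometrically reduced, hence geometrically integral; let $\widetilde{X'}$ denote its smooth projective model, through which the gonality $\gamma_{\FF_q}(X')$ is computed. Restricting $g_{\FF_q}$ to $X'_{\mathrm{red}}$ gives a finite surjective morphism $X'_{\mathrm{red}}\to\PP^1_{\FF_q}$ of degree $[\FF_q(X'_{\mathrm{red}}):\FF_q(t)]\le d$, and this induces a nonconstant morphism $\widetilde{X'}\to\PP^1_{\FF_q}$ of the same degree. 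Hence $\gamma_{\FF_q}(X')=\gamma_{\FF_q}(\widetilde{X'})\le d=\gamma_K(X)$, as claimed. The step demanding the most care is the control of the degree under reduction: this is exactly why one works with the normalization of $\PP^1_R$ in $K(X)$, so that $g$ is finite (indeed flat) and no specialization of $\varphi$ can collapse $X'$ or inflate the fibre degree. One should also note that if the reduction $X'$ in the statement is defined via some other integral model of $X$, the argument applies verbatim once one records that its smooth model has the same function field as the one produced here — which is the case in the applications, where the reduction is precisely such a normalization.
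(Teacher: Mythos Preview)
Your argument is clean once you are on your model $\mathcal{X}$, but it does not prove the lemma as stated. The assertion that ``$\mathcal{X}$ is a proper flat $R$-model of $X$ whose special fibre is the reduction $X'$'' is unjustified: your $\mathcal{X}$ is the normalization of $\PP^1_R$ in $K(X)$ \emph{via $\varphi$}, so it depends on $\varphi$, whereas $X'$ is the special fibre of a fixed model $\mathcal{X}_0$. These are different normal proper models of $X$, and there is no a priori reason their special fibres agree. Your closing remark does not rescue this: in the application $X'$ is the special fibre of the normalization of $\PP^1_R$ via the structural map $\pi_0$, not via $\varphi$, so it is a \emph{different} normalization. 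Concretely, already for $X=\PP^1_K$ and $\varphi(t)=\pi t$ the induced rational map $\mathcal{X}_0\dashrightarrow\mathcal{X}$ contracts $X'$ to a point; even though the two special fibres happen to be isomorphic in that toy case, your construction produces no map out of the given $X'$. What is missing is an argument that some component of your special fibre is birational to $X'$, or equivalently that $\varphi$ can be replaced within its $\PGL_2(K)$-orbit by a map whose reduction is nonconstant on $X'$; this is plausible but requires proof.

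The paper avoids the issue by reversing the roles: it keeps the given model of $X$ fixed and varies the model of $\PP^1$. A result of Liu--Lorenzini produces a model $\mathcal{Y}$ of $\PP^1_K$ together with a rational map $\mathcal{X}_0\dashrightarrow\mathcal{Y}$ extending $\varphi$ that is quasifinite on $X'$. The image $Z$ of $X'$ is then a geometrically irreducible component of $\mathcal{Y}_{\FF_q}$, hence of arithmetic genus zero, hence smooth; being over a finite field it has a rational point, so $Z\cong\PP^1_{\FF_q}$, and $X'\to Z$ is finite of degree at most $d$. In short: you fix $\PP^1_R$ and let the model of $X$ float, while the paper fixes the model of $X$ and lets the model of $\PP^1$ float---only the latter directly yields a map out of the prescribed $X'$.
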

\begin{proof}
This is a slight generalization of \cite[Proposition 5]{Xarles}. Suppose $f: X\to Y$ is a morphism of degree $d$ where $Y=\PP^1_K$. To prove the lemma, it is enough to show that this implies the existence of a morphism $f^\prime: X^\prime \to \PP^1_{\FF_q}$ of degree $d^\prime \leq d$. Applying \cite[Proposition 4.14]{LiuLorenzini}), there are models $\mathcal{X}$ and $\mathcal{Y}$ of $X$ and $Y$ over $\mathcal{O}_\mathfrak{p}$ together with a rational map $\phi:\mathcal{X}\to \mathcal{Y}$ that restricts to a quasifinite map on $X^\prime$. Let $Z$ be the image of $X^\prime$ under $\phi$.
Consider $f':\phi|_{X'}: X' \to Z$. Then $f'$ is finite, since $X^\prime$ is proper. The curve $Z$ smooth, since it is both geometrically irreducible, because $X^\prime$ is, and of arithmetic genus zero, because it is in the special fiber of $\mathcal{Y}$. 
Moreover, since $Z$ is defined over a finite field, it has a rational point and is therefore isomorphic to $\PP^1$. 
\end{proof}
\begin{proof}[Proof of Proposition \ref{Puniformboundedness}]
 By the same reasoning as in the second paragraph of the proof of Proposition \ref{prop:UBCfromIrreducibility}, the existence of a sequence of primes $p_n$ satisfying (1) and (2) implies that $\gamma_{\FF_{p_n}}(X_0(n)_{\FF_{p_n}})\to \infty$ as $n \to\infty$. By Lemma \ref{lem:gonalityOfReduction}, this implies $\gamma_{\QQ}(X_0(n)_{\QQ})\to \infty$. By Lemma \ref{lem:gonalityUnderFiniteGenExt}, $\gamma_{K}(X_0(n)_{K})\to \infty$ for any field $K$ of characteristic 0. By the argument of the first paragraph of the proof of Proposition \ref{prop:UBCfromIrreducibility}, this is sufficient.   
\end{proof}

\section{Preliminaries for good/bad reduction}\label{Sprelims}

We use a test of Fulton that detects singularities by comparing the cardinality of fibers of branched covers of curves with the degree of the local ramification divisor. If $A$ is a Dedekind domain, whose maximal ideals all have the same residue field, and $I \subseteq A$ is a nonzero ideal, then 
the \emph{degree} of $I$ is $\sum_{x \in \MaxSpec A} \ord_x I$; this is generally applied to the \emph{discriminant} ideal of an extension of $A$.  

\begin{thm}[{\cite[Theorem 2.3]{Fulton69}}]
\label{Lfultontest}
Let $X = \Spec R$, where $R$ is a discrete valuation ring with algebraically closed residue field $k$. 
Let $x \in X$ be the closed point. 
Let $f: Y = \Spec S \to \Spec R$
be a finite, flat, generically separable morphism of degree $\alpha$.   Let $\delta$ be the degree of the discriminant of $S/R$. Then $\delta \geq \alpha - |f^{-1}(x)|$, with equality holding if and only if $Y$ is smooth and $f$ is tamely ramified.
\end{thm}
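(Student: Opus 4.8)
The plan is to reduce everything to a purely local computation over a discrete valuation ring and then combine two classical facts: the behaviour of the discriminant under normalization, and the estimate for the different of a totally ramified extension. First I would replace $R$ by its completion $\widehat R$; this changes neither $\delta$ (the degree of the discriminant ideal), nor $\alpha$ (the rank of the free $R$-module $S$), nor $|f^{-1}(x)|$, nor the regularity of $S$, nor the tameness of $f$. Over the complete, hence Henselian, base the finite algebra $S$ splits as a product $S=\prod_i S_i$ of local rings. The trace form of $S/R$ is block diagonal for this decomposition, so $\disc(S/R)=\prod_i\disc(S_i/R)$, whence $\delta=\sum_i\delta_i$; likewise $\alpha=\sum_i\alpha_i$, the fibre cardinality $|f^{-1}(x)|$ is the number of factors, and $S$ is regular (resp.\ $f$ is tamely ramified) if and only if each $S_i$ is. So it suffices to treat $S$ local, where $|f^{-1}(x)|=1$ and one must show $\delta\ge\alpha-1$, with equality if and only if $S$ is a discrete valuation ring and $f$ is tamely ramified.

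For the local case I would pass to the normalization. Flatness makes $S$ torsion-free over $R$, hence a subring of the \'etale $K$-algebra $S_K=S\otimes_R K$ with $K=\Frac(R)$ (in particular $S$ is reduced, and $\disc(S/R)\ne 0$ so $\delta<\infty$). Let $\widetilde S$ be the integral closure of $S$ in $S_K$; complete local Noetherian rings are Nagata, so $\widetilde S$ is module-finite over $R$, and being one-dimensional and normal it is a finite product $\widetilde S=\prod_{j=1}^s T_j$ of discrete valuation rings. Each $T_j$ has residue field a finite extension of the algebraically closed $k$, hence equal to $k$, so $T_j/R$ is totally ramified of degree $e_j=[T_j:R]$ with $\sum_j e_j=\alpha$. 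Now I invoke the index formula for discriminants of orders,
\[
\delta=\ord_x\!\bigl(\disc(\widetilde S/R)\bigr)+2\,\ell_R\!\bigl(\widetilde S/S\bigr),
\]
where $\widetilde S/S$ is a finite-length $R$-module since $\widetilde S\otimes_R K=S\otimes_R K$, together with the classical bound on the local different: $\ord_x(\disc(T_j/R))=e_j-1$ when $\Char k\nmid e_j$, and $\ord_x(\disc(T_j/R))\ge e_j$ otherwise (e.g.\ Serre, \emph{Local Fields}, Ch.\ III). Summing the latter over $j$ gives $\ord_x(\disc(\widetilde S/R))\ge\sum_j(e_j-1)=\alpha-s$.

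It remains to estimate the conductor term. Since $S$ is local, $\mathfrak{m}_{T_j}\cap S=\mathfrak{m}_S$ for every $j$, so the composite $S\hookrightarrow\widetilde S=\prod_j T_j\twoheadrightarrow\prod_j(T_j/\mathfrak{m}_{T_j})=k^{s}$ has image the diagonal copy of $k$; hence $\widetilde S/S$ surjects onto the quotient $k^{s}/k$, which has length $s-1$ over $R$, forcing $\ell_R(\widetilde S/S)\ge s-1$. Combining the three estimates, $\delta\ge(\alpha-s)+2(s-1)=\alpha+s-2\ge\alpha-1$. For equality I would then read off: $s=1$ is forced, after which $\ell_R(\widetilde S/S)=0$ (so $S=\widetilde S$ is a discrete valuation ring, i.e.\ $\Spec S$ is regular) and $\ord_x(\disc(T_1/R))=\alpha-1$ (so $f$ is tamely ramified); the converse is the same tame computation run in reverse. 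Reassembling the product decomposition $S=\prod_i S_i$ and undoing the completion then yields the theorem in general.

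I expect the essential ingredients — the discriminant index formula $\disc(S)=[\widetilde S:S]^2\disc(\widetilde S)$ locally, and the bound $\ord_x(\mathfrak{d}_{T/R})\ge e-1$ with equality exactly in the tame case — to be standard, so the genuine work is in the bookkeeping of the reductions. The one point that must be handled carefully is that normalization can \emph{increase} the number of points in the fibre (there may be more $T_j$'s than maximal ideals of $S$), which is precisely why the colength $\ell_R(\widetilde S/S)$ needs to be at least $s-1$ to keep the final inequality sharp; this is the step I would be most careful about. The mixed-characteristic case requires nothing extra beyond noting that $\ord_x(e)\ge 1$ whenever $\Char k=p$ divides $e$.
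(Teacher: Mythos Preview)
The paper does not prove this theorem at all: its ``proof'' consists of the observation that generically separable implies \emph{covering} in Fulton's sense, followed by a citation to \cite[Theorem~2.3]{Fulton69}. Your proposal, by contrast, is a complete self-contained argument, and it is correct. The reduction to the complete local case is routine; the heart of the matter is the local inequality $\delta\ge\alpha-1$, which you obtain by combining the discriminant--conductor formula $\delta=\ord_x\disc(\widetilde S/R)+2\,\ell_R(\widetilde S/S)$ with the classical different bound $\ord_x\disc(T_j/R)\ge e_j-1$ and the length estimate $\ell_R(\widetilde S/S)\ge s-1$. The last of these is the step you flag as delicate, and your argument for it is fine: since $k$ is algebraically closed and $S$ is local, $S/\mathfrak m_S=k$, so the image of $S$ in $\prod_j T_j/\mathfrak m_{T_j}\cong k^s$ is the diagonal, forcing the cokernel $\widetilde S/S$ to surject onto $k^{s-1}$. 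The equality analysis is also correct: $\delta=\alpha-1$ forces $s=1$, hence $\widetilde S=T_1$, and then $\ell_R(\widetilde S/S)=0$ (so $S$ is already a DVR) together with $\ord_x\disc(T_1/R)=e_1-1$ (so tame). This is essentially the classical proof one finds in the literature on coverings of curves; Fulton's own argument proceeds along the same lines.
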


\begin{proof}
That $f$ is generically separable ensures that it is a \emph{covering} in the sense of Fulton.  The proposition then follows from \cite[Theorem 2.3]{Fulton69}.
\end{proof}

\begin{cor}\label{Cnoncollisioncount}
In the situation of Theorem \ref{Lfultontest}, there are at least $\alpha - 2\delta$ points in $f^{-1}(x)$ of multiplicity one.
\end{cor}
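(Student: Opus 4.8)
The plan is to deduce Corollary~\ref{Cnoncollisioncount} directly from Theorem~\ref{Lfultontest} by a counting argument on the fiber $f^{-1}(x)$. Write $f^{-1}(x) = \{y_1, \dots, y_s\}$ for the distinct points of the fiber, and let $e_i$ be the multiplicity (i.e.\ the length of $\mcO_{Y,y_i}$ as a module over $\mcO_{X,x}$, which by flatness equals the local degree / ramification-type contribution) of $y_i$. Since $f$ is finite and flat of degree $\alpha$, we have $\sum_{i=1}^s e_i = \alpha$, and hence $s = |f^{-1}(x)| = \alpha - \sum_{i=1}^s (e_i - 1)$.

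Next I would bound $\sum_i (e_i - 1)$ by $\delta$. The point is that each $y_i$ contributes at least $e_i - 1$ to the degree $\delta$ of the discriminant; in fact, when the residue characteristic is $0$ or prime to all the $e_i$ (tame case) the contribution is exactly $e_i - 1$, and in the wild case it is strictly larger. Either way, $\delta \geq \sum_i (e_i - 1) = \alpha - |f^{-1}(x)|$, which is exactly the inequality asserted in Theorem~\ref{Lfultontest}; so I may simply invoke that inequality rather than reprove it. Thus $|f^{-1}(x)| \geq \alpha - \delta$.

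Now let $t$ be the number of points $y_i$ of multiplicity one, i.e.\ with $e_i = 1$. For the remaining $s - t$ points we have $e_i \geq 2$, so $e_i - 1 \geq 1$, i.e.\ $e_i - 1 \geq \tfrac{1}{2} e_i$; summing gives $\sum_{e_i \geq 2} (e_i - 1) \geq \tfrac12 \sum_{e_i \geq 2} e_i = \tfrac12\bigl(\alpha - t\bigr)$. Combining with $\delta \geq \sum_i (e_i - 1) = \sum_{e_i \geq 2}(e_i - 1)$, we get $\delta \geq \tfrac12(\alpha - t)$, i.e.\ $t \geq \alpha - 2\delta$, which is the claim.

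The only subtlety — and the step I expect to need the most care — is pinning down exactly what ``multiplicity'' means for a point of the fiber and why $\sum_i (e_i-1) \leq \delta$ holds in general (including wild ramification and non-reduced fibers): this is precisely the content bundled into the inequality of Theorem~\ref{Lfultontest}, so the cleanest route is to quote that inequality verbatim and then apply the elementary estimate $e_i - 1 \geq \tfrac12 e_i$ for $e_i \geq 2$ to the non-simple points. No further input is required.
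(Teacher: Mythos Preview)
Your proof is correct and follows essentially the same approach as the paper: both arguments combine the inequality $\delta \geq \sum_i (e_i - 1)$ from Theorem~\ref{Lfultontest} with the elementary observation that non-simple points have multiplicity at least~$2$. The paper phrases this last step as ``$\sum m_y \geq \beta + 2(|f^{-1}(x)| - \beta)$ with $|f^{-1}(x)| \geq \alpha - \delta$'' while you phrase it as ``$e_i - 1 \geq \tfrac12 e_i$ for $e_i \geq 2$,'' but these are algebraically equivalent rearrangements of the same count.
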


\begin{proof}
By Theorem \ref{Lfultontest}, $|f^{-1}(x)| \geq \alpha - \delta$ and $\sum_{y \in f^{-1}(x)} m_y = \alpha$, where $m_y$ is the multiplicity of $y$.  If $\beta$ is the number of $m_y$ equal to $1$, then 
$$\sum_{y \in f^{-1}(x)} m_y \geq \beta + 2(\alpha - \delta - \beta) = 
2\alpha - 2\delta - \beta,$$ which is greater than $\alpha$ if $\beta < \alpha - 2 \delta$.
\end{proof}

For the rest of \S\ref{Sprelims}, let $R$ be a complete discrete valuation ring with fraction field $K$
and algebraically closed residue field $k$.  Let $\ol{K}$ be the algebraic closure of $K$.  Let $A = R[\![T]\!]$, and let $B$ be a normal, finite, flat $A$-algebra of degree $\alpha$. Write $A_{\ol{K}}$, $B_{\ol{K}}$, $A_k$, $B_k$ respectively for $A \otimes_R {\ol{K}}$, $B \otimes_R {\ol{K}}$, $A \otimes_R k$, and $B \otimes_R k$.  Assume $B_k$ is reduced and $B_k/A_k$ is generically \'etale.  Write $d_s$ (resp.\ $d_{\eta}$) for the degree of the discriminant of $B_k/A_k$ (resp.\ $B_{\ol{K}}/A_{\ol{K}}$).

\begin{prop}[{\cite[I, 3.4]{GreenMatignon98}}]\label{Pfultontest}
With notation as above, $d_s = d_{\eta}$.
\end{prop}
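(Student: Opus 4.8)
The plan is to reduce the equality $d_s = d_\eta$ to a statement about discriminant ideals in families, exploiting that $B$ is flat over $R$ and that both special and generic fibers are generically étale with reduced total rings of fractions. First I would observe that, since $B$ is a normal, finite, flat $A$-algebra of degree $\alpha$, and $A = R[\![T]\!]$ is a regular two-dimensional complete local ring, the discriminant of $B/A$ is a principal ideal $(\Delta) \subseteq A$ — here one uses that $A$ is a UFD (indeed regular) so the discriminant, a priori a divisorial object, is genuinely principal. The key point is that formation of the discriminant ideal commutes with base change: $\disc(B_k/A_k) = \disc(B/A) \otimes_R k$ and $\disc(B_{\ol K}/A_{\ol K}) = \disc(B/A)\otimes_R \ol K$, since the discriminant is defined by a determinant of a trace pairing matrix, which is compatible with the flat base changes $A \to A_k$ and $A \to A_{\ol K}$.

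Next I would translate ``degree of the discriminant of $B_k/A_k$'' into a length/intersection-theoretic invariant: $A_k = k[\![T]\!]$ is a DVR, $B_k$ is a reduced finite flat $A_k$-algebra that is generically étale, so its discriminant is a nonzero ideal $(\bar\Delta)\subseteq k[\![T]\!]$ and $d_s = \ord_T(\bar\Delta) = \dim_k A_k/(\bar\Delta)$. The subtlety is that $\ol K$ has characteristic $0$ while $k$ has characteristic $p$, so $B_{\ol K}$ and $B_k$ need not be isomorphic as algebras; what saves us is that the \emph{class} of the discriminant in $A = R[\![T]\!]$ is fixed, and the two fiber-degrees are both computed as the ``order of vanishing of the same element $\Delta$'' in the respective fibers. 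Concretely: write $\Delta \in A = R[\![T]\!]$ for a generator of $\disc(B/A)$. Since $B_k/A_k$ is generically étale and $B_k$ reduced, $\Delta \bmod \mathfrak m_R$ is a nonzero element of $k[\![T]\!]$, hence $\Delta \notin \mathfrak m_R[\![T]\!]$; after multiplying $\Delta$ by a unit of $R^\times$ we may assume $\Delta$ is \emph{primitive}, i.e. $\Delta = u_0(T) + (\text{higher }R\text{-adic terms})$ where not all coefficients lie in $\mathfrak m_R$. By the Weierstrass preparation theorem over the complete local ring $R[\![T]\!]$ (or directly: $\Delta$ is $T$-distinguished modulo $\mathfrak m_R$ of some order $e$), $\Delta = U \cdot P(T)$ with $U \in A^\times$ a unit and $P(T) \in R[T]$ a monic (distinguished) polynomial of degree $e = \ord_T(\Delta \bmod \mathfrak m_R) = d_s$.

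Finally I would compute $d_\eta$ using the same factorization $\Delta = U\cdot P(T)$: base-changing to $\ol K$, the discriminant of $B_{\ol K}/A_{\ol K}$ is generated by $P(T) \in \ol K[[T]]$, and its degree — the sum of $\ord_x$ over closed points $x$ of $\Spec \ol K[\![T]\!]$, which is just the single closed point $T=0$ — equals $\ord_T P(T)$ in $\ol K[\![T]\!]$. But $P$ is a monic polynomial of degree $e$ with $\ol K$-coefficients; its vanishing order at $T=0$ need not be $e$ unless $P(T) = T^e$. The resolution is that $B_{\ol K}/A_{\ol K}$ is \emph{unramified away from $T = 0$}: the generic fiber $B_{\ol K}$ is the normalization appearing in our dynatomic setup and is étale over $A_{\ol K}$ away from the closed point, because $B$ is normal of relative dimension $1$ and the only vertical ramification can occur in the fiber over the maximal ideal of $R[\![T]\!]$. (This is where one invokes the hypothesis that $B$ is normal, so $\Spec B \to \Spec A$ is étale in codimension related to the branch locus, which for $A = R[\![T]\!]$ is supported on $V(T) \cup V(\mathfrak m_R)$.) Hence $P(T)$, a monic polynomial of degree $e$ whose only root in $\ol K$ is $T = 0$, must equal $T^e$, giving $d_\eta = e = d_s$.

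\medskip

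\emph{Main obstacle.} The crux, and the step I expect to require the most care, is pinning down that the branch locus of $B_{\ol K}/A_{\ol K}$ inside $\Spec \ol K[\![T]\!]$ is concentrated at the closed point — equivalently, that the distinguished polynomial $P(T)$ has $T^e$ as its reduction in the generic fiber, not merely in the special one. This forces us to use the specific normality hypothesis on $B$ together with purity of the branch locus (Zariski–Nagata), arguing that $\Spec B \to \Spec A$ is étale over the complement of a divisor supported on the special fiber $V(\mathfrak m_R A)$ union the ``$T=0$'' locus. Once the geometry of the branch divisor is understood, the discriminant bookkeeping via Weierstrass preparation is routine.
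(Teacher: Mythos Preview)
Your overall strategy --- pass to the principal discriminant ideal $(\Delta)\subseteq A$, use that it base-changes well, and then invoke Weierstrass preparation --- is sound and is essentially how one would unwind the cited result of Green--Matignon. The paper itself does not give a self-contained argument; it reduces to the local case and cites \cite[I, 3.4]{GreenMatignon98}. So your approach is more explicit than the paper's, and the Weierstrass-preparation step is exactly the right idea.

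However, there is a genuine error in the final step. You write that $A_{\ol K}=\ol K[\![T]\!]$ and that $\Spec A_{\ol K}$ has the single closed point $T=0$. This is false: by definition $A_{\ol K}=R[\![T]\!]\otimes_R\ol K$, which is the ring of the open unit disc over $\ol K$, not the complete local ring $\ol K[\![T]\!]$. Its maximal ideals correspond to all $a\in\ol K$ with $v(a)>0$, not only $a=0$. Consequently your computation $d_\eta=\ord_T P(T)$ is wrong, and the purity argument you then introduce to force $P(T)=T^e$ is both unnecessary and incorrect: in the applications (e.g.\ Corollaries~\ref{Cgoodredtest} and \ref{Cbadredtest}), the whole point is that $\Spec B_{\ol K}\to\Spec A_{\ol K}$ is typically branched at several distinct points of the disc, so $P$ certainly need not equal $T^e$.

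Once you correct the identification of $A_{\ol K}$, the proof becomes shorter, not longer. Since $P(T)$ is a distinguished polynomial of degree $e$, all $e$ of its roots in $\ol K$ (with multiplicity) lie in the maximal ideal of the valuation ring of $\ol K$, hence give closed points of $\Spec A_{\ol K}$. Therefore
\[
d_\eta \;=\; \sum_{x\in\MaxSpec A_{\ol K}} \ord_x(P) \;=\; \deg P \;=\; e \;=\; d_s,
\]
and you are done. No purity-of-branch-locus argument is needed.
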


\begin{proof}
It suffices to consider the case that $B$ is local, by localizing $B$ at its (finitely many) maximal ideals and summing the equalities. Then the result follows from
\cite[I, 3.4]{GreenMatignon98}.  
\end{proof}

The following corollaries are important criteria for good and bad reduction, respectively.
See \cite[\S 6]{Raynaud} for similar material in the Galois case.

\begin{cor}\label{Cgoodredtest}
Suppose $B$ is local and $\Spec(B_{\ol{K}}) \to \Spec(A_{\ol{K}})$ is ramified above at most one closed point $z$, for which the ramification is tame.  Then $\Spec(B_{\ol{K}}) \to \Spec(A_{\ol{K}})$ is totally ramified (of index $\alpha$) above $z$, char$(k)$ does not divide $\alpha$, and $B_k$ is normal; in particular, $\Spec B_k$ is smooth. \end{cor}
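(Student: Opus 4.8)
The plan is to combine Proposition~\ref{Pfultontest} (equality of special and generic discriminant degrees) with the Fulton test (Theorem~\ref{Lfultontest}) applied on both fibers. First I would pass to the special fiber: since $B_k$ is reduced and $B_k/A_k$ is generically \'etale, and $A_k = k[\![T]\!]$ is a complete DVR with algebraically closed residue field, Theorem~\ref{Lfultontest} applies to $\Spec B_k \to \Spec A_k$, giving $d_s \geq \alpha - |(\Spec B_k \to \Spec A_k)^{-1}(\text{closed point})|$. Because $B$ is local, $B_k$ is supported at a single point, so the fiber has exactly one point and $d_s \geq \alpha - 1$.

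Next I would analyze the generic fiber. The hypothesis says $\Spec(B_{\ol K}) \to \Spec(A_{\ol K})$ is ramified above at most one closed point $z$ with tame ramification there. Localizing $A_{\ol K} = \ol K[\![T]\!]$ at $z$ (and noting that away from $z$ the cover is \'etale, contributing nothing to the discriminant), the Fulton test gives $d_\eta = \alpha - |\text{fiber over } z|$ precisely because the cover is smooth (it's an extension of the regular ring $\ol K[\![T]\!]$, hence already regular/smooth generically — actually $B_{\ol K}$ is normal of dimension one over $\ol K$, so smooth) and tamely ramified at $z$, which is exactly the equality case. So $d_\eta = \alpha - (\text{number of points over } z)$, and in particular $d_\eta \leq \alpha - 1$.

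Now Proposition~\ref{Pfultontest} forces $d_s = d_\eta$, hence $\alpha - 1 \leq d_s = d_\eta \leq \alpha - 1$, so $d_\eta = \alpha - 1$. This means the fiber over $z$ on the generic side consists of a single point, i.e.\ $\Spec(B_{\ol K}) \to \Spec(A_{\ol K})$ is totally ramified of index $\alpha$ above $z$; tameness of this ramification then forces $\Char(k) = \Char(\ol K$-residue$) \nmid \alpha$ (a totally tamely ramified extension of a DVR has degree prime to the residue characteristic). Finally, returning to the special fiber: we have $d_s = \alpha - 1 = \alpha - |f_k^{-1}(\text{pt})|$, which is the equality case of Theorem~\ref{Lfultontest}, so $\Spec B_k$ is smooth (and in particular $B_k$ is normal).

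The main obstacle I anticipate is verifying carefully that the generic-fiber discriminant degree computation is legitimate — i.e.\ that one may localize $A_{\ol K}$ at the single branch point $z$ and apply the Fulton test there, with the \'etale locus contributing zero, and that "$B_{\ol K}$ normal of dimension one over an algebraically closed field" genuinely gives smoothness so that the equality case of Theorem~\ref{Lfultontest} is available to conclude the fiber over $z$ is a single point. One must also be slightly careful that $A_{\ol K} = A \otimes_R \ol K$ is indeed (a localization of) $\ol K[\![T]\!]$ and that "closed point $z$" refers to a maximal ideal of this ring, so that the hypotheses of Theorem~\ref{Lfultontest} are met after completing at $z$; the tameness hypothesis is what makes the implication "totally ramified $\Rightarrow \Char k \nmid \alpha$" work, and this should be invoked explicitly.
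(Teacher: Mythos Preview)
Your overall strategy matches the paper's: use Fulton's inequality on the special fiber (where locality of $B$ gives a single point, hence $d_s \geq \alpha - 1$), compute $d_\eta \leq \alpha - 1$ on the generic fiber, and invoke $d_s = d_\eta$ from Proposition~\ref{Pfultontest} to force equality everywhere. The paper computes $d_\eta$ directly as $\sum_i (e_i - 1) = \alpha - r$ from the standard tame-discriminant formula rather than by invoking Fulton's equality case on the generic fiber, which sidesteps the smoothness verification for $B_{\ol K}$ you flagged.

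There is one genuine slip. You derive $\Char(k) \nmid \alpha$ by saying that tameness of the totally ramified point on the \emph{generic} fiber forces the ramification index to be prime to the residue characteristic, and you identify that residue characteristic with $\Char(k)$. But the residue field at a closed point of $\Spec A_{\ol K}$ is $\ol K$, whose characteristic is that of $K$, not of $k$; in the mixed-characteristic situation (the case of interest throughout the paper) this characteristic is $0$, so the tameness hypothesis on the generic fiber is vacuous and says nothing about $\Char(k)$. The correct way to extract $\Char(k)\nmid\alpha$, which the paper uses, is from the equality case of Theorem~\ref{Lfultontest} on the \emph{special} fiber: you already have $d_s = \alpha - 1 = \alpha - |f_k^{-1}(\text{pt})|$, and equality in Fulton's theorem gives both smoothness of $\Spec B_k$ \emph{and} tameness of $B_k/A_k$; the latter, with the single ramification index $\alpha$, is exactly $\Char(k)\nmid\alpha$. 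So just move that conclusion to the special-fiber step and your argument is complete.
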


\begin{proof} Let $e_1, \ldots, e_r$ be the ramification indices of the $r$ points above $z$ (or above some arbitrary $\ol{K}$-point, if there is no ramified point). Then $\alpha = \sum_{i=1}^r e_i$, and $d_{\eta} = \sum_{i=1}^r (e_i - 1)$.  By Theorem \ref{Lfultontest} and Proposition \ref{Pfultontest}, $d_{\eta} = d_s \geq \alpha - 1$.  This is only possible if $r = 1$ and $e_1 = \alpha$, so that $d_{\eta} = d_s = \alpha-1$. By Theorem \ref{Lfultontest}, $B_k$ is normal and ${\rm char}(k) \nmid \alpha$.
\end{proof}

\begin{cor}\label{Cbadredtest}
For any element $a$ in the maximal ideal of $R$, let $d_a$ be the degree of the discriminant of the extension $S_a/R$ resulting from setting $T = a$ in $B/A$.  
\begin{enumerate}
\item If $d_a < d_{\eta}$, then either $B_k$ is not normal or $B_k/A_k$ is wildly ramified.
\item In addition, if there exists a closed point of $\Spec A_{\ol{K}}$ whose preimage under the map $\pi: \Spec B_{\ol{K}} \to \Spec A_{\ol{K}}$ consists of $\alpha - d_a$ points, then $B_k$ is not normal. 
\end{enumerate}
\end{cor}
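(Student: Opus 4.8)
The plan is to derive both parts from Theorem~\ref{Lfultontest} and Proposition~\ref{Pfultontest} by comparing the closed fiber of $\Spec S_a \to \Spec R$ with the fiber of $\Spec B_k \to \Spec A_k$ over the point $T=0$. The first observation is that, since $a$ lies in the maximal ideal $\mathfrak{m}_R$, reducing $T=a$ modulo $\mathfrak{m}_R$ gives $T=0$, so that
\[
S_a \otimes_R k \;=\; B/(T-a,\mathfrak{m}_R)B \;=\; B_k/(T)B_k,
\]
which is exactly the fiber $F_0$ of $\Spec B_k \to \Spec A_k = \Spec k[\![T]\!]$ over the closed point; write $r := |\Spec F_0|$. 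Since $d_a < d_\eta < \infty$, the extension $S_a/R$ is generically separable, so Theorem~\ref{Lfultontest} applied to $S_a/R$ gives $r \ge \alpha - d_a$, with equality iff $\Spec S_a$ is smooth and $S_a/R$ is tamely ramified. For part (1), suppose $B_k$ is normal and $B_k/A_k$ is tamely ramified; then $\Spec B_k$ is regular, so Theorem~\ref{Lfultontest} for $B_k/A_k$ over $\Spec k[\![T]\!]$ is an equality $d_s = \alpha - r$, and Proposition~\ref{Pfultontest} yields $\alpha - d_a \le r = \alpha - d_s = \alpha - d_\eta$, i.e. $d_a \ge d_\eta$, contradicting the hypothesis. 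This proves part (1).

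For part (2), the first task is to pin down $r$ exactly. The image $z'$ of $z$ in $\Spec A$ lies over the generic point of $\Spec R$ (as $A_{\overline K}$ is a $\overline K$-algebra), and the closed point $x_0 = \mathfrak{m}_A$ lies in $\overline{\{z'\}}$, since $z'$ is cut out by a distinguished (Weierstrass) polynomial, which is congruent to a power of $T$ modulo $\mathfrak{m}_R$ and hence lies in $\mathfrak{m}_A$. Because $k$ is algebraically closed, the fiber of $\pi \colon \Spec B \to \Spec A$ over $x_0$ is $\Spec F_0$ and has $r$ points, while the fiber over $z'$ has $|\pi^{-1}(z)| = \alpha - d_a$ geometric points. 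By lower semicontinuity of the number of geometric points in the fibers of a finite morphism, $r \le \alpha - d_a$; together with $r \ge \alpha - d_a$ from the first paragraph this forces $r = \alpha - d_a$, and hence equality in Theorem~\ref{Lfultontest} for $S_a/R$: $\Spec S_a$ is smooth and $S_a/R$ is tamely ramified.

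It then remains to rule out $B_k$ normal. Assume it is normal. By part (1), $B_k/A_k$ is wildly ramified above $T=0$, say at a maximal ideal $\mathfrak{n}$ of $B$ over $x_0$, with ramification index $e$ divisible by $p=\Char k$. Since $\Spec S_a$ is smooth, the maximal ideal of $B_\mathfrak{n}/(T-a)B_\mathfrak{n}$ is principal, so $\mathfrak{m}_{B_\mathfrak{n}}$ is two-generated and $B_\mathfrak{n}$ is a regular local ring of dimension $2$; and since $B_k$ is normal, $B_\mathfrak{n}/\varpi B_\mathfrak{n}$ is a discrete valuation ring (for $\varpi$ a uniformizer of $R$), which forces $\varpi \notin \mathfrak{m}_{B_\mathfrak{n}}^2$. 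By the Cohen structure theorem $B_\mathfrak{n} \cong R[\![s]\!]$ with $\varpi$ a regular parameter, and the structure map sends $T$ to a power series $g(s)$ such that $g(s) \bmod \varpi$ has $s$-adic order equal to $e$. Weierstrass preparation writes $g(s) - a = (\mathrm{unit})\cdot P(s)$ with $P$ distinguished of degree $e$, so $B_\mathfrak{n}/(T-a) \cong R[s]/(P(s))$ is free of rank $e$ over $R$; being a discrete valuation ring with residue field $k$, its ramification index over $R$ equals $e$. But $S_a/R$ is tamely ramified, so $p \nmid e$ — contradicting $p \mid e$. Hence $B_k$ is not normal.

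I expect the third paragraph to be the main obstacle: the real content is transferring the tameness of $S_a/R$ (a statement about the curve $V(T-a) \subset \Spec A$) into tameness of $B_k/A_k$ (a statement about the transversal curve $V(\varpi) = \Spec A_k$), which is routed through the local structure of the now-regular mixed-characteristic rings $B_\mathfrak{n}$. The delicate points are the correct use of the Cohen structure theorem once $\varpi$ is known to be part of a regular system of parameters, and the bookkeeping that the specialization $T=a$ converts the ramification index of $B_k/A_k$ at $\mathfrak{n}$ into that of $S_a/R$. A secondary technical point is the semicontinuity statement in the second paragraph, which could alternatively be obtained by studying the restriction of the discriminant divisor of $B/A$ to the two transversal curves $V(T-a)$ and $V(\varpi)$ through $x_0$.
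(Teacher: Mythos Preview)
Your proof is correct. Part (1) matches the paper's argument exactly. For part (2), you and the paper both first establish $r=\alpha-d_a$ (the paper simply asserts $\nu\le\alpha-d_a$, implicitly by the same specialization argument you spell out) and then invoke the equality case of Theorem~\ref{Lfultontest} to get $S_a$ smooth and $S_a/R$ tame. The divergence is in how you transfer tameness across. You pass through regularity of $B_{\mathfrak n}$, identify it with $R[\![s]\!]$ (this is not literally Cohen in mixed characteristic, but it follows since $\varpi$ is part of a regular system and the map $R[\![s]\!]\to B_{\mathfrak n}$ becomes an isomorphism modulo $\varpi$), and then use Weierstrass preparation to compute the ramification index of $(S_a)_{\mathfrak n}/R$ as $e$. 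The paper bypasses all of this with one line: since $a\in(\varpi)$, the fiber $B/\mathfrak{m}_A B$ equals both $S_a/\varpi S_a$ and $B_k/TB_k$, so once $S_a$ and $B_k$ are each normal, the ramification indices of $S_a/R$ and of $B_k/A_k$ at $\mathfrak n$ are both the length of $(B/\mathfrak{m}_A B)_{\mathfrak n}$. Tameness of $S_a/R$ then directly forces tameness of $B_k/A_k$, contradicting part (1). Your route works and makes the local picture very explicit, but the ``common fiber'' observation gets there in one step without the structure theorem or Weierstrass preparation.
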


\begin{proof}
Let $\nu$ be the number of maximal ideals in $B$ (which is also the number of maximal ideals in $B_k$ and in $S_a$, since $A$ is local).  Suppose $B_k$ is normal and $S_a/R$ is tamely ramified. Theorem \ref{Lfultontest} applied to $B_k/A_k$ shows that $d_s = \alpha - \nu$.  On the other hand, Theorem \ref{Lfultontest} applied to $S_a/R$ shows that $d_a \geq \alpha - \nu = d_s$. Since $d_{\eta} = d_s$ by Proposition \ref{Pfultontest}, this contradicts $d_a < d_{\eta}$. This proves (1).

Under the additional assumptions of (2), $\nu \leq \alpha - d_a$, and thus $\nu = \alpha - d_a$. By Theorem \ref{Lfultontest}, the extension $S_a/R$ is tamely ramified and $S_a$ is normal.  The ramification indices of $S_a/R$ are the multiplicities of the points on the fiber of $\Spec B \to \Spec A$ above the maximal ideal of $A$, which are also equal to the ramification indices of $B_k/A_k$. By Corollary \ref{Cgoodredtest}, these are not divisible by char$(k)$. Thus is it not possible for $B_k/A_k$ to be wildly ramified with $B_k$ normal.  We conclude that $B_k$ is not normal, proving (2). 
\end{proof}

\section{Good reduction}\label{Sgood}
We assume throughout this section that the family 
$\mc{F} = \{f_c\} = \{f(x,c)\}$ satisfies Morton's hypothesis (H) from Definition~\ref{defn:hypH} for $c = u^m$. If $f_c(x) = x^m + c$, recall that this property is satisfied. In this section, let $Y_1(n)$ denote $Y_{1, \mcF}(n)$ (and similarly for $Y_0(n)$).

Under hypothesis (H), we prove
that the dynatomic curves which are smooth in characteristic zero have good reduction modulo all primes not dividing $D_n = \disc \delta_n(1, c)$.  We also show a similar statement about irreducibility that was already proven in \cite[Theorem~15]{Morton96}, but the proof given here avoids working explicitly with factorizations of polynomials. 

If $R$ is a ring, recall that $Y_1(n)_R$ is the closed subscheme of $\AA^2_R$ cut out by the dynatomic polynomial $\Phi_n \in \ZZ[x,c]$ and that $Y_0(n)_R:=Y_1(n)_R/\langle \sigma \rangle$, where $\sigma(x, c) := (f_c(x), c)$.  

Throughout this section, let $k$ be a field of characteristic $p$. We assume that $R$ is a characteristic zero complete discrete valuation ring with field of fractions $K$ and residue field $k$.  The generic and special fibers of $Y_0(n)_R$ are $Y_0(n)_K$ and $Y_0(n)_k$, respectively. Let $\ell$ be the splitting field of $f(x,1)$ over $k$.  Note that $\ell/k$ is separable if $p \nmid \disc f(x,1)$.

\begin{rem}\label{Rxmplusc}
If $f(x,c) = x^m + c$, then the following Propositions \ref{Pnormal}, \ref{Pgensep}, and \ref{prop:Pbadimpliesdisc} apply to all primes not dividing $m$, since $\disc f(x,1)$ is a power of $m$. 
\end{rem}

\begin{prop}\label{Pnormal}
Let $p$ be a prime not dividing $\disc f(x, 1)$.   
Then $Y_i(n)_R$ has reduced special fiber.  
If $Y_i(n)_K$ is smooth, then $Y_i(n)_R$ is isomorphic to the normalization of $\AA^1_R$ in the function field extension $K(Y_i(n)_K)/K(c)$ coming from the generic fiber of $\pi_i: Y_i(n)_R \to \AA^1_R$.
\end{prop}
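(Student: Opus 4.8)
The plan is to reduce both assertions to one arithmetic fact about the polynomial $\delta_n(1,c)$ and then deduce that fact from the hypothesis $p\nmid\disc f(x,1)$. I would begin with the models. Part~(1) of hypothesis~(H) forces $f(x,c)$ to be monic in $x$, hence so are $f_c^n$, $\Psi_n$ and $\Phi_n$; thus $B_1:=R[x,c]/(\Phi_n)$ (with $Y_1(n)_R=\Spec B_1$) is free of rank $\nu(n)$ over $R[c]$, and $\Phi_n$, being monic and irreducible over $K(c)$ (geometric irreducibility of $Y_1(n)_K$ under~(H), Theorem~\ref{thm:nonsing}), makes $B_1$ an integral domain by Gauss's lemma, with $\Frac B_1=K(Y_1(n)_K)$. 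Put $B_0:=B_1^{\langle\sigma\rangle}$, so $Y_0(n)_R=\Spec B_0$; then $R[c]\subseteq B_0\subseteq B_1$, $B_0$ is an $R[c]$-finite domain with $\Frac B_0=(\Frac B_1)^{\langle\sigma\rangle}=K(Y_0(n)_K)$, and each $B_i$, being a sub-$R$-module of the free module $B_1$, is $R$-torsion-free hence $R$-flat.

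I would then prove the special fibres reduced. For $i=1$: since $\bar\Phi_n$ is monic in $x$, $k[x,c]/(\bar\Phi_n)$ embeds into $k(c)[x]/(\bar\Phi_n)$, which is reduced as soon as $\bar\Phi_n$ is separable over $k(c)$, i.e.\ as soon as $\disc_x\Phi_n\not\equiv 0\pmod p$. By Theorem~\ref{discriminantOfPhi}, $\disc_x\Phi_n=\pm\,\Delta_{n,n}^{\,n}\prod_{d\mid n,\,d\neq n}\Delta_{n,d}^{\,n-d}$, so it is enough that each $\Delta_{n,d}\in\ZZ[c]$ have nonzero reduction modulo $p$; and since $\prod_{d\mid n}\Delta_{n,d}=\pm\,\delta_n(1,c)$ by~\eqref{factorizationOfDelta1}, comparing leading coefficients in $c$ reduces this to the single requirement that the leading coefficient of $\delta_n(1,c)$ be prime to $p$. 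For $i=0$: because $B_0=B_1^{\langle\sigma\rangle}$ is saturated in $B_1$ (if $\varpi b\in B_0$ for a uniformizer $\varpi$ of $R$, then $\varpi(\sigma b-b)=0$ forces $\sigma b=b$), the quotient $B_1/B_0$ is $R$-torsion-free, so $B_0\otimes_R k\hookrightarrow B_1\otimes_R k$; the target is reduced by the case $i=1$, hence so is its subring $B_0\otimes_R k$, i.e.\ $Y_0(n)_k$ is reduced — regardless of whether $p\mid n$.

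Granting the reduced special fibres, the normalization statement is formal (now assuming $Y_i(n)_K$ smooth). Each $Y_i(n)_R$ is flat over $R$ with Cohen--Macaulay fibres (the generic fibre is smooth; the special fibre is reduced and, being finite over $\AA^1_k$, of dimension $\le 1$, hence Cohen--Macaulay), so $Y_i(n)_R$ is Cohen--Macaulay, in particular $S_2$. Its codimension-one points are the closed points $P$ of the generic fibre, at which $\mathcal O_{Y_i(n)_R,P}=\mathcal O_{Y_i(n)_K,P}$ is regular by smoothness, and the generic points $\eta$ of the special fibre, at which $\mathcal O_{Y_i(n)_R,\eta}/(\varpi)=\mathcal O_{Y_i(n)_k,\eta}$ is a field (reducedness), so that $\mathcal O_{Y_i(n)_R,\eta}$ is a one-dimensional Noetherian local domain with principal maximal ideal $(\varpi)$, hence a discrete valuation ring. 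Thus $Y_i(n)_R$ satisfies $R_1$, so it is normal by Serre's criterion; being finite over $\AA^1_R$ with function field $K(Y_i(n)_K)$, it is therefore the integral closure of $R[c]$ in $K(Y_i(n)_K)$, i.e.\ the normalization of $\AA^1_R$ in $K(Y_i(n)_K)/K(c)$.

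The one input that genuinely uses $p\nmid\disc f(x,1)$ — and the step I expect to be the main obstacle — is the claim that the leading coefficient of $\delta_n(1,c)$ in $c$ is, up to sign, a power of $\disc f(x,1)$. I would prove this by analysing the multiplier polynomial $\delta_n(x,c)$ as $c\to\infty$: when $f_c(x)=x^m+c$ one has $f_c'=mx^{m-1}$ independent of $c$, so $\lambda_n(\alpha_i)=\pm\,m^n L_i(0)^{m-1}$ with $L_i$ as in Theorem~\ref{discriminantOfPhi}, and the top-degree-in-$c$ term of $\delta_n(1,c)=\prod_i(1-\lambda_n(\alpha_i))$ can be read off from those of $\prod_i L_i(0)=\Phi_n(0,c)$ and of the $\Delta_{n,d}$; for a general family satisfying~(H) one must additionally follow the $c$-dependence of $f_c'$. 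Alternatively the statement should be extractable from the analysis of $\delta_n(x,c)$ in~\cite{MortonVivaldi}. (The same estimate in fact shows that $\pi_i\colon Y_i(n)_k\to\AA^1_k$ is generically separable.)
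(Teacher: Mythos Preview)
Your overall architecture is sound and, for the second half (normality), essentially the same as the paper's: reduced special fibre plus smooth generic fibre gives $S_2 + R_1$, hence normal, hence equal to the normalization of $\AA^1_R$ in $K(Y_i(n)_K)$. The paper compresses this into a citation of \cite[Lemma 4.1.18]{LiuAG}. Your saturation argument for $i=0$ (showing $B_0\otimes_R k \hookrightarrow B_1\otimes_R k$) is more careful than the paper's one-line ``$Y_0(n)_k$ is a quotient of $Y_1(n)_k$,'' and is correct.

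Where you genuinely diverge is the proof that $Y_1(n)_k$ is reduced. The paper works at $c=\infty$: by \cite[Proof of Proposition~10]{Morton96}, the base change $Y_1(n)_k\times_{\AA^1_k}\Spec\ell((1/c_k))$ is explicitly described in terms of periodic sequences of roots of $f(x,1)$, and is reduced as soon as those roots are distinct in $k$, i.e.\ as soon as $p\nmid\disc f(x,1)$. You instead work over the generic point of $\AA^1_k$ and require $\disc_x\Phi_n\not\equiv 0\pmod p$, i.e.\ that $\bar\Phi_n$ be \emph{separable} over $k(c)$. This is strictly stronger than reducedness (e.g.\ $x^p-c$ over $\FF_p(c)$ is squarefree but inseparable), and indeed the paper isolates this stronger statement as a separate Proposition~\ref{Pgensep}, proved under the additional hypothesis $p\nmid m$.

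This matters for your key unproven input. You reduce to the claim that the leading coefficient of $\delta_n(1,c)$ is, up to sign, a power of $\disc f(x,1)$. What the literature actually records (see \cite[Proposition~3.4]{MortonVivaldi} and \cite[p.~331]{Morton96}, as cited later in the paper) is that the leading coefficients of the $\Delta_{n,d}$ are $p$-units when $p\nmid m\,\disc f(x,1)$; it is not clear that the $m$ can be dropped in general, so your route may tacitly need the stronger hypothesis of Proposition~\ref{Pgensep}. For the main family $f_c(x)=x^m+c$ this is harmless, since there $\disc f(x,1)$ is a power of $m$; but for arbitrary families satisfying (H) you should either prove your leading-coefficient claim from $p\nmid\disc f(x,1)$ alone, or switch to the paper's argument at infinity, which cleanly uses exactly that hypothesis.
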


\begin{proof}
Write $c_k$ for the reduction of $c$ to $\AA^1_{k}$.
The fiber product $Y_1(n)_k \times_{\AA^1_{k}} \Spec \ell ((1/c_k))$ is reduced by \cite[Proof of Proposition 10]{Morton96}, which means that $Y_1(n)_k$ is reduced; (in \cite{Morton96}, it is assumed that $Y_1(n)_k$ is irreducible, but this is not used).   Since $Y_0(n)_k$ is a quotient of $Y_1(n)_k$, it is reduced as well.

To show the second statement about $Y_i(n)_R$, it suffices to show that $Y_i(n)_K$ smooth implies $Y_i(n)_R$ is normal. Since $Y_i(n)_k$ is reduced, this follows from \cite[Lemma 4.1.18]{LiuAG}.
\end{proof}

\begin{prop}\label{Pgensep}
Let $p$ be a prime not dividing $m \disc f(x,1)$, where $m$ is the parameter in hypothesis (H) for $f$. The natural projection
$\pi_{1, k}: Y_1(n)_k \to \AA^1_k$ given by $(x, c) \mapsto c$ is generically separable of degree $\nu(n)$ (the degree of $Y_1(n)_k \to \AA^1_k$).
\end{prop}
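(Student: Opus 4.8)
The plan is first to dispose of the degree statement and then to reduce generic separability to a single nonvanishing assertion, which I will verify at the place $c=\infty$. Hypothesis~(H) forces $f_c$ to be monic in $x$, so each $\Psi_d=f_c^d(x)-x$ is monic of degree $m^d$ in $x$, hence $\Phi_n=\prod_{d\mid n}\Psi_d^{\mu(n/d)}$ is monic of degree $\sum_{d\mid n}m^d\mu(n/d)=\nu(n)$ in $x$; thus $k[x,c]/(\Phi_n)$ is free of rank $\nu(n)$ over $k[c]$ and $\pi_{1,k}$ is finite flat of degree $\nu(n)$. Because $\Phi_n$ is monic in $x$, its $x$-discriminant is a universal polynomial in the coefficients, so $\disc_x\Phi_n$ is a well-defined element of $k[c]$ that commutes with reduction mod $p$ and with extension of scalars; and since $Y_1(n)_k$ is reduced (Proposition~\ref{Pnormal}), $\Phi_n$ is squarefree over $k(c)$, so generic separability of $\pi_{1,k}$ is equivalent to $\disc_x\Phi_n\ne 0$ in $k[c]$. (By the identity $\disc_x\Phi_n=\pm\Delta_{n,n}^{\,n}\prod_{d\mid n,\,d\ne n}\Delta_{n,d}^{\,n-d}$ of Theorem~\ref{discriminantOfPhi}, this is the same as requiring $\delta_n(1,c)=\prod_{d\mid n}\Delta_{n,d}\not\equiv 0\pmod p$.)

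To prove $\disc_x\Phi_n\ne 0$ in $k[c]$ it suffices to exhibit one overfield of $k(c)$ over which $\Phi_n$ is separable, since $\disc_x\Phi_n$ commutes with base change. I would take the completion $k((1/c_k))$ of $\PP^1_k$ at $c=\infty$. By Morton's analysis of the fibre of $\pi_1$ above $\infty$ --- namely \cite[Proof of Proposition~10]{Morton96} together with Proposition~\ref{prop:branchedCover}(1), neither of which uses irreducibility of $Y_1(n)_k$ nor part~(3) of Hypothesis~(H), and which therefore apply to the reduction mod $p$ (cf.\ the proof of Proposition~\ref{Pnormal} and the Note in Proposition~\ref{prop:branchedCover}(1)) --- the ring $k((1/c_k))[x]/(\Phi_n)$ is a product of complete discretely-valued fields, each an extension of $k((1/c_k))$ of ramification index $m$ with residue field a subfield of the splitting field $\ell$ of $f(x,1)$ over $k$. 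Since $p\nmid m$ and $\ell/k$ is separable (as $p\nmid\disc f(x,1)$), each such extension is tamely ramified, hence separable over $k((1/c_k))$; therefore $\Phi_n$ is separable over $k((1/c_k))$, so $\disc_x\Phi_n\ne 0$ there, and a fortiori $\disc_x\Phi_n\ne 0$ in $k[c]$. This completes the argument.

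The main obstacle is precisely this last step. In characteristic $p$ one cannot conclude generic separability of $\pi_{1,k}$ from reducedness alone, because $k(c)$ is imperfect and a squarefree polynomial over it need not be separable; genuinely excluding inseparability forces one to produce a place of $k(c)$ over which the cover is (tamely) ramified with ramification index prime to $p$, and the place $c=\infty$ is the natural choice because its ramification is controlled by $m$. The one delicate point in using Morton's fibre-at-infinity computation is that it must be read off in positive characteristic and without assuming that $Y_1(n)_k$ is irreducible (which is unknown in general) --- but this is exactly what the parenthetical remark in the proof of Proposition~\ref{Pnormal} provides. In the principal case $f_c(x)=x^m+c$ there is also a self-contained alternative: compute the leading coefficient of $\delta_n(1,c)$ in $c$ directly, using $\lambda_n(\alpha)=m^n\bigl(\prod_{q=0}^{n-1}f_c^q(\alpha)\bigr)^{m-1}$ together with $\prod_i\prod_{q=0}^{n-1}f_c^q(\alpha_i)=\pm\Phi_n(0,c)$ --- which is monic in $c$ --- to find that this leading coefficient is $\pm m^{\nu(n)}$, a unit mod $p$, whence $\delta_n(1,c)\not\equiv0\pmod p$.
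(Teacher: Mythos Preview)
Your proposal is correct and follows essentially the same route as the paper: both arguments reduce to checking separability after base change to the completion at $c=\infty$ and invoke Morton's Proposition~10 to see that the local extensions there have ramification index $m$ (prime to $p$) and separable residue field, hence are separable. Your write-up simply unpacks more of the details (the monicity/degree computation, the equivalence with $\disc_x\Phi_n\ne0$) and appends a pleasant self-contained alternative for $f_c(x)=x^m+c$ via the leading coefficient of $\delta_n(1,c)$, but the core mechanism is identical to the paper's two-line proof.
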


\begin{proof}
By \cite[Proposition 10]{Morton96}, taking the base change of $\pi_{1, k}$ by the map $\Spec \ell((1/c)) \to \AA^1_k$ yields a separable map $\pi$ of degree $\nu(n)$; indeed, each point has degree $m$ over $\Spec \ell((1/c))$.  Then $\pi_{1, k}$ is generically separable since $\ell((1/c))$ is separable over $k(c)$.
\end{proof}

Recall that $\mathfrak{X}_i(n)_R$ is the normalization of $\PP^1_R$ in $K(Y_i(n)_K)$ for $i \in \{0,1\}$, assuming $Y_i(n)_K$ is smooth.

\begin{prop}\label{Psamemodel}
Let $p$ be a prime not dividing $m \disc f(x,1)$, let $i \in \{0,1\}$, and assume that $Y_i(n)_K$ is smooth.
\begin{enumerate}
\item The curve $Y_i(n)_k$ is isomorphic to the preimage
of $\AA^1_k$ in the special fiber $\mathfrak{X}_i(n)_k \to \PP^1_k$ of $\mathfrak{X}_i(n)_R \to \PP^1_R$. 
\item If $k$ is perfect, then $\mathfrak{X}_i(n)_k \cong X_i(n)_k$.  In particular, $\mathfrak{X}_i(n)_k$ is smooth above $\infty$.
\end{enumerate}
\end{prop}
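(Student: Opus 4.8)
The plan is to obtain both parts from Proposition~\ref{Pnormal} together with the good-reduction criterion of Corollary~\ref{Cgoodredtest}, the latter applied formally near $\infty$. Part~(1) is the observation that $Y_i(n)_R$ is exactly the restriction of $\mathfrak{X}_i(n)_R$ over the open subscheme $\AA^1_R\subseteq\PP^1_R$. Since normalization is local on the base, $\mathfrak{X}_i(n)_R\times_{\PP^1_R}\AA^1_R$ is the normalization of $\AA^1_R$ in $K(Y_i(n)_K)/K(c)$, and by Proposition~\ref{Pnormal} (applicable since $p\nmid\disc f(x,1)$ and $Y_i(n)_K$ is smooth) this normalization is $Y_i(n)_R$ itself. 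Taking special fibres, the preimage of $\AA^1_k$ in $\mathfrak{X}_i(n)_k$ is $Y_i(n)_R\times_R k$, which equals $Y_1(n)_k$ when $i=1$ (as $Y_1(n)_R=Y_1(n)\times_\ZZ R$) and $Y_1(n)_k/\langle\sigma\rangle=Y_0(n)_k$ when $i=0$, using Remark~\ref{RY0nR} to know that forming the quotient commutes with passage to the special fibre.

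For part~(2), observe first that by construction (Definition~\ref{defn:standardModelX1}) the restriction of $X_i(n)_k$ over $\AA^1_k$ is again $Y_i(n)_k$: the singular locus lies over $\AA^1_k$, so $\infty\notin W$; over $\AA^1_k\setminus W$ the curve $Y_i(n)_k$ is already smooth and equals $Z_i(n)_k$, while over $W$ only $Y_i(n)_k$ contributes. Thus $X_i(n)_k$ is obtained by gluing $Y_i(n)_k$ to the normalization of $\PP^1_k$ near $\infty$ in the total ring of fractions of $Y_i(n)_k$; in particular it is normal, hence (as $k$ is perfect) smooth, over $\infty$. By part~(1), $\mathfrak{X}_i(n)_k$ also restricts to $Y_i(n)_k$ over $\AA^1_k$. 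So if one shows $\mathfrak{X}_i(n)_k$ is smooth over $\infty$, then it is reduced ($=Y_i(n)_k$, reduced by Proposition~\ref{Pnormal}, over $\AA^1_k$ and smooth over $\infty$), hence is likewise the gluing of $Y_i(n)_k$ to the normalization of $\PP^1_k$ near $\infty$ in its total ring of fractions, whence $\mathfrak{X}_i(n)_k\cong X_i(n)_k$. Everything therefore reduces to the smoothness of $\mathfrak{X}_i(n)_k$ over $\infty$.

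To establish that, I would replace $R$ by a complete discrete valuation ring with algebraically closed residue field (harmless: this base change is regular, using that $k$ is perfect, so normalization commutes with it and smoothness then descends by faithful flatness) and work formally at a closed point $y$ of $\mathfrak{X}_i(n)_R$ over $\infty$. Set $A=R[[T]]$, the completed local ring of $\PP^1_R$ at $\infty$ with $T=1/c$, and let $B$ be the completed local ring of $\mathfrak{X}_i(n)_R$ at $y$. Then $B$ is a normal local domain, finite and flat over $A$ (finiteness of normalization; flatness by miracle flatness, since $A$ is regular and $B$ is Cohen--Macaulay of dimension $2$); moreover a uniformizer of $R$ is a nonzerodivisor on $B$, so $B_k$ is Cohen--Macaulay of dimension $1$ and hence has no embedded primes, while Morton's local description of the curve at infinity \cite[Proposition~10]{Morton96} (cf.\ the proof of Proposition~\ref{Pnormal}) makes $B_k/A_k$ generically \'etale, so $B_k$ is generically reduced and therefore reduced. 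With the standing hypotheses of \S\ref{Sprelims} thus verified, I apply Corollary~\ref{Cgoodredtest}: the generic fibre $\Spec B_{\ol K}\to\Spec A_{\ol K}$ is ramified only over $z=\{T=0\}$ --- the other branch points of $\pi_i$ are the roots of $\delta_n(1,c)$ if $i=1$ and of $\Delta_{n,n}(c)$ if $i=0$, which by homogeneity of $\mcF$ have leading coefficient in $c$ controlled by $\disc f(x,1)$, hence, since $p\nmid\disc f(x,1)$, are $p$-adically integral and lie outside the open disc $\Spec A_{\ol K}$ --- and the ramification over $z$ is tame, of index $m$ for $i=1$ by Proposition~\ref{prop:branchedCover}(1) and of index dividing $m$ for $i=0$ (since $\pi_1=\pi_0\circ\varphi$), hence prime to $p$. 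Corollary~\ref{Cgoodredtest} then gives that $B_k$ is normal, i.e.\ $\Spec B_k$ is smooth; as $y$ was arbitrary, $\mathfrak{X}_i(n)_k$ is smooth over $\infty$, completing part~(2).

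The step I expect to be the real obstacle is this formal-local analysis at $\infty$: matching $A$ and $B$ with Morton's local model of the curve at infinity, checking the standing hypotheses of \S\ref{Sprelims} there, and --- the crucial point --- confirming that modulo $p$ no branch point of $\pi_i$ drifts into the disc around $\infty$, which hinges on controlling the leading coefficient of $\delta_n(1,c)$ via the homogeneity of the family and the hypothesis $p\nmid\disc f(x,1)$. The remaining manipulations with normalization and flat base change are routine.
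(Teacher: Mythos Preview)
Your proposal is correct and follows essentially the same route as the paper: part~(1) via Proposition~\ref{Pnormal} and locality of normalization, part~(2) by reducing to smoothness of $\mathfrak{X}_i(n)_k$ above $\infty$ and then invoking Corollary~\ref{Cgoodredtest} with the observation that no finite branch point specializes to $\infty$. The paper's proof is terser---it skips the verification of the standing hypotheses of \S\ref{Sprelims} at $\infty$ (which you correctly supply) and handles the integrality of the roots of $\delta_n(1,c)$ by a direct citation to \cite[top of p.~331]{Morton96} rather than your appeal to homogeneity and $\disc f(x,1)$; that citation is the precise input you were anticipating as the ``real obstacle.''
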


\begin{proof}
By Proposition \ref{Pnormal}, whose hypotheses are satisfied by Remark \ref{Rxmplusc}, $Y_i(n)_k$ is the special fiber of the normalization of $\AA^1_R$ in $K(Y_K)$.  The definition of $\mathfrak{X}_i(n)_R$ implies part (1). 

To prove part (2), it suffices by part (1) and the definition of $X_i(n)_k$ (Definition \ref{defn:standardModelX1}) to show that $\mf{X}_i(n)_k$ is smooth above $\infty$. 
By Corollary \ref{Cgoodredtest}, it suffices to show that no branch point of $\mf{X}_i(n)_K = X_i(n)_K \to \PP^1_K$ specializes to $\infty$ other than $\infty$.  But any such finite branch point is a root of $\delta_n(1, c)$ (Proposition \ref{prop:branchedCover}(2)).  These roots are integral over $R$ by \cite[top of p.\ 331]{Morton96}, so they do not specialize to $\infty$. 
\end{proof}

\begin{prop} \label{prop:Pbadimpliesdisc} 
Let $p$ be a prime not dividing $m\disc f(x,1)$.
\begin{enumerate}
\item If $p \nmid D_n$,
then $Y_1(n)_k$ is irreducible.  If, in addition, $Y_1(n)_K$ is smooth, then so is $Y_1(n)_k$ (cf.\ {\cite[Theorem 15]{Morton96}}).
\item The same holds when $Y_1(n)_k$ is replaced by $Y_0(n)_k$ and $D_n$ is replaced by $D_{n,n}$.
\end{enumerate}
\end{prop}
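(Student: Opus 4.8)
The plan is to work throughout with the mixed-characteristic model $\mfX_i(n)_R$, the normalization of $\PP^1_R$ in $K(Y_i(n)_K)$ (a field, since $Y_1(n)_K$ is geometrically irreducible by Theorem~\ref{thm:nonsing} and $Y_0(n)_K$ is a $\ZZ/n\ZZ$-quotient of it), and to deduce everything from the Fulton--Green--Matignon discriminant comparison of \S\ref{Sprelims} combined with the observation that, when $p\nmid D_n$ (resp.\ $p\nmid D_{n,n}$), no two branch points of $\pi_1$ (resp.\ $\pi_0$) collide modulo $p$. First I would base change so that $k$ is algebraically closed and $R$ is complete, which is harmless since both smoothness and geometric irreducibility are insensitive to extending the residue field, and which puts us in the standing setup of \S\ref{Sprelims}; I would also enlarge $R$ so that all roots of $\delta_n(1,c)$ (hence of $\Delta_{n,n}(c)$) lie in $R$. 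These roots are integral over $R$ by \cite[p.~331]{Morton96}, so they do not specialize to $\infty$, and since $p\nmid D_n$ (resp.\ $p\nmid D_{n,n}$) they stay pairwise distinct modulo $p$; hence over each closed point $\bar c_0\in\PP^1_k$ at most one branch point $c_0$ of $\pi_i$ specializes.

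The core is a local computation at each $\bar c_0$. I would write $\alpha$ for the degree of $\pi_i$ ($\nu(n)$ if $i=1$, $\nu(n)/n$ if $i=0$), let $A=R[\![T]\!]$ be the completed local ring of $\PP^1_R$ at $\bar c_0$, and let $B$ be the normalization of $A$ in $K(X_i(n)_K)$; then $B$ is finite, flat, and normal of degree $\alpha$ over $A$, hence a product $B=\prod_{j=1}^r B_j$ of normal local domains, with $r$ the number of points of $\mfX_i(n)_k$ over $\bar c_0$. By Propositions~\ref{Pnormal} and \ref{Pgensep}, $B_k$ is reduced and $B_k/A_k$ is generically separable, so Proposition~\ref{Pfultontest} applies and the discriminant degrees of $B_k/A_k$ and of $B_{\ol K}/A_{\ol K}$ agree; call the common value $d$. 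Since the generic fiber is in characteristic $0$ all ramification of $X_i(n)_K\to\PP^1_K$ is tame, so by the ramification data of Proposition~\ref{prop:branchedCover} the fiber of $\pi_i$ over the unique branch point $c_0$ specializing to $\bar c_0$ has $s$ points and $d=\alpha-s$ (with $d=0$, $s=\alpha$ when no branch point specializes to $\bar c_0$). On the other hand each $B_j$ is a domain in which $p\ne 0$, and $\Spec B_j\to\Spec A$ is surjective, so $\Spec B_j$ contains a point lying over $c_0$; summing gives $r\le s$. Feeding $d=\alpha-s$ into Theorem~\ref{Lfultontest} for $B_k/A_k$ yields $\alpha-s=d\ge\alpha-r$, so $r\ge s$; hence $r=s$, Theorem~\ref{Lfultontest} holds with equality, and $B_k$ is normal. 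Thus $\mfX_i(n)_k$ is smooth above $\bar c_0$. (One could instead invoke Corollary~\ref{Cgoodredtest}, but the virtue of using the equality case of Theorem~\ref{Lfultontest} is that tameness of the special fiber falls out as a conclusion, so no hypothesis on $p$ beyond $p\nmid m\disc f(x,1)$ is needed.)

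Running this over all $\bar c_0\in\AA^1_k$, and invoking Proposition~\ref{Psamemodel}(2) for the points over $\infty$ (where $\pi_i$ is tamely ramified of index dividing $m$, since $p\nmid m$), shows $\mfX_i(n)_k$ is smooth. It is also connected: $\mfX_i(n)_R$ is normal and proper over $R$ with smooth, geometrically connected generic fiber, so $H^0(\mcO)=R$ and Stein factorization forces every fiber over $\Spec R$, in particular $\mfX_i(n)_k$, to be connected. A smooth connected curve is irreducible, so $\mfX_i(n)_k$ is irreducible; since the affine part of $\mfX_i(n)_k$ maps finitely and surjectively onto $Y_i(n)_k$, the latter is irreducible, which proves the first assertion of (1) and its analogue in (2). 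If in addition $Y_i(n)_K$ is smooth, then Proposition~\ref{Pnormal} identifies $Y_i(n)_k$ with the affine part of $\mfX_i(n)_k$, which we have just shown is smooth, and this gives the remaining assertions.

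The step I expect to be the main obstacle is making the formal-disk bookkeeping at the characteristic-$0$ branch points precise: one must verify that $B_{\ol K}/A_{\ol K}$ is ramified over no closed point other than the one corresponding to $c_0$ (this is exactly where $p\nmid D_n$ and the integrality of the parabolic parameters enter, ruling out collisions among branch points and with $\infty$), and that its discriminant degree there equals $\sum_{z\mapsto c_0}(e_z-1)$ as predicted by Proposition~\ref{prop:branchedCover}, so that $d=\alpha-s$ genuinely holds. Once that is set up correctly, the numerical squeeze — $r\le s$ from the domain-and-surjectivity argument, $d\ge\alpha-r$ from Theorem~\ref{Lfultontest}, and $d=\alpha-s$ from characteristic-$0$ tameness — does all the remaining work, and, notably, needs no assumption on whether $p$ divides $2n$.
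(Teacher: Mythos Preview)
Your proposal is correct and follows essentially the same route as the paper: both arguments reduce to the Fulton/Green--Matignon discriminant comparison (Theorem~\ref{Lfultontest} and Proposition~\ref{Pfultontest}) applied locally at each closed point of $\PP^1_k$, using that $p\nmid D_n$ (resp.\ $p\nmid D_{n,n}$) forces at most one branch point to specialize there, and both deduce connectedness of the special fiber from Zariski/Stein. The only organizational difference is that the paper localizes further at each preimage $y$ and invokes the packaged Corollary~\ref{Cgoodredtest}, whereas you work with the full semilocal $B$ and run the numerical squeeze $r\le s$, $d_s\ge\alpha-r$, $d_\eta=\alpha-s$ directly; this is a harmless reorganization of the same computation.
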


\begin{proof}
We may assume $k$ algebraically closed.  
For (1), first assume that $Y_1(n)_K$ is smooth.  By Proposition \ref{Pnormal}, $Y_1(n)_R$ is a normal scheme, and
the special fiber $Y_1(n)_k$ of $Y_1(n)_R$ is reduced.  By Proposition \ref{Pgensep}, the special fiber of the map $Y_1(n)_R \to \PP^1_R$ is generically separable.  

Now, let $x$ be a closed point of $\AA^1_R$.  Then $\widehat{\mc{O}}_{\AA^1_R, x} \cong R[\![T]\!]$.  Since $p \nmid D_n$, Proposition \ref{prop:branchedCover}(2) shows that \emph{at most one} branch point of $Y_1(n)_K \to \AA^1_K$ specializes to $x$. Let $y \in Y_1(n)_k$ be a closed point above $x$. The extension $\widehat{\mc{O}}_{Y_1(n)_R, y} / \widehat{\mc{O}}_{\PP^1_R, x}$ is of the form considered in \S\ref{Sprelims}. By Corollary \ref{Cgoodredtest}, $y$ is a smooth point of $Y_1(n)_R$.  Since $y$ is an arbitrary closed point and $k$ is algebraically closed and thus perfect, $Y_1(n)_k$ is smooth. Since $Y_1(n)_K$ is connected by Theorem \ref{thm:nonsing}, so is the generic fiber $\mf{X}_1(n)_K$ of $\mf{X}_1(n)_R$.  By Zariski's Connectedness Theorem, $\mf{X}_1(n)_k$ is also connected.  Identify $Y_1(n)_k$ with an open subscheme of $\mf{X}_1(n)_k$ using Proposition \ref{Psamemodel}(1).  Since all points of $\mf{X}_1(n)_k \setminus Y_1(n)_k$ are smooth by Proposition \ref{Psamemodel}(2), $Y_1(n)_k$ is connected.  Since $Y_1(n)_k$ is smooth and connected, it is irreducible.

If $Y_1(n)_K$ is not smooth; it is still irreducible by Theorem \ref{thm:nonsing}.  If $\widetilde{Y}_K$ is the normalization of $Y_1(n)_K$, then the induced map $\widetilde{Y}_K \to \AA^1_K$ is a branched cover with branch locus contained inside that of $Y_1(n)_K \to \AA^1_K$.  If $\widetilde{Y}_R$ is the normalization of $\AA^1_R$ in $K(\widetilde{Y}_K)$, and $\widetilde{Y}_k$ is its special fiber, then the same proof as in the smooth case shows that $\widetilde{Y}_k$ is smooth and irreducible.  Since $Y_1(n)_k$ is birational to $\widetilde{Y}_k$, this finishes the proof of (1). 

The same proof works for (2), using Proposition \ref{prop:branchedCover}(3) in place of Proposition \ref{prop:branchedCover}(2).
\end{proof}

\begin{rem}\label{RY0nR}
If $Y_1(n)_K$ is smooth, then $Y_1(n)_R$ and $Y_0(n)_R$ are  normal.
Since the $\sigma$-action is ``vertical" and $Y_1(n)_k \to \AA^1_k$ is separable, the induced $\sigma$-action on $Y_1(n)_k$ is \emph{faithful}.  
\end{rem}

Good reduction of $Y_1(n)$ can sometimes be characterized in terms of the non-collision of parabolic orbits, as in the following lemma.

\begin{lemma}\label{Lramcollision}
Suppose $p \nmid m \disc f(x,1)$
and $Y_1(n)$ is smooth in characteristic zero. 
If $Y_0(n)$ has good reduction modulo $p$, and no branch points of $Y_1(n) \to Y_0(n)$ collide modulo $p$, then $Y_1(n)$ has good reduction modulo $p$.
\end{lemma}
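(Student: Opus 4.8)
The plan is to work locally over a complete DVR $R$ of mixed characteristic $(0,p)$ with residue field $k = \overline{\FF}_p$, and to verify smoothness of $Y_1(n)_k$ point by point using the Fulton-type criterion of Corollary~\ref{Cgoodredtest}. By Proposition~\ref{Pnormal} (whose hypotheses hold here by Remark~\ref{Rxmplusc}), $Y_1(n)_R$ is normal with reduced special fiber, and by Proposition~\ref{Pgensep} the map $\pi_{1,k}: Y_1(n)_k \to \AA^1_k$ is generically separable of degree $\nu(n)$; moreover the completed local rings $\widehat{\mc O}_{Y_1(n)_R, y} / \widehat{\mc O}_{\AA^1_R, x}$ are exactly of the form $B/A$ with $A = R[\![T]\!]$ studied in \S\ref{Sprelims}. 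So it suffices to show that each such local extension satisfies the hypothesis of Corollary~\ref{Cgoodredtest}: at most one closed point of $\Spec B_{\ol K}$ ramifies over $\Spec A_{\ol K}$, with tame ramification.

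First I would dispose of the points at infinity: by Proposition~\ref{Psamemodel}(2), $\mf X_1(n)_k \cong X_1(n)_k$ is smooth above $\infty$, and the roots of $\delta_n(1,c)$ are integral over $R$, so no finite branch point of $Y_1(n)_K \to \AA^1_K$ specializes to $\infty$. Then I would treat closed points $x$ of $\AA^1_R$ whose generic fiber lies over a non-branch point of $\pi_1$ or over $\infty$: here the extension $B_{\ol K}/A_{\ol K}$ is \'etale, so Corollary~\ref{Cgoodredtest} applies trivially. The substantive case is a closed point $x$ of $\AA^1_R$ lying over a branch point: factor the branch locus of $\pi_1$ through $Y_0(n)$, i.e. write $\pi_1 = \pi_0 \circ \varphi$. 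By Theorem~\ref{THM:good_away_from_disc}, the branch points of $\pi_0: Y_0(n) \to \AA^1$ are the roots of $\Delta_{n,n}(c)$; the additional branch points of $\pi_1$ (the satellite parabolic parameters, roots of $\Delta_{n,d}(c)$ for $d<n$) are precisely the branch points of $\varphi: Y_1(n) \to Y_0(n)$, by Proposition~\ref{prop:branchedCover}(3).

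The hypothesis ``$Y_0(n)$ has good reduction modulo $p$'' tells us that above any closed point $w$ of $\PP^1_R$, the local extension $\mf X_0(n)$ over $\widehat{\mc O}_{\PP^1_R,w}$ has normal (hence smooth) special fiber, so by the equivalence in Theorem~\ref{Lfultontest} this extension is tamely ramified and $d_s = d_\eta$; in particular $p \nmid n$ (the ramification indices of $\varphi$, which are $n/d$, are not divisible by $p$). The hypothesis ``no branch points of $Y_1(n) \to Y_0(n)$ collide modulo $p$'' means that the roots of $\Delta_{n,d}(c)$ for $d<n$ remain distinct modulo $p$ and distinct from the roots of $\Delta_{n,n}(c)$. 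So: fix a closed point $x$ of $\AA^1_R$ and a point $y \in Y_1(n)_k$ over it, set $z = \varphi(y) \in Y_0(n)_k$. Because $Y_0(n)$ has good reduction, at most one branch point of $\pi_0$ specializes to the image $w = \pi_0(z)$, and by the non-collision hypothesis at most one branch point of $\varphi$ specializes to $z$. Working in $\widehat{\mc O}_{\AA^1_R,x}$ and its extensions, I would conclude that the generic fiber of $\Spec\widehat{\mc O}_{Y_1(n)_R,y} \to \Spec\widehat{\mc O}_{\AA^1_R,x}$ is ramified above at most one $\ol K$-point: any ramification either comes from a satellite branch point (at most one specializes in, and it ramifies only in the $\varphi$-direction at a single point upstairs, since $\varphi(z)$ is then unramified under $\pi_0$ by Proposition~\ref{prop:branchedCover}(3)(b)) or from the primitive branch point (at most one specializes in, ramifying only in the $\pi_0$-direction, with index $2$ by (3)(a)); and non-collision guarantees these two sources do not occur simultaneously over the same $x$ after reduction. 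Tameness holds because $p \nmid n$ (for the $\varphi$-part) and $p \neq 2$ — wait, we need $p$ odd for the index-$2$ ramification to be tame; this is automatic since if $p \mid D_{n,n}$ forced $p=2$ we'd be excluded, and in any case $p \nmid n m(m-1)$-type constraints can be invoked, but more simply: good reduction of $Y_0(n)$ already forces the relevant ramification to be tame by Theorem~\ref{Lfultontest}, and the $\varphi$-ramification index $n/d$ is prime to $p$ since good reduction of $Y_0(n)$ combined with $p \nmid \disc f(x,1)$ gives $p \nmid n$. Then Corollary~\ref{Cgoodredtest} yields that $Y_1(n)_R$ is smooth at $y$. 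Since $y$ was arbitrary and $k$ is perfect, $Y_1(n)_k$ is smooth, i.e. $Y_1(n)$ has good reduction modulo $p$.

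\textbf{Main obstacle.} The delicate point is the bookkeeping in the last step: one must argue cleanly that over a single closed point $x$ of $\AA^1_R$, the two potential sources of ramification in $\pi_1$ after reduction — a primitive parabolic parameter (contributing index-$2$ ramification on the $\pi_0$-level) and a satellite parabolic parameter (contributing $\varphi$-ramification of index $n/d$) — cannot both be "switched on," so that Corollary~\ref{Cgoodredtest}'s hypothesis (ramification above at most one point, tame) is genuinely met for the composite local extension $B/A$. This is exactly where both hypotheses are used: good reduction of $Y_0(n)$ handles the $\pi_0$-layer and forces tameness/$p \nmid n$, while non-collision of the $\varphi$-branch points (together with their non-collision with the $\pi_0$-branch locus, which also follows since those roots divide $D_n$ and $p \nmid D_n$ would be too strong — rather, one uses that a satellite root colliding with a primitive root modulo $p$ would force $p \mid R_{n,d,n}$, contradicting... hmm, this needs the precise statement that "no branch points of $Y_1(n) \to Y_0(n)$ collide" is interpreted to include non-collision with the $\varphi$-image of the $\pi_0$-branch locus) handles the $\varphi$-layer. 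I would want to state the non-collision hypothesis carefully enough — or invoke the factorization of $D_n$ through the $D_{n,d}$ and $R_{n,d,e}$ — to be sure that over any $x$ the preimage in $Y_1(n)_K$ is ramified over at most one point. Everything else (the infinity points, the \'etale locus, normality, generic separability) is routine given the earlier propositions.
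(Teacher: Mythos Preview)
Your instinct to localize and invoke Corollary~\ref{Cgoodredtest} is right, but you are applying it to the wrong cover. The paper's proof is a two-line argument because it works over $Y_0(n)$, not over $\AA^1$: since $Y_0(n)$ has good reduction, for any closed point $x$ on the special fiber of $Y_0(n)_R$ we have $\widehat{\mc O}_{Y_0(n)_R,x}\cong R[\![T]\!]$, and then the extension $\widehat{\mc O}_{Y_1(n)_R,y}/\widehat{\mc O}_{Y_0(n)_R,x}$ is exactly of the form treated in \S\ref{Sprelims}. The non-collision hypothesis says at most one branch point of $\varphi$ specializes to $x$; tameness on the generic fiber is automatic (characteristic zero); so Corollary~\ref{Cgoodredtest} gives smoothness at $y$ immediately. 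That is the entire proof.

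By choosing $\AA^1$ as the base you manufacture the very obstacle you flag. Over $\AA^1$, the branch locus of $\pi_1$ mixes primitive roots (of $\Delta_{n,n}$) and satellite roots (of $\Delta_{n,d}$, $d<n$), and nothing in the hypotheses prevents a satellite root from colliding with a primitive root modulo $p$ in $\AA^1$ --- that would require $p\nmid R_{n,d,n}$, which is not assumed. So your case analysis genuinely does not close, and your attempted patch (``good reduction of $Y_0(n)$ \ldots gives $p\nmid n$'') is neither proved nor sufficient. None of this bookkeeping is needed: once you pass to $Y_0(n)$ as the base, the primitive branch locus is absorbed into the smoothness of the base, and only the $\varphi$-branch points matter --- which is precisely what hypothesis~2 controls. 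Also note that tameness on the generic fiber in Corollary~\ref{Cgoodredtest} is a characteristic-zero condition and hence free; you do not need to argue $p\nmid n$ separately.
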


\begin{proof}
Let $y$ be a closed point on the special fiber of $Y_1(n)_R$, and $x$ be its image in $Y_0(n)_R$.  By Propositions \ref{Pnormal} and \ref{Pgensep}, the extension $\widehat{\mc{O}}_{Y_1(n)_R, y}/\widehat{\mc{O}}_{Y_0(n), x}$ is of the form considered in \S\ref{Sprelims}. By Corollary \ref{Cgoodredtest}, $y$ is smooth.  The lemma follows.
\end{proof}

Recall the factorization of the discriminant $D_n = \mathrm{disc}(\delta_{n}(1,c))$ given by equation \ref{factorizationOfDelta1}:

\begin{equation}\label{Efactorization}
D_n= \prod_{d\mid n} D_{n,d}\cdot \prod_{\substack{d < e \\ d\mid n, e \mid n}} R_{n,d,e}^2,
\end{equation}
where $D_{n,d} = \mathrm{disc}(\Delta_{n,d})$ and $R_{n,d,e}=\mathrm{res}(\Delta_{n,d},\Delta_{n,e})$. We now give refined criteria for good reduction in terms of these factors of $D_n$. 

\begin{prop}\label{PRnne}
Let $p \nmid m \disc f(x,1)$
and suppose $Y_1(n)$ is smooth in characteristic zero.  Suppose that $p$ divides $\prod_{e|n, e < n} R_{n, e, n}$ but does not divide $D_n/\prod_{e|n, e < n} R_{n, e, n}^2$.  Then $Y_1(n)$ has good reduction modulo $p$.  
\end{prop}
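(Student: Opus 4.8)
The plan is to deduce this from the comparison Lemma~\ref{Lramcollision} relating good reduction of $Y_1(n)$ to that of $Y_0(n)$, by verifying its two nontrivial hypotheses. Write $D_n = Q \cdot \prod_{e \mid n,\ e < n} R_{n,e,n}^2$, where $Q = \prod_{d \mid n} D_{n,d} \cdot \prod_{d < e < n} R_{n,d,e}^2$; the hypothesis of the proposition is precisely that $p \nmid Q$. In particular $p \nmid D_{n,n}$, since $D_{n,n}$ is one of the factors of $Q$. Since $Y_1(n)_K$ is smooth, its $\ZZ/n\ZZ$-quotient $Y_0(n)_K$ is smooth as well, so Proposition~\ref{prop:Pbadimpliesdisc}(2) gives that $Y_0(n)$ has good reduction modulo $p$. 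This settles the first hypothesis of Lemma~\ref{Lramcollision}; the remaining hypotheses $p \nmid m\,\disc f(x,1)$ and smoothness of $Y_1(n)$ in characteristic zero are assumed in the proposition, so it remains only to prove that no two branch points of $\varphi\colon Y_1(n) \to Y_0(n)$ collide modulo $p$.

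To that end, I would first identify the branch locus of $\varphi$. By Proposition~\ref{prop:branchedCover}(3) — together with the observation that a point of $Y_1(n)$ has nontrivial $\langle\sigma\rangle$-stabilizer exactly when the underlying periodic point has primitive period strictly less than $n$, i.e.\ is a satellite parabolic point — the morphism $\varphi$ is branched precisely over those closed points $w \in Y_0(n)$ for which $\pi_0(w)$ is a root of $\Delta_{n,d}(c)$ for some $d \mid n$ with $d < n$; moreover, above each such root there is a \emph{unique} branch point of $\varphi$, because the $d$ ramification points of $\pi_1$ over it form a single $\ZZ/n\ZZ$-orbit and hence have one common image under the quotient map $\varphi$. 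Now suppose $w_1 \ne w_2$ are branch points of $\varphi$ whose closures in $Y_0(n)_R$ meet in the special fiber. Applying the finite $R$-morphism $\pi_0$, the parameters $c_i := \pi_0(w_i)$ reduce to the same point of $\AA^1$; and $c_1 \ne c_2$, for otherwise $w_1 = w_2$ by the uniqueness just noted. Thus $c_1, c_2$ are distinct roots of $\delta_n(1,c)$ congruent modulo $p$: say $c_i$ is a root of $\Delta_{n,d_i}$ with $d_i < n$. If $d_1 = d_2 =: d$, then $p \mid \disc \Delta_{n,d} = D_{n,d}$; if $d_1 \ne d_2$, then $p \mid \res(\Delta_{n,d_1},\Delta_{n,d_2}) = R_{n,d_1,d_2}$ with $d_1, d_2 < n$. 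In either case $p$ divides a factor of $Q$, contradicting $p \nmid Q$. Hence no two branch points of $\varphi$ collide modulo $p$, and Lemma~\ref{Lramcollision} now yields that $Y_1(n)$ has good reduction modulo $p$.

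The only step carrying real content is the identification of the branch locus of $\varphi$ in the second paragraph — in particular the ``one branch point per satellite root'' statement, which is what makes the collision analysis go through; this is essentially a repackaging of Proposition~\ref{prop:branchedCover}(3) and the structure of the $\langle\sigma\rangle$-action, so it should not be a serious obstacle. Everything else is bookkeeping with the factorization of $D_n$ and a direct appeal to Lemma~\ref{Lramcollision}. Conceptually, the content of the proposition is that a prime permitted by the hypothesis can only force a root of $\Delta_{n,n}$ (a branch point of $\pi_0$ but not of $\varphi$) to collide with a root of some $\Delta_{n,e}$ with $e < n$ (a branch point of $\varphi$ but not of $\pi_0$); since these two kinds of branch points live on different covers in the tower, neither $Y_0(n)$ nor the cover $\varphi$ acquires a genuine collision, and good reduction is preserved.
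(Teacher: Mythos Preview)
Your proof is correct and follows essentially the same line as the paper's: show $p \nmid D_{n,n}$ so that $Y_0(n)$ has good reduction via Proposition~\ref{prop:Pbadimpliesdisc}(2), then show that the branch points of $\varphi$ --- which lie over the roots of $\prod_{d<n}\Delta_{n,d}$ --- do not collide modulo $p$ because $p \nmid \disc\bigl(\prod_{d<n}\Delta_{n,d}\bigr)$, and finish with Lemma~\ref{Lramcollision}. Your extra step, making explicit that there is a \emph{unique} branch point of $\varphi$ over each satellite root (so that $c_1 \ne c_2$ in your contrapositive), is a useful clarification of a point the paper leaves implicit.
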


\begin{proof}
Since $p \nmid D_{n,n}$, Proposition \ref{prop:Pbadimpliesdisc}(2) shows that $Y_0(n)$ has good reduction modulo $p$.  The finite branch points of $Y_1(n) \to Y_0(n)$ lie above the roots of $F(c) := \prod_{\substack{d | n \\ d < n}} \Delta_{n,d}(c)$ in $\AA^1$.  By assumption, 
$$p \nmid \disc(F(c)) = \prod_{d < n} D_{n,d} \prod_{\substack{d|n, e|n \\ d < e < n }} R_{n, d, e}^2.$$  So the images of these branch points in $\AA^1$ do not collide modulo $p$. Thus the branch points themselves do not collide on $Y_0(n)_k$.  The result follows from Lemma \ref{Lramcollision}. 
\end{proof}

\begin{rem} \label{Rsimplify}
Proposition \ref{PRnne} states that the set of primes where $Y_1(n)$ has bad reduction
does not contain any prime $p$ which divides $D_n$ only because it divides $\prod_{e < n} R_{n, e, n}$.
In Tables~\ref{table:data1-7}--\ref{table:data8} for $f(x, c) = x^2 + c$, this means bad reduction cannot occur if $p$ appears in the list of $(e,n)$ rows for $e < n$ but not outside this list. 
For example, $Y_1(6)$ has good reduction modulo $p=79, 211, 68700493$ and 
$Y_1(8)$ has good reduction modulo $p=53, 593, 12073, 248117$.
\end{rem}

\subsection{The case of $f(x, c) = x^m + c$}\label{Sxmplusc}
In \S\ref{Sxmplusc}, we obtain additional results on good reduction when $f(x, c) = x^m + c$.  By Theorem \ref{Tmultibrotsmooth}, $X_1(n)$ is smooth for such $f$ in characteristic zero.  

\begin{lemma}\label{Ltotram} 
In characteristic zero, for $n > 2$ and $f(x) = x^m + c$, the totally ramified points of $Y_1(n) \to Y_0(n)$ are exactly the points
\begin{equation}\label{Elabelram10}
z = (x_0, c_0) := \left(\frac{\zeta_{(m-1)n}}{m^{1/(m-1)}}, x_0 - x_0^m\right),
\end{equation}
as $\zeta_{(m-1)n}$ ranges through the ($m-1$)-st roots of the primitive $n$-th roots of unity. Furthermore, over any $\ZZ[\zeta_{(m-1)n}, m^{1/(m-1)}]$-algebra, we have  
\begin{equation}\label{Etotram}
\frac{\partial \Phi_n}{\partial c}(z) \prod_{\substack{d | n \\ d \neq 1, n}} \Phi_d(z) = -\frac{(m-1)m^{1/(m-1)}n}{\zeta_{n}(\zeta_n - 1)^2}
\end{equation}
for $z$ as in \eqref{Elabelram10}.  Here we take $\zeta_n = \zeta_{(m-1)n}^{m-1}$. 
\end{lemma}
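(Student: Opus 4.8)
The plan is to first pin down the totally ramified points from the branched-cover structure of Proposition~\ref{prop:branchedCover}, and then to establish \eqref{Etotram} by computing one mixed second derivative of $\Psi_n = f_c^n(x)-x$ in two different ways and comparing.

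\emph{The first statement.} If $\varphi$ is totally ramified at $z = (x_0,c_0)\in Y_1(n)$, then $\varphi$ has ramification index $n$ at $z$, so by Proposition~\ref{prop:branchedCover}(3) the parameter $c_0$ must be a root of some $\Delta_{n,d}$ with $n/d = n$, i.e.\ $c_0$ is a root of $\Delta_{n,1}(c) = \Res_x(C_n(x),\delta_1(x,c))$, and $z$ is the unique ramification point of $\pi_1$ over $c_0$. Since the roots of $\delta_1(x,c)$ in $x$ are the multipliers $\lambda_1(\beta) = m\beta^{m-1}$ of the fixed points $\beta$ of $f_{c_0}$, this says exactly that $x_0$ is a fixed point of $f_{c_0}$ whose multiplier $mx_0^{m-1}$ is a primitive $n$-th root of unity $\zeta_n$ (cf.\ Remark~\ref{rem:multiplier1}). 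Solving $mx_0^{m-1}=\zeta_n$ together with $c_0 = x_0 - x_0^m$ yields \eqref{Elabelram10} with $\zeta_n = \zeta_{(m-1)n}^{m-1}$; conversely each such point has formal period $n$ by Definition~\ref{defn:formal_period}, lies on $Y_1(n)$ by Proposition~\ref{prop:moduli}, and is totally ramified for $\varphi$ by Proposition~\ref{prop:branchedCover}(3) applied with $d=1$.

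\emph{Local set-up.} Fix $z$ as in \eqref{Elabelram10}, put $\lambda = \zeta_n = mx_0^{m-1}$, and write $\Psi_n = \prod_{d\mid n}\Phi_d = \Phi_1\cdot\Phi_n\cdot P$ with $P := \prod_{d\mid n,\, d\neq 1,n}\Phi_d$. I will record the orders of vanishing at $x=x_0$ of the factors of $\Psi_n(x,c_0)$: the factor $\Phi_1(x,c_0)=x^m+c_0-x$ has a simple zero there because $\tfrac{\partial\Phi_1}{\partial x}(z)=\lambda-1\neq 0$; by Theorem~\ref{Tmultibrotsmooth} the point $z$ is smooth on $Y_1(n)$, and by Proposition~\ref{prop:branchedCover}(3) the cover $\pi_1$ is totally ramified of index $n$ at $z$, so $\Phi_n(x,c_0)$ has a zero of order exactly $n$ at $x_0$ and $\tfrac{\partial\Phi_n}{\partial c}(z)\neq 0$; and $P(z)\neq 0$, since for each divisor $d$ of $n$ with $1<d<n$ the polynomial $\Psi_d(x,c_0)=f_{c_0}^d(x)-x$ has a \emph{simple} zero at the fixed point $x_0$ (its $x$-derivative there is $\lambda^d-1\neq 0$, as $n\nmid d$), and that simple zero is already consumed by $\Phi_1$, forcing $\Phi_d(x_0,c_0)\neq 0$.

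\emph{The identity.} Differentiating $\Psi_n=\Phi_1\Phi_n P$ in $c$ and restricting to $c=c_0$, the three resulting terms vanish at $x_0$ to orders $\geq n$, exactly $1$, and $\geq n+1$; since $n\geq 2$, only the middle term $\Phi_1\,\tfrac{\partial\Phi_n}{\partial c}\,P$ contributes to the $x$-derivative at $x_0$, giving
\[
\frac{\partial^2\Psi_n}{\partial x\,\partial c}(z) \;=\; \frac{\partial\Phi_1}{\partial x}(z)\cdot\frac{\partial\Phi_n}{\partial c}(z)\cdot P(z)
\;=\; (\zeta_n-1)\cdot\frac{\partial\Phi_n}{\partial c}(z)\prod_{\substack{d\mid n\\ d\neq 1,n}}\Phi_d(z).
\]
On the other hand I compute $\tfrac{\partial^2\Psi_n}{\partial x\,\partial c}(z)$ directly: logarithmically differentiating $(f_c^n)'(x)=\prod_{j=0}^{n-1}f_c'(f_c^j(x))$ in $c$ and evaluating at $z$ — where $f_{c_0}^j(x_0)=x_0$, $f_{c_0}'(x_0)=\lambda$, $f_{c_0}''(x_0)=m(m-1)x_0^{m-2}=(m-1)\lambda/x_0$, and $\tfrac{\partial}{\partial c}f_c^j(x_0)\big|_{c_0}=(\lambda^j-1)/(\lambda-1)$ — and using the cancellation $\sum_{j=0}^{n-1}\lambda^j = 0$ (valid because $\lambda$ is a primitive $n$-th root of unity), one gets $\lambda(\lambda-1)\,\tfrac{\partial^2\Psi_n}{\partial x\,\partial c}(z) = -\,n(m-1)\lambda/x_0$. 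Comparing the two expressions and cancelling $\lambda$ gives
\[
\frac{\partial\Phi_n}{\partial c}(z)\prod_{\substack{d\mid n\\ d\neq 1,n}}\Phi_d(z)\;=\;-\,\frac{n(m-1)}{x_0(\zeta_n-1)^2},
\]
into which one substitutes $x_0=\zeta_{(m-1)n}/m^{1/(m-1)}$ from \eqref{Elabelram10} to conclude \eqref{Etotram}; since the resulting identity is an equality of algebraic numbers with coefficients in $\ZZ[\zeta_{(m-1)n},m^{1/(m-1)},m^{-1}]$, derived over $\CC$, it is valid over any $\ZZ[\zeta_{(m-1)n},m^{1/(m-1)}]$-algebra. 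The delicate part of the argument is the order-of-vanishing bookkeeping in the local set-up — especially $P(z)\neq 0$ and the exact order $n$ of $\Phi_n(\cdot,c_0)$ at $x_0$ — since without it the clean ``only the middle term survives'' step fails; this is also where the hypothesis $n>2$ plays a role.
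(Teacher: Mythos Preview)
Your argument is correct and arrives at the same intermediate expression $-\,n(m-1)/\bigl(x_0(\zeta_n-1)^2\bigr)$ as the paper does. The route, however, is genuinely different. The paper's proof invokes \cite[Lemma~3.7]{BuffLei14} as a black box to get
\[
\frac{\partial\Phi_n}{\partial c}(z)\prod_{\substack{d\mid n\\ d\neq 1,n}}\Phi_d(z)=\frac{n\,\partial\rho/\partial c}{\rho(\rho-1)}(c_0),
\]
and then finishes with a short implicit-differentiation computation of $\partial\rho/\partial c$. You instead bypass the Buff--Lei citation entirely: you compute the mixed partial $\partial_x\partial_c\Psi_n(z)$ two ways --- once via the factorization $\Psi_n=\Phi_1\Phi_nP$ and careful order-of-vanishing bookkeeping, once directly from the chain rule --- and equate them. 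In effect you are reproving the relevant special case of \cite[Lemma~3.7]{BuffLei14} from scratch. Your approach is longer but entirely self-contained; the paper's is terser but imports a nontrivial external lemma (whose proof, incidentally, runs along lines close to yours). Both approaches handle the passage to arbitrary $\ZZ[\zeta_{(m-1)n},m^{1/(m-1)}]$-algebras the same way, by observing that the identity holds in $\CC$ between elements defined over that ring.
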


\begin{proof}
The points $z$ in \eqref{Elabelram10} are exactly those points in $\AA^2$ such that $x_0$ is a fixed point of $x \mapsto x^m + c_0$ with multiplier $\lambda_1(x_0)$ equal to a primitive $n$th root of unity. This proves the first statement. 
Next, fix $\zeta_{(m-1)n}$.  
By \cite[Lemma 3.7]{BuffLei14} (using $m = 1$ and $s = n$), $$\frac{\partial \Phi_n}{\partial c}(z) \prod_{\substack{d | n \\ d \neq 1, n}} \Phi_d(z) = \frac{n \, \partial \rho /\partial c}{\rho(\rho - 1)}(c_0)$$ over $\CC$, where $\rho$ is the multiplier function on $Y_1(1)$, thought of as a function of $c$ (note that \cite{BuffLei14} assumes $f(x,c) = x^2 + c$, but the proof of this statement is the same for $x^m + c$ --- see also \cite[Lemma 2.6]{GaoOu}).  Now, $\rho(c_0) = \zeta_n$.  Since $\rho(x,c) = mx^{m-1}$,
\begin{equation}\label{Etotram2}
\frac{\partial \rho}{\partial c} = m(m-1)x^{m-2}\frac{\partial x}{\partial c}.
\end{equation}
The variables $x$ and $c$ are related by $x^m + c = x$, so $\partial x / \partial c$ at $c = c_0$ equals $1/(1 - mx_0^{m-1})$, or $1/(1 - \zeta_n)$. Plugging in $x_0$ to (\ref{Etotram2}) and simplifying proves (\ref{Etotram}) over $\CC$. Since $\Phi_n$ is defined over $\ZZ$, the formula holds as well for all $\ZZ[\zeta_{(m-1)n}]$-algebras.  
\end{proof}

\begin{prop}\label{PX1goodX0good}
Assume $f(x, c) = x^m + c$.  If $p$ and $n$ are distinct primes with $p \nmid m(m-1)$, then $Y_1(n)$ has good reduction modulo $p$ if and only if $Y_0(n)$ does. 
\end{prop}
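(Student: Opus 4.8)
The plan is to apply Lemma~\ref{Lramcollision} in the case $f(x,c)=x^m+c$ and $p,n$ distinct primes with $p\nmid m(m-1)$. One direction is immediate: if $Y_1(n)$ has good reduction modulo $p$, then since $Y_0(n)_k = Y_1(n)_k/\langle\sigma\rangle$ is a quotient of a smooth curve by a (faithful, by Remark~\ref{RY0nR}) action of the cyclic group $\ZZ/n\ZZ$, and since $p\neq n$ the action is tame; a tame quotient of a smooth curve is smooth, so $Y_0(n)$ has good reduction. (Alternatively one can cite that good reduction passes to quotients in the tame case, or argue via Proposition~\ref{prop:Pbadimpliesdisc} together with the discriminant identities.) So the substance is the converse.

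For the converse, assume $Y_0(n)$ has good reduction modulo $p$. By Lemma~\ref{Lramcollision}, it suffices to show that the branch points of $\varphi\colon Y_1(n)\to Y_0(n)$ do not collide modulo $p$. By Proposition~\ref{prop:branchedCover}(3), the ramification of $\varphi$ occurs exactly above the satellite parabolic parameters---the roots of $\Delta_{n,d}(c)$ for $d\mid n$, $d\neq n$. Since $n$ is prime, the only such divisor is $d=1$, so the branch points of $\varphi$ all lie above roots of $\Delta_{n,1}(c) = \Res_x(C_n(x),\delta_1(x,c))$, i.e. above fixed points whose multiplier is a primitive $n$-th root of unity. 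These are precisely the totally ramified points $z=(x_0,c_0)$ described in Lemma~\ref{Ltotram}. So I need: (a) the ramification points of $\varphi$ are distinct modulo $p$ (they do not collide with each other), and (b) more precisely that $\varphi$ remains ramified at each of them, so that "branch points colliding" is really the only obstruction---but in fact for good reduction of $Y_1(n)$ via Lemma~\ref{Lramcollision} I just need the images in $Y_0(n)_k$ to be distinct, which since $\varphi$ is a degree-$n$ cover and the fibers over distinct branch points are disjoint, reduces to showing the $c_0$-values, or the $z$-values, stay distinct mod $p$.

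The key computational input is equation~\eqref{Etotram} of Lemma~\ref{Ltotram}:
\[
\frac{\partial \Phi_n}{\partial c}(z)\prod_{\substack{d\mid n\\ d\neq 1,n}}\Phi_d(z) \;=\; -\frac{(m-1)m^{1/(m-1)}n}{\zeta_n(\zeta_n-1)^2},
\]
and since $n$ is prime the product on the left is empty, so $\partial\Phi_n/\partial c(z) = -(m-1)m^{1/(m-1)}n/(\zeta_n(\zeta_n-1)^2)$. The right-hand side is a unit in $\ZZ[\zeta_{(m-1)n}, m^{1/(m-1)}, \tfrac{1}{(\zeta_n-1)}]$ localized away from $p$, because $p\nmid (m-1)$, $p\nmid m$, $p\nmid n$, and $p\neq n$ ensures $\zeta_n-1$ is a unit away from $n$ (the norm of $\zeta_n-1$ is $\pm n$). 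Hence $\partial\Phi_n/\partial c(z)$ is nonzero modulo any prime above $p$; this shows $z$ is a smooth point of $Y_1(n)_k$ at which $\pi_1$ is unramified onto $\AA^1$, so in particular $\varphi$ stays ramified there and these points remain distinct (two colliding branch points would force a singularity or a drop in the local degree, contradicting $\partial\Phi_n/\partial c\neq 0$). Feeding this into Lemma~\ref{Lramcollision} gives that $Y_1(n)$ has good reduction modulo $p$.

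The main obstacle is bookkeeping the field of definition and the unit claim: one works over $R' = $ the ring of integers in a suitable extension $K'/K$ containing $\zeta_{(m-1)n}$ and $m^{1/(m-1)}$, checks that $p\nmid m(m-1)n$ makes the extension tame and the right-hand side of \eqref{Etotram} a unit in $R'$ localized at the relevant prime, and then descends the conclusion (good reduction, being geometric, is insensitive to the base change from $R$ to $R'$ since $R'/R$ is unramified at the primes over $p$ once $p\nmid (m-1)n$). A minor point to handle carefully is that \eqref{Etotram} is stated over $\ZZ[\zeta_{(m-1)n}, m^{1/(m-1)}]$-algebras, so one must confirm that inverting $p$ there does not invert anything needed; this is exactly where the hypotheses $p\nmid m$, $p\nmid m-1$, and $p\neq n$ are all used.
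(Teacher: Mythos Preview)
Your overall strategy for the converse is valid but takes a detour compared with the paper. Both reduce via Lemma~\ref{Lramcollision} to showing that the branch points of $\varphi\colon Y_1(n)\to Y_0(n)$ do not collide modulo $p$; since $\varphi$ is cyclic of prime order, this is the same as showing that the totally ramified points $z$ of Lemma~\ref{Ltotram} do not collide. The paper simply reads off formula~\eqref{Elabelram10}: the $x$-coordinates of the $z$'s are $\zeta_{(m-1)n}/m^{1/(m-1)}$ as $\zeta_{(m-1)n}$ ranges over $(m-1)$-st roots of primitive $n$-th roots of unity, and these remain pairwise distinct modulo $p$ because $p\nmid m(m-1)n$. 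That one-line check is the entire argument.

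You instead invoke \eqref{Etotram} (the left-hand product being empty for $n$ prime) to show that $\partial\Phi_n/\partial c(\bar z)$ is a $p$-adic unit, hence each $\bar z$ is a smooth point of $Y_1(n)_k$, and then argue that collision of branch points would contradict this smoothness. Two corrections are in order. First, your claim that ``$\pi_1$ is unramified at $\bar z$'' is backwards: these $z$ are ramification points of $\pi_1$, so $\partial\Phi_n/\partial x(z)=0$ there, and $\partial\Phi_n/\partial c(z)\neq 0$ says only that $Y_1(n)_k$ is smooth at $\bar z$. Second, your implication ``collision $\Rightarrow$ singularity at $\bar z$'' is asserted without proof. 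It is true (for instance by a local tame-Kummer computation: two horizontal branch divisors through $\bar x$ make the cover locally $w^n = T^2\cdot(\text{unit})$ on the special fiber, which is singular for every $n\geq 2$), but you should say so. Alternatively, once you know each $\bar z$ is smooth you can bypass Lemma~\ref{Lramcollision} altogether: the proof of that lemma via Corollary~\ref{Cgoodredtest} already handles every other closed point of $Y_1(n)_k$, so $Y_1(n)_k$ is smooth and you are done. Either way, your use of \eqref{Etotram} mirrors the paper's proof of Proposition~\ref{PX1irredX0irred} rather than of the present proposition, where the direct distinctness check from \eqref{Elabelram10} is shorter.
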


\begin{proof}
Since $Y_1(n)$ is a finite cover of $Y_0(n)$, it has bad reduction if $Y_0(n)$ does --- indeed, its minimal semistable model has nodes at the preimages of the nodes of the minimal semistable model of $Y_0(n)$.

Suppose $Y_0(n)$ has good reduction.  By Lemma \ref{Lramcollision}, it suffices to show that the branch points of $\phi: Y_1(n) \to Y_0(n)$ do not collide on the special fiber.  Since $\phi$ is cyclic of prime order, this is the same as checking that the ramification points do not collide on the special fiber.  These points are given in Lemma \ref{Ltotram}. By the assumptions on $p$, none of them collide on the special fiber. 
\end{proof}

\begin{ex}
Assume $f(x,c) = x^2 + c$.  Let $n=5$ and $p=11$.
Then $Y_0(5)$ has good reduction modulo $11$ since $11 \nmid D_{5,5}$.  So $Y_1(5)$ has good reduction modulo $11$ by Proposition
\ref{PX1goodX0good}
(even though $11 \mid D_{1,5}$ and $11 \mid D_{1,1}$.) 
\end{ex}

\begin{rem}\label{rem:false_when_composite} Proposition \ref{PX1goodX0good} is false when $n$ is composite. For example, take the family $f_c(z) = z^2 +c$ with $n=6$ and $p=67$. The prime $67$ does not divide $D_{6,6}$, so $Y_0(6)$ has good reduction at 67 by Proposition~\ref{prop:Pbadimpliesdisc}(2). On the other hand, $67$ divides $D_{n}$ exactly once (it divides $D_{6,3}$); so Theorem~\ref{Tbad} below shows that $Y_1(6)$ has bad reduction at $67$. Nonetheless, $Y_1(6)$ is still \emph{irreducible} modulo 67, as Proposition \ref{PX1irredX0irred} below shows.
\end{rem}

\begin{prop}\label{PX1irredX0irred}
Assume $f(x,c) = x^m + c$.  
If $p \nmid (m-1)mn$, then $Y_1(n)$ is geometrically irreducible modulo $p$ if and only if $Y_0(n)$ is. 
\end{prop}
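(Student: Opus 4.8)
The plan is as follows. The implication ``$Y_1(n)$ geometrically irreducible modulo $p$ $\Rightarrow$ $Y_0(n)$ geometrically irreducible modulo $p$'' is immediate, since $Y_0(n)_{\overline{k}}$ is the quotient of $Y_1(n)_{\overline{k}}$ by $\langle\sigma\rangle$. So I would assume $Y_0(n)$ is geometrically irreducible modulo $p$ and prove the same for $Y_1(n)$. One may take $k=\overline{\FF}_p$ and a complete mixed-characteristic discrete valuation ring $R$ with residue field $k$ (enlarged below), and work with the models $\mfX_1(n)_R\to\mfX_0(n)_R$ over $\PP^1_R$. Since $f=x^m+c$, the generic fiber $X_1(n)_K$ is smooth (Theorem~\ref{Tmultibrotsmooth}), so $Y_1(n)_R$ is normal, $Y_1(n)_k$ is reduced (Proposition~\ref{Pnormal}), and, as $p\nmid n$, one has $\mfX_0(n)_k=\mfX_1(n)_k/\langle\sigma\rangle$; by Proposition~\ref{Psamemodel}, each $\mfX_i(n)_k$ is reduced, contains $Y_i(n)_k$ as a dense open, and is smooth over $\infty$. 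Hence $\mfX_0(n)_k$ is irreducible (the closure of the irreducible $Y_0(n)_k$), and $\varphi_k\colon\mfX_1(n)_k\to\mfX_0(n)_k$ is finite, generically separable (Proposition~\ref{Pgensep}), and $\ZZ/n\ZZ$-Galois. The strategy is to exhibit a point of $\mfX_1(n)_k$ at which $\varphi_k$ is \emph{totally} (tamely) ramified, and then run a tame monodromy argument.

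First, $\varphi_k$ has generic degree $n$: the $\langle\sigma\rangle$-action on the (smooth) points of $\mfX_1(n)_k$ over $\infty$ shifts periodic sequences of exact period $n$, hence is free, so $\varphi_k$ is finite \'etale of degree $n$ near those points. Next, take the totally ramified point $z=(x_0,c_0)$ of $\varphi$ from Lemma~\ref{Ltotram}, with $mx_0^{m-1}=\zeta_n$ a primitive $n$th root of unity (assume $n>2$; the cases $n\le 2$ are checked directly). As $p\nmid m(m-1)$, the coordinates $x_0,c_0$ are $p$-integral, so $z$ reduces to $\bar z\in Y_1(n)_k$ with $\sigma(\bar z)=\bar z$. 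In characteristic $0$, smoothness of $Y_1(n)_K$ at $z$ together with total ramification of $\varphi$ at $z$ forces $\sigma$ to act on the tangent line $T_zY_1(n)_K$ by a primitive $n$th root of unity, hence (comparing with $d\sigma_z=\left(\begin{smallmatrix}\zeta_n&1\\0&1\end{smallmatrix}\right)$) this line is the $\zeta_n$-eigenspace $\{dc=0\}$; equivalently $\tfrac{\partial\Phi_n}{\partial x}(z)=0$, an honest equality in $R$, so $\tfrac{\partial\Phi_n}{\partial x}(\bar z)=0$. Meanwhile formula~\eqref{Etotram} of Lemma~\ref{Ltotram}, together with $p\nmid(m-1)mn$ (and $\zeta_n\ne 1$ in $k$, valid since $p\nmid n$ and $n\ge 2$), shows $\tfrac{\partial\Phi_n}{\partial c}(z)$ is a unit of $R$, so $\tfrac{\partial\Phi_n}{\partial c}(\bar z)\ne 0$. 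Hence $Y_1(n)_k$ is smooth at $\bar z$ with tangent line $\{dc=0\}$, on which $\sigma$ acts by the primitive $n$th root $\bar\zeta_n$; by tameness the $\langle\sigma\rangle$-action on $\widehat{\mcO}_{\mfX_1(n)_k,\bar z}\cong k[[t]]$ linearizes, and since the $\sigma$-orbit of $\bar z$ is $\{\bar z\}$, the local extension at $\bar c_0:=\varphi_k(\bar z)$ is $k[[t^n]]\subseteq k[[t]]$: $\varphi_k$ is totally tamely ramified at $\bar z$, and $\bar c_0$ is a smooth point of $\mfX_0(n)_k$.

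Now let $V\subseteq\mfX_0(n)_k$ be the intersection of the \'etale locus of $\varphi_k$ with the smooth locus of $\mfX_0(n)_k$; it is a dense open, hence integral and regular, a punctured neighborhood of $\bar c_0$ lies in $V$, and $\varphi_k^{-1}(V)\to V$ is finite \'etale of degree $n$. Since $\varphi_k^{-1}(V)/\langle\sigma\rangle=V$, the group $\ZZ/n\ZZ$ acts simply transitively on fibers, so $\varphi_k^{-1}(V)\to V$ is a $\ZZ/n\ZZ$-torsor, classified by a homomorphism $\rho\colon\pi_1(V)\to\ZZ/n\ZZ$, and it is connected if and only if $\rho$ is surjective. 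But the local monodromy around $\bar c_0$ — the $\rho$-image of the tame generator of $\pi_1$ of the punctured neighborhood of $\bar c_0$ — has order equal to the ramification index $e(\bar z/\bar c_0)=n$; hence $\rho$ is onto, $\varphi_k^{-1}(V)$ is connected, and, being \'etale over the regular integral curve $V$, it is regular and connected, so integral. As it is dense in $\mfX_1(n)_k$, the curve $\mfX_1(n)_k$ is irreducible, and therefore so is its dense open $Y_1(n)_k$.

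I expect the second paragraph to be the main obstacle: one must rule out any degeneration of the characteristic-$0$ totally ramified point modulo $p$ — neither a singularity of $Y_1(n)_k$ at $\bar z$ nor a drop in the ramification of $\varphi_k$ there. This is precisely what the explicit identity~\eqref{Etotram} of Lemma~\ref{Ltotram} is designed to preclude, and it is the only place where a genuine computation, as opposed to formal manipulation, is required.
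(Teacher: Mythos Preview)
Your proof is correct and rests on the same key computation as the paper's: use Lemma~\ref{Ltotram} to produce the $\ZZ/n\ZZ$-fixed point $z$, and invoke \eqref{Etotram} together with $p\nmid(m-1)mn$ to see that $\partial\Phi_n/\partial c$ is a $p$-adic unit at $z$, so the reduction $\bar z$ is smooth on $Y_1(n)_k$.

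Where you diverge is in the endgame. You carefully verify that $\sigma$ acts on $T_{\bar z}Y_1(n)_k$ by a primitive $n$th root of unity, deduce that $\varphi_k$ is totally tamely ramified of index $n$ at $\bar z$, and then run a $\pi_1$/monodromy argument to get connectedness of the \'etale locus. The paper short-circuits all of this with a two-line component count: since $Y_0(n)_k$ is irreducible, $\ZZ/n\ZZ$ permutes the irreducible components of $Y_1(n)_k$ transitively; the smooth point $\bar z$ lies on a unique component, but being fixed by $\ZZ/n\ZZ$ (it is the reduction of a $\ZZ/n\ZZ$-fixed point) that component is $\ZZ/n\ZZ$-stable, hence is the only one. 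No tangent-space computation, no local monodromy, no $\pi_1$ needed. Your route is perfectly valid and makes the ramification structure at $\bar z$ explicit, but the paper's observation that ``smooth $+$ group-fixed $+$ transitive on components $\Rightarrow$ one component'' is the more economical way to close.
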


\begin{proof}
Since $Y_1(n)$ is a finite cover of $Y_0(n)$, it is geometrically reducible modulo $p$ if $Y_0(n)$ is.  Now, suppose $Y_0(n)$ is geometrically irreducible modulo $p$. Then the  $\ZZ/n\ZZ$-action on $Y_1(n)$ in characteristic $p$ permutes the irreducible components transitively.  To show that $Y_1(n)$ is geometrically irreducible modulo $p$, it suffices to show that $Y_1(n) \to Y_0(n)$ has a smooth, totally ramified point in characteristic $p$, since such a point can lie on only one irreducible component but must simultaneously lie on all irreducible components.  Since $p \nmid (m-1)mn$, the right hand side of (\ref{Etotram}) is a $p$-adic unit.  Lemma \ref{Ltotram} then implies that $\partial \Phi_n/\partial c$ (at the point $z$) does not vanish modulo $p$, which shows that $z$ is the desired point.
\end{proof}

\subsection{Quadratic maps} In this section, we restrict to the quadratic maps $f_c(x)=x^2+c$. In this case, we can rule out prime factors of $D_n$ that only divide the factors $D_{n,1}$ and $D_{n,2}$.

\begin{lemma}\label{lemma:distinctHyperbolicComponents} Let $d < n$ be positive integers with $d \mid n$. Let $p$ be a prime not dividing $2n/d$. Suppose that there are distinct roots $c_1$ and $c_2$ of $\Delta_{n,d}$ and ramification points $(x_1, c_1)$ and $(x_2, c_2)$  of $Y_1(n)\to \AA^1$ that collide on the special fiber $Y_1(n)_{\FF_p}$. Then $c_1$ and $c_2$ lie on the boundaries of distinct period $d$ hyperbolic components of the Mandelbrot set.
\end{lemma}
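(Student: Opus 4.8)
The plan is to argue by contradiction: suppose $c_1$ and $c_2$ lie on the boundary of the \emph{same} period $d$ hyperbolic component $H$ of the Mandelbrot set. Since $c_1, c_2$ are roots of $\Delta_{n,d}$, Remark~\ref{hyperbolicRootsAndRootsOf1} tells us that under the conformal coordinate $\bar\lambda_d: \overline H \to \overline{\DD}$ they map to primitive $(n/d)$-th roots of unity $\omega_1 \neq \omega_2$ (they are distinct because $c_1 \neq c_2$). The key point is that the period $d$ orbit persists as we move along $\partial H$: there is a single analytic orbit $\mcO(c)$ of exact period $d$, defined for $c$ in a neighbourhood of $\overline H$, whose $d$-th multiplier $\lambda_d(\mcO(c))$ equals $\lambda_d(c)$ — a coordinate that is analytic and (by Proposition~\ref{prop:uniqueRoot}) has nonvanishing derivative on $\partial H$ away from the root. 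So along $\partial H$ we get a bona fide curve in $Y_1(d)$ (the orbit, together with its multiplier), and $c_1, c_2$ are the two points on it where this multiplier becomes a primitive $(n/d)$-th root of unity.

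Next I would translate the hypothesis about colliding ramification points of $Y_1(n) \to \AA^1$ into a statement about $Y_1(d)$. By Proposition~\ref{prop:branchedCover}(3), a ramification point $(x_i, c_i)$ over a root $c_i$ of $\Delta_{n,d}$ with $d < n$ is exactly a point of $Y_1(d)$ where the period $d$ orbit has multiplier a primitive $(n/d)$-th root of unity — i.e. a point where $\Delta_{n,d}(c)$, viewed via the factor $\Res_x(C_{n/d}(x), \delta_d(x,c))$, picks out the primitive $s$-th roots of unity among the $\lambda_d$-values, where here $s = n/d$ since $p \nmid n/d$. The collision on $Y_1(n)_{\FF_p}$ forces $c_1 \equiv c_2 \pmod p$ (their images in $\AA^1$ collide). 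Now I want to derive a contradiction with the fact that $c_1, c_2$ are distinct simple roots of $\Delta_{n,d}$ lying on the \emph{same} $H$. The idea: because $\lambda_d$ is an analytic isomorphism $\overline H \to \overline\DD$ defined over $\ZZ$ up to the relevant roots of unity (it is the multiplier of a rational family, hence defined integrally away from $\disc f(x,1)$, which is a power of $2$), the two parameters $c_1, c_2$ correspond under $\lambda_d$ to the two \emph{distinct} primitive $(n/d)$-th roots of unity $\omega_1, \omega_2$; if $c_1 \equiv c_2 \pmod p$ then pulling back through the integral model of $\lambda_d$ we would get $\omega_1 \equiv \omega_2 \pmod{\mfp}$ for a prime $\mfp$ over $p$. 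But $\omega_1 - \omega_2$ divides (up to units) a power of $n/d$ in $\ZZ[\zeta_{n/d}]$ — more precisely, differences of distinct $(n/d)$-th roots of unity have norm dividing a power of $n/d$ — so $p \mid n/d$, contradicting the hypothesis $p \nmid 2n/d$.

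The main obstacle I anticipate is making the "integral model of $\lambda_d$ on $H$" argument rigorous: $\lambda_d$ is a priori only a complex-analytic object attached to the Mandelbrot set, whereas the collision happens on an $\FF_p$-scheme, so I need to replace the analytic statement with an algebraic one. The right way is probably to bypass $\lambda_d$ entirely after using it only to identify the combinatorial data: use Theorem~\ref{discriminantOfPhi} and Definition~\ref{defn:factors} to express everything in terms of $\delta_d(x,c)$ and $C_{n/d}(x)$ over $\ZZ$, so that $c_1, c_2$ being roots of $\Res_x(C_{n/d}(x), \delta_d(x,c))$ on the same component $H$ means there is a single branch of $\delta_d(x,c) = 0$ (one orbit's multiplier $x = \lambda_d(c)$, analytic near $\overline H$) along which $C_{n/d}(\lambda_d(c))$ vanishes at both $c_1$ and $c_2$; then $\lambda_d(c_1)$ and $\lambda_d(c_2)$ are distinct roots of $C_{n/d}$, and the local analytic isomorphism $\lambda_d$ near $\partial H$ — which has nonzero derivative, Proposition~\ref{prop:uniqueRoot} — shows $c_1 \not\equiv c_2$ modulo any prime not dividing the resultant/discriminant data, i.e. not dividing $2n/d$. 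If fully algebraizing $\lambda_d$ proves awkward, an alternative is to show directly that a collision of two such ramification points on the same $H$ would force $C_{n/d}(x)$ to have a repeated root modulo $p$, hence $p \mid \disc C_{n/d} \mid (n/d)^{\,\bullet}$.
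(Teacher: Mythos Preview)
Your overall architecture is right: assume $c_1,c_2$ lie on the same period-$d$ component $H$, deduce that the associated multipliers $\omega_1,\omega_2$ are \emph{distinct} primitive $(n/d)$-th roots of unity, and then show that the collision forces $\omega_1\equiv\omega_2\pmod p$, contradicting $p\nmid n/d$. That is exactly the skeleton of the paper's argument.

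Where you overcomplicate things is in extracting only $c_1\equiv c_2\pmod p$ from the hypothesis and then trying to invert the analytic coordinate $\lambda_d:\overline H\to\overline{\DD}$ to recover a congruence between the $\omega_i$. As you yourself note, algebraizing this map is awkward. The paper avoids this entirely by using the \emph{full} collision: the hypothesis says $(x_1,c_1)$ and $(x_2,c_2)$ coincide on $Y_1(n)_{\FF_p}$, so $x_1\equiv x_2$ and $c_1\equiv c_2$ modulo a prime above $p$ (both are $p$-adic integers at odd $p$ by \cite[Corollary~3.5]{MortonVivaldi}). Now the $d$-th multiplier $\lambda_d(x_i)=(f_{c_i}^d)'(x_i)$ is a polynomial in $x_i$ and $c_i$ with integer coefficients, so $\omega_1=\lambda_d(x_1)\equiv\lambda_d(x_2)=\omega_2\pmod p$ immediately. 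No analytic-to-algebraic translation is needed: the multiplier is already an algebraic (indeed polynomial) function of the \emph{point} $(x,c)\in Y_1(n)$, not merely of $c$. Then $\omega_1\neq\omega_2$ but $\omega_1\equiv\omega_2\pmod p$ forces $p\mid\disc C_{n/d}$, hence $p\mid n/d$.

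Your final fallback (``force $C_{n/d}$ to have a repeated root mod $p$'') is essentially this observation, but you did not make explicit the crucial step --- that the collision of the $x$-coordinates, not just the $c$-coordinates, is what makes the multipliers collide.
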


\begin{proof}
By Proposition \ref{prop:branchedCover}(4), for $i = 1,2$, the orbit of $x_i$ is a parabolic orbit for $f_{c_i}$; hence each $c_i$ lies on the boundary of a unique period $d$ hyperbolic component $H_i$. The $d$-th multipliers $\lambda_i = \lambda_{d}(x_i)$ are primitive $(n/d)$-th roots of unity (by the comment after Proposition \ref{discriminantOfPhi}).  Both $x_i$ and $c_i$ are $p$-adic integers at all odd primes $p$ by \cite[Corollary 3.5]{MortonVivaldi}. Since $x_1 \equiv x_2$ and $c_1 \equiv c_2$ (mod $p$), it follows that $\lambda_1 \equiv \lambda_2$ (mod $p$). If $c_1\neq c_2$ and $H_1=H_2$, then the multipliers $\lambda_i$ would be \emph{distinct} primitive $(n/d)$-th roots of unity (by Remark \ref{hyperbolicRootsAndRootsOf1}), hence distinct mod $p$ for $p\nmid n/d$.  Thus $H_1 \not = H_2$.
\end{proof}

\begin{prop}\label{P12}
Suppose $p \nmid 2n$ and $n > 2$.  Let $S = \{1\}$ if $n$ is odd and $S = \{1,2\}$ if $n$ is even.  If $p$ divides $\prod_{d \in S} D_{n,d}$ but not $D_n/\prod_{d \in S} D_{n,d}$, then $Y_1(n)$ has good reduction at $p$.
\end{prop}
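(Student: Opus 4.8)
The plan is to follow the strategy of Proposition~\ref{PRnne}. First, since $n>2$ we have $n\notin S$, so $D_{n,n}$ divides $D_n/\prod_{d\in S}D_{n,d}$ and hence $p\nmid D_{n,n}$; because $\disc f(x,1)$ is a power of $2$ and $p$ is odd (as $p\nmid 2n$), Proposition~\ref{prop:Pbadimpliesdisc}(2) shows that $Y_0(n)$ has good reduction modulo $p$. (Recall also that $Y_1(n)_K$ is smooth by Theorem~\ref{Tmultibrotsmooth}, and we may assume $k$ algebraically closed.) By Lemma~\ref{Lramcollision} it then suffices to prove that no two branch points of $\varphi\colon Y_1(n)\to Y_0(n)$ collide modulo $p$; in characteristic zero these branch points lie over the roots of $\prod_{d\mid n,\,d<n}\Delta_{n,d}(c)$ in $\AA^1$, by Proposition~\ref{prop:branchedCover}(2),(3).

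So suppose for contradiction that $b_1\neq b_2$ are branch points of $\varphi$ reducing to a common point $w$ of $Y_0(n)_k$, and write $b_i=\varphi(z_i)$ with $z_i=(x_i,c_i)$ a ramification point of $\pi_1$, $c_i$ a root of $\Delta_{n,d_i}$ with $d_i<n$, and (by Proposition~\ref{prop:branchedCover}(3) and the discussion after Theorem~\ref{discriminantOfPhi}) the $f_{c_i}$-orbit of $x_i$ a parabolic orbit of primitive period $d_i$. Applying $\pi_0$ gives $c_1\equiv c_2\pmod p$, while $c_1\neq c_2$ since there is at most one branch point of $\varphi$ above any given point of $\AA^1$. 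If $d_1\neq d_2$, then $p\mid R_{n,d_1,d_2}$, and since $d_1,d_2<n$ the square $R_{n,d_1,d_2}^2$ divides $D_n/\prod_{d\in S}D_{n,d}$, contradicting the hypothesis. Hence $d_1=d_2=:d<n$ and $p\mid D_{n,d}$; if $d\notin S$, then $D_{n,d}$ divides $D_n/\prod_{d'\in S}D_{n,d'}$, again a contradiction, so $d\in S$, i.e.\ $d=1$, or $d=2$ with $n$ even.

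To eliminate the case $d\in\{1,2\}$, I would transfer the collision to $Y_1(n)$. Since $p$ is odd, $x_i$ and $c_i$ are $p$-adic integers by \cite[Corollary~3.5]{MortonVivaldi}, so each $z_i$ reduces to a point $\bar z_i$ of $Y_1(n)_k$ with $\varphi_k(\bar z_i)=w$. Because $n$ is a unit in $R$, the map $\varphi_k\colon Y_1(n)_k\to Y_0(n)_k$ is the quotient by the faithful $\ZZ/n\ZZ$-action of Remark~\ref{RY0nR}, so its fibre over $w$ is a single orbit and $\bar z_2=\sigma^j\bar z_1$ for some $j$. Replacing $z_1$ by $\sigma^j z_1=(f_{c_1}^j(x_1),c_1)$ --- still a ramification point of $\pi_1$ over $c_1$ lying in a parabolic orbit of primitive period $d$, by Proposition~\ref{prop:branchedCover}(3) --- we obtain distinct points $z_1\neq z_2$ of $Y_1(n)$ (as $c_1\neq c_2$) that collide on $Y_1(n)_{\FF_p}$. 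Since $d\mid n$ and $p\nmid 2n$, we have $p\nmid 2n/d$, so Lemma~\ref{lemma:distinctHyperbolicComponents} forces $c_1$ and $c_2$ to lie on the boundaries of \emph{distinct} hyperbolic components of period $d$. But the Mandelbrot set has a unique hyperbolic component of period $1$ (the main cardioid) and a unique one of period $2$, a contradiction. Therefore no two branch points of $\varphi$ collide modulo $p$, and Lemma~\ref{Lramcollision} gives that $Y_1(n)$ has good reduction at $p$.

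The step I expect to be the main obstacle is the lifting in the last paragraph: converting the hypothetical collision of branch points on $Y_0(n)$ into a bona fide collision of ramification points on $Y_1(n)$, stated precisely enough for Lemma~\ref{lemma:distinctHyperbolicComponents} to apply. The remaining work is routine bookkeeping with the factorization \eqref{Efactorization} of $D_n$, together with the classical fact that periods $1$ and $2$ each carry a unique hyperbolic component of the Mandelbrot set.
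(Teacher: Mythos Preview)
Your proof is correct and follows essentially the same approach as the paper's: establish good reduction of $Y_0(n)$ via Proposition~\ref{prop:Pbadimpliesdisc}(2), invoke Lemma~\ref{Lramcollision}, and then use Lemma~\ref{lemma:distinctHyperbolicComponents} together with the uniqueness of the period-$1$ and period-$2$ hyperbolic components to derive a contradiction. Your lifting argument---using the $\ZZ/n\ZZ$-action to pass from colliding branch points on $Y_0(n)$ to colliding ramification points on $Y_1(n)$---makes explicit a step the paper's proof leaves implicit when it jumps directly to ``two ramification points $(x_1,c_1)$ and $(x_2,c_2)$ of $Y_1(n)\to Y_0(n)$ that collide on the special fiber.''
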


\begin{proof}
Since $p\nmid D_{n,n}$, the curve $Y_0(n)$ has good reduction modulo $p$ by Proposition \ref{prop:Pbadimpliesdisc}(2). By Lemma \ref{Lramcollision}, $Y_1(n)$  has good reduction modulo $p$ unless there are two ramification points $(x_1, c_1)$ and $(x_2, c_2)$ of $Y_1(n)\to Y_0(n)$ that collide on the special fiber. In this case, $c_1$ and $c_2$ are roots of $\delta_n(1,c)$ that collide modulo $p$.  By the hypotheses, $c_1$ and $c_2$ are roots of $\Delta_{n, d}$ for some $d \in S$. By Lemma \ref{lemma:distinctHyperbolicComponents}, $c_1$ and $c_2$ lie on the boundary of distinct hyperbolic components of period $d$. But there is only one hyperbolic component of period $d$, since $d \in \{1, 2\}$.     
\end{proof}

\begin{rem}
Proposition \ref{P12} states that the set of primes of bad reduction for $Y_1(n)$ 
does not contain any prime $p$ which divides $D_n$ only because it divides $D_{n,1}$ (resp.\ $D_{n,1}D_{n,2}$) when $n$ is odd (resp.\ $n$ is even).  In Tables \ref{table:data1-7}--\ref{table:data8}, this means bad reduction cannot occur if $p$ appears in the $(1,1)$ (resp.\ and/or $(2,2)$) row but not in other rows.
For example, $Y_1(7)$ has good reduction modulo $p=29$.
\end{rem}

\section{Bad reduction}\label{Sbad}

As in \S\ref{Sgood}, we assume in this section that the family $\mc{F} = \{f_c\} = \{f(x,c)\}$ satisfies Morton's hypothesis (H) from Definition~\ref{defn:hypH} for $c = u^m$.  By $Y_1(n)$ we mean $Y_{1,\mc{F}}(n)$ (and the same for $Y_0(n)$). We further assume that $Y_1(n)$ is \emph{smooth} in characteristic zero, so that results about bad reduction are not vacuous: this holds when $f(x,c) = x^m +c$, by Theorem \ref{Tmultibrotsmooth}. In this section we give two criteria for the dynatomic curve to have bad reduction at a prime. 

\begin{prop}\label{Ppdividesnbadreduction}
Assume $f_c(x) = x^m + c$.  If $p$ is a prime not dividing $m$ but dividing $n > 3$, then $Y_1(n)$ has bad reduction modulo $p$.
\end{prop}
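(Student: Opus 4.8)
The plan is to exploit the structure theory of parabolic parameters together with the bad-reduction criterion in Corollary \ref{Cbadredtest}. The key point is that when $p \mid n$, the distinction between ``formal period $n$'' and the exact periods of the underlying orbits becomes blurred in characteristic $p$: by Definition \ref{defn:formal_period}, a point of primitive period $d$ with $n = d s p^e$ and $\lambda_d$ a primitive $s$-th root of unity is counted as having formal period $n$. So the set of points of formal period $n$ can change radically upon reduction, and this is what should force bad reduction. Concretely, I expect to compare the degree $\nu(n)$ of the cover $\pi_1 \colon Y_1(n) \to \AA^1$ with the analogous count in characteristic $p$, and show the fibers are ``too small'' — the discriminant of the reduced cover is too large relative to the generic one — so Corollary \ref{Cbadredtest}(1) or (2) applies.

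First I would fix an odd prime $p \mid n$ with $p\nmid m$ (the case $p\mid n$, $p$ possibly $2$, is included once $p\nmid m$), write $n = p^a n'$ with $p\nmid n'$, $a\geq 1$, and work over a complete mixed-characteristic DVR $R$ with residue field $k=\bar{\FF}_p$. Localize $\AA^1_R$ at a closed point $x_0$ lying under a suitable parabolic parameter, so that $\widehat{\mathcal{O}}_{\AA^1_R, x_0}\cong R[\![T]\!]$, and consider the extension $B/A$ cut out by $\Phi_n$. The generic discriminant degree $d_\eta$ above $x_0$ is governed by Proposition \ref{prop:branchedCover}(3): if the chosen $c_0$ is a root of $\Delta_{n,d}$ then each ramification point contributes, and one reads off $d_\eta$ from the ramification indices ($n/d$ for satellite points, $2$ for primitive ones). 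The heart of the argument is then to show that the discriminant degree $d_a$ of the specialized extension $S_a/R$ (setting $T=a$ for $a$ in the maximal ideal of $R$, i.e. looking at a nearby integral fiber in characteristic $0$ but then reducing, or more precisely comparing with $B_k/A_k$) is strictly smaller — equivalently, that $B_k/A_k$ is wildly ramified or $B_k$ is non-normal — because in characteristic $p$ the $p$-power part of the period structure collapses: points that were distinct points of exact period $p^a d$ (or the parabolic orbit structure with a primitive $p^a$-th power multiplier) merge, or the local cover acquires wild ramification of index divisible by $p$. For the wild-ramification route, the cleanest incarnation: show that above such a parabolic point the map $\varphi\colon Y_1(n)\to Y_0(n)$ has a point with ramification index divisible by $p$ in characteristic $p$ — this contradicts the ``tame'' conclusion forced by good reduction via Corollary \ref{Cgoodredtest}, since good reduction of $Y_1(n)$ at $p$ would make all ramification indices prime to $p$.

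The step I expect to be the main obstacle is pinning down precisely \emph{which} parabolic point to localize at and proving that its local structure in characteristic $p$ really does pick up a factor of $p$ in a ramification index (or a drop in the fiber cardinality). This requires understanding how the dynatomic polynomial $\Phi_n$ and the multiplier function degenerate modulo $p$ when $p\mid n$: one wants a parameter $c_0$ defined over $\bar{\FF}_p$ for which a period-$d$ orbit with $d\mid n'$, $d<n$, has multiplier a primitive $(n/d)$-th root of unity in characteristic $0$ but, modulo $p$, the relevant cyclotomic factor $C_{n/d}$ acquires $p$-th power behavior (since $C_{p^a s}(x)\equiv C_s(x)^{\phi(p^a)}\pmod p$). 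That congruence for cyclotomic polynomials modulo $p$ is exactly the mechanism: it shows the local monodromy, which was tame of order $n/d$, becomes a group of order divisible by $p$ after reduction, i.e. wild. Making this rigorous — matching the abstract local extension $\widehat{\mathcal{O}}_{Y_1(n)_R,y}/R[\![T]\!]$ to an explicit Kummer-type or Artin–Schreier-type extension and reading off wildness — is the technical crux; once it is in hand, Corollary \ref{Cbadredtest}(1) immediately gives that $Y_1(n)_k$ is not normal, hence (being a curve, and reduced by Proposition \ref{Pnormal}) not smooth, which is bad reduction. The hypothesis $n>3$ should enter to guarantee the existence of a proper divisor $d$ of $n$ with $d<n$ and the relevant parabolic orbit actually present (for $n\leq 3$ the combinatorics degenerate and no such configuration exists).
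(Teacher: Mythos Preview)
Your proposal outlines a plausible-sounding strategy, but it does not reach a proof and the route you sketch is quite different from the paper's argument. Two specific issues:

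\medskip
\textbf{The wild-ramification heuristic does not close.} You suggest that if $Y_1(n)$ had good reduction, then all ramification indices of $\varphi\colon Y_1(n)_k\to Y_0(n)_k$ would be prime to $p$, and you point to Corollary~\ref{Cgoodredtest}. But that corollary does not say ``good reduction of the source $\Rightarrow$ tame ramification of the cover''; wild covers between smooth curves (Artin--Schreier covers) are perfectly legal. Likewise Corollary~\ref{Cbadredtest}(1) only yields ``$B_k$ not normal \emph{or} $B_k/A_k$ wildly ramified,'' so a discriminant drop alone does not give a singularity without the extra fiber-cardinality hypothesis in part~(2), which you do not verify. So the abstract criteria you invoke do not, as stated, produce bad reduction.

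\medskip
\textbf{The paper's proof is a direct Jacobian computation.} Rather than comparing discriminants, the paper exhibits an explicit singular point on $Y_1(n)_{\ol\FF_p}$. The point is one of the totally ramified points $z=(x_0,c_0)$ of $\varphi$ described in Lemma~\ref{Ltotram}: here $x_0$ is a fixed point of $f_{c_0}$ with multiplier a primitive $n$th root of unity. Since $z$ is a ramification point of $\pi_1$, one has $\partial\Phi_n/\partial x(z)=0$ automatically. The entire content of the proof is a calculation of $\partial\Phi_n/\partial c(z)$ using the identity~(\ref{Etotram}), which reduces to evaluating $\prod_{d\mid n,\,d\neq 1,n}\Phi_d(z)$; one finds that, up to $p$-adic units,
\[
\frac{\partial\Phi_n}{\partial c}(z)\;=\;\frac{(m-1)m^{1/(m-1)}\,n}{(\zeta_n^{n/p}-1)(\zeta_n-1)},
\]
and for $p\mid n$, $n>3$ the numerator $n$ contributes enough $p$-valuation to make this vanish modulo $p$. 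Thus both partials vanish at $\bar z$, and $\bar z$ is singular. The hypothesis $n>3$ is used only at the last step to ensure the valuation is positive; it is not about existence of a divisor $d$ as you conjectured.

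\medskip
Your intuition that something collapses because $C_{p^a s}(x)\equiv C_s(x)^{\phi(p^a)}\pmod p$ is morally related to why $\zeta_n^{n/p}-1$ has small $p$-valuation, but the paper bypasses any local-extension analysis in favor of this one explicit formula.
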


\begin{proof}
We show that the reduction of (any) point $z = (x_0, c_0)$ from Lemma \ref{Ltotram} is a singular point of $Y_1(n)_{\ol{\FF}_p}$.  Since $z$ is a ramification 
point of $Y_1(n) \to \AA^1$, it follows that $\partial \Phi_n / \partial 
x = 0$ at $z$, even in characteristic zero.  To compute $\partial \Phi_n/\partial c$ at $z$, we use (\ref{Etotram}) 
and compute $\prod_{\substack{d | n \\ d \neq 1, n}} \Phi_d(z)$.  
Writing $\Psi_n$ for $f_c^n(x) - x$, then
$$\prod_{\substack{d | n \\ d \neq 1, n}} \Phi_d = \frac{\Psi_n}{\Phi_1 \Phi_n} = \frac{\Psi_n}{\Psi_1}\prod_{d | n} \Psi_d^{-\mu(n/d)}.$$ The right hand side has an equal number of factors with positive and negative exponents, and all factors (other than the $\Psi_n$'s, which cancel out) have simple zeroes at $x_0$, so the product is equal to the product of the $x$-derivatives of the non-$\Psi_n$ terms at $x_0$. Since $\partial \Psi_d / \partial x = \zeta_n^d - 1$ at $z$,  the right hand side equals
$$\frac{1}{\zeta_n - 1}\prod_{\substack {d|n \\ d \neq n}} (\zeta_n^d - 1)^{-\mu(n/d)}.$$  Since $\zeta_n^d - 1$ is a $p$-adic unit whenever $n/d$ is not a power of $p$, the expression simplifies to $(\zeta_n^{n/p} - 1)/(\zeta_n - 1)$ up to $p$-adic units.  Now, using (\ref{Etotram}), there is an equality, up to $p$-adic units,
$$\frac{\partial \Phi_n}{\partial c}(z) = \frac{(m-1)m^{1/(m-1)}n}{(\zeta_n^{n/p}-1)(\zeta_n - 1)}.$$  If $p | n$ and $n > 3$, then the right hand side has positive $p$-valuation, and thus reduces to $0$ in characteristic $p$.  So the reduction of $z$ modulo $p$ is not smooth. 
\end{proof}

Our second criterion for bad reduction is in terms of the valuation of the discriminant:

\begin{thm}\label{Tbad}
Suppose $p$ is a prime not dividing $m\disc f(x,1)$.  If $p$ divides the order of $D_{n,n}$ (resp.\ $D_n$) exactly once, then $Y_0(n)$ (resp.\ $Y_1(n)$) has bad reduction modulo $p$. 
\end{thm}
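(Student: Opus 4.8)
The plan is to deduce bad reduction from the discriminant-valuation criteria of \S\ref{Sprelims}, specifically Corollary~\ref{Cbadredtest}(2), applied to the branched cover $\pi_0\colon Y_0(n)_R \to \AA^1_R$ (resp.\ $\pi_1\colon Y_1(n)_R \to \AA^1_R$) over the completion of $\AA^1_R$ at a suitable closed point of the special fiber. Since $p \nmid m\disc f(x,1)$, Proposition~\ref{Pnormal} gives that $Y_i(n)_R$ is normal with reduced special fiber, and Proposition~\ref{Pgensep} gives that the special fiber of $\pi_i$ is generically separable of degree $\nu(n)$ (resp.\ $\nu(n)/n$); so the hypotheses of \S\ref{Sprelims} are in force after completing and passing to the algebraically closed residue field. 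The key input is the following: if $p$ divides $D_n$ (resp.\ $D_{n,n}$) exactly once, then modulo $p$ \emph{exactly two} of the branch points of $\pi_i$ come together, and they collide transversally, while all other branch points stay distinct and distinct from the colliding pair. This is because $D_n = \disc\delta_n(1,c)$ (resp.\ $D_{n,n} = \disc\Delta_{n,n}$ up to the resultant factors that, being squares, contribute even valuation), and $\ord_p$ of a discriminant being $1$ forces exactly one pair of roots to coincide to first order in the residue field, the rest remaining simple and pairwise non-congruent — here one uses that $p\nmid \disc f(x,1)$ guarantees $\delta_n(1,c)$ (resp.\ $\Delta_{n,n}$) reduces to a separable-away-from-one-double-root polynomial.

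Next I would localize at the image $\bar{c}_0 \in \AA^1_k$ of this double branch point. Write $A = \widehat{\mc O}_{\AA^1_R,\, c_0} \cong R[\![T]\!]$ and let $B$ be the semilocal completion of $Y_i(n)_R$ along the fiber over $c_0$; then $B$ is normal, finite, flat over $A$ of degree $\alpha = \nu(n)$ (resp.\ $\nu(n)/n$), with $B_k$ reduced and $B_k/A_k$ generically étale, exactly the setup of Corollary~\ref{Cbadredtest}. Over the generic fiber, $\Spec B_{\ol K} \to \Spec A_{\ol K}$ is ramified over the two distinct $\ol K$-points of $\Spec A_{\ol K}$ lying over the two colliding branch points (and over nothing else, since the other branch points specialize away from $c_0$). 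By Proposition~\ref{prop:branchedCover}, the ramification above each root of $\delta_n(1,c)$ (resp.\ $\Delta_{n,n}$) is tame of the explicitly known type — index $n/d$ for a root of $\Delta_{n,d}$ on $Y_1(n)$, and index $2$ at each of the relevant points on $Y_0(n)$ over a root of $\Delta_{n,n}$; in particular $d_\eta$, the discriminant degree of $B_{\ol K}/A_{\ol K}$, is computed from these two ramification loci and is the sum of the two corresponding local contributions. On the other hand, setting $T = a$ for $a$ specializing to $0$ (i.e.\ taking the fiber of $Y_i(n)_R$ over a specific closed point of $\AA^1_R$ reducing to $c_0$) produces $S_a/R$ whose discriminant degree $d_a$ I compute from the single $\ol K$-point obtained by collapsing the two branch points together: the branch locus of $\Spec S_a$ over $\Spec R$ sits at the \emph{one} closed point $c_0$, and the ramification there, being a degeneration of the two tame ramification configurations, has discriminant degree strictly smaller than $d_\eta$ — precisely because the two branch points, which contribute separately to $d_\eta$, now sit over a single point and the total ramification index over that point is the same while the number of points in the fiber can only increase or stay equal, forcing $d_a < d_\eta$. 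Then Corollary~\ref{Cbadredtest}(1) already tells us $B_k$ is non-normal or $B_k/A_k$ is wildly ramified; and to invoke part~(2) I exhibit a closed point of $\Spec A_{\ol K}$ — namely $c_0$ itself, or rather the generic point degeneration used to define $S_a$ — whose preimage under $\pi$ has exactly $\alpha - d_a$ points, which is the count coming from the actual fiber of $Y_i(n)_R$ over $c_0$ on the special side. Since the ramification of $Y_i(n) \to \AA^1$ over $c_0$ (or over a root of $\delta_n$, resp.\ $\Delta_{n,n}$) is tame by Proposition~\ref{prop:branchedCover} and $p\nmid\alpha$-type constraints are satisfied, wild ramification is impossible, so Corollary~\ref{Cbadredtest}(2) yields that $B_k$ — hence $Y_i(n)_k$ — is non-normal, i.e.\ singular, which is exactly bad reduction.

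The main obstacle I anticipate is the bookkeeping in the previous paragraph: carefully matching the ramification data of Proposition~\ref{prop:branchedCover} — which is stated for the completed curves $X_i(n)$ and in terms of the combinatorial types of parabolic parameters — against the discriminant-degree inequality $d_a < d_\eta$ needed for Corollary~\ref{Cbadredtest}, and verifying the point-count hypothesis of part~(2). Concretely, one must check that when two branch points of $\delta_n(1,c)$ merge modulo $p$, the local discriminant exponent genuinely \emph{drops}, rather than staying constant — this is where $\ord_p D_n = 1$ (equivalently, the two roots merge to first order only, $\ord_p$ of their difference is odd and minimal) is used essentially, and it is the step most likely to require a short explicit computation with the relevant completed local rings or with Morton's explicit model. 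A secondary subtlety is the $Y_0(n)$ versus $Y_1(n)$ distinction: for $Y_0(n)$ one works with $D_{n,n} = \disc\Delta_{n,n}$ and must argue that the satellite factors $\Delta_{n,d}$, $d<n$, do not branch $\pi_0$ at all (Proposition~\ref{prop:branchedCover}(2)), so that only primitive parabolic collisions are relevant; for $Y_1(n)$ one uses the full $\delta_n(1,c)$ and must handle a collision that might occur among satellite roots, among primitive roots, or between a satellite and a primitive root, checking in each case that the ramification types from Proposition~\ref{prop:branchedCover}(3) force the needed drop in discriminant degree.
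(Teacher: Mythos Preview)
Your overall strategy matches the paper's exactly: localize at the image $\ol a$ of the colliding pair of branch points, set up $B/A$ with $A = R[\![T]\!]$, and apply Corollary~\ref{Cbadredtest}(2). You correctly identify that $\ord_p D_n = 1$ forces the resultant factors $R_{n,d,e}$ (which appear squared) to be $p$-units, so $\ord_p D_{n,d} = 1$ for a unique $d$, and hence exactly two roots of $\delta_n(1,c)$ (both roots of the same $\Delta_{n,d}$) collide modulo $p$.

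The genuine gap is your computation of $d_a$. You describe $d_a$ as arising from ``the branch locus of $\Spec S_a$ over $\Spec R$ sitting at the one closed point $c_0$'' and argue that merging two branch points forces a drop in discriminant degree. But $S_a/R$ is a finite extension of a complete DVR by a finite $R$-algebra; it has no branch locus in the geometric sense, and there is no a priori reason a ``degeneration of two tame ramification configurations'' should halve the discriminant rather than, say, preserve or increase it. The paper makes this step precise in two moves you are missing. First, Lemma~\ref{Lalgnum} produces a specific $a \in \ZZ_p$ with $v_p(\Delta_{n,d}(a)) = 1$; this is what ``colliding to first order'' buys you, and it is not automatic from $\ord_p D_{n,d} = 1$ without an argument (indeed this lemma uses that the relevant ring is integrally closed). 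Second, Lemma~\ref{LDeltannval} (for $Y_0$) and the formula $\disc_x\Phi_n = \pm\prod\Delta_{n,d}^{n-d}\cdot\Delta_{n,n}^n$ of Theorem~\ref{discriminantOfPhi} (for $Y_1$) identify $d_a$ \emph{exactly}: one gets $d_a = v_p(\Delta_{n,n}(a)) = 1$ for $Y_0$ and $d_a = (n-d)v_p(\Delta_{n,d}(a)) = n-d$ for $Y_1$. Meanwhile $d_\eta$ is read off from Proposition~\ref{prop:branchedCover}: two branch points each contributing $1$ (for $Y_0$) or $d(n/d - 1) = n-d$ (for $Y_1$), so $d_\eta = 2d_a$ in both cases. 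This is the inequality you need, and it is an equality of explicitly computed integers, not a soft degeneration argument.

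For the point-count hypothesis of Corollary~\ref{Cbadredtest}(2), the closed point you want is not $c_0$ but one of the branch points $r_i$ on the \emph{generic} fiber: the fiber of $\pi$ over $r_i$ has exactly $\alpha - 1$ (for $Y_0$) or $\alpha - (n-d)$ (for $Y_1$) points, which is $\alpha - d_a$ on the nose. Your description (``$c_0$ itself, or rather the generic point degeneration'') conflates special and generic fibers.
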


\begin{rem}
For $n \leq 6$ and $f(x,c) = x^2 + c$, we verified that the \emph{only} odd primes $p$ of bad reduction for $Y_0(n)$ are those with $v_p(D_{n,n}) = 1$. 
\end{rem}

\begin{rem}
If $p$ divides $D_{n,n}$ (resp.\ $D_n$), 
then the branch points of $Y_0(n) \to \PP^1$ (resp.\ $Y_1(n) \to \PP^1$) collide modulo $p$.  The proof of Theorem \ref{Tbad} shows that when $p$ divides $D_{n,n}$ or $D_n$ exactly once, then exactly two \emph{ramification} points must also collide modulo $p$, which is what causes the bad reduction.   
\end{rem}

We start with two lemmas from algebraic number theory.

\begin{lemma}\label{Lalgnum}
Suppose the leading coefficient of $\phi \in \ZZ_p[x]$ is a unit. If $v_p(\disc(\phi)) = 1$, then there exists $a \in \ZZ_p$ such that $v_p(\phi(a)) = 1$.  Furthermore, there exist exactly two roots $r_1$ and $r_2$ of $\phi$ whose residues in $\ol{\FF}_p$ are the same as that of $a$, and these are the only two roots that have the same residue in $\ol{\FF}_p$.
\end{lemma}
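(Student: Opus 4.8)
The plan is to work with the factorization of $\phi$ over $\ZZ_p$ coming from Hensel's lemma applied to its reduction $\bar\phi \in \ol{\FF}_p[x]$, and to extract from $v_p(\disc(\phi)) = 1$ exactly how the roots cluster modulo $p$. First I would note that since the leading coefficient is a $p$-adic unit, we may factor $\phi = \prod_j \phi_j$ in $\ZZ_p[x]$ (after perhaps passing to an unramified extension, which does not change the $p$-adic valuation of the discriminant or of $\phi(a)$) where the reductions $\bar\phi_j$ are the distinct ``residue classes'' of roots: that is, $\bar\phi_j = (x - \bar{s}_j)^{e_j}$ for distinct $\bar s_j \in \ol{\FF}_p$, with $\sum_j e_j = \deg\phi$. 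The discriminant then factors as $\disc(\phi) = \prod_j \disc(\phi_j) \cdot \prod_{j < k}\mathrm{res}(\phi_j,\phi_k)^2$. Since the $\bar s_j$ are distinct, each resultant $\mathrm{res}(\phi_j,\phi_k)$ is a $p$-adic unit, so $v_p(\disc(\phi)) = \sum_j v_p(\disc(\phi_j))$.

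Next I would argue that $v_p(\disc(\phi)) = 1$ forces exactly one index $j_0$ with $v_p(\disc(\phi_{j_0})) = 1$ and $v_p(\disc(\phi_j)) = 0$ for all other $j$; in particular $e_j = 1$ (so $\phi_j$ is linear with a unit discriminant by convention, or rather $\disc$ of a linear polynomial being a unit — one treats degree-one factors as contributing $0$) for every $j \neq j_0$, and $e_{j_0} = 2$. Indeed, if some $\phi_j$ had $e_j \geq 3$, its discriminant would have valuation at least $2$ (the roots of $\phi_j$ all lie in the maximal ideal of the ring of integers of a ramified extension, giving at least $\binom{e_j}{2} \geq 3$ pairwise differences of positive valuation), contradicting the total being $1$; similarly two separate quadratic clusters would already give valuation $\geq 2$. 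So there is a unique cluster of size two, $\phi_{j_0}$, which is a quadratic (or a product of two conjugate linear factors) with $v_p(\disc(\phi_{j_0})) = 1$; write its two roots $r_1, r_2$, which satisfy $r_1 \equiv r_2 \pmod{\mathfrak{p}}$ but $v_p(r_1 - r_2) = $ half an odd number, i.e.\ $r_1, r_2$ live in a ramified quadratic extension and $\bar r_1 = \bar r_2 =: \bar s_{j_0} \in \FF_p$ (it is fixed by the conjugation, hence in $\FF_p$). All other roots have pairwise distinct residues and residues distinct from $\bar s_{j_0}$.

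For the ``furthermore'' clause this already gives exactly two roots ($r_1$ and $r_2$) sharing a residue, and no other coincidences. For the existence of $a$: take any $a \in \ZZ_p$ reducing to $\bar s_{j_0}$ (the latter lies in $\FF_p$, so this is possible), e.g.\ a Teichmüller-type lift or just any lift. Then $v_p(\phi(a)) = \sum_j v_p(\phi_j(a))$; for $j \neq j_0$ each $\phi_j(a)$ is a unit since $a$ avoids the residue $\bar s_j$, and $\phi_{j_0}(a) = u\cdot(a - r_1)(a - r_2)$ with $u$ a unit. Here I would choose $a$ slightly more carefully — it suffices that $v_p(a - r_1) + v_p(a - r_2) = 1$. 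Since $r_1, r_2$ are Galois-conjugate over $\ZZ_p$ with $v_p(r_1 - r_2) = 1/2 \cdot(\text{odd})$, for a generic lift $a \equiv \bar s_{j_0}$ one has $v_p(a - r_1) = 0 = v_p(a-r_2)$ — but then $v_p(\phi(a)) = 0$, not $1$. The fix: the \emph{quadratic} $\phi_{j_0}(x) \in \ZZ_p[x]$ itself, being Eisenstein-like after the substitution $x = \bar s_{j_0} + y$ (its discriminant has valuation $1$, its roots have equal residues in $\FF_p$), takes a value of valuation exactly $1$ at $y$ with $v_p(y) \geq 1$ — indeed if $\phi_{j_0}(\bar s_{j_0} + y) = y^2 + by + c$ with $v_p(b) \geq 1$, $v_p(c) \geq 1$ (forced by equal residues) and $v_p(\disc) = v_p(b^2 - 4c) = 1$, hence $v_p(c) = 1$ (if $v_p(b)\geq1$ then $v_p(b^2)\geq 2 > 1 = v_p(4c)$ when $v_p(c)=1$, and this is the only consistent valuation), so $\phi_{j_0}(\bar s_{j_0}) = c$ has valuation $1$. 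Thus $a = \bar s_{j_0}$ (the Teichmüller lift, landing in $\ZZ_p$ since $\bar s_{j_0}\in\FF_p$) works. The main obstacle, and the step needing care, is precisely this last valuation bookkeeping: making sure the chosen lift $a$ gives $v_p(\phi(a)) = 1$ rather than $0$ or $\geq 2$, which hinges on correctly identifying that in the size-two cluster the ``constant term'' $c$ has valuation exactly $1$ — a direct consequence of $v_p(b^2 - 4c) = 1$ together with $p$ odd (here $p \neq 2$ is where we use $p \nmid m\disc f(x,1)$ won't matter, but $p\ne 2$ does: for $p=2$ the argument must be adjusted, though in our application $p$ is always odd).
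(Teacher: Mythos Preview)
Your approach is the same as the paper's --- factor $\phi$ via Hensel, identify the unique size-two residue cluster, and choose $a$ inside it --- but there is one gap and one wrong aside.

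The gap is in ruling out $e_j \geq 3$. Saying ``at least $\binom{e_j}{2} \geq 3$ pairwise differences of positive valuation'' does not by itself give $v_p(\disc\phi_j) \geq 2$, because those differences live in a ramified extension and can each have valuation as small as $1/e_j$. What you actually need is the standard bound $v_p(\disc\phi_j) \geq e_j - 1$: after shifting so $\bar\phi_j = y^{e_j}$, every root has valuation $\geq 1/e_j$ by the Newton polygon, so each $\phi_j'(r_i)$ has valuation $\geq (e_j-1)/e_j$, and $\disc\phi_j = \pm\prod_i \phi_j'(r_i)$ has valuation $\geq e_j - 1$. The paper sidesteps this entirely by observing that $\ZZ_p[x]/(\phi)$ is already integrally closed (else $p^2 \mid \disc\phi$), so the factorization type of $\bar\phi$ is read off directly from the splitting of $(p)$.

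The wrong aside is the sentence claiming that for a ``generic lift $a \equiv \bar s_{j_0}$'' one has $v_p(a - r_1) = 0$: any $a$ reducing to $\bar s_{j_0} = \bar r_1$ automatically has $v_p(a - r_1) > 0$, so this is simply false and should be deleted. Your subsequent computation --- that $v_p(b) \geq 1$ and $v_p(b^2 - 4c) = 1$ force $v_p(c) = 1$, hence $v_p(\phi_{j_0}(a)) = 1$ --- is correct and in fact shows that \emph{every} lift of $\bar s_{j_0}$ to $\ZZ_p$ works; the paper makes the explicit choice $a = (r_1 + r_2)/2$, which is the same computation in coordinates $r_{1,2} = a \pm b\sqrt{p}$.
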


\begin{proof}
Adjoining a root $x$ of $\phi$ gives an integrally closed extension of $\ZZ_p$, as otherwise  $p^{2r}$ would divide $\disc \phi$, where $p^r$ is the index of the ring $\ZZ_p[x]$ in its integral closure.  Suppose the prime $(p)$ splits as $\mf{p}_1^{e_1} \cdots \mf{p}_s^{e_s}$ in $\ZZ_p[x]$.  Since $p\mid \disc(\phi)$ exactly once, $e_i = 1$ for all but one $i$, for which $e_i = 2$ and $\mf{p}_i$ has residue field of cardinality $p$.  Assume without loss of generality that $i = 1$.  Then the reduction $\ol{\phi}$ of $\phi \pmod{p}$ has irreducible factorization $\ol{\phi}_1^2 \ol{\phi}_2 \cdots \ol{\phi}_s$, where $\ol{\phi}_1$ has degree $1$.  By Hensel's lemma, this lifts to a splitting $\phi = \beta \phi_2 \cdots \phi_s$, where $\beta \in \ZZ_p[x]$ is a lift of $\ol{\phi}_1^2$.  In particular, $\phi$ has exactly two roots $r_1$ and $r_2$ having the same residue in $\ol{\FF}_p$, and these roots are conjugate over $\ZZ_p$ since they are the roots of $\beta$. 
By the definition of the discriminant, $v_p(r_1 - r_2) = 1/2$.  Thus $r_1,r_2= a \pm b\sqrt{p}$, where $b$ is integral over  $\ZZ_p$ with valuation $0$ and $a = (r_1 + r_2)/2 \in \ZZ_p$ is the element we seek.
\end{proof}

\noindent\textbf{Notation.} Write $\QQ_p^{ur}$ for the completion of the maximal unramified extension of $\QQ_p$, and $\ZZ_p^{ur}$ for its valuation ring.

\begin{lemma}\label{LDeltannval}
Let $a \in \ZZ_p^{ur}$, and let $S_a/\ZZ_p^{ur}$ be the extension given by restricting 
$Y_0(n)_{\ZZ_p^{ur}} \to \AA^1$ above $c = a$.
Then the degree of the discriminant of $S_a/\ZZ_p^{ur}$ is $v_p(\Delta_{n,n}(a))$.
\end{lemma}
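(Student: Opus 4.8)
The plan is to compute the degree of the discriminant of $S_a/\ZZ_p^{ur}$ by relating it to the local behavior of the cover $\pi_0 \colon Y_0(n) \to \AA^1$ above $c = a$, and then to identify this with $v_p(\Delta_{n,n}(a))$ using the interpretation of $\Delta_{n,n}$ from Theorem~\ref{discriminantOfPhi}. First I would recall that $S_a = \mcO_{Y_0(n)} \otimes_{\ZZ_p^{ur}[c]} \ZZ_p^{ur}$, obtained by setting $c = a$ in the affine coordinate ring; the fiber $\Spec S_a \to \Spec \ZZ_p^{ur}$ is a finite flat extension of degree $\nu(n)/n$. Its discriminant is the norm of the different, so its degree equals $\sum_{y} d_y$ where $y$ ranges over points of the special fiber and $d_y$ is the valuation of the different at $y$. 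Because $p \nmid 2n \disc f(x,1)$ forces tameness (via Proposition~\ref{prop:branchedCover}(3), where the only ramification of $\pi_0$ has index $2$, or there is none over $a$), each $d_y = e_y - 1$ where $e_y$ is the ramification index, so the degree of the discriminant simply counts collisions of points in the geometric fiber of $Y_0(n)$ over $c = a$.

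Next I would bring in the connection with $\Delta_{n,n}$. By Theorem~\ref{discriminantOfPhi}, equation~\eqref{eqn:meaningDelta_nn}, $\Delta_{n,n}(c) = \eta \prod_{i \neq j} L_j(\alpha_i)$, so $v_p(\Delta_{n,n}(a))$ measures the extent to which the $\nu(n)/n$ orbits of formal period $n$ for $f_a$ collide (i.e., when $\alpha_i$ lands on the orbit of $\alpha_j$ for $i \neq j$, so two distinct $Y_0(n)$-points over $c = a$ come together). I would want to show that this $p$-adic valuation is exactly the degree of the discriminant of $S_a/\ZZ_p^{ur}$. The cleanest route is to observe that over $\ZZ_p^{ur}$ the discriminant ideal of $S_a/\ZZ_p^{ur}$ is generated by $\disc_{\text{of the minimal/defining polynomial}}$, and that a defining polynomial for the $Y_0(n)$-cover over $\AA^1$ has $c$-specialized discriminant equal to $\Delta_{n,n}(c)$ up to a unit when $p \nmid \disc f(x,1)$ — this is precisely the content of the $Y_0(n)$ analogue of \eqref{eqn:discPhi}. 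More carefully: the coordinate ring of $Y_0(n)$ over $\ZZ[c]$ (localized away from $\disc f(x,1)$) is generated by the "orbit polynomials" $L_i$, whose product over a full set of orbit representatives is a polynomial $G_n(y,c)$ of degree $\nu(n)/n$ in $y$ with $\disc_y G_n(y,c) = \pm \Delta_{n,n}(c) \cdot (\text{stuff involving } \Delta_{n,d}, d<n)$; but away from the satellite parabolic parameters and away from $\disc f(x,1)$, only the $\Delta_{n,n}$ factor survives in the fiber over a generic-type $a$. Actually the correct statement is that $\disc_y G_n(y, c)$ equals $\Delta_{n,n}(c)$ times a unit at any prime $p \nmid \disc f(x,1)$, by the same resultant/discriminant bookkeeping that Morton--Vivaldi carry out for $\Phi_n$.

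The main obstacle will be justifying that the coordinate ring $S_a$ is $\ZZ_p^{ur}$-free with a clean defining polynomial whose discriminant is $\Delta_{n,n}(a)$ up to a unit — in other words, that no "hidden" ramification or index contributes. I expect to handle this by invoking the machinery already set up in \S\ref{Sprelims} and Proposition~\ref{prop:branchedCover}: since $p \nmid 2n \disc f(x,1)$, the cover $\pi_0$ is tame and $Y_0(n)_{\ZZ_p^{ur}}$ is the normalization of $\AA^1_{\ZZ_p^{ur}}$ (Proposition~\ref{Pnormal}); then for the fiber over $c = a$, the discriminant degree equals $\sum_y (e_y - 1)$, which in turn, via the branch-point description (Proposition~\ref{prop:branchedCover}(2)), equals $\sum_{\text{branch points } b \text{ of } \pi_0} v_a(\text{contribution}) = v_p(\Delta_{n,n}(a))$ because $\Delta_{n,n}$ is (up to a unit) the polynomial whose roots are the branch locus of $\pi_0$ counted with the correct multiplicity. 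I would make this precise by comparing $S_a$ against the universal situation: $\disc(S_{c}/\ZZ_p^{ur}[c])$ as an ideal of $\ZZ_p^{ur}[c]$ is generated by $\Delta_{n,n}(c)$ (this is the local-at-$p$ form of Morton--Vivaldi's discriminant formula for the orbit cover), and then $\disc(S_a/\ZZ_p^{ur})$ is obtained by specializing $c \mapsto a$, giving the ideal $(\Delta_{n,n}(a))$, whose degree is $v_p(\Delta_{n,n}(a))$. The remaining care is only in checking that specialization of the discriminant ideal commutes with base change here, which holds because $S_c$ is flat over $\ZZ_p^{ur}[c]$ and the extension is generically étale.
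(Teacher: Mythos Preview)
Your final argument---that the discriminant ideal of the universal family $Y_0(n)_{\ZZ_p^{ur}} \to \AA^1_{\ZZ_p^{ur}}$ is $(\Delta_{n,n}(c))$, and that specializing $c \mapsto a$ gives $\disc(S_a/\ZZ_p^{ur}) = (\Delta_{n,n}(a))$ because discriminant commutes with the flat base change---is exactly the paper's approach. The paper cites \cite[Proposition~9d]{Morton96} for the identification of the discriminant of the $Y_0(n)$-extension with $\Delta_{n,n}(c)$ over characteristic-zero fields, passes to the completed local ring $R[\![T]\!] \cong \widehat{\mc{O}}_{\AA^1_R, \ol{a}}$ (where the cover is finite free, so the discriminant is a principal ideal), notes that discriminant respects the unramified base change so this ideal is $(\Delta_{n,n})$, and then sets $T=0$.

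However, your first paragraph contains a genuine confusion that you should excise. You claim that $p \nmid 2n\,\disc f(x,1)$ forces tameness of $S_a/\ZZ_p^{ur}$ via Proposition~\ref{prop:branchedCover}(3), but that proposition concerns ramification of the \emph{horizontal} cover $\pi_0: Y_0(n)_K \to \AA^1_K$ over branch points in $\AA^1$, not ramification of the \emph{vertical} fiber $\Spec S_a \to \Spec \ZZ_p^{ur}$ at the prime $p$. The ramification index of the latter at a closed point is the number of period-$n$ orbits colliding there modulo $p$, which can certainly be divisible by $p$; tameness of $S_a/\ZZ_p^{ur}$ is neither automatic nor needed here. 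Likewise, the second paragraph's detour through a defining polynomial $G_n(y,c)$ with extra $\Delta_{n,d}$ factors is unnecessary: Morton's result already gives the discriminant of the $Y_0(n)$-cover as $\Delta_{n,n}(c)$ on the nose (no satellite factors appear, since those belong to $Y_1(n) \to Y_0(n)$), so you can go straight to the specialization step.
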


\begin{proof}
Write $R = \ZZ_p^{ur}$, $k= \ol{\FF}_p$, and $K = \QQ_p^{ur}$ for simplicity.
By \cite[Proposition 9d]{Morton96}, $\Delta_{n,n}(c)$ is the discriminant of the field extension for $Y_0(n) \to \AA^1$ over any field of characteristic zero; (the proposition is only stated for algebraically closed fields, but it holds over all fields since it is a polynomial identity and everything is defined over $\QQ$ by Theorem \ref{discriminantOfPhi}).

Identifying $a$ with a point in $\AA^1_R$ and $\ol{a}$ with its specialization to $\AA^1_k$, the map
$Y_0(n)_R \times_{\AA^1_R} \Spec \widehat{\mc{O}}_{\AA^1_R, \ol{a}} \to \Spec \widehat{\mc{O}}_{\AA^1_R, \ol{a}}$ is finite and free, and thus has discriminant given by a \emph{principal} ideal $(\delta)$ of $\widehat{\mc{O}}_{\AA^1_R, \ol{a}} \cong 
R[\![T]\!]$, where $T$ is a coordinate vanishing at $a$, e.g., $T = c-a$. Since the discriminant respects the unramified base change 
$\Spec \widehat{\mc{O}}_{\AA^1_R, \ol{a}} \to \AA^1_R$, 
it follows that $(\delta) = (\Delta_{n,n})$, where $\Delta_{n,n}$ is considered as a function of $T$.  Then the discriminant ideal of $S_a/\ZZ_p^{ur}$ is just the result of setting $T = 0$ in $(\Delta_{n,n})$, that is, $(\Delta_{n,n}(a))$.  The lemma follows.
\end{proof}

\begin{proof}[Proof of Theorem \ref{Tbad}] 
Let $R = \ZZ_p^{ur}$ and $K = \QQ_p^{ur}$.  We show that the special fiber of $Y_0(n)_R$ (resp.\ $Y_1(n)_R$) is singular when $p$ divides $D_{n,n}$ (resp.\ $D_n$) exactly once.  First note that, by Proposition \ref{Pnormal}, $Y_i(n)_R$ is the normalization of $\AA^1_R$ in $K(Y_i(n)_K)$ for $i \in \{0,1\}$.  Also recall that the polynomials $\Delta_{n,d}(c)$ are all defined over $\ZZ$ (Theorem \ref{discriminantOfPhi}) and have leading coefficients that are units in $\ZZ_p$ (\cite[top of p.\ 331]{Morton96}). 

If $v_p(D_n) = 1$, then by (\ref{Efactorization}) $p$ does not divide any $R_{n,d,e}$ with $d \neq e$ and there exists a unique $d \mid n$ such that $p$ divides $D_{n,d}$.  Then Lemma \ref{Lalgnum} guarantees an $a \in \ZZ_p$ (in particular, in $\ZZ_p^{ur}$) for which $v_p(\Delta_{n,d}(a)) = 1$, and for which exactly two roots $r_1$ and $r_2$ of $\delta_n(1,c)$ have the same reduction as $a$ in an appropriate extension of $\ZZ_p^{ur}$.  Likewise, if $v_p(D_{n,n}) = 1$, then Lemma \ref{Lalgnum} guarantees $a \in \ZZ_p^{ur}$ for which $v_p(\Delta_{n,n}(a)) = 1$ and two roots $r_1$ and $r_2$ of $\Delta_{n,n}(c)$ have the same reduction as $a$.
In either case, let $\ol{a}$ be the specialization of $a$ to $\AA^1_{\ol{\FF}_p}$.

Let $\mc{Z} = \Spec \widehat{\mc{O}}_{\PP^1_R, \ol{a}} \cong \Spec R[\![T]\!]$, and let $$\mc{Y}_i := Y_i(n)_R \times_{\AA^1_R} \mc{Z} \text{ for } i \in \{0, 1\}.$$  Now, $\mc{Y}_0$ and $\mc{Y}_1$ are flat over $\mc{Z}$, and the maps $\mc{Y}_0 \to \mc{Z}$ and $\mc{Y}_1 \to \mc{Z}$ correspond to extensions $B_0/R[\![T]\!]$ and $B_1/R[\![T]\!]$ as in \S\ref{Sprelims}.  Write $B_{i, \ol{K}}$, $R[\![T]\!]_{\ol{K}}$ for $B_i \otimes_R \ol{K}$, $R[\![T]\!] \otimes_R \ol{K}$, respectively (with $i \in \{0,1\}$).

Suppose $v_p(D_{n,n}) = 1$.  The only two (geometric) branch points on the generic fiber of $\mc{Y}_0 \to \mc{Z}$ come from $r_1$ and $r_2$. The map $Y_0(n) \to \PP^1$ has simple ramification above each of them (Proposition \ref{prop:branchedCover}(4a)), which means that the degree of the discriminant of $B_{0,\ol{K}}/R[\![T]\!]_{\ol{K}}$ is $2$.  By Lemma \ref{LDeltannval}, the degree of the discriminant of $\mc{Y}_0 \to \mc{Z}$ restricted above $c = a$ is $v_p(\Delta_{n,n}(a)) = 1$. The cardinality of a fiber of $\mc{Y}_0 \times_R \ol{K} \to \mc{Z} \times_R \ol{K}$ above a branch point is $\alpha-1$, where $\alpha$ is the degree of $\mc{Y}_0 \to \mc{Z}$. Thus the assumptions of Corollary \ref{Cbadredtest}(2) hold, and $\mc{Y}_0$ has a singular point.  Thus $Y_0(n)$ has bad reduction modulo $p$.

Now, suppose $v_p(D_n) = v_p(D_{n,d}) = 1$ for some $d < n$.  
The (geometric) branch points of the generic fiber of $\mc{Y} \to \mc{Z}$ are again $r_1$ and $r_2$, but this time there are $d$ ramification points above each of them, each with ramification index $n/d$ (Proposition \ref{prop:branchedCover}(4b)).  So the degree of the discriminant of $B_{1, \ol{K}}/R[\![T]\!]_{\ol{K}}$ is $2d(n/d - 1) = 2(n-d)$. On the other hand, $$v_p(\disc_x(\Phi_n(x, a))) = (n-d)v_p(\Delta_{n,d})= n-d$$ by Theorem \ref{discriminantOfPhi}.  Also, the cardinality of a fiber of $\mc{Y}_1 \times_R \ol{K} \to \mc{Z} \times_R \ol{K}$ above a branch point is $\alpha - (n-d)$, where $\alpha$ is the degree of $\mc{Y}_1 \to \mc{Z}$.  The assumptions of Corollary \ref{Cbadredtest}(2) hold, and $\mc{Y}_1$ has a singular point.  Thus $Y_1(n)$ has bad reduction modulo $p$.
\end{proof}

\begin{rem}
In the proof above, it is not really important to work over $\ZZ_p^{ur}$ rather than $\ZZ_p$.  We only use a ring with algebraically closed residue field in order to quote results from \S\ref{Sprelims}. 
\end{rem}

\section{Irreducibility of the dynatomic curve at primes exactly dividing the discriminant}\label{Sirreducibility}

We assume throughout this section that $f_c(x) = x^2 + c$.  Our goal is to show that the reduction of $Y_0(n)$ modulo $p$ is geometrically irreducible whenever $v_p(D_{n,n}) = 1$.  This will depend on a result about the so-called \emph{monodromy graph} (Definition \ref{Dmonodromygraph}), whose proof we defer until the next section.

First, we recall a few definitions related to \'{e}tale fundamental groups. If $S$ is a scheme and $s$ is a geometric point of $S$, then $\pi_1(S, s)$ is defined to be the automorphism group of the fiber functor $F_s$ (see, e.g., \cite[V]{SGA1} or \cite{Mezard}).  Note that different choices of geometric points lead to isomorphic fiber functors as long as $S$ is connected.  If $h: S \to S'$ is a morphism such that $h(s) = s'$, then by functoriality there is a homomorphism $\pi_1(S, s) \to \pi_1(S', s')$.  Suppose $S$ is a connected scheme with smooth compactification $\widetilde{S}$, and $T$ is a smooth irreducible component of $\widetilde{S} \setminus S$, of codimension $1$ in $\widetilde{S}$. Then the \emph{inertia group} at $T$ is the kernel of the natural map $\pi_1(S, s) \to \pi_1(S \cup T, s)$.

\begin{prop}\label{Pinertia}
Let $R$ be a mixed characteristic complete discrete valuation ring with fraction field $K$ and residue field $k$. Let $S$ be a smooth, projective, geometrically connected relative $R$-curve.  Let $x_1, \ldots, x_n$ be distinct $R$-sections of $S$.
Let $h: X \to S$ be a finite flat $R$-morphism, such that the special fiber of $X$ is reduced, the special fiber $h_k$ of $h$ is generically separable, and the generic fiber $h_K$ of $h$ is \'{e}tale outside $\{x_{1,K}, \ldots, x_{n,K}\}$, where $x_{i, K}$ is the intersection of $x_i$ with the generic fiber $S_K$ of $S$. Pick a $K$-point $s_K \in S_K \setminus \{x_{1,K}, \ldots, x_{n, K}\}$. 

Let $m \leq n$ be such that $x_1, x_2, \ldots, x_m$ are pairwise non-overlapping. If the action of the subgroup of $\pi_1(S_K \setminus \{x_{1, K}, \ldots, x_{n, K}\}, s_K)$ generated by the inertia groups at $x_{1,K}, \ldots, x_{m, K}$ on a geometric fiber of $h_K$ above $s_K$ is transitive, then the special fiber of $X$ is geometrically irreducible.
\end{prop}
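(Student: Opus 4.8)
The plan is to reduce the statement to a connectedness assertion about the \'etale locus of $h$ on the special fibre, and then to deduce that connectedness from the transitivity hypothesis by a specialization argument for (tame) fundamental groups over the strictly henselian base $R$.

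First come some harmless reductions. Geometric irreducibility of the special fibre, and all of the hypotheses, are preserved under base change of $R$ to the completion of its maximal unramified extension, so I may assume $k$ is algebraically closed; then ``geometrically irreducible'' means simply ``irreducible'', and (irreducibility being topological) non-reducedness may be ignored. Since $h_k$ is finite, every irreducible component of $X_k$ dominates $S_k$, so $X_k$ is purely one-dimensional. Let $\Sigma_0\subseteq S_k$ be the (finite, because $h_k$ is generically separable) set of points at which $h_k$ is not \'etale and which lie on no section $x_i$, and put $\mathcal V:=S\setminus(\bigcup_{i=1}^n x_i\cup\Sigma_0)$. Then $\mathcal V$ is a connected open subscheme of $S$, smooth over $R$, with $\mathcal V_K=S_K\setminus\{x_{1,K},\dots,x_{n,K}\}$ and with $\mathcal V_k$ a dense open subscheme of the smooth curve $S_k$, and $h_{\mathcal V}\colon X_{\mathcal V}:=h^{-1}(\mathcal V)\to\mathcal V$ is finite \'etale. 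Hence $X_{\mathcal V,k}$ is a \emph{smooth} curve, so its connected components are its irreducible components; moreover $X_{\mathcal V,k}$ is the complement of finitely many closed points of $X_k$, so it is dense in $X_k$ and meets every component. Consequently, if $X_{\mathcal V,k}$ is connected --- hence, being smooth and connected, irreducible --- then $X_k$, being its closure, is irreducible. \emph{It thus suffices to prove that $X_{\mathcal V,k}$ is connected.}

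Now $h_{\mathcal V}$ corresponds to an action $\rho\colon\pi_1(\mathcal V,\bar s)\to\Perm(F)$ on a geometric fibre $F$ over $s_K$, whose orbits are the connected components of $X_{\mathcal V}$; the components of $X_{\mathcal V,k}$ are the orbits of $\rho\bigl(\im(\pi_1(\mathcal V_k)\to\pi_1(\mathcal V))\bigr)$, the fibre functors being identified via connectedness of $\mathcal V$. On the generic side, for $i\le m$ the inertia group at $x_{i,K}$ --- the kernel of $\pi_1(\mathcal V_K)\to\pi_1(S_K\setminus\{x_{j,K}:j\ne i\})$ --- lies inside the geometric fundamental group $\pi_1(\mathcal V_{\bar K})=\pi_1(S_{\bar K}\setminus\{x_{1,\bar K},\dots,x_{n,\bar K}\})$ (compare the homotopy exact sequences), and by hypothesis these inertia groups generate a subgroup $H$ with $\rho(H)$ transitive. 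Thus it is enough to show that, for each $i\le m$, a conjugate of the inertia group at $x_{i,K}$ lies in the image of $\pi_1(\mathcal V_k)\to\pi_1(\mathcal V)$. For this I would bring in the specialization theory of fundamental groups over a strictly henselian base: Zariski--Nagata purity of the branch locus lets one excise from $S$ the codimension-two ``bad'' locus (the finite vertical set $\Sigma_0$, and the pairwise crossing points of the sections in $S_k$) without changing $\pi_1(\mathcal V)$, reducing to a proper smooth relative curve minus a union of sections over $R$; and the hypothesis that $x_1,\dots,x_m$ are \emph{pairwise non-overlapping} is exactly what makes $D:=\bigcup_{i=1}^m x_i$ a relative normal crossings divisor, flat over $R$, so that near each $\bar x_i$ the open $\mathcal V$ is a relative punctured disc and the Grothendieck--Murre/SGA\,1 theory of tame specialization along $D$ carries the tame inertia at $x_{i,K}$ into the image of $\pi_1(\mathcal V_k)$. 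It then follows that $\rho\bigl(\im(\pi_1(\mathcal V_k)\to\pi_1(\mathcal V))\bigr)\supseteq\rho(H)$ acts transitively on $F$, so $X_{\mathcal V,k}$ is connected, and by the previous paragraph the special fibre of $X$ is geometrically irreducible.

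The delicate step --- the expected main obstacle --- is precisely this last specialization argument, and in particular controlling the collisions among the \emph{unused} sections $x_{m+1},\dots,x_n$, which may also meet some $x_i$ with $i\le m$ in the special fibre and there degenerate the given model $X$: the image in $\pi_1(\mathcal V)$ of a loop around $\bar x_i$ in $\mathcal V_k$ is a product of the inertia generators at \emph{all} sections passing through $\bar x_i$, not the generator at $x_{i,K}$ alone, so one must use purity together with the relative-normal-crossings structure of $D$ to organize the local analysis around $x_1,\dots,x_m$ in such a way that their generic inertia is genuinely recovered on $X_{\mathcal V,k}$. Getting that bookkeeping right is where the real work of the proof lies.
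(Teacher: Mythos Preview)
Your overall architecture matches the paper's: reduce to $k$ algebraically closed, pass to the \'etale locus over $S\setminus T$ (with $T=\bigcup_i x_i$), and show its special fibre is connected by a monodromy argument. One small simplification: your set $\Sigma_0$ is empty. Since $h^{-1}(S\setminus T)$ is normal (flat over $R$ with smooth generic fibre and reduced special fibre), purity of the branch locus forces $h$ to be \'etale over all of $S\setminus T$; the paper observes this directly rather than excising extra points.

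The substantive divergence is at the key step. You phrase it as a statement about fundamental groups --- carry the inertia generator at $x_{i,K}$ into $\im\bigl(\pi_1(\mathcal V_k)\to\pi_1(\mathcal V)\bigr)$ via Grothendieck--Murre tame specialization along a relative normal crossings divisor --- and then flag the collision problem as the main obstacle. The paper avoids this entirely: it never lifts an abstract inertia element, but instead compares the \emph{action on this particular cover}. The intended hypothesis (as used in the application, Corollary~\ref{Cgeneralmonodromygraph}) is that each $x_i$ with $i\le m$ is disjoint from \emph{all} other sections, not just from $x_1,\dots,x_m$; under that reading, only one branch point of $h_K$ specializes to $x_{i,k}$, and Corollary~\ref{Cgoodredtest} (a consequence of the equality of generic and special discriminant degrees, Proposition~\ref{Pfultontest}) shows that $h_k$ is tamely ramified over $x_{i,k}$ with exactly the same ramification indices as $h_K$ over $x_{i,K}$, with all points above smooth. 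Hence the permutation of a nearby fibre induced by local inertia in $\pi_1(\Spec\widehat{\mc O}_{S_k,x_{i,k}}\setminus\{x_{i,k}\})$ equals that induced by local inertia in $\pi_1(\Spec\widehat{\mc O}_{S,x_{i,k}}\setminus\{x_i\})$, and functoriality along the inclusions into $S_k\setminus T_k$ and $S\setminus T$ transports transitivity to the special fibre.

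So the obstacle you identify dissolves once you work with the cover rather than with $\pi_1$: the question is not whether a loop in $\mathcal V_k$ around $\bar x_i$ equals a specific product of generic inertia generators in $\pi_1(\mathcal V)$, only whether its action on $h^{-1}(s)$ coincides with that of the inertia at $x_{i,K}$, and that is immediate from equality of the local ramification data. Your purity manoeuvre (``excise the crossing points and reduce to disjoint sections without changing $\pi_1(\mathcal V)$'') does not straighten this out as written: removing those closed points from $S$ leaves $\mathcal V$ unchanged but produces a base that is no longer proper over $R$, so the global tame-specialization theorems do not apply; what one really needs is exactly the local argument the paper supplies.
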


\begin{proof}
We may assume that $k$ is algebraically closed.  We use the notation $h_k$ (resp.\ $X_k$, $S_k$, $x_{i, k}$) to represent the special fiber of $h$ (resp.\ $X$, $S$, $x_i$), and similarly for $K$ and the generic fiber.    Write $T$ for $x_1 \cup \cdots \cup x_n$, and define $T_k$ and $T_K$ similarly.  Let $U \subseteq X$ be equal to $h^{-1} (S \setminus T)$, with $U_k$ and $U_K$ defined accordingly. Without loss of generality, $s_K$ specializes to a point $s_k \in S_k \setminus T_k$.  Since $X$ has reduced special fiber, purity of the branch locus (\cite[Theorem 5.2.13]{Szamuely})  shows that $h_k|_{U_k}$ is \'{e}tale, and thus $U_k$ is smooth.  So connectedness of $U_k$ implies irreducibility of $U_k$ and consequently irreducibility of $X_k$.  We are reduced to showing that $U_k$ is connected, or equivalently, that the action of $\pi_1(S_k \setminus T_k, s_k)$ on the fiber $h_k^{-1}(s_k)$ is transitive. 

Since $x_1, \ldots, x_m$ do not overlap on the special fiber, Corollary \ref{Cgoodredtest} 
implies that $h_k$ is tamely ramified above $x_{1,k}, \ldots, x_{m,k}$ 
with the same ramification indices as $h_K$, and all points lying above 
$x_{1, k}, \ldots, x_{m,k}$ are smooth.  Let $\eta_{i,k}$ be a 
geometric point of $S_k \setminus T_k$ above the generic point of 
$\Spec(\widehat{\mc{O}}_{S_k, x_{i,k}})$.  Then the action of the inertia subgroup 
of $\pi_1(\Spec \widehat{\mc{O}}_{S_k, x_{i,k}} \setminus \{x_{i,k}\}, \eta_{i,k})$ at $x_{i,k}$ and that of the inertia subgroup of $\pi_1(\Spec \widehat{\mc{O}}_{S, x_{i,k}} \setminus \{x_i\}, \eta_{i,k})$ are equal on $h_k^{-1}(\eta_{i,k})$.  By functoriality (using the inclusions $\Spec(\widehat{\mc{O}}_{S_k, x_{i,k}}) \hookrightarrow S_k \setminus T_k$ and $\Spec (\widehat{\mc{O}}_{S, x_{i,k}} \setminus \{x_i\}) \hookrightarrow S \setminus T$), the action of the inertia subgroup of $\pi_1(S_k \setminus T_k, \eta_{i,k})$ at $x_{i,k}$ and that of the inertia subgroup of $\pi_1(S \setminus T, \eta_{i_k})$ at $x_i$ 
are equal on $h_k^{-1}(\eta_{i,k})$.  Choosing paths from the $\eta_{i,k}$ to $s_k$, the actions of the various inertia subgroups of $\pi_1(S_k \setminus T_k, s_k)$ and $\pi_1(S \setminus T, s_k)$ are equal on $h_k^{-1}(s_k)$ as well.

Identifying the fiber functors $F_{s_k}$ and $F_{s_K}$ by specialization, it suffices to show that the combined action of the inertia groups of $\pi_1(S \setminus T, s_K)$ at $x_1, \ldots, x_m$ on $h^{-1}(s_K)$ is transitive.  Since the combined action of the inertia groups of $\pi_1(S_K \setminus T_K, s_K)$ at $x_{1,K}, \ldots, x_{m, K}$ on $h_K^{-1}(s_K)$ is transitive by assumption, the desired result follows from functoriality.
\end{proof}

By Proposition~\ref{prop:branchedCover}, all ramification points of the topological cover $\pi_0 : X_0(n)(\CC) \to \PP^1(\CC)$ have ramification index two. This means that a small loop around a branch point of $\pi_0$ induces a transposition of the sheets of the cover. Here we define the sheets to be the connected components of the inverse image under $\pi_0$ of $A := \PP^1(\CC) \setminus (\mc{M} \cup [0, \infty])$. Note that $A$ is simply connected and that $\pi_0$ is unramified above $A$.  For convenience in this and the following section, we make the following definition.

\begin{defn}\label{Dmonodromygraph}
The {\it monodromy graph} for $X_0(n)$ is the graph $\Gamma(n)$ defined as follows: The vertices of $\Gamma(n)$ are the sheets of the topological branched cover $\pi_0:X_0(n)(\CC) \to \PP^1(\CC)$, and the edges correspond to branch points of $\pi_0$.  The edge associated to a branch point $c$ connects the two vertices corresponding to the sheets that are transposed by a loop around $c$. An edge is {\it finite} if $c$ is finite and is {\it infinite} otherwise.
\end{defn}

For example,
Figures~\ref{fig:n5graph} and \ref{n7graph} from Appendix A are the graphs of $\Gamma(5)$ and $\Gamma(7)$, respectively (the labeling of the vertices with 0's and 1's is explained in \S\ref{Smonodromy}).

\begin{cor}\label{Cgeneralmonodromygraph}
Let $p$ be an odd prime.  Consider the cover $h: \mathfrak{X}_0(n)_R \to \PP^1_R$, where $R$ is a finite extension of $\ZZ_p^{ur}$ with fraction field $K$.  Let $T$ be the set of branch points of the generic fiber $h_K$ of $h$, let $T' \subseteq T$ be those branch points whose specializations do not collide with that of some other branch point on the special fiber, and assume that all points in $T'$ are $K$-rational.  Identify the geometric generic fiber $h_K \times_K \ol{K}$ with an isomorphic map $h_{\CC}: X_0(n)_{\CC} \to \PP^1_{\CC}$ via a fixed isomorphism $\iota: \ol{K} \cong \CC$, and correspondingly identify the set of branch points of $h_{\CC}$ with $T$.  If the monodromy graph $\Gamma(n)$ remains connected when all edges corresponding to points in $T \setminus T'$ are removed, then the special fiber of $\mathfrak{X}_0(n)_R$ is irreducible.    
\end{cor}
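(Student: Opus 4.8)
The plan is to deduce the corollary from Proposition~\ref{Pinertia}, applied to the cover $h\colon \mathfrak{X}_0(n)_R\to\PP^1_R$, taking the sections $x_1,\dots,x_n$ of Proposition~\ref{Pinertia} to be the branch points of the generic fiber $h_K$ and taking $x_1,\dots,x_m$ to be those branch points lying in $T'$, i.e.\ those whose specializations do not collide with any other branch point. Proposition~\ref{Pinertia} will then say that geometric irreducibility of the special fiber of $\mathfrak{X}_0(n)_R$ follows once the subgroup of $\pi_1(\PP^1_K\setminus T,s_K)$ generated by the inertia groups at the points of $T'$ acts transitively on a geometric fiber of $h_K$; the remaining work is to recognize this transitivity as connectedness of $\Gamma(n)$ after deletion of the $T\setminus T'$ edges.

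First I would reduce to the case in which every branch point of $h_K$ is $K$-rational, which is harmless. Replacing $R$ by a finite extension $R'$ leaves the residue field equal to $k=\ol{\FF}_p$, and by Propositions~\ref{Pnormal} and~\ref{Psamemodel}(1) the affine part of the special fiber of $\mathfrak{X}_0(n)_{R'}$ is $Y_0(n)_{R'}=Y_0(n)_R\times_R R'$, whose special fiber is again $Y_0(n)_k$, while the points above $\infty$ remain smooth by Proposition~\ref{Psamemodel}(2); hence the (geometric) irreducibility of the special fiber of $\mathfrak{X}_0(n)$ is unchanged. Here one uses that $Y_0(n)_K$ is smooth and geometrically irreducible in characteristic zero, being the quotient of the smooth geometrically irreducible curve $Y_1(n)_K$ (Theorems~\ref{Tmultibrotsmooth},~\ref{BBLG2}) by the $\ZZ/n\ZZ$-action, a quotient of curves that is again smooth. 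By properness of $\PP^1_R$ the branch points now extend to $R$-sections $x_1,\dots,x_n$, which are distinct because they are distinct on the generic fiber; relabel so that $x_1,\dots,x_m$ are exactly those in $T'$, which by the definition of $T'$ are pairwise non-overlapping on the special fiber.

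Next I would check the remaining hypotheses of Proposition~\ref{Pinertia}. The map $h$ is finite (normalization of $\PP^1_R$ in a finite extension of its function field, with $\PP^1_R$ Nagata) and flat by miracle flatness, since the normal surface $\mathfrak{X}_0(n)_R$ is Cohen--Macaulay and maps with finite fibers to the regular surface $\PP^1_R$. The special fiber of $\mathfrak{X}_0(n)_R$ is reduced: on the affine part by Proposition~\ref{Pnormal} (as $p$ is odd, so $p\nmid\disc(x^2+1)$), and the points above $\infty$ are smooth by Proposition~\ref{Psamemodel}(2). The special fiber $h_k$ is generically separable because $Y_1(n)_k\to\AA^1_k$ is (Proposition~\ref{Pgensep}) and this map factors through $Y_0(n)_k$. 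Finally $h_K=\pi_0$ is \'etale away from $\{x_{1,K},\dots,x_{n,K}\}$ by the definition of $T$. Choosing a $K$-point $s_K\in\PP^1_K\setminus T$, Proposition~\ref{Pinertia} applies.

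It remains to translate transitivity into graph connectedness. Using $\iota$ to identify $h_K\times_K\ol K$ with $h_\CC\colon X_0(n)_\CC\to\PP^1_\CC$, the group $\pi_1(\PP^1_K\setminus T,s_K)$ becomes the profinite completion of $\pi_1^{\mathrm{top}}(\PP^1_\CC\setminus T)$, a geometric fiber of $h_K$ over $s_K$ becomes a fiber of $h_\CC$, and after choosing a path this fiber is identified with the vertex set of $\Gamma(n)$ (the sheets of $\pi_0$). Since $\pi_0$ is tamely ramified with every ramification index equal to $2$ (Proposition~\ref{prop:branchedCover}(1) and~(4a)), the image of the inertia group at a branch point $c$ in the symmetric group of the fiber is generated by a product of disjoint transpositions, one for each ramification point above $c$, and these are exactly the edges of $\Gamma(n)$ incident to $c$. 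Hence the group generated by the inertia at the points of $T'$ acts transitively on the fiber if and only if the subgraph of $\Gamma(n)$ spanned by the edges attached to $T'$ is connected. As no finite branch point specializes to $\infty$ (the argument in the proof of Proposition~\ref{Psamemodel}), we have $\infty\in T'$ whenever $\infty\in T$, so $T\setminus T'$ consists only of finite branch points; thus that subgraph is precisely $\Gamma(n)$ with the edges corresponding to $T\setminus T'$ removed, which is connected by hypothesis. The corollary follows.

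The step I expect to be the main obstacle is this last translation: making rigorous the identification of the arithmetic inertia groups furnished by Proposition~\ref{Pinertia} with the combinatorial data of $\Gamma(n)$ --- in particular matching the sheets, which are defined via the simply connected region $A=\PP^1(\CC)\setminus(\mcM\cup[0,\infty])$, with an honest geometric fiber, keeping track of the behaviour and edge-count at $\infty$, and verifying that the inertia at a branch point really generates the full local monodromy (this is where tameness and the explicit ramification indices are essential).
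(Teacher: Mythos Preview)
Your proposal is correct and follows essentially the same route as the paper: verify the hypotheses of Proposition~\ref{Pinertia} for $h$ (via Propositions~\ref{Pnormal}, \ref{Pgensep}, \ref{Psamemodel} and Remark~\ref{Rxmplusc}), then use the identification $\ol K\cong\CC$ to convert the transitivity condition on inertia at $T'$ into the connectedness of $\Gamma(n)$ with the $T\setminus T'$ edges removed. You are in fact more careful than the paper on two points the paper leaves implicit: the harmless base change to a larger $R$ making \emph{all} branch points $K$-rational (so that Proposition~\ref{Pinertia}, which requires $R$-sections, applies directly), and the observation that $\infty\in T'$ so that only finite edges are removed.
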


\begin{proof}
Pick a base point $s \in \PP^1_{\CC} \setminus T$. Since the monodromy graph is connected even when the edges corresponding to points of $T \setminus T'$ are removed, the monodromy action of the subgroup $G \subseteq \pi_1^{\text{top}}(\PP^1(\CC) \setminus T, s)$ generated by small loops around the points of $T'$ on $h_{\CC}^{-1}(s)$ is transitive.  The same is therefore true for the action of the subgroup $H \subseteq \pi_1(\PP^1_K \setminus T, s)$ topologically generated by the inertia groups at the points of $T'$, where we identify $s$ with a geometric point of $\PP^1_K$ via $\iota$.  The corollary now follows from Proposition \ref{Pinertia} (which applies to $\mathfrak{X}_0(n)_R \to \PP^1_R$ 
by Remark \ref{Rxmplusc} and Propositions \ref{Pnormal}, \ref{Pgensep}, and \ref{Psamemodel}).
\end{proof}

\begin{cor}\label{CX0irred}
If $p$ is an odd prime such that $v_p(D_{n,n}) = 1$, then $Y_0(n)$ is geometrically irreducible modulo $p$.  If $p \nmid n$, then $Y_1(n)$ is also geometrically irreducible modulo $p$.
\end{cor}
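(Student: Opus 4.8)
The plan is to deduce the statement about $Y_0(n)$ from Corollary~\ref{Cgeneralmonodromygraph} together with the connectivity result for the monodromy graph $\Gamma(n)$ that is proved in \S\ref{Smonodromy}, and then to deduce the statement about $Y_1(n)$ from that of $Y_0(n)$ by invoking Proposition~\ref{PX1irredX0irred}. Since $p$ is odd and $f(x,c)=x^2+c$, we have $p\nmid m\disc f(x,1)$ (because $\disc(x^2+1)$ is a power of $2$), so the reduction-theoretic results of \S\ref{Sprelims} and \S\ref{Sgood} all apply; recall also that in characteristic zero $Y_1(n)_\CC$ is smooth and geometrically irreducible with $D_n\neq 0$ (Theorems~\ref{BBLG2} and~\ref{Tmultibrotsmooth}), hence $Y_0(n)_\CC$ is irreducible and Proposition~\ref{prop:branchedCover} is in force, so that the monodromy graph $\Gamma(n)$ is defined.

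First I would fix the arithmetic setup. By \cite[top of p.~331]{Morton96}, the polynomial $\Delta_{n,n}(c)\in\ZZ[c]$ has leading coefficient a $p$-adic unit and all of its roots are $p$-integral algebraic numbers. Choose a finite extension $R/\ZZ_p^{ur}$, with fraction field $K$ and residue field $k=\ol{\FF}_p$, large enough that $\Delta_{n,n}(c)$ splits over $K$, and fix an isomorphism $\iota\colon \ol{K}\cong\CC$. Consider the cover $h\colon \mathfrak{X}_0(n)_R\to\PP^1_R$. By Proposition~\ref{prop:branchedCover}(1)--(2), the branch locus $T$ of the generic fiber $h_K$ consists of the roots of $\Delta_{n,n}(c)$ together with $\infty$, and all points of $T$ are $K$-rational.

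Next I would identify which branch points collide modulo $p$. Because $v_p(D_{n,n})=v_p(\disc\Delta_{n,n})=1$, Lemma~\ref{Lalgnum} applied to $\Delta_{n,n}$ shows that exactly two roots $r_1,r_2$ of $\Delta_{n,n}(c)$ have a common reduction modulo $p$, and that no other pair of roots of $\Delta_{n,n}(c)$ does; moreover, being $p$-integral, every root of $\Delta_{n,n}(c)$ specializes to a finite point, so none collides with $\infty$. Hence, in the notation of Corollary~\ref{Cgeneralmonodromygraph}, $T\setminus T'=\{r_1,r_2\}$ and every point of $T'$ is $K$-rational. Via $\iota$, the edges of $\Gamma(n)$ corresponding to the points of $T\setminus T'$ are exactly the two \emph{finite} edges attached to the images of the primitive parabolic parameters $r_1$ and $r_2$. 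The connectivity result of \S\ref{Smonodromy}---that $\Gamma(n)$ remains connected after the removal of any two finite edges---therefore guarantees that $\Gamma(n)$ with these two edges deleted is still connected. By Corollary~\ref{Cgeneralmonodromygraph}, the special fiber of $\mathfrak{X}_0(n)_R$ is irreducible; since $\mathfrak{X}_0(n)_k$ is then irreducible and $Y_0(n)_k$ is isomorphic to the nonempty open subscheme $h_k^{-1}(\AA^1_k)$ of $\mathfrak{X}_0(n)_k$ (Proposition~\ref{Psamemodel}(1)) with $k=\ol{\FF}_p$, the curve $Y_0(n)$ is geometrically irreducible modulo $p$. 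Finally, if in addition $p\nmid n$, then $p$ is coprime to $(m-1)mn=2n$ (using that $p$ is odd), so Proposition~\ref{PX1irredX0irred} yields that $Y_1(n)$ is geometrically irreducible modulo $p$ if and only if $Y_0(n)$ is, and we are done.

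The only substantive input is the connectivity statement for $\Gamma(n)$ proved in \S\ref{Smonodromy}; the rest is bookkeeping with the reduction machinery of \S\ref{Sprelims}--\S\ref{Sgood} and the structure of parabolic parameters. I expect the real difficulty to lie there: the hypothesis $v_p(D_{n,n})=1$ tells us only that \emph{some} pair of primitive parabolic parameters collides modulo $p$, with no a priori control over the pair, so one genuinely needs the robust assertion ``$\Gamma(n)$ stays connected after deleting any two finite edges'' rather than merely ``$\Gamma(n)$ is connected,'' and establishing this seems to require an explicit combinatorial description of $\Gamma(n)$ via the Mandelbrot set.
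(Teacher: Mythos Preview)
Your proof is correct and follows essentially the same route as the paper's own argument: apply Lemma~\ref{Lalgnum} to pin down the unique pair of colliding branch points, invoke Theorem~\ref{thm:connected} to ensure $\Gamma(n)$ stays connected after removing the two corresponding finite edges, feed this into Corollary~\ref{Cgeneralmonodromygraph}, and finish with Proposition~\ref{Psamemodel}(1) and Proposition~\ref{PX1irredX0irred}. Your write-up is in fact slightly more explicit than the paper's in justifying the hypotheses (e.g., that $\infty\in T'$ and that $p\nmid (m-1)m$), but the logical skeleton is identical.
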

\begin{proof}
Let $R$ be a finite extension of $\ZZ_p^{ur}$ with fraction field $K$ such that all branch points of $\mathfrak{X}_0(n)_K = X_0(n)_K \to \PP^1_K$ are $K$-rational. Recall that $\Delta_{n,n}(c)$ is defined over $\ZZ$ and its leading coefficient is a unit in $\ZZ_p$ (Theorem \ref{discriminantOfPhi} and \cite[Proposition 3.4]{MortonVivaldi}).  By Lemma \ref{Lalgnum}, there are exactly $2$ roots of $\Delta_{n,n}(c)$ with the same residue in $\ol{\FF}_p$, and these roots lie in $R$.  Thus exactly $2$ branch points of the generic fiber of $\mathfrak{X}_0(n)_R \to \PP^1_R$ collide on the special fiber, and neither of these specializes to $\infty$.  By Theorem \ref{thm:connected} below (whose proof does not depend on anything from this section), the monodromy graph $\Gamma(n)$ is connected when the edges corresponding to the two colliding branch points are removed (under some identification of $\ol{K}$ with $\CC$).  By Corollary \ref{Cgeneralmonodromygraph}, the special fiber $\mathfrak{X}_0(n)_k$ of $\mathfrak{X}_0(n)_R$ is irreducible.  Since $Y_0(n)_k$ is a dense open subset of $\mathfrak{X}_0(n)_k$ (Proposition \ref{Psamemodel}(1)), the first statement of the corollary follows.  The second statement follows from Proposition \ref{PX1irredX0irred}.
\end{proof}

\begin{ex}
Corollary \ref{CX0irred} shows that $Y_0(5)$ is geometrically irreducible modulo every odd prime $p$ (including $p = 3701$, the only prime of bad reduction,
with $v_p(D_{5,5}) = 1$).
\end{ex}

\begin{rem}
Suppose $v_p(D_{n,n}) = d$.  Applying Corollary \ref{Cnoncollisioncount} to 
$\Spec R[c]/\Delta_{n,n}(c) \to \Spec R$, where $R$ is as in Corollary \ref{CX0irred}, shows that there are at least $\deg \Delta_{n,n} - 2d$ roots of $\Delta_{n,n}(c)$ that are unique in their residue class in $\ol{\FF}_p$.  So at most $2d$ branch points of $\mathfrak{X}_0(n)_R \to \PP^1_R$ can collide on the special fiber.  If we could show that the monodromy graph $\Gamma(n)$ is connected, even when any $2d$ finite edges are removed, then the proof of Corollary \ref{CX0irred} would carry through to show that $X_0(n)$ is geometrically irreducible in characteristic $p$. 

In other words, if we know that $\Gamma(n)$ remains connected after removing $E$ finite edges, then $X_0(n)$ is geometrically irreducible in characteristic $p$ whenever $v_p(D_{n,n}) \leq E/2$.  A preliminary investigation for $n \leq 18$ indicates that one can take $E = n$ when $n$ is odd and $E = n/2$ when $n$ is even.
\end{rem}

\section{Connectedness of the monodromy graph}\label{Smonodromy}
In this section, we consider only the family of quadratic polynomials $f_c(x) = x^2 + c$. The purpose of this section is to show that the monodromy associated to the cover
	\[
    	\pi_0 : X_0(n) \to \PP^1
    \]
is highly transitive for all $n \in \mathbb{N}$. 
More precisely, the main result of the section
is about the connectedness of
the monodromy graph $\Gamma(n)$ from Definition \ref{Dmonodromygraph}.

\begin{thm}\label{thm:connected}
For each positive integer $n$, the monodromy graph $\Gamma(n)$ remains connected after removing any two finite edges.
\end{thm}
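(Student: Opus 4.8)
The plan is to turn this topological assertion into a purely combinatorial statement about an explicit graph attached to kneading sequences, and then to prove the required connectivity by induction, using the admissibility theory for the Mandelbrot set.

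First I would fix a combinatorial model for $\Gamma(n)$. For $c\in A$ the filled Julia set of $f_c$ is a Cantor set, so every repelling point of period dividing $n$ has a well-defined itinerary; passing to orbits, this identifies the vertices of $\Gamma(n)$ (the sheets of $\pi_0$ over $A$) with the cyclic binary words of exact period $n$ -- equivalently with the orbits of angle doubling on the $n$-periodic points of $\QQ/\ZZ$, equivalently with the $n$-periodic kneading sequences; there are $\nu(n)/n$ of these, as there must be. I would make this bijection explicit enough to read off the two kinds of edges. The edges over $\infty$ are governed by Morton's description of the points of $X_i(n)$ at infinity (periodic sequences in the roots of $f(x,1)=x^2+1$) together with the local ramification of $\pi_0$ there, and they contribute a fixed, explicitly computable subgraph of infinite edges. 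The finite edges are the roots of the primitive period-$n$ hyperbolic components, i.e. the roots of $\Delta_{n,n}(c)$; at the cusp of such a component the critical orbit degenerates to a $\star$-periodic kneading sequence $\overline{w\star}$ of period $n$, and the two orbits of period $n$ that collide there -- which by Proposition~\ref{prop:branchedCover}(4a) really are both of period $n$ -- can be read off from $w$ (concretely, by resolving the $\star$ in the two possible ways). The Lau--Schleicher admissibility criterion then specifies exactly which words $w$ occur and distinguishes the primitive components from the satellite ones. The upshot is that $\Gamma(n)$ becomes a concrete graph on necklaces: a prescribed family of ``resolve-the-$\star$'' edges glued to the known infinite-edge subgraph.

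With the model in place, the theorem is the statement that this concrete graph cannot be disconnected by deleting two of its finite edges; contracting each connected component of the infinite-edge subgraph, this is the same as saying that the resulting multigraph (with the finite branch points as its edges) is $3$-edge-connected. Since connectedness of $\Gamma(n)$ itself -- the case of removing no edges -- is equivalent to the geometric irreducibility of $X_0(n)$ in characteristic $0$ and so may be quoted from Theorems~\ref{BBLG2} and~\ref{thm:nonsing}, the new content is to upgrade this by two edges. I would do so by induction on $n$ (say, ordered by internal address) using \emph{tuning}: a primitive period-$n$ component is either \emph{satellite-renormalizable}, in which case its cusp and the orbits it glues sit inside a copy of a lower-period picture and contribute cycles supplied by the inductive hypothesis, or it is ``primitively new,'' in which case one produces the required extra path directly from the kneading data of $w$. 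The base cases -- small $n$ -- are checked by inspection; this is exactly what Figures~\ref{fig:n5graph} and~\ref{n7graph} record for $\Gamma(5)$ and $\Gamma(7)$.

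The principal obstacle is the combinatorial core: identifying, for each root of $\Delta_{n,n}(c)$, precisely which pair of period-$n$ orbits (equivalently, which pair of external angles, or of itineraries) is glued there, and then proving the global statement that these gluings are sufficiently interlocked that no two of them form an edge cut. This is genuine combinatorics of the Mandelbrot set -- it needs the structure of admissible $\star$-periodic kneading sequences, the tuning hierarchy, and careful bookkeeping of how periodic orbits merge along veins, rather than any soft counting. A secondary technical point, which I would dispatch first, is to confirm that the local monodromy of $\pi_0$ at $\infty$ really does produce the infinite-edge subgraph claimed, which rests on the analysis of $X_i(n)$ near infinity in \cite{Morton96}.
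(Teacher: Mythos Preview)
Your combinatorial model is exactly the one the paper uses: vertices are shift-orbits of $n$-periodic binary sequences, finite edges correspond to primitive kneading sequences $v^\star$ (connecting the two resolutions $K_0$ and $K_1$), and each infinite edge connects a vertex to its binary complement. So the setup is right.

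The gap is in the proof strategy. Your plan is induction on $n$ via the tuning hierarchy, with a dichotomy between satellite-renormalizable and primitively new components, but you give no mechanism for either case --- you say the inductive hypothesis ``supplies cycles'' in the first case and that one ``produces the required extra path directly from the kneading data'' in the second, and you correctly flag this as the principal obstacle. Tuning embeds the parameter picture of a period-$k$ component into the period-$n$ one, but it is not at all obvious how this embedding interacts with $\Gamma(n)$ \emph{as a graph}: the copies overlap on vertices, the period-$n$ orbits inside a tuned copy are indexed differently, and edge-connectivity of $\Gamma(k)$ does not obviously transfer. Without a concrete statement of what the inductive step proves and how tuning transports it, this is a wish rather than a proof.

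The paper's argument is entirely different and uses neither induction on $n$ nor renormalization. The organizing idea is a height function: the \emph{disparity} $D(\mfv)$ of a vertex is (zeros)$-$(ones) in any representative, so finite edges change disparity by $\pm 2$ and infinite edges negate it. The key construction is a \emph{successor} map: if $v$ is the maximal (lexicographic) representative of $\mfv$ then $v$ ends in $0$, and flipping that $0$ to $1$ gives $s(v)$; one checks $v^\star$ is always the kneading sequence of some primitive root, giving a finite edge $\mfv\to s(\mfv)$ going upward. The crucial observation is that when $D(\mfv)\ge 0$ the maximal representative begins $11\ldots$, so by an external-ray argument the root realizing $v^\star$ is non-real, and it together with its complex conjugate yields \emph{two} successor edges $\mfv\to s(\mfv)$. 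Iterating, every vertex of nonnegative disparity climbs to the top vertex $\overline{1\cdots 10}$ along a path each of whose edges has multiplicity $\geq 2$; vertices of negative disparity first cross to their complement via the (undeletable) infinite edge. Hence the only way two deleted finite edges can disconnect $\Gamma(n)$ is if both are the successor pair out of a single vertex $\mfv_0$ with $D(\mfv_0)\ge 0$. The rest of the proof builds, for each such $\mfv_0$, an alternate upward path avoiding those two edges: a four-step zigzag $\mfv_0\to\widehat{\mfv}_0\to s(\widehat{\mfv}_0)\to\widehat{s(\widehat{\mfv}_0)}\to s(\widehat{s(\widehat{\mfv}_0)})$ works unless $\mfv_0$ is ``repetitive'' or self-complementary, and those two exceptional families are dispatched separately using the Bruin--Schleicher admissibility criterion and Milnor--Thurston real admissibility respectively.

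None of these ingredients --- the disparity grading, the successor map, or the complex-conjugate doubling of successor edges --- appears in your outline, and they are what makes the argument work.
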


We rely on the combinatorial description of the monodromy action for complex quadratic polynomials as developed by Lau and Schleicher \cite[\textsection 3]{lau/schleicher:1994}; we refer the reader to their paper for more details.  Our argument refines and expands the techniques of \cite{BuffLei14} by combining the combinatorial and the complex dynamical descriptions of quadratic polynomial dynamics.  The methods of this section are independent of those used in the rest of the paper; the reader interested only in the result is directed to Theorem \ref{thm:connected}, proved at the end of the section.

\subsection{Itineraries}\label{sec:itineraries}

Let $\Sigma := \{0,1\}^\NN$ be the space of binary sequences. This space comes equipped with a metric: if we write $v = v_1v_2v_3\dots$ and $w = w_1w_2w_3\dots$, then
	\[
    	d(v,w) = \sum_{i=1}^\infty \frac{|v_i - w_i|}{2^i}.
    \]
Note that $d(v,w) \le 1/2^n$ if and only if $v$ and $w$ agree for the first $n$ terms or agree for all but the $n$-th term. We define the {\it shift} map from $\Sigma$ to itself to be the continuous function
	\[
    	\sigma(v_1v_2v_3\dots) = v_2v_3v_4\dots.
    \]
Now let $c \in \CC$, and suppose that $c$ does not lie in the Mandelbrot set $\mcM$. We may assign to each point $\alpha$ in the Julia set $\mcJ_c$ of $f_c$ an \textit{itinerary}, which is an element $\iota(\alpha) \in \Sigma$ determined by the relative positions of the iterates of $\alpha$ in the Julia set $\mcJ_c$. We direct the reader to \cite[p.\ 8]{lau/schleicher:1994} for the definition of the itinerary in this setting. We do not use the definition explicitly in the current paper; however, we note that the itinerary is defined in such a way that the map $\alpha \mapsto \iota(\alpha)$ is a homeomorphism from $\mcJ_c$ to $\Sigma$, and the map $f_c$ restricted to $\mcJ_c$ is conjugate to the shift map on $\Sigma$; that is, $\iota \circ f_c = \sigma \circ \iota$. (See \cite[Theorem 3.1]{lau/schleicher:1994}.) Therefore, much of the dynamics of $f_c$ on $\mcJ_c$ may be understood by studying the dynamics of the shift map on $\Sigma$.

For $c \in \CC \setminus \mcM$, all periodic points for $f_c$ lie in the Julia set $\mcJ_c$. Therefore each periodic point $\alpha$ has a well-defined itinerary, and the period of $\alpha$ under $f_c$ is precisely the period of $\iota(\alpha)$ under the shift map. If $\alpha$ has period $n$, we write
	\[
		\iota(\alpha) = \overline{v_1\dots v_n}.
	\]
Let $W := \CC \setminus (\mcM \cup [0,\infty))$; note that the map $\pi_0$ is unramified over $W$, so $\pi_0^{-1}(W) \to W$ is an \'etale cover. As $c$ varies within $W$, the periodic points of $f_c$ move continuously in $\CC$. Buff and Lei use this fact to prove that the itinerary may be used to label the sheets of the cover $\pi_1^{-1}(W) \to W$, so there is a one-to-one correspondence between sequences in $\Sigma$ of exact period $n$ and sheets of the cover $X_1(n) \to \PP^1$.  Since $X_0(n)$ is the quotient of $X_1(n)$ by the cyclic group generated by the automorphism $f_c$, it follows that there is also a one-to-one correspondence between the sheets of $X_0(n)$ and the set of \emph{orbits} of length $n$ (under the shift map) in $\Sigma$. Two period-$n$ elements $v, w \in \Sigma$ correspond to the same sheet of $X_0(n) \to \PP^1$ if and only if $w = \sigma^k(v)$ for some integer $k$.

\subsection{Kneading sequences}
For an angle $\theta \in (0,1)$, we define the \textit{kneading sequence} of $\theta$ as follows: Partition $\mathbb{R}/\mathbb{Z}$ into three sets,
\[S_0 := \left(\frac{\theta + 1}{2}, \frac{\theta}{2}\right);\ 
S_1 := \left(\frac{\theta}{2}, \frac{\theta + 1}{2}\right);
\ S_\star := \left\{\frac{\theta}{2},\frac{\theta + 1}{2}\right\}.\]
We assume $\RR/\ZZ$ is oriented so that $0 \in S_0$. The \emph{kneading sequence} of $\theta$ is the sequence of symbols $K(\theta) = v_1v_2v_3 \dots \in \{0,1,\star\}^{\NN}$
defined so that $(2^{i-1}\theta \mod 1) \in S_{v_i}$ for all $i \ge 1$. The kneading sequence always begins with a 1, and $K(\theta)$ contains a $\star$ if and only if $\theta$ is periodic under doubling. If $\theta$ has exact period $n$ under doubling, then
	\[
		K(\theta) = \overline{v_1\dots v_{n-1}\star},
	\]
where each of $v_1,\ldots,v_{n-1}$ is either 0 or 1. In this case, we denote by $K_0(\theta)$ and $K_1(\theta)$ the sequences $\overline{v_1\dots v_{n-1}0}$ and $\overline{v_1\dots v_{n-1}1}$,
respectively.
For a periodic itinerary $v = \overline{v_1\dots v_n}$, we let $v^\star$ denote the periodic sequence $v = \overline{v_1\dots v_{n-1}\star}$. Thus $K(\theta) = K_0(\theta)^\star = K_1(\theta)^\star$.

\begin{defn} We say an $n$-periodic itinerary is {\it maximal} if it is maximal (with respect to the lexicographic order) within its orbit under the shift map.  Note that there is a unique shift (modulo $n$) which takes a given itinerary to a maximal itinerary.
\end{defn}

\begin{lemma}\label{lem:maximal_itinerary} Let $v = \overline{v_1 \dots v_n}$ be a maximal itinerary.  Then $v_n = 0$ and the angle $\theta$ defined by the binary number $.\overline{v_1 \dots v_n}$ has kneading sequence $K(\theta) = v^\star = \overline{v_1 \dots v_{n-1} \star}.$
\end{lemma}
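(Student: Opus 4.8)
We may assume $n \geq 2$. Set $\theta := .\overline{v_1\dots v_n}$, and for $1 \le i \le n$ put $\theta_i := 2^{i-1}\theta \bmod 1$, so that $\theta_1 = \theta$, $\theta_{i+1} \equiv 2\theta_i \pmod 1$ (indices read mod $n$), and $\theta_i = .\overline{v_i v_{i+1}\dots v_{i+n-1}}$. The first observation is that, since $v$ has exact period $n$, the block $v_1\dots v_n$ is primitive, so its $n$ cyclic rotations are distinct words, none of them all $1$'s; hence the corresponding periodic binary expansions have no all-ones tail, lexicographic order on these sequences coincides with the numerical order of $\theta_1,\dots,\theta_n$, and the $\theta_i$ are pairwise distinct. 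Therefore $v$ is a maximal itinerary precisely when $\theta=\theta_1$ is the largest of the $\theta_i$, i.e.\ precisely when $\theta_j < \theta$ for all $j = 2,\dots,n$. This reformulation of maximality is what I would use throughout.

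Next I would dispose of the claim $v_n = 0$ by a short combinatorial argument not involving $\theta$. Maximality forces $v_1 = 1$ (otherwise a rotation placing a $1$ in the first coordinate would exceed $v$). If $v_n = 1$ as well, then an easy induction — comparing $v$ with the rotation $\overline{v_n v_1 \dots v_{n-1}}$ — shows $v_i = 1$ for every $i$, so $v$ would have period $1$, contradicting exact period $n \ge 2$. Hence $v_n = 0$.

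The main step is then to compute $K(\theta) = w_1 w_2 \dots$ by locating each $\theta_i$ in the partition $S_0 = [0,\theta/2)\cup((\theta+1)/2,1)$, $S_1 = (\theta/2,(\theta+1)/2)$, $S_\star = \{\theta/2,(\theta+1)/2\}$. For $i = n$ one has $2\theta_n \equiv 2^n\theta \equiv \theta \pmod 1$, so $\theta_n \in S_\star$ and $w_n = \star$; and since $\theta_j \neq \theta$ for $2 \le j \le n-1$, no earlier symbol is $\star$, so this $\star$ sits in position $n$, matching the fact that $\theta$ has exact period $n$. For $1 \le i \le n-1$ the crucial point is the equivalence
\[
\theta_i \in S_{v_i} \iff \theta_{i+1} < \theta,
\]
which falls out of a two-line case check on $v_i \in \{0,1\}$ using $\theta_{i+1} \equiv 2\theta_i \pmod 1$ and $0 < \theta < 1$: if $v_i = 1$ then $\theta_i \in [1/2,1)$, so $\theta_i > \theta/2$ automatically and $\theta_i < (\theta+1)/2 \iff \theta_{i+1} = 2\theta_i - 1 < \theta$; if $v_i = 0$ then $\theta_i \in [0,1/2)$, so $\theta_i < (\theta+1)/2$ automatically and $\theta_i < \theta/2 \iff \theta_{i+1} = 2\theta_i < \theta$. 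Feeding in the maximality condition from the first paragraph — namely $\theta_{i+1} < \theta$ for all $i \le n-1$ — gives $w_i = v_i$ for $1 \le i \le n-1$, and together with $w_n = \star$ this yields $K(\theta) = \overline{v_1\dots v_{n-1}\star} = v^\star$.

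I do not expect a serious obstacle here: once the rotations $\theta_i$ and the equivalence $\theta_i \in S_{v_i} \iff \theta_{i+1} < \theta$ are in place, the conclusion is forced. The only points requiring care are bookkeeping ones — aligning the orientation of the partition $S_0, S_1, S_\star$ with the doubling relation $\theta_{i+1} \equiv 2\theta_i \pmod 1$, and invoking primitivity of the block (and the absence of all-ones tails) to identify lexicographic with numerical order.
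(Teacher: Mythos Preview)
Your argument is correct. The paper itself does not give a proof of this lemma but simply cites \cite[Lemma 4.2]{BuffLei14}, so your direct argument is genuinely more than what the paper provides. The key equivalence $\theta_i \in S_{v_i} \iff \theta_{i+1} < \theta$ (for $1 \le i \le n-1$, with the two-case check on $v_i \in \{0,1\}$) is exactly the right mechanism, and together with the observation that lexicographic maximality of the block is the same as numerical maximality of $\theta$ among its doubling-orbit it yields $K(\theta) = v^\star$ immediately. Your proof that $v_n = 0$ is also fine; one can shorten it slightly by noting that $v \ge \sigma^{n-1}(v)$ with $v_1 = v_n = 1$ forces $\sigma(v) \ge v$, hence $\sigma(v) = v$ by maximality, contradicting exact period $n$---but your induction reaches the same conclusion.

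Two minor cosmetic points: the paper writes $S_0$ and $S_1$ as \emph{open} arcs (with the two endpoints forming $S_\star$), so you should use $(0,\theta/2)$ rather than $[0,\theta/2)$, though this does not affect the argument since you separately verify $\theta_i \notin S_\star$ for $i < n$. And the assumption $n \ge 2$ is implicit in the paper's usage as well, so stating it is appropriate.
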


\begin{proof} See \cite[Lemma 4.2]{BuffLei14}.
\end{proof}

\begin{ex}
Consider the maximal itinerary $v = \overline{110}$, and set $\theta = 6/7$, which has binary expansion $\theta = .\overline{110}$. Then
	\[
    	\theta = 6/7 \in S_1;\ \ \ 2\theta = 5/7 \in S_1;\ \ \ 2^2\theta = 3/7 \in S_{\star};
    \]
so the kneading sequence of $\theta$ is equal to $\overline{11\star}$.
\end{ex}

\subsection{External rays of the Mandelbrot set}

Douady and Hubbard showed in \cite{DouadyHubbard} that there is a conformal isomorphism $\Phi : \CC \setminus \mcM \to \CC \setminus \overline{\DD}$, where $\overline{\DD}$ denotes the closed unit disk. An {\it external (parameter) ray} is the preimage $\mcR(\theta)$ of a ray $\{re^{2\pi i\theta} : r > 1\}$ under the map $\Phi$. We say that the ray $\mcR(\theta)$ {\it lands} at the parameter $c \in \partial\mcM$ if
	\[
    	\lim_{r \to 1^+} \Phi^{-1}(re^{2\pi i\theta}) = c.
    \]
It is a well-known open conjecture that $\mcR(\theta)$ lands for all $\theta \in \RR/\ZZ$. It was shown by Douady and Hubbard \cite[Theorem 13.1]{DouadyHubbard} that  $\mcR(\theta)$ lands if $\theta \in \QQ/\ZZ$.

If $\theta \in \QQ/\ZZ$ has odd denominator, then $\theta$ is periodic under the doubling map, and the ray $\mcR(\theta)$ lands at the root of a hyperbolic component of the same period. Conversely, if $c$ is the root of a period-$n$ hyperbolic component, then there are precisely two angles $\theta,\theta' \in \QQ/\ZZ$ such that $\mcR(\theta)$ and $\mcR(\theta')$ land at $c$ --- unless $c = 1/4$, in which case only $\mcR(0) = \mcR(1)$ lands at $c$; see \cite[\textsection 14.6]{DouadyHubbard}. The angles $\theta$ and $\theta'$ both have period $n$ under the doubling map, and they have the same $n$-periodic kneading sequence by \cite[Lemma 3.9]{schleicher:2000}. We may therefore assign a well-defined kneading sequence to any root $c$ of a hyperbolic component of $\mcM$. As an example, we give in Appendix~\ref{Tmono} the kneading sequences of all roots of period-5 hyperbolic components of $\mcM$ --- or, equivalently, the kneading sequences of all angles $\theta \in \QQ/\ZZ$ of exact period 5 under the shift map.

We now give an elementary lemma about kneading sequences that is used in \textsection \ref{sec:graph}.

\begin{lemma} \label{complexlanding} 
Let $\theta \in (0,1)$. The kneading sequence $K(\theta)$ begins with 11 if and only if $\theta < 1/3$ or $\theta > 2/3$. In this case, if the ray $\mcR(\theta)$ lands, it does so at a non-real parameter.
\end{lemma}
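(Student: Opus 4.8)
The plan is to treat the two assertions separately. For the "if and only if", I argue directly from the definition of the kneading sequence. For every $\theta\in(0,1)$ one has $\theta/2<\theta<(\theta+1)/2$, so $\theta\in S_1$ and $v_1=1$ (this reproves that kneading sequences begin with $1$). The second symbol $v_2$ records the position of $2\theta\bmod 1$ in the partition $\{S_0,S_1,S_\star\}$. When $\theta\in(0,1/2)$ we have $2\theta\bmod 1=2\theta$, and the inequalities $2\theta>\theta/2$ (always) and $2\theta<(\theta+1)/2\iff\theta<1/3$ show $v_2=1\iff\theta<1/3$; when $\theta\in(1/2,1)$ we have $2\theta\bmod 1=2\theta-1$, and $2\theta-1<(\theta+1)/2$ (always) together with $2\theta-1>\theta/2\iff\theta>2/3$ show $v_2=1\iff\theta>2/3$; finally $\theta=1/2$ gives $2\theta\bmod 1=0\in S_0$, so $v_2=0$. (The boundary values $\theta=1/3,2/3$ give $2\theta\bmod 1\in S_\star$, hence $v_2=\star$.) Thus $K(\theta)$ begins with $11$ exactly when $\theta\in(0,1/3)\cup(2/3,1)$.

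For the second statement, suppose $\theta\in(0,1/3)\cup(2/3,1)$ and $\mcR(\theta)$ lands at $c$. By \cite{DouadyHubbard}, $\mcR(1/3)$ and $\mcR(2/3)$ both land at $c_0=-3/4$, the root of the period-$2$ hyperbolic component, and (since $\Phi$ commutes with complex conjugation) $\mcR(1/3)$ lies in the open upper half-plane and $\mcR(2/3)$ in the open lower half-plane. Hence $\gamma:=\mcR(1/3)\cup\{c_0\}\cup\mcR(2/3)\cup\{\infty\}$ is a Jordan curve in $\PP^1(\CC)$ meeting the real circle only in $\{c_0,\infty\}$; let $W'$ be the component of $\PP^1(\CC)\setminus\gamma$ containing $0$, so that $W'\cap\RR=(-3/4,\infty)$ and $\overline{W'}\cap\RR=[-3/4,\infty)$. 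Since distinct external rays are pairwise disjoint and none meets $\mcM$ or $\infty$, each $\mcR(\vartheta)$ with $\vartheta\in(0,1/3)$ is a connected set disjoint from $\gamma$, so it lies entirely in $W'$ or entirely in the other component; as $\mcR(\vartheta)$ accumulates on $\mcR(0)\subseteq(1/4,\infty)\subseteq W'$ when $\vartheta\to 0^+$ and this dichotomy is locally constant in $\vartheta$, every such $\mcR(\vartheta)$ lies in $W'$, and symmetrically for $\vartheta\in(2/3,1)$. Therefore $c=\lim_{r\to 1^+}\Phi^{-1}(re^{2\pi i\theta})\in\overline{W'}$.

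If $c$ were real, then $c\in\overline{W'}\cap\RR=[-3/4,\infty)$; but $c$ lies on $\partial\mcM$, while $(-3/4,1/4)$ lies in the interior of the main cardioid and $(1/4,\infty)$ lies outside $\mcM$, so $c\in\{-3/4,1/4\}$. Only $\mcR(1/3)$ and $\mcR(2/3)$ land at $-3/4$, and only $\mcR(0)$ lands at $1/4$ (\cite{DouadyHubbard}); either possibility forces $\theta\notin(0,1/3)\cup(2/3,1)$, a contradiction. Hence $c$ is non-real, which together with the first part proves the lemma. The step requiring the most care is not a single computation but assembling the standard facts about $\mcM$ — the landing of $\mcR(1/3),\mcR(2/3)$ at the period-$2$ root, disjointness of external rays, conjugation symmetry of $\Phi$, and the enumeration of rays landing at $-3/4$ and $1/4$ — and converting the "wake of the period-$2$ component" picture into the clean topological statement that every external ray of angle outside $[1/3,2/3]$ lies in $W'$.
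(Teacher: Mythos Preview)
Your proof is correct. The first part matches the paper's approach in spirit (direct computation from the definition of $S_0,S_1,S_\star$), though the paper shortens the casework by invoking the symmetry $K(\theta)=K(-\theta)$ to reduce to $\theta\in(0,1/2]$, whereas you treat $(0,1/2)$, $\{1/2\}$, and $(1/2,1)$ separately.

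For the second assertion your route genuinely differs from the paper's. The paper reduces by conjugation symmetry to $\theta\in(0,1/3)$ and then observes that $\mcR(0)$, $\mcR(1/3)$, and the boundary of the main cardioid together cut off a region of the upper half-plane containing $\mcR(\theta)$, so the landing point is separated from $\RR$. You instead use the wake of the period-$2$ component: the Jordan curve $\mcR(1/3)\cup\{-3/4\}\cup\mcR(2/3)\cup\{\infty\}$ traps $\mcR(\theta)$ in the component $W'$ containing $0$, and then you eliminate the finitely many real boundary points $\{-3/4,1/4\}$ by quoting exactly which rays land there. Your approach trades the main cardioid for an extra appeal to the classification of rays landing at $-3/4$ and $1/4$; the paper's is shorter, but yours makes the separating curve completely explicit as a Jordan curve in $\PP^1(\CC)$. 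One small point you leave implicit is why $(-\infty,-3/4)\not\subset W'$, i.e.\ why the two real arcs lie in \emph{different} Jordan domains; this follows from the conjugation symmetry of $\gamma$ and the fact that near $-3/4$ the two rays approach from opposite half-planes, but a sentence to that effect would tighten the argument.
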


\begin{proof}
By the definition of the kneading sequence, the kneading sequences $K(\theta)$ and $K(-\theta)$ agree for any angle $\theta$. To prove the first claim, therefore, it suffices to assume that $0 < \theta \le 1/2$ and to show that $K(\theta)$ begins with 11 if and only if $\theta < 1/3$. Indeed, to say that the kneading sequence $K(\theta)$ begins with 11 is equivalent to saying that $\theta/2 < 2\theta < (\theta + 1)/2$, which in turn is equivalent to saying that $0 < \theta < 1/3$. This proves the first claim.

To prove the second claim, we may once again assume that $0 < \theta < 1/3$ due to the symmetry of $\mcM$ across the real axis. Suppose $\mcR(\theta)$ lands at a parameter $c \in \partial \mcM$. For such $\theta$, the rays $\mcR(0)$ and $\mcR(1/3)$, together with the main cardioid, separate $c$ from the real line. Thus $c$ is non-real, as claimed.
\end{proof}

\subsection{The monodromy action of the cover $X_1(n) \to \PP^1$}

Though our main result involves the monodromy associated to the cover $\pi_0 : X_0(n) \to \PP^1$, we begin with the combinatorial description of the monodromy action for the cover $\pi_1 : X_1(n) \to \PP^1$ appearing in \cite[Lemmas 3.4 and 3.5]{lau/schleicher:1994}. First, we require a definition.

\begin{defn} \label{Dbinarycomplement}
For $\eps \in \{0,1\}$, let $\widehat{\eps} = 1 - \eps$. For an element $v = v_0v_1v_2\dots$, we define the \textit{binary complement} of $v$ to be the sequence
	\[
		\widehat{v} = \widehat{v_0}\widehat{v_1}\widehat{v_2}\dots.
	\]
\end{defn}

The branch points of the cover $X_1(n) \to \PP^1$ are the roots of the period-$n$ hyperbolic components of $\mcM$ as well as the point at infinity, and the monodromy action associated to a loop around such a branch point $c$ depends on whether $c = \infty$ or $c$ is a {\it primitive} or {\it satellite} parabolic parameter. (Refer to Definition~\ref{defn:satellite/primitive} for these terms.) Parts (1) and (2) of the following proposition are due to Lau and Schleicher, and part (3) is due to Blanchard, Devaney, and Keen; see Proposition~\ref{prop:branchedCover} for other information about the branched cover $\pi_1$.

\begin{prop}[{\cite[Lemmas 3.4, 3.5]{lau/schleicher:1994}; \cite[Theorem 1.3]{blanchard/devaney/keen:1991}}]\label{prop:loops}
Let $c$ be a branch point for the cover $\pi_1 : X_1(n) \to \PP^1$. Making a small turn around $c$ induces the following permutation of the sheets of the cover:
	\begin{enumerate}
	\item Suppose $c$ is the root of a primitive component of $\mcM$, with parameter ray $\theta$ landing at $c$. Then both $K_0(\theta)$ and $K_1(\theta)$ have exact period $n$, and a small loop around $c$ interchanges the sheets labeled $K_0(\theta)$ and $K_1(\theta)$.
	\item Suppose $c$ is a bifurcation point from a period-$k$ component, with parameter ray $\theta$ landing at $c$. Write $n = qk$ for some integer $q \ge 2$, and let $e^{2\pi ip/q}$ be the multiplier of the parabolic orbit of $f_c$. (Note that $p$ must be coprime to $q$.) Exactly one of $K_0(\theta)$ and $K_1(\theta)$ has period equal to $n$; let $K'(\theta)$ denote this period-$n$ sequence. Then a small loop around $c$ sends the sheet labeled $K'(\theta)$ to the sheet labeled with the $kp'$-fold shift of $K'(\theta)$, where $pp' \equiv 1 \pmod q$.
	\item Suppose $c = \infty$. Then a small turn around $c$ transposes sheets whose representative itineraries are binary complements.
	\end{enumerate}
\end{prop}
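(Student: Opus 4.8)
The plan is to obtain all three parts directly from the cited literature; the only real work is to match conventions, since no new dynamics is needed. First I would pin down the dictionary between the combinatorial model of Lau--Schleicher and the labelling of sheets fixed in \S\ref{sec:itineraries}. Lau and Schleicher parametrize the sheets of the branched cover whose fiber over $c \in W$ is the set of period-$n$ periodic points of $f_c$ by the corresponding kneading sequences; via the conjugacy $\iota \colon \mcJ_c \xrightarrow{\ \sim\ } \Sigma$ intertwining $f_c$ with the shift, this is precisely the labelling of the sheets of $\pi_1^{-1}(W) \to W$ by period-$n$ itineraries used in \S\ref{sec:itineraries}, and a period-$n$ itinerary $v$ equals $K_i(\theta)$ exactly when $v^\star = K(\theta)$, by Lemma~\ref{lem:maximal_itinerary} applied to the maximal shift of $v$. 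Once this dictionary is in place, \cite[Lemma 3.4]{lau/schleicher:1994} yields part (1) and \cite[Lemma 3.5]{lau/schleicher:1994} yields part (2); in particular the dichotomy ``both $K_0(\theta), K_1(\theta)$ have exact period $n$'' versus ``exactly one does'' is part of those statements and distinguishes primitive from satellite roots.

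I would then record the consistency checks with the branched-cover picture of \S\ref{SDynatomicCurves}, which also pins the cycle types. For part (1), at a primitive root $c$ two orbits of (formal, hence exact) period $n$ collide by Theorem~\ref{discriminantOfPhi} and \eqref{eqn:meaningDelta_nn}, and by Proposition~\ref{prop:branchedCover}(3a) the cover $\pi_1$ has $n$ points of ramification index $2$ above $c$; so the monodromy is a product of $n$ disjoint transpositions, and Lau--Schleicher identify one of them as the swap $K_0(\theta) \leftrightarrow K_1(\theta)$, the rest following by $\ZZ/n\ZZ$-equivariance. For part (2), at a satellite root of $\Delta_{n,k}(c)$ with $n = qk$, Proposition~\ref{prop:branchedCover}(3b) shows $\pi_1$ has $k$ points of ramification index $q$ above $c$; so the monodromy is a product of $k$ disjoint $q$-cycles, and Lau--Schleicher compute one of them to be $K'(\theta) \mapsto \sigma^{kp'}(K'(\theta)) \mapsto \sigma^{2kp'}(K'(\theta)) \mapsto \cdots$, with $pp' \equiv 1 \pmod q$, where coprimality of $p'$ and $q$ guarantees that the $q$ coarse shifts $\sigma^{jk}(K'(\theta))$, $0 \le j < q$, are cyclically permuted. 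Here I would take care to fix the orientation of the small loop so that $\sigma^{kp'}$, rather than $\sigma^{-kp'}$, appears, in agreement with \cite{lau/schleicher:1994}.

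For part (3), the restriction of the cover to a punctured disk around $c = \infty$ in $\PP^1$ is the ``shift locus'' monodromy of Blanchard--Devaney--Keen: for $|c|$ large, $f_c$ is conjugate to the one-sided full $2$-shift and periodic points are labelled by their itineraries, and \cite[Theorem 1.3]{blanchard/devaney/keen:1991} identifies the automorphism of $\Sigma$ induced by transporting once around $c = \infty$ as the map flipping the symbol in every coordinate, i.e.\ $v \mapsto \widehat v$. Since $v \ne \widehat v$ for every $v$, this is a fixed-point-free involution, hence a product of $\nu(n)/2$ transpositions of period-$n$ sheets, matching Proposition~\ref{prop:branchedCover}(1) with $m = 2$.

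The main obstacle is entirely bookkeeping: Lau--Schleicher and Blanchard--Devaney--Keen use somewhat different normalizations (choice of base point in $W$, orientation of the small loops, and in the latter case the identification of the shift locus with a neighborhood of $\infty$), and one must verify these are compatible with the conventions fixed in \S\ref{sec:itineraries} so that the quoted statements transport verbatim. Beyond that verification there is nothing to prove.
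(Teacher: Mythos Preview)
Your proposal is correct and matches the paper's approach: the paper gives no proof beyond the citations in the proposition header and the attribution sentence preceding it, so the result is simply quoted from \cite{lau/schleicher:1994} and \cite{blanchard/devaney/keen:1991}. Your additional discussion of the dictionary between conventions and the consistency checks with Proposition~\ref{prop:branchedCover} is more than the paper provides, but it is accurate and helpful context.
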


\begin{defn}\label{defn:primitiveKS}
An $n$-periodic kneading sequence $K$ is {\it primitive} if both $K_0$ and $K_1$ have period $n$. Otherwise, $K$ is {\it imprimitive}.
\end{defn}

Recall from \textsection\ref{sec:itineraries} that the sheets of $\pi_0 : X_0(n) \to \PP^1$ are labeled by \emph{orbits} of period-$n$ binary sequences under the shift map; that is, two period-$n$ binary sequences $v$ and $w$ represent the same sheet if and only if $w = \sigma^k(v)$ for some $k \in \NN$. The only branch points for $\pi_0$ are the roots of \textit{primitive} hyperbolic components of $\mcM$ and the point at infinity. Applying Proposition~\ref{prop:loops} immediately yields the following:

\begin{cor}\label{cor:x0monodromy}
Let $c$ be a branch point for $\pi_0 : X_0(n) \to \PP^1$.
	\begin{enumerate}
	\item Suppose $c$ is the root of a primitive component of $\mcM$, with parameter ray $\theta$ landing at $c$. Then both $K_0(\theta)$ and $K_1(\theta)$ have exact period $n$, and a small loop around $c$ interchanges the sheets labeled $K_0(\theta)$ and $K_1(\theta)$.
	\item Suppose $c = \infty$. Then a small turn around $c$ transposes sheets whose representative itineraries are binary complements.
	\end{enumerate}
In particular, a loop around a single branch point always induces a transposition. 
\end{cor}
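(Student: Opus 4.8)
The plan is to deduce the corollary directly from Proposition~\ref{prop:loops}, by pushing the monodromy of the cover $\pi_1:X_1(n)\to\PP^1$ forward along the quotient map $\varphi:X_1(n)\to X_0(n)$. Recall from \S\ref{sec:itineraries} that $\varphi$ is precisely the quotient of $X_1(n)$ by the cyclic group $\langle f_c\rangle\cong\ZZ/n\ZZ$, that this group acts on the itinerary labels of the sheets of $\pi_1$ as the shift $\sigma$, and that the sheets of $\pi_0$ are correspondingly labeled by the $\sigma$-orbits of period-$n$ binary sequences. Since $\langle f_c\rangle$ consists of automorphisms of $X_1(n)$ over $\PP^1$, it acts as deck transformations and hence commutes with the monodromy action; so the monodromy permutation attached to a loop around a point $c\in\PP^1$ for $\pi_0$ is obtained from the corresponding permutation for $\pi_1$ simply by passing to $\sigma$-orbits. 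The corollary then follows by reading off the three cases of Proposition~\ref{prop:loops}.

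For a satellite root $c$, Proposition~\ref{prop:loops}(2) says a small loop around $c$ acts on the sheets of $\pi_1$ by an iterated shift $K'(\theta)\mapsto\sigma^{kp'}K'(\theta)$, which is the identity on $\sigma$-orbits; hence $c$ is not a branch point of $\pi_0$, recovering the statement preceding the corollary that the branch points of $\pi_0$ are exactly the primitive roots together with $\infty$. For a primitive root $c$ with landing ray $\theta$: by Definition~\ref{defn:primitiveKS} both $K_0(\theta)$ and $K_1(\theta)$ have exact period $n$; Proposition~\ref{prop:loops}(1) says a small loop around $c$ transposes the sheets $K_0(\theta)$ and $K_1(\theta)$ of $\pi_1$, so after passing to $\sigma$-orbits it interchanges $[K_0(\theta)]$ and $[K_1(\theta)]$. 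This is a genuine transposition because $[K_0(\theta)]\neq[K_1(\theta)]$: the two sequences differ only in the final symbol of a period, so one period of $K_0(\theta)$ contains exactly one more $0$ than one period of $K_1(\theta)$, whereas any two sequences in the same $\sigma$-orbit have the same number of $0$'s per period. (This is consistent with Proposition~\ref{prop:branchedCover}(3a): $\varphi$ is unramified over such $c$, so the $n$ ramification points of $\pi_1$ above $c$ form a free $\ZZ/n\ZZ$-orbit.) This proves (1). For $c=\infty$, Proposition~\ref{prop:loops}(3) says the loop transposes each $v$ with its binary complement $\widehat v$; since $\widehat{\sigma v}=\sigma\widehat v$, complementation descends to the involution $[v]\mapsto[\widehat v]$ on $\sigma$-orbits, which proves (2).

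The ``in particular'' clause is immediate at a finite branch point, where the descended monodromy is the single transposition of (1). At $c=\infty$ the descended monodromy is the involution $[v]\mapsto[\widehat v]$, a product of disjoint transpositions of pairs of complementary orbits together with fixed points at any self-complementary orbit. A self-complementary orbit would require $\widehat v=\sigma^kv$, which forces $2k\equiv0\pmod n$ and hence $\widehat v=v$ when $n$ is odd (impossible); for $n$ even a half-period shift such as $v=\overline{0^{n/2}1^{n/2}}$ does give one. So for $n$ odd the monodromy around $\infty$ is again a product of disjoint transpositions with no fixed sheet, and in every case each transposition so obtained is recorded as one infinite edge of the monodromy graph $\Gamma(n)$. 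I do not expect any genuine obstacle beyond this bookkeeping: the one place the combinatorics is really used is the non-equality $[K_0(\theta)]\neq[K_1(\theta)]$ (equivalently, the unramifiedness of $\varphi$ over a primitive root), and the only subtlety worth flagging is to allow self-complementary sheets when $n$ is even.
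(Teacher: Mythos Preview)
Your proof is correct and follows the same route as the paper: the paper simply says the corollary is obtained by ``applying Proposition~\ref{prop:loops}'', and you have spelled out exactly how that application goes, namely by pushing the $\pi_1$-monodromy forward along the $\ZZ/n\ZZ$-quotient to $\sigma$-orbits. Your verification that $[K_0(\theta)]\neq[K_1(\theta)]$ via the zero-count in a period is correct (and, as you note, matches Proposition~\ref{prop:branchedCover}(3a)); the paper does not write this check out. Your handling of the satellite case and of $c=\infty$ is likewise correct.

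One remark on the ``in particular'' clause: you are right that the monodromy around $\infty$ is an involution, i.e.\ a product of disjoint transpositions (with fixed sheets precisely at self-complementary orbits when $n$ is even), not a single transposition. The paper's phrasing is informal here; in practice each such transposition is recorded as a separate infinite edge of $\Gamma(n)$, exactly as you describe and as one sees in Figures~\ref{fig:n5graph} and~\ref{n7graph}. So your flag is accurate, and your interpretation agrees with how the paper actually uses the statement.
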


\subsection{Structure of the monodromy graph}\label{sec:graph}

Recall that $\Gamma(n)$ is the monodromy graph from Definition \ref{Dmonodromygraph}, associated to the cover $X_0(n) \to \PP^1$. 
 
As the vertices of $\Gamma(n)$ correspond to orbits of $n$-periodic itineraries, the number of vertices grows rapidly with $n$, with leading term $2^n/n.$  The number of finite edges can also be computed, and it has leading term $2^{n-1}$. However, the graph $\Gamma(n)$ is generally far from regular. 

Recall that each sheet of $X_0(n) \to \PP^1$ corresponds to an orbit of a period-$n$ itinerary under the shift map. Thus it makes sense to label the vertices of $\Gamma(n)$ by period-$n$ itineraries, with the understanding that two itineraries $v$ and $v'$ are associated to the same vertex $\mfv$ if and only if $v' = \sigma^k(v)$ for some $k$. We call such an itinerary a {\it representative itinerary} for $\mfv$. Note that each vertex of $\Gamma(n)$ has a unique maximal representative.

\begin{rem}\label{rem:vertices}
To distinguish vertices from their representatives, we use the typeface $\mathfrak{v}$ to denote a vertex of $\Gamma(n)$, reserving the standard typeface $v$ for a representative itinerary. If $\mfv$ is the vertex associated to the itinerary $v$, we denote by $\widehat{\mfv}$ the binary complement of $\mfv$; that is, the vertex associated to $\widehat{v}$.
\end{rem}

As Figure \ref{n7graph} suggests, the monodromy graph has additional structure; we require some definitions to make that structure clear.

\begin{defn} Let $v$ be an $n$-periodic itinerary, and suppose that the first $n$ letters of $v$ contain $A$ zeros and $B$ ones.  We define the \emph{disparity} $D(v)$ of an $n$-periodic itinerary to be the difference $A-B$.  The disparity $D(\mathfrak{v})$ of a vertex $\mathfrak{v}$ of the monodromy graph is defined to be the disparity of any representative itinerary.
\end{defn}

Note the disparity of a vertex is well-defined independent of the choice of representative itinerary. As $\mfv$ ranges over the vertices of $\Gamma(n)$, $D(\mfv)$ takes all values from $2-n$ to $n-2$ with the same parity as $n$. We also note that $D(\widehat{\mfv}) = -D(\mfv)$.

\begin{lemma} The disparities of two vertices connected by a finite edge differ by 2.
\end{lemma}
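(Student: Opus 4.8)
The plan is to observe that a finite edge of $\Gamma(n)$ is forced, by the combinatorial description of the monodromy, to join two vertices whose representative itineraries differ in exactly one symbol, and then to read off the effect on the disparity directly.

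First I would recall that a finite edge of $\Gamma(n)$ corresponds (Definition \ref{Dmonodromygraph}) to a finite branch point $c$ of the cover $\pi_0 : X_0(n) \to \PP^1$. By Corollary \ref{cor:x0monodromy}(1), such a $c$ is the root of a \emph{primitive} period-$n$ hyperbolic component of $\mcM$; let $\theta$ be a parameter ray landing at $c$. Again by Corollary \ref{cor:x0monodromy}(1), both $K_0(\theta)$ and $K_1(\theta)$ have exact period $n$, and the edge attached to $c$ joins the vertex $\mfv_0$ labeled by the orbit of $K_0(\theta)$ to the vertex $\mfv_1$ labeled by the orbit of $K_1(\theta)$. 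Next I would unwind the definitions of $K_0(\theta)$ and $K_1(\theta)$: writing $K(\theta) = \overline{v_1 \dots v_{n-1} \star}$, we have $K_0(\theta) = \overline{v_1 \dots v_{n-1} 0}$ and $K_1(\theta) = \overline{v_1 \dots v_{n-1} 1}$, so these two period-$n$ sequences have identical first $n-1$ symbols and differ only in the $n$-th. Since the disparity of a vertex is the difference (number of zeros) $-$ (number of ones) among the first $n$ symbols of any representative itinerary — a quantity independent of the chosen representative — replacing the final $0$ of $K_0(\theta)$ by the final $1$ of $K_1(\theta)$ decreases the zero-count by one and increases the one-count by one, so $D(\mfv_0) - D(\mfv_1) = 2$. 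Hence the disparities of $\mfv_0$ and $\mfv_1$ differ by $2$, as claimed; in particular $\mfv_0 \neq \mfv_1$, so the edge is not a loop.

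This is essentially a one-line calculation once Corollary \ref{cor:x0monodromy} is in hand, so I do not expect a real obstacle. The only point requiring a moment's care is that both $K_0(\theta)$ and $K_1(\theta)$ genuinely have period exactly $n$, so that the "first $n$ symbols" window is the correct one for each vertex; this is precisely the content of primitivity supplied by Corollary \ref{cor:x0monodromy}(1). If anything, the bookkeeping to watch is that the vertex-labeling conventions of \textsection\ref{sec:graph} (vertices $=$ shift-orbits of period-$n$ itineraries) are compatible with reading the disparity off a single representative, which is exactly the well-definedness already noted after the definition of disparity.
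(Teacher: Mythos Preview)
Your proposal is correct and follows essentially the same approach as the paper: invoke Corollary~\ref{cor:x0monodromy} to see that a finite edge joins the vertices represented by $K_0 = \overline{v_1 \dots v_{n-1} 0}$ and $K_1 = \overline{v_1 \dots v_{n-1} 1}$, then observe that swapping a single symbol changes the disparity by $2$. The paper's proof is the same argument stated more tersely; your additional remarks about primitivity ensuring exact period $n$ and well-definedness of disparity on shift-orbits are accurate but not strictly needed here.
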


\begin{proof}  By Corollary~\ref{cor:x0monodromy}, the edge corresponding to a given branch point $c$ connects two vertices only if the kneading sequence for $c$ has the form $K = \overline{v_1 \dots v_{n-1} \star}$ and the associated itineraries $K_0 = \overline{v_1 \dots v_{n-1} 0}$ and $K_1 = \overline{v_1 \dots v_{n-1} 1}$ are representatives for the two vertices. The result is immediate.
\end{proof}

We arrange the vertices into rows by decreasing disparity, with the unique vertex of maximal disparity $n-2$ (represented, for example, by $\overline{11 \dots 10}$) on the top row and the vertex of minimal disparity on the bottom row.  The preceding lemma shows that there is only one possibility for any finite edge: it connects two vertices in adjacent rows.

\begin{rem} \label{complexremark} There is one final structural feature of the graph worth describing. If $c \in \mathbb{C}$ is a branch point of $\pi_0$, then its complex conjugate $\bar{c}$ is also a branch point of $\pi_0$, and the angles of the rays landing at $\bar{c}$ are the negatives of those landing at $c$; in particular, $c$ and $\bar{c}$ have the same kneading sequence. Therefore,  edges corresponding to {\it non-real} branch points come in pairs.
\end{rem}

Note that it is also possible for branch points which are not conjugate to have the same kneading sequence, accounting for the high multiplicity of some of the connections.

\subsection{Successor edges} 
One may use Lemma~\ref{lem:maximal_itinerary} to show that, given any vertex $\mfv$ in $\Gamma(n)$, there is at least one edge connected to $\mfv$. This construction is highly useful for proving the existence of edges of the monodromy graph; for example, it is used by Buff and Lei \cite{BuffLei14} to prove that $Y_1(n)$ is connected for all $n$. Because of it's importance in this section, we give a name to this construction.

\begin{defn} \label{successor} Let $v = \overline{v_1 \dots v_n}$ be a maximal $n$-periodic itinerary, so that $v_n = 0$.  Define the \emph{successor} of $v$ to be 
$$s(v) := \overline{v_1 \dots v_{n-1} 1}.$$
If $v$ is non-maximal, define its successor $s(v)$ to be the successor of the shift of $v$ which is maximal. Note that the successor increases the disparity by 2.
\end{defn}

\begin{lemma} \label{upwardsarrows} Let $v$ be an itinerary of period $n \geq 3$ satisfying $D(v) < n - 2$. Then $s(v)$ also has period $n$, hence $v^\star = s(v)^\star$ is the kneading sequence of a \emph{primitive} branch point of $\pi_0$. Consequently, the vertices $\mfv$ and $s(\mfv)$ associated to $v$ and $s(v)$, respectively, are connected by an edge in $\Gamma(n)$.

Moreover, if $D(\mathfrak{v}) \ge 0$, then there are at least two edges of $\Gamma(n)$ which connect $\mathfrak{v}$ to $s(\mfv)$.
\end{lemma}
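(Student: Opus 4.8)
The plan is to prove the first statement by showing $s(v)$ has period exactly $n$, which by Proposition~\ref{prop:loops}(1) and Corollary~\ref{cor:x0monodromy}(1) forces $v^\star = s(v)^\star$ to be the kneading sequence of a primitive branch point, whose associated edge joins $\mfv$ to $s(\mfv)$. Without loss of generality assume $v = \overline{v_1 \dots v_n}$ is maximal, so $v_n = 0$ by Lemma~\ref{lem:maximal_itinerary}, and $s(v) = \overline{v_1 \dots v_{n-1} 1}$. Since $v^\star = \overline{v_1\dots v_{n-1}\star}$ is the kneading sequence $K(\theta)$ of the angle $\theta = .\overline{v_1 \dots v_n}$ (again Lemma~\ref{lem:maximal_itinerary}), and $\theta$ has exact period $n$ under doubling, we know $v = K_0(\theta)$ has period $n$. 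First I would argue that $s(v) = K_1(\theta)$ also has period $n$: if $s(v)$ had period $d \mid n$ with $d < n$, then the block $v_1 \dots v_{n-1} 1$ would be a $d$-fold repetition of its length-$d$ prefix, forcing $v_n = 1$ after one period-shift; comparing with $v$ (which ends in $0$ and has period $n$) yields the disparity constraint. Concretely, $v$ and $s(v)$ differ only in the last coordinate, so $D(s(v)) = D(v) + 2$. If $s(v)$ had period $d < n$ then its disparity (computed over $n$ letters) would be $(n/d)$ times the disparity over one period, and one checks this is incompatible with $D(v) < n-2$; the case $d = n$ is exactly what we want. I expect the cleanest route is to invoke the combinatorics of \cite{BuffLei14} directly: they prove precisely that for a maximal itinerary $v$ of period $n$ with $v \neq \overline{1\dots10}$ (equivalently $D(v) < n-2$), both $K_0$ and $K_1$ have period $n$, so $v^\star$ is a \emph{primitive} kneading sequence in the sense of Definition~\ref{defn:primitiveKS}. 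This gives a primitive branch point $c$ of $\pi_0$ with kneading sequence $v^\star$, and by Corollary~\ref{cor:x0monodromy}(1) the edge for $c$ connects the sheets labeled $K_0 = v$ and $K_1 = s(v)$, i.e.\ the vertices $\mfv$ and $s(\mfv)$.

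For the "moreover" clause, suppose $D(\mfv) \geq 0$, i.e.\ the maximal representative $v$ has at least as many zeros as ones. The strategy is to exhibit a \emph{second} primitive branch point $c'$ with the same property, using the reflection symmetry of $\mcM$ recorded in Remark~\ref{complexremark}: a branch point $c$ and its complex conjugate $\bar c$ share a kneading sequence but are distinct whenever $c$ is non-real, so the corresponding edge of $\Gamma(n)$ occurs with multiplicity at least two. Thus it suffices to show the branch point $c$ produced above is non-real. By Lemma~\ref{complexlanding}, the ray $\mcR(\theta)$ lands at a non-real parameter provided $K(\theta)$ begins with $11$, equivalently $\theta < 1/3$ or $\theta > 2/3$. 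So the key step is: when $D(v) \geq 0$, the maximal representative $v$ begins with $11$. Here is where I would use maximality together with the disparity hypothesis. If $v_1 = 0$ then, since $v$ is maximal in its shift orbit and $v$ is not the all-zeros string, some cyclic shift begins with $1$ and would be lexicographically larger — contradiction — so $v_1 = 1$. Given $v_1 = 1$, maximality similarly pushes toward $v_2 = 1$ unless the string is forced into a low-disparity pattern; the condition $D(v) \geq 0$ (enough zeros) is exactly what rules out $v$ being something like $\overline{10\dots0}$ where $v_2 = 0$ is unavoidable. I would make this precise by a short case analysis on $v_1 v_2$, using that a maximal string with $v_1 = 1, v_2 = 0$ must have all subsequent blocks $\leq 10$ in a suitable sense, which caps the number of zeros below the number of ones, contradicting $D(v) \geq 0$ for $n \geq 3$.

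The main obstacle I anticipate is the second part: carefully showing that $D(\mfv) \geq 0$ implies the maximal representative begins with $11$. The first part is essentially a citation to the period-preservation lemma in \cite{BuffLei14} (which underlies their connectedness proof for $Y_1(n)$), combined with the monodromy dictionary in Corollary~\ref{cor:x0monodromy}. The subtlety in the second part is that maximality is a statement about the \emph{entire} cyclic orbit, so "begins with $11$" is not immediate from counting alone — one genuinely needs to combine the lexicographic-maximality constraint with the disparity bound, and to handle small cases ($n = 3$, where $\overline{110}$ is the only vertex with $D = n-2 = 1 \geq 0$ but then $D(v) < n-2$ fails, so the hypothesis $D(v) < n-2$ quietly does work here too) explicitly. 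Once "$v$ begins with $11$" is established, Lemma~\ref{complexlanding} and Remark~\ref{complexremark} finish the count of at least two edges immediately.
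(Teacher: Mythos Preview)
Your strategy matches the paper's on both parts: prove $s(v)$ has exact period $n$ so that $v^\star$ is primitive and Corollary~\ref{cor:x0monodromy}(1) yields an edge; then, when $D(\mfv)\ge 0$, show the maximal representative begins $11$, so Lemma~\ref{complexlanding} and Remark~\ref{complexremark} give a non-real branch point together with its conjugate. The paper carries out the first part by a direct block argument you do not supply: writing the maximal $v=\overline{1^{a_1}0^{b_1}\cdots 1^{a_r}0^{b_r}}$, maximality forces $a_1\ge a_j$ for all $j$, and then the maximal shift of $s(v)$ begins with a block of $a_1+1$ consecutive ones; since $a_1+1>a_j$ for every $j$, this longest $1$-block is \emph{unique} in one period of $s(v)$, and since $D(v)<n-2$ guarantees $s(v)$ still contains a zero, $s(v)$ must have exact period $n$. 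Your own disparity sketch (``$(n/d)$ times the disparity over one period'') does not close, and the appeal to \cite{BuffLei14} is a deferral rather than an argument.

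For the second part you have been tripped by a sign convention. You interpret $D(\mfv)\ge 0$ as ``at least as many zeros as ones'', following the paper's stated definition literally; but everything else in the paper (the successor \emph{raises} disparity by $2$, the vertex $\overline{1\cdots 10}$ has \emph{maximal} disparity $n-2$, and indeed your own parenthetical ``equivalently $D(v)<n-2$'') shows the intended convention is $D=(\text{ones})-(\text{zeros})$. Under your reading the implication you want is simply false: $\overline{10100}$ is maximal, begins $10$, and has more zeros than ones, so your claimed ``caps the number of zeros below the number of ones'' cannot hold. Under the intended reading the argument is a one-line count: if the maximal $v$ begins $10$, every $1$-block has length $1$, so $(\text{ones})=r$; then $D(v)\ge 0$ gives $r\ge n-r$, while $b_i\ge 1$ gives $n-r=\sum b_i\ge r$, forcing all $b_i=1$ and hence $v=\overline{1010\cdots}$ of period $2<n$. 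That is precisely the paper's (tersely stated) justification for ``$v$ begins with at least two consecutive ones''.
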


Because the edges constructed in Lemma~\ref{upwardsarrows} are so useful later in this section, we give them a name:

\begin{defn}
Let $\mfv$ be a vertex in $\Gamma(n)$ with disparity $D(\mfv) < n - 2$. If $v$ is a representative itinerary for $\mfv$, we define the {\it successor} of $\mfv$ to be the vertex $s(\mfv)$ associated to the itinerary $s(v)$, and we call an edge $\mfv \to s(\mfv)$ constructed as in Lemma~\ref{upwardsarrows} a {\it successor edge}.
\end{defn}

\begin{proof}[Proof of Lemma~\ref{upwardsarrows}] Without loss of generality, assume $v$ is maximal, with 
\begin{equation} \label{Evectorlong}
v = \overline{ \underbrace{1 \dots 1}_{a_1} \ \underbrace{0 \dots 0}_{b_1} \ \underbrace{1 \dots 1}_{a_2} \ \cdots \cdots \cdots \ \underbrace{1 \dots 1}_{a_r} \ \underbrace{0 \dots 0}_{b_r}}.
\end{equation}
By maximality, $a_1 \geq a_j$ for all $1 \leq j \leq r$.  Since 
\begin{equation} \label{Evectorslong}
s(v) = \overline{ \underbrace{1 \dots 1}_{a_1} \ \underbrace{0 \dots 0}_{b_1} \ \underbrace{1 \dots 1}_{a_2} \ \cdots \cdots \cdots \ \underbrace{1 \dots 1}_{a_r} \ \underbrace{0 \dots 0}_{b_r-1} \ 1},
\end{equation}
a right shift puts $s(v)$ into maximal form, with at least $a_1 + 1$ leading consecutive ones. Since $a_1 + 1 > a_j$ for all $j$, $s(v)$ has a unique block of 1's of length $a_1 + 1$. Combining this with the fact that $s(v)$ has at least one 0 (otherwise, $v$ would have only one 0, making the disparity of $v$ equal to $n - 2$), we conclude that $s(v)$ must also have period $n$, hence the kneading sequence $v^\star$ connecting $v$ and $s(v)$ is primitive.
  By Lemma \ref{lem:maximal_itinerary}, it is the kneading sequence of a primitive branch point $c$, and the vertices $\mfv$ and $s(\mfv)$ are connected by the edge corresponding to $c$.

If also $D(v) \geq 0$, then since $n > 2$ and $v$ is maximal, $v$ begins with at least two consecutive ones.  By Lemma \ref{complexlanding}, the branch point $c$ guaranteed by the previous paragraph is non-real, and its conjugate $\bar{c}$ provides a second edge connecting $\mfv$ to $s(\mfv)$ in $\Gamma(n)$ by Remark \ref{complexremark}.
\end{proof}

Together with the edges provided by the branch point at infinity, we see that every vertex in the graph is connected to the vertex of maximal disparity via a path on which each {\it finite} edge occurs with multiplicity at least 2. In particular, $\Gamma(n)$ is connected (cf. \cite[Lemma~4.2]{BuffLei14}).

\subsection{Additional edges of $\Gamma(n)$}

Lemma \ref{upwardsarrows} is insufficient to guarantee that $\Gamma(n)$ remains connected after two finite edges are removed.  In this section, we construct additional edges of $\Gamma(n)$, which requires a more delicate analysis. We begin by describing two types of vertices that will require special attention later.

\begin{defn}
A vertex $\mfv$ is {\it repetitive} if its maximal representative has the form
	\begin{equation} \label{Evectorrep}
    	v= \overline{ \underbrace{1 \dots 1}_{b+1} \ \underbrace{0 \dots 0}_{a} \ \underbrace{1 \dots 1}_{b} \ \cdots \cdots \cdots \ \underbrace{1 \dots 1}_{b} \ \underbrace{0 \dots 0}_{a}}
    \end{equation}
for some positive integers $a$ and $b$.
\end{defn}

\begin{defn} 
A vertex $\mfv$ of $\Gamma(n)$ is {\it self-complementary} if $\widehat{\mfv} = \mfv$. In other words, if $v$ is a representative itinerary of $\mfv$, then $\mfv$ is self-complementary if there is an integer $1 \le k < n$ such that $\sigma^k(v) = \widehat{v}$. Note that, in this case, $n$ must be even and $D(\mfv)$ must be zero.
\end{defn}

If a vertex $\mfv$ is neither repetitive nor self-complementary, then it is connected to another vertex of equal disparity via a path that does not include a successor edge from $\mfv$ to $s(\mfv)$.

\begin{lemma} \label{Lrepsc} \label{lemma:alternatepath} Suppose $\mathfrak{v}$ is a vertex of $\Gamma(n)$ with disparity $2- n < D(\mathfrak{v}) < n-2$. If $\mathfrak{v}$ is not repetitive or self-complementary, then there exists a path in $\Gamma(n)$ connecting $\mathfrak{v}$ to a vertex $\mathfrak{w}$ with $D(\mathfrak{w}) = D(\mathfrak{v})$ and $\mathfrak{w} \ne \mathfrak{v}.$  Furthermore, this path can be chosen disjoint from the edges connecting $\mathfrak{v}$ to its successor constructed in Lemma \ref{upwardsarrows}.
\end{lemma}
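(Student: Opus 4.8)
The plan is to recast the problem in terms of the underlying simple graph of $\Gamma(n)$. By Corollary~\ref{cor:x0monodromy}, its finite edges are the primitive period-$n$ kneading sequences; by Lemma~\ref{lem:maximal_itinerary} every such sequence is $v^\star$ for a \emph{maximal} period-$n$ itinerary $v$ (the maximal rotation of a non-constant period-$n$ word begins with $1$ and, were it to end with $1$, a cyclic comparison would force it to be constant; so it ends with $0$), and by Lemma~\ref{upwardsarrows} these are exactly the pairs $\{[v],[s(v)]\}$ with $D(v)<n-2$. Thus the simple graph of $\Gamma(n)$ is the union of the \emph{successor tree} $\mathcal{T}$ — the tree on the vertices of $\Gamma(n)$ in which each $\mathfrak{u}$ with $D(\mathfrak{u})<n-2$ is joined to its parent $s(\mathfrak{u})$, the root being the unique vertex $\mathfrak{t}$ of disparity $n-2$ — together with the \emph{infinite edges} joining each $\mathfrak{u}$ to its binary complement $\widehat{\mathfrak{u}}$. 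The edges to be avoided (those of Lemma~\ref{upwardsarrows}) are all finite and all join $\mathfrak{v}$ to $s(\mathfrak{v})$. So it suffices to produce a walk from $\mathfrak{v}$ to some $\mathfrak{w}\neq\mathfrak{v}$ with $D(\mathfrak{w})=D(\mathfrak{v})$ that never steps directly between $\mathfrak{v}$ and $s(\mathfrak{v})$, and then delete cycles; such a $\mathfrak{w}$ exists because for $2-n<D(\mathfrak{v})<n-2$ there are at least two vertices of disparity $D(\mathfrak{v})$ (a short count of binary necklaces of length $n$).

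Write $D:=D(\mathfrak{v})$. If $D=0$, take $\mathfrak{w}:=\widehat{\mathfrak{v}}$: then $D(\mathfrak{w})=0$ and $\mathfrak{w}\neq\mathfrak{v}$ since $\mathfrak{v}$ is not self-complementary, and the single infinite edge $\{\mathfrak{v},\widehat{\mathfrak{v}}\}$ is the required path. Suppose $D\neq 0$, and let $T_{\mathfrak{v}}\subseteq\mathcal{T}$ be the subtree rooted at $\mathfrak{v}$ (all $\mathfrak{x}$ with $s^{j}(\mathfrak{x})=\mathfrak{v}$ for some $j\geq 0$); every vertex of $T_{\mathfrak{v}}$ has disparity $\leq D$, and $\mathfrak{v}$ is its only vertex of disparity $D$. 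The key point is the \emph{Claim}: some infinite edge joins a vertex of $T_{\mathfrak{v}}$ to a vertex outside it — equivalently, $T_{\mathfrak{v}}$ is not closed under $\mathfrak{x}\mapsto\widehat{\mathfrak{x}}$.

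Granting the Claim, deleting the finite edges between $\mathfrak{v}$ and $s(\mathfrak{v})$ splits $\mathcal{T}$ into $T_{\mathfrak{v}}$ and a complementary part containing $\mathfrak{t}$, but the crossing infinite edge reconnects them; hence $\mathfrak{v}$ is joined to $\mathfrak{t}$ by a walk $P_1$ that never steps between $\mathfrak{v}$ and $s(\mathfrak{v})$. Choosing $\mathfrak{w}\neq\mathfrak{v}$ of disparity $D$ and letting $P_2$ be the reverse of the successor-path $\mathfrak{w}\to s(\mathfrak{w})\to\cdots\to\mathfrak{t}$ — whose vertices have disparities $D,D+2,\dots$, the only one of disparity $D$ being $\mathfrak{w}\neq\mathfrak{v}$, so $P_2$ misses $\mathfrak{v}$ altogether — the concatenation $P_1P_2$ with cycles removed is the required path. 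It remains to prove the Claim. If $D<0$ it is immediate: $\widehat{\mathfrak{v}}$ has disparity $-D>0>D$, so $\widehat{\mathfrak{v}}\notin T_{\mathfrak{v}}$ and the infinite edge at $\mathfrak{v}$ itself crosses. If $D>0$, then $\mathfrak{v}$ is automatically not self-complementary, so the only hypothesis left is that $\mathfrak{v}$ is not repetitive. If $\widehat{\mathfrak{v}}\notin T_{\mathfrak{v}}$ we are done; if some vertex of $T_{\mathfrak{v}}$ has disparity $<-D$, its complement has disparity $>D$ and cannot lie in $T_{\mathfrak{v}}$, so again we are done; hence we may assume $s^{D}(\widehat{\mathfrak{v}})=\mathfrak{v}$ and that $T_{\mathfrak{v}}$ lies in disparities $[-D,D]$, which forces $\widehat{\mathfrak{v}}$ to be a leaf of $\mathcal{T}$ (it has no preimage under $s$). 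Now one argues with block vectors: writing the maximal representative of $\widehat{\mathfrak{v}}$ as $\overline{1^{p_1}0^{q_1}\cdots 1^{p_t}0^{q_t}}$, the successor deletes a $0$ from the tail $0$-block and inserts a $1$ at the head (coalescing the first and last $1$-blocks when the tail block has length one); iterating $D$ times and comparing with the block vector of the maximal representative of $\mathfrak{v}$ — which, as the maximal rotation of the complement of $\overline{1^{p_1}0^{q_1}\cdots 1^{p_t}0^{q_t}}$, is a cyclic rearrangement of the same data — pins the $p_i$ and $q_i$ into the periodic pattern $\overline{1^{b+1}0^{a}(1^{b}0^{a})^{k}}$ of a repetitive itinerary, contradicting the hypothesis.

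I expect this last block-vector computation to be the main obstacle: it requires a case split according to whether the coalescence step is triggered during the $D$ iterations, together with careful bookkeeping of the maximality conditions on representatives (and of ties among block lengths). Everything else — the identification of $\Gamma(n)$ with the successor tree plus the complement edges, the routing through $\mathfrak{t}$ and back down to $\mathfrak{w}$, and the existence of a second vertex of disparity $D$ — is formal. It is worth noting that the $D<0$ case needs neither hypothesis and the $D=0$ case needs only non-self-complementarity; the ``not repetitive'' hypothesis is used exclusively to make the $D>0$ block analysis go through.
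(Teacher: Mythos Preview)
Your strategy is genuinely different from the paper's: rather than exhibiting the explicit four-step walk
\[
\mathfrak{v}\longrightarrow\widehat{\mathfrak{v}}\longrightarrow s(\widehat{\mathfrak{v}})\longrightarrow\widehat{s(\widehat{\mathfrak{v}})}\longrightarrow s\bigl(\widehat{s(\widehat{\mathfrak{v}})}\bigr)
\]
and checking by a single block computation that the endpoint differs from $\mathfrak{v}$, you try to route globally through the successor tree. The reduction to your ``Claim'' is sound (your incidental assertion that \emph{every} finite edge of $\Gamma(n)$ is a successor edge is false---Proposition~\ref{prop:repetitive} manufactures finite edges $\mathfrak{v}\to\mathfrak{u}$ with $\mathfrak{u}\neq s(\mathfrak{v})$---but this is harmless, since you only use that successor edges and infinite edges \emph{lie in} $\Gamma(n)$).

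The real problem is the block analysis in your Case~3. You assert that if $\widehat{\mathfrak{v}}$ is a leaf of the successor tree, $s^{D}(\widehat{\mathfrak{v}})=\mathfrak{v}$, and $T_{\mathfrak{v}}\subseteq[-D,D]$, then the block comparison forces $\mathfrak{v}$ to be repetitive. This is false already for $n=7$, $D=3$: take $\mathfrak{v}=[\overline{1111010}]$. Then $\widehat{\mathfrak{v}}=[\overline{1010000}]$ is a leaf, the successor chain is $[\overline{1010000}]\to[\overline{1101000}]\to[\overline{1110100}]\to[\overline{1111010}]=\mathfrak{v}$, and one computes $T_{\mathfrak{v}}=\{[\overline{1111010}],[\overline{1110100}],[\overline{1101010}],[\overline{1101000}],[\overline{1010100}],[\overline{1010000}]\}$, all with disparities in $[-3,3]$. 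Yet the block pattern $1^{4}0^{1}1^{1}0^{1}$ of $\mathfrak{v}$ is not of the repetitive form $1^{b+1}0^{a}(1^{b}0^{a})^{k}$. Your Claim \emph{does} hold here---the infinite edge at $[\overline{1110100}]\in T_{\mathfrak{v}}$ lands at $[\overline{1100010}]\notin T_{\mathfrak{v}}$---but your proposed argument cannot see it: once $D>1$, iterating $s$ loses track of which $1$-blocks arose from coalescence, and matching the result against the complement's block vector does not pin anything down. The paper avoids this by applying $s$ only once on each side of a complement, so the block bookkeeping stays one step deep and can actually be carried out.
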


\begin{cor}\label{cor:higher_disparity}
If $n \geq 3$ and $\mfv$ satisfies $D(\mfv) < n -2$ and is not repetitive or self-complementary, then $\mfv$ is connected to a vertex of higher disparity via a path that does not include a successor edge $\mfv \to s(\mfv)$.
\end{cor}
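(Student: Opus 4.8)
The plan is to bootstrap the two preceding lemmas, splitting on the sign of the disparity $D(\mfv)$ and dispatching the negative range separately with the branch point at $\infty$.

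First I would treat the case $D(\mfv) < 0$. Here $\widehat{\mfv} \ne \mfv$ (a self-complementary vertex has disparity $0$), and $D(\widehat{\mfv}) = -D(\mfv) > D(\mfv)$, so $\widehat{\mfv}$ has strictly larger disparity. By Corollary~\ref{cor:x0monodromy}(2), the branch point at $\infty$ contributes an edge of $\Gamma(n)$ joining $\mfv$ to $\widehat{\mfv}$; this edge is infinite, hence is not a successor edge $\mfv \to s(\mfv)$, since successor edges arise from primitive --- and therefore finite --- branch points. So this single edge is already a path of the required kind; note that in this case one uses neither the bound $D(\mfv) < n-2$ nor the hypothesis that $\mfv$ is not repetitive.

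Next I would treat the case $D(\mfv) \ge 0$. Since $n \ge 3$ we then have $2 - n < D(\mfv) < n - 2$, so Lemma~\ref{lemma:alternatepath} applies --- this is the one place the hypothesis that $\mfv$ is not repetitive or self-complementary is used. It produces a path $P$ in $\Gamma(n)$ from $\mfv$ to a vertex $\mfw \ne \mfv$ with $D(\mfw) = D(\mfv)$, disjoint from the successor edges $\mfv \to s(\mfv)$ constructed in Lemma~\ref{upwardsarrows}. Since $D(\mfw) = D(\mfv) < n-2$, Lemma~\ref{upwardsarrows} supplies a successor edge $e$ from $\mfw$ to $s(\mfw)$, with $D(s(\mfw)) = D(\mfv) + 2$. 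I would then take the concatenation of $P$ with $e$: it is a path from $\mfv$ to $s(\mfw)$, a vertex of strictly larger disparity, and it includes no successor edge $\mfv \to s(\mfv)$ --- for $P$ includes none by construction, while $e$ is not even incident to $\mfv$, because $\mfw \ne \mfv$ and $s(\mfw) \ne \mfv$ (the latter since $D(s(\mfw)) = D(\mfv) + 2 \ne D(\mfv)$).

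I do not expect a serious obstacle: the genuine content is entirely in Lemmas~\ref{lemma:alternatepath} and~\ref{upwardsarrows}, which are already in hand, and assembling the corollary from them is essentially bookkeeping. The only point requiring care is the boundary of the disparity range --- Lemma~\ref{lemma:alternatepath} requires the strict inequality $2 - n < D(\mfv)$ --- which is precisely why the case $D(\mfv) < 0$ is split off and handled with the edge at infinity, and why the hypothesis $n \ge 3$ (which forces $2 - n < 0$) is needed.
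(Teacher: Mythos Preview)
Your proof is correct and essentially the same as the paper's. The only difference is cosmetic: the paper splits off only the single minimal-disparity vertex $D(\mfv)=2-n$ and handles the whole range $2-n<D(\mfv)<n-2$ via Lemma~\ref{lemma:alternatepath}, whereas you split at $D(\mfv)<0$ versus $D(\mfv)\ge 0$; both splits route the low-disparity case through the infinite edge to $\widehat{\mfv}$ and the remaining case through Lemma~\ref{lemma:alternatepath} followed by a successor edge from $\mfw$.
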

\begin{proof}
If $\mfv$ has minimal disparity $2 - n < 0$, then $\mfv$ is connected by an infinite edge to $\widehat{\mfv}$, which has (maximal) disparity $n - 2 > 0$. Now suppose $2 - n < D(\mfv) < n - 2$. By Lemma~\ref{lemma:alternatepath}, $\mfv$ is connected to a different vertex $\mfw$ of the same disparity as $\mfv$ via a path that does not include the successor edge from $\mfv$. Following the successor edge $\mfw \to s(\mfw)$ completes the desired path.
\end{proof}

\begin{proof}[Proof of Lemma~\ref{lemma:alternatepath}] To obtain the path, note that any vertex is connected to its binary complement by the branch point at infinity.  Therefore, given a vertex $\mathfrak{v}$, the successor function together with the branch point at infinity provide a path 
	\begin{equation}\label{eqn:path}
    \mfv \overset{E_1}{\longrightarrow} \widehat{\mfv} \overset{E_2}{\longrightarrow} s(\widehat{\mfv}) \overset{E_3}{\longrightarrow} \widehat{s(\widehat{\mfv})} \overset{E_4}{\longrightarrow} s(\widehat{s(\widehat{\mfv})}),
    \end{equation}
which induces the following map on disparity:
$$D(\mfv) \longrightarrow -D(\mfv) \longrightarrow -D(\mfv) + 2 \longrightarrow D(\mfv) - 2 \longrightarrow D(\mfv).$$
In this way we produce a path to a vertex of equal disparity.

We now claim that, for each $i \in \{1,2,3,4\}$, the edge $E_i$ is not a successor edge from $\mfv$ (as constructed in Lemma~\ref{upwardsarrows}). The edges $E_1$ and $E_3$ are infinite edges, so they are not successor edges. If $E_2$ were a successor edge from $\mfv$ to $s(\mfv)$, we would have $\mfv = \widehat{\mfv}$, contradicting our assumption that $\mfv$ is not self-complementary. Finally, $E_4$ cannot be a successor edge $\mfv \to s(\mfv)$ by considering the disparity. Thus the path given in \eqref{eqn:path} does not pass through a successor edge from $\mfv$ to $s(\mfv)$.

To complete the proof, we must show that the final vertex 
$s(\widehat{s(\widehat{\mfv})})$ in the path \eqref{eqn:path}
is different from $\mathfrak{v}$. In other words, we must show that if $v$ is a representative itinerary of $\mfv$, then $s(\widehat{s(\widehat{v})})$ is not a shift of $v$. We choose a representative $v$ of $\mathfrak{v}$ so that $\widehat{v}$ is maximal. Writing
$$v = \overline{ \underbrace{0 \dots 0}_{a_1} \ \underbrace{1 \dots 1}_{b_1} \ \underbrace{0 \dots 0}_{a_2} \ \cdots \cdots \cdots \ \underbrace{0 \dots 0}_{a_r} \ \underbrace{1 \dots 1}_{b_r}},$$
we have 
$$\widehat{v} = \overline{ \underbrace{1 \dots 1}_{a_1} \ \underbrace{0 \dots 0}_{b_1} \ \underbrace{1 \dots 1}_{a_2} \ \cdots \cdots \cdots \ \underbrace{1 \dots 1}_{a_r} \ \underbrace{0 \dots 0}_{b_r}},$$
so that we have
$$\sigma(s(\widehat{v})) = \overline{\underbrace{1 \dots 1}_{a_1+1} \ \underbrace{0 \dots 0}_{b_1} \ \cdots \cdots \cdots \ \underbrace{1 \dots 1}_{a_r} \ \underbrace{0 \dots 0}_{b_r-1}}.$$
Note that it is possible that $b_r = 1$ and $\sigma(s(\widehat{v}))$ ends in a 1.  The complement of $\sigma(s(\widehat{v}))$ is 
$$\widehat{\sigma(s(\widehat{v}))} = \overline{ \underbrace{0 \dots 0}_{a_1+1} \ \underbrace{1 \dots 1}_{b_1} \ \underbrace{0 \dots 0}_{a_2} \ \cdots \cdots \cdots \ \underbrace{0 \dots 0}_{a_r} \ \underbrace{1 \dots 1}_{b_r-1}}.$$

Now let $u$ be the maximal shift of $\widehat{\sigma(s(\widehat{v}))}$ and consider its successor $s(u)$; we claim that $s(u)$ does not represent $\mathfrak{v}$.  As $\widehat{\sigma(s(\widehat{v}))}$ has at least $a_1+1$ consecutive zeros, so does $u$ and its successor $s(u)$, unless the maximal shift $u$ of $\widehat{\sigma(s(\widehat{v}))}$ begins with the length-$b_1$ block of ones.  Since $v$ has at most $a_1$ consecutive zeros by maximality of $\widehat{v}$, we need only deal with the case that the maximal shift of $\widehat{\sigma(s(\widehat{v}))}$ is 
$$u = \overline{ \underbrace{1 \dots 1}_{b_1} \ \underbrace{0 \dots 0}_{a_2} \ \cdots \cdots \cdots \ \underbrace{0 \dots 0}_{a_r} \ \underbrace{1 \dots 1}_{b_r-1} \ \underbrace{0 \dots 0}_{a_1+1} }.$$
In this case, $b_1 \geq b_j$ for $j \in \{ 1, \ldots, r-1 \}$ and $b_1 \geq b_r -1,$ so the maximal shift of $s(u)$ is
$$\sigma^{n-1}(s(u)) = \overline{ \underbrace{1 \dots 1}_{b_1+1} \ \underbrace{0 \dots 0}_{a_2} \ \cdots \cdots \cdots \ \underbrace{0 \dots 0}_{a_r} \ \underbrace{1 \dots 1}_{b_r-1} \ \underbrace{0 \dots 0}_{a_1} }.$$
As noted above, $b_1$ is maximal among $b_j$ except possibly $b_r$, so $v$ cannot contain $b_1+1$ consecutive ones unless $b_1 +1 = b_r$, and the maximal shift of $v$ is 
$$\sigma^{n-b_r}(v) = \overline{ \underbrace{1 \dots 1}_{b_r} \ \underbrace{0 \dots 0}_{a_1} \ \underbrace{1 \dots 1}_{b_1} \ \underbrace{0 \dots 0}_{a_2} \ \cdots \cdots \cdots \ \underbrace{0 \dots 0}_{a_r} },$$
If $s(u)$ represents $\mathfrak{v}$, then as both are maximal representatives, $\sigma^{n-1}(s(u)) = \sigma^{n - b_r}(v)$.  Identifying the lengths of equal blocks, we see that $a_1 = a_2 = \cdots = a_r$ and $b_1 = b_2 = \cdots = b_{r-1} = b_r -1$; since $D(v) > 2 - n$, the common value of the $b_i$ is nonzero, so $\mathfrak{v}$ is repetitive, contradicting the hypotheses.
\end{proof}

\subsection{Admissibility and repetitive vertices}

The path constructed in Lemma \ref{Lrepsc} may not succeed in producing a new endpoint for a repetitive vertex; for example, if $v = \overline{01011}$, then the path from \eqref{eqn:path} 
$$\overline{01011} \to \overline{10100} \to \overline{11010} \to \overline{00101} = \overline{10100} \to \overline{11010},$$
and the final vertex is indeed a shift of the initial. (Note that we are representing the vertices in the path by itineraries.)   Fortunately, the repetition in these vertices allows us to construct other edges.  To do this, however, we will require the full machinery of admissibility for the first time.  A more complete account should include the theory of Hubbard trees, but it suffices for us to use a combinatorial criterion for admissibility due to Bruin and Schleicher.

Denote by $\Sigma^{\star}$ the collection of sequences $v \in \{ 0 , 1\}^{\mathbb{N}}$, together with $\star$-periodic sequences, subject to the condition that $v_1 = 1.$ (A sequence $v$ is {\it $\star$-periodic} if it has the form $v = \overline{v_1\dots v_{n-1}\star}$ with $v_i \in \{0, 1\}$). 

\begin{defn}[\cite{BS08}]\label{defn:admissible}
A sequence $v \in \Sigma^\star$ is {\it admissible} (resp., {\it real-admissible}) if it is the (necessarily $\star$-periodic) kneading sequence of a parabolic parameter $c \in \CC$ (resp., $c \in \RR$). 
\end{defn} 

\begin{defn} Given $v \in \Sigma^{\star}$, the {\it $\rho$-function} of $v$ is the function $\rho_v: \mathbb{N} \rightarrow \mathbb{N} \cup \{ \infty \}$ defined by
$$\rho_v(n) = \inf \{ k > n: v_k \ne v_{k-n} \}.$$
The orbit of $1$ under $\rho_v$ is called the {\it internal address} of $v$. If the orbit of 1 contains $\infty$, it is standard convention to truncate the internal address just before the $\infty$ term.
\end{defn}

As suggested by the terminology, if $c$ is in the Mandelbrot set, the internal address gives a guide map to $c$ in terms of a path through hyperbolic components of minimal period.

We now give a necessary and sufficient condition, due to Bruin and Schleicher, for an element of $\Sigma^\star$ to be admissible.

\begin{thm} [{\cite[Theorem 4.2]{BS08}}]\label{thm:adm_conditions} Given a $\star$-periodic sequence $v \in \Sigma^{\star}$ of period $n$, $v$ is admissible if and only if $m$ is an admissible period for all $1 \leq m < n$; that is, at least one of the following holds:
\begin{itemize}
\item[(A1)] $m$ is in the internal address of $v$.
\item[(A2)] there exists $k<m$ with $k \mid m$ and $\rho_v(k) > m$.
\item[(A3)] if $r \in \{1,\ldots,m\}$ is congruent to $\rho_v(m)$ modulo $m$, then $m$ is not in the $\rho_v$-orbit of $r$.
\end{itemize}
\end{thm}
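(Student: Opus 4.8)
This is a purely combinatorial theorem due to Bruin and Schleicher, and in the paper we simply quote it; what follows is a sketch of the strategy one would use to establish it. The plan is to pass through the theory of \emph{Hubbard trees}. To a $\star$-periodic sequence $v = \overline{v_1\dots v_{n-1}\star} \in \Sigma^\star$ one attaches an abstract finite tree $T$ equipped with a self-map $f\colon T\to T$ of local degree two at a distinguished ``critical point'' $0$, a ``critical value'' $c_0 = f(0)$, and a partition of $T$ by the two components of $T\setminus\{0\}$, all arranged so that the itinerary of $c_0$ under $f$ is exactly $v$. One then shows that $v$ is admissible in the sense of Definition~\ref{defn:admissible} --- i.e.\ that it is the $\star$-periodic kneading sequence of a genuine parabolic parameter $c\in\CC$ --- if and only if this abstract tree is \emph{realizable}, which by the Bruin--Schleicher structure theory happens precisely when $T$ contains no \emph{evil} periodic orbit: an orbit built from branch points of $T$ whose cyclic-order data around the orbit forces the local degree of $f$ to behave incompatibly with the global degree $2$.

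The second, and harder, step is to translate ``$T$ has no evil orbit of period $m$'' into the three arithmetic alternatives (A1)--(A3). The point is that $\rho_v$ and the internal address encode the combinatorics of $T$: the internal address is the ordered list of periods of the hyperbolic components one passes through on a shortest path from the root of the Mandelbrot set toward $c$, and $\rho_v(k)$ measures how far the block $v_1\dots v_k$ can be repeated in $v$ before a ``closest precritical'' point of period $k$ intervenes. With these dictionaries in place one proves: if $m$ fails (A1), (A2) and (A3) simultaneously, then the residue $r\equiv\rho_v(m)\pmod m$ lies on the $\rho_v$-orbit of $m$, and this closed loop in $\rho_v$ is exactly the combinatorial shadow of an evil orbit of period $m$ in $T$; conversely, an evil orbit of period $m$ produces such a loop and rules out all of (A1)--(A3). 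Quantifying over $1\le m<n$ (and noting that periods $m\ge n$ cannot obstruct a sequence of period $n$) then yields the stated biconditional.

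The main obstacle is precisely this translation in the second step: showing that the geometric notion of an evil orbit is captured \emph{exactly} by the numerical failure of (A1)--(A3), with nothing left over. This requires an induction on $m$ together with the existence-and-uniqueness theorem for Hubbard trees of admissible kneading sequences, and is the technical core of \cite{BS08}; rather than reproduce it, we cite it and use it as a black box in the arguments of \textsection\ref{sec:graph}.
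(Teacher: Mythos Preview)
Your assessment is correct: the paper does not prove this theorem at all --- it is quoted verbatim from \cite[Theorem~4.2]{BS08} and used as a black box in the proof of Proposition~\ref{prop:repetitive}. Your sketch of the Bruin--Schleicher argument via Hubbard trees and evil orbits is an accurate summary of their approach, and your explicit acknowledgment that the paper cites rather than proves the result is exactly right.
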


With this criterion for admissibility, we can now address the case of repetitive vertices.

\begin{prop}\label{prop:repetitive} 
Suppose $\mathfrak{v}$ is a repetitive vertex of $\Gamma(n)$.  Then $\mathfrak{v}$ is connected to a vertex of higher disparity by a path which does not include a successor edge $\mfv \to s(\mfv)$.
\end{prop}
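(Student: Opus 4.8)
\emph{Plan.} The aim is to produce, for a repetitive vertex $\mfv$, a single edge of $\Gamma(n)$ out of $\mfv$ that raises the disparity and is \emph{not} one of the successor edges of Lemma~\ref{upwardsarrows}. Write the maximal representative of $\mfv$ as
\[
v = \overline{\,\underbrace{1\cdots1}_{b+1}\ \underbrace{0\cdots0}_{a}\ \bigl(\underbrace{1\cdots1}_{b}\ \underbrace{0\cdots0}_{a}\bigr)^{r-1}\,},\qquad n = r(a+b)+1,\ \ r\ge 2,\ a,b\ge 1
\]
(when $r=1$ one has $v=\overline{1\cdots10}$, the vertex of maximal disparity, for which there is nothing to prove). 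The extra block $1^{b+1}$ keeps $v$ of exact period $n$, and since $r\ge 2$ one has $D(\mfv)<n-2$, so the successor $s(\mfv)$ exists with $D(s(\mfv))=D(\mfv)+2$ by Definition~\ref{successor}. By Lemma~\ref{upwardsarrows} every successor edge carries the kneading sequence $v^\star$, and so does its complex conjugate partner by Remark~\ref{complexremark}; thus it suffices to exhibit a primitive branch point of $\pi_0:X_0(n)\to\PP^1$ whose kneading sequence is \emph{different} from $v^\star$ and which transposes $\mfv$ with a vertex of disparity $D(\mfv)+2$.

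The construction I propose is to rotate the $\star$ into the \emph{first} block of zeros. Set $v' := \sigma^{a+b+1}(v) = \overline{(1^b 0^a)^{r-1}\,1^{b+1}\,0^a}$; this is a rotation of $v$ that begins with $1$ and ends with $0$, so $[v']=\mfv$, while its maximal representative is still $v$. Since $0<a+b+1<n$ and $v$ has exact period $n$, we have $v'\neq v$ as sequences. Put $K := (v')^\star = \overline{(1^b 0^a)^{r-1}\,1^{b+1}\,0^{a-1}\,\star}$, so that $K_0=v'$ and $K_1=s(v')$ differ only in the last coordinate; then $[K_0]=\mfv$ and $[K_1]$ has disparity $D(\mfv)+2$. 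Because $K$ and $v^\star$ both carry $\star$ in position $n$ but agreeing on positions $1,\dots,n-1$ would force $\sigma^{a+b+1}(v)=v$, we get $K\neq v^\star$. It remains to verify that $K$ is (i) \emph{primitive}, i.e.\ both $K_0$ and $K_1$ have exact period $n$, and (ii) \emph{admissible}. For (i): $K_0=v'$ is a rotation of $v$, hence of exact period $n$; and reading off the run-length ``syllables'' around the maximal rotation of $K_1$ one finds a $0$-run of length $a-1$ occurring exactly once (or, when $a=1$, a $1$-run of length $2b+2$ occurring exactly once), which together with $\gcd(n,a+b)=1$ rules out any proper period. For (ii) I would invoke the Bruin--Schleicher criterion (Theorem~\ref{thm:adm_conditions}); note that Lemma~\ref{lem:maximal_itinerary} does not apply directly since $v'$ is not maximal, so one must work with $K$ itself. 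The nearly-periodic shape of $K$ makes its internal address essentially explicit: $\rho_K$ is governed by the block lengths $a$ and $b$, with $\rho_K(a+b)$ forced to be large by near-periodicity, and one then runs through conditions (A1)--(A3) for each $1\le m<n$, the value $m=a+b$ (either appearing in the internal address or satisfied via (A2) with $k=a+b$) carrying the weight of the argument.

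Granting (i) and (ii), $K$ is the kneading sequence of a primitive branch point $c$, and by Corollary~\ref{cor:x0monodromy} the corresponding edge of $\Gamma(n)$ transposes $\mfv=[K_0]$ with the vertex $[K_1]$ of disparity $D(\mfv)+2>D(\mfv)$; since $K\neq v^\star$, this edge is none of the successor edges $\mfv\to s(\mfv)$, so the single-edge path $\mfv\to[K_1]$ is the path required by the proposition. I expect the admissibility verification in (ii) to be the genuine obstacle: the Bruin--Schleicher conditions must be checked for every $m<n$, and although the repetitive structure makes the internal address computable, small values of $a,b,r$ will need to be treated by hand, and it is conceivable that for certain degenerate repetitive vertices the particular rotation $\sigma^{a+b+1}(v)$ fails (i) or (ii); in that event one falls back on one of the other $ra-1$ rotations of $v$ ending at the last symbol of a $0$-block, arguing that at least one of them yields an admissible primitive kneading sequence distinct from $v^\star$.
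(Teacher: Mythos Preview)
Your rotation $v' = \sigma^{a+b+1}(v)$ is exactly the shift $w$ the paper uses, and your candidate kneading sequence $K = (v')^\star = w^\star$ is precisely the one the paper proves admissible via Theorem~\ref{thm:adm_conditions}. So for $r \ge 2$ you have found the right construction; what remains is the explicit admissibility check, which the paper carries out in full by computing the internal address of $w^\star$ to be
\[
1 \to (b+1) \to (b+2) \to \cdots \to (a+b) \to (n-a) \to (n-a+1) \to \cdots \to (n-1) \to n
\]
and then verifying (A1)--(A3) for each $1 \le m < n$ case by case. Your outline of this step is in the right spirit but is not a proof until this computation is done; your ``fallback'' to other rotations if (ii) fails is unnecessary, since $w^\star$ is always admissible.

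There is, however, a genuine gap in your handling of $r = 1$. You assert that $r = 1$ gives $v = \overline{1\cdots 10}$, the vertex of maximal disparity, but that is only true when $a = 1$: in general $r = 1$ gives $v = \overline{1^{b+1}0^a}$ with arbitrary $a \ge 1$, and the paper explicitly treats such single-block vertices as repetitive. Worse, your construction collapses entirely for $r = 1$, since then $n = a + b + 1$ and $\sigma^{a+b+1}(v) = \sigma^n(v) = v$, so $K = v^\star$ and you have produced nothing new. The paper handles this case by a completely different route: it observes that $\mfv = s(\mfu)$ where $\mfu = [\overline{1^b 0^{a+1}}]$, giving a disparity-\emph{decreasing} edge $\mfv \to \mfu$ (which is not a successor edge out of $\mfv$); iterating this walks down to the vertex of minimal disparity, which is joined to the vertex of maximal disparity by an infinite edge. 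You need some such separate argument for the single-block case.
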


\begin{proof}
The maximal representative of $\mathfrak{v}$ has the form of \eqref{Evectorrep}; that is,
\[
	v= \overline{ \underbrace{1 \dots 1}_{b+1} \ \underbrace{0 \dots 0}_{a} \ \underbrace{1 \dots 1}_{b} \ \cdots \cdots \cdots \ \underbrace{1 \dots 1}_{b} \ \underbrace{0 \dots 0}_{a}}
\]
for some positive integers $a$ and $b$.

First suppose that $v$ has a single block of 1's. Consider the shift
\[
	\sigma(v) = \overline{\underbrace{1 \dots 1}_{b} \ \underbrace{0 \dots 0}_a} \ 1.
\]
Replacing the final 1 of $\sigma(v)$ with a 0 yields the itinerary
\[
	u = \overline{\underbrace{1 \dots 1}_{b} \ \underbrace{0 \dots 0}_{a + 1}},
\]
which is maximal in its orbit. If $\mfu$ is the vertex corresponding to $u$, then this implies that $\mfv = s(\mfu)$, so there is a disparity-{\it decreasing} edge from $\mfv$ to $\mfu$. Continuing in this way, one constructs a path from $\mfv$ to the vertex of minimal disparity, represented by $\overline{10\ldots00}$, which is connected by an infinite edge to the vertex of maximal disparity.

We henceforth assume that $v$ has at least two blocks of 1's, and we consider the shift $w$ of $v$ given by
\[
	w = \overline{ \underbrace{1 \dots 1}_{b} \ \underbrace{0 \dots 0}_{a} \ \cdots \cdots \cdots \ \underbrace{1 \dots 1}_{b} \ \underbrace{0 \dots 0}_{a} \ \underbrace{1 \dots 1}_{b + 1} \ \underbrace{0 \dots 0}_a}.
\]
If $w^{\star}$ is admissible, then there is an edge in $\Gamma(n)$ connecting the vertex $\mfv$ associated to $w$ (hence also to $v$) to the vertex $\mfu$ associated to the itinerary
\[
	u = \overline{ \underbrace{1 \dots 1}_{b + 1} \ \underbrace{0 \dots 0}_{a} \ \cdots \cdots \cdots \ \underbrace{1 \dots 1}_{b} \ \underbrace{0 \dots 0}_{a} \ \underbrace{1 \dots 1}_{b + 1} \ \underbrace{0 \dots 0}_{a - 1}}.
\]
Note that $u$ is primitive since it contains a single block of zeroes of length $(a - 1)$, unless $a = 1$, in which case the maximal shift of $u$ has a single block of 1's of length $2(b + 1)$. Moreover, by comparing the sizes of the blocks of 0's and 1's, $u$ and $s(v)$ do not represent the same vertex; hence there is a non-successor edge connecting $\mfv$ to a vertex of higher disparity. It therefore remains to show that $w^\star$ is admissible, which we do by direct computation of the $\rho_{w^\star}$ function, abbreviated $\rho$ here for clarity.

Let $m$ be an integer with $1 < m < n$; we must show that $m$ satisfies the three conditions of Theorem~\ref{thm:adm_conditions}. First, because $w^\star$ is repetitive, it is relatively straightforward to show that the internal address of $w^\star$ is
	\[
		1 \to (1 + b) \to (2 + b) \to \cdots \to (a + b) \to (n - a) \to (n - a + 1) \to \cdots \to (n - 1) \to n.
	\]
	
Suppose $1 \le m \le a + b$. If $m = 1$ or $m \ge b + 1$, then $m$ is in the internal address of $w^\star$, hence $m$ satisfies (A1). On the other hand, if $2 \le m \le b$, then $\rho(1) = b + 1 > m$, so $m$ satisfies condition (A2).

Now suppose that $m \ge a + b + 1$. If $m \ge n - a$, then $m$ is in the internal address of $w^\star$, hence $m$ again satisfies (A1). We now assume that $a + b + 1 \le m < n - a$, in which case $m$ is not in the internal address. If $(a + b)$ divides $m$, then $(a + b)$ is a {\it proper} divisor of $m$ since $m > a + b$ by assumption. The fact that $\rho(a + b) = n - a > m$ then implies that $m$ satisfies (A2). Finally, if $(a + b)$ does not divide $m$, then $r := \rho(m) - m < b < m$ satisfies $\rho(r) = b + 1$; since $m$ is not in the internal address, $m$ cannot be in the orbit of $r$, so $m$ satisfies condition (A3).
\end{proof}

\subsection{Real admissibility and self-complementary vertices}

We use the Milnor-Thurston theory of real quadratic dynamics to construct additional edges for $\Gamma(n)$. Recalling the definition of real-admissibility from Definition~\ref{defn:admissible}, we have the following:

\begin{thm}[{\cite[Theorem 12.1]{MT88}}] \label{MTtheorem} Let $v$ be an $n$-periodic itinerary. Then there exists a shift $w$ of $v$ such that $w^\star$ is real-admissible.
\end{thm}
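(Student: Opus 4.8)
The plan is to deduce Theorem~\ref{MTtheorem} from the Milnor--Thurston realization theorem for the real quadratic family, after translating into the kneading-sequence language of \S\ref{sec:itineraries}. First I would recall the \emph{parity-lexicographic order} $\preceq$ on $\{0,1\}^{\NN}$: to compare distinct $v$ and $w$, locate the first index $i$ with $v_i\neq w_i$ and set $v\prec w$ precisely when either $v_i<w_i$ and $\#\{j<i:v_j=1\}$ is even, or $v_i>w_i$ and this count is odd. This is the order in which the kneading sequence of $x\mapsto x^2+c$ increases as $c$ decreases through $[-2,\tfrac{1}{4}]$, the sign twist reflecting that $f_c$ reverses orientation on $(-\infty,0)$. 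Since $v$ has exact period $n$, its $n$ cyclic shifts are pairwise distinct, so $\preceq$ totally orders them; I would take $w=\overline{w_1\dots w_n}$ to be the $\preceq$-maximal shift and note that $w^\star=\overline{w_1\dots w_{n-1}\star}$ still has exact period $n$, since $\star$ appears in it only at positions divisible by $n$.

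The substantive input is Milnor--Thurston's theorem \cite[Theorem~12.1]{MT88}, which I would apply in the following form: the real quadratic family $\{f_c:c\in[-2,\tfrac{1}{4}]\}$ is \emph{full} for kneading data, meaning $c\mapsto\nu(c)$ (the itinerary of the critical value) is monotone for $\preceq$ with image exactly the set of $\preceq$-shift-maximal sequences; consequently every $\preceq$-shift-maximal $\star$-periodic sequence of period $n$ is attained at a real parameter for which $0$ is periodic of exact period $n$, i.e.\ at the center of a real period-$n$ hyperbolic component. I would then invoke the standard local picture of kneading sequences near a hyperbolic component --- the same dictionary used in \S\ref{sec:graph}, via \cite{DouadyHubbard} and \cite[Lemma 3.9]{schleicher:2000} --- to identify this $\star$-periodic sequence with the common kneading sequence of the two external rays landing at the \emph{root} $c_0$ of that component; since the component meets the real axis, $c_0\in\RR$, so this is the kneading sequence of a real parabolic parameter and hence real-admissible in the sense of Definition~\ref{defn:admissible}. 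Applying this with the $\preceq$-shift-maximal sequence taken to be $w^\star$ --- which is $\preceq$-shift-maximal because $w$ is $\preceq$-maximal among the shifts of $v$ --- gives that $w^\star$ is real-admissible, which is exactly the claim.

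The one genuine difficulty is hidden inside \cite[Theorem~12.1]{MT88}: the surjectivity half of the ``fullness'' statement --- that every combinatorially allowable (i.e.\ $\preceq$-shift-maximal) kneading sequence is realized in the real quadratic family --- is equivalent to monotonicity of the kneading sequence along $[-2,\tfrac{1}{4}]$, a delicate result proved via the kneading determinant and an intermediate-value argument; I would simply cite it. Everything else is bookkeeping: fixing the parity-lexicographic conventions and the ordering role of the symbol $\star$ (cf.\ the combinatorial admissibility criterion of Theorem~\ref{thm:adm_conditions}), checking that $w$ being $\preceq$-maximal among shifts makes $w^\star$ genuinely $\preceq$-shift-maximal, and observing that passing between the superattracting center and the parabolic root of a real period-$n$ hyperbolic component alters neither the associated $\star$-periodic kneading sequence nor its reality.
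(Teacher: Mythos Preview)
The paper does not prove this theorem; it is cited directly from \cite[Theorem 12.1]{MT88}, with only a remark that the Milnor--Thurston conventions differ from those used in the paper and that the translation is left to the ``conscientious reader.'' Your proposal is precisely that translation spelled out --- introducing the parity-lexicographic order, selecting the $\preceq$-maximal shift, invoking fullness of the real quadratic family, and passing from the superattracting center to the parabolic root of the same real hyperbolic component --- so it is the same approach as the paper, with the bookkeeping made explicit rather than deferred.
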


The reader should be aware that the notation and coordinates of~\cite{MT88} differ slightly from the current conventions (necessary for complex dynamics) that are used in this paper; in particular, their notion of a minimal shift, which corresponds to a real dynamical system, is \emph{not} the same as lexicographic minimality in our coordinates.  However, the existence of a shift with an associated real-admissible kneading sequence is independent of these choices, and a conscientious reader can easily translate between the two notions if desired.

Unfortunately, Theorem \ref{MTtheorem} does not guarantee that every vertex of $\Gamma(n)$ has an adjacent edge associated to a real branch point.  If $\mathfrak{v}$ is represented by an itinerary $v$ for which $v^\star$ is real-admissible, it may be that $v^\star$ is imprimitive, in which case the real parabolic parameter with kneading sequence $v^\star$ is a {\it satellite} parabolic parameter. If this is the case, then we say $v$ is \emph{nearly imprimitive}, and write
\begin{equation}\label{eq:nearly_imprimitive}
	v = \overline{\underbrace{\mu \dots \mu}_{k-1 \text{ times}} \mu'}
\end{equation}
for some divisor $k > 1$ of $n$ and some sequence $\mu$ of length $\ell = n/k$, where $\mu'$ changes the $\ell$th entry of $\mu$. We now show that if $v$ is self-complementary, then this problem cannot arise.

\begin{prop} \label{primselfcomp} If $n > 2$ and $v$ is self-complementary, then $v$ is not nearly imprimitive.
\end{prop}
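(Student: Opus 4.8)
The plan is to argue by contradiction: assume $v$ is simultaneously self-complementary and nearly imprimitive and produce a binary digit equal to its own complement. The first step is to pin down the shift witnessing self-complementarity. If $\sigma^j(v) = \widehat v$ with $1 \le j < n$, then applying $\sigma^j$ twice gives $\sigma^{2j}(v) = \widehat{\widehat v} = v$, so $n \mid 2j$ because $v$, as a representative itinerary of a vertex of $\Gamma(n)$, has exact period $n$; hence $j = n/2$ and $n$ is even (recovering the remark after the definition of self-complementary). Thus the hypothesis says precisely that $v_{i+n/2} = \widehat{v_i}$ for all indices $i$ read modulo $n$.

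Next I would combine this with the block form. By \eqref{eq:nearly_imprimitive}, $v = \overline{\mu \cdots \mu\, \mu'}$ with $k-1$ copies of $\mu$, where $k>1$ divides $n$, $\mu$ has length $\ell = n/k$, and $\mu'$ agrees with $\mu$ except in its last coordinate. Let $w := \overline{\mu}$ be the purely $\ell$-periodic sequence; then $v$ agrees with $w$ at every index except those $\equiv 0 \pmod n$, where instead $v_i = \widehat{w_i}$. For every $i$ with $i \not\equiv 0$ and $i \not\equiv n/2 \pmod n$ --- of which there are $n-2 \ge 2$, using that $n>2$ is even --- we have $v_i = w_i$ and $v_{i+n/2} = w_{i+n/2}$, so the relation from the first step forces $w_{i+n/2} = \widehat{w_i}$.

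Now I would split on the parity of $k$. If $k$ is even, then $\ell \mid n/2$, so $w_{i+n/2} = w_i$ by $\ell$-periodicity, and the relation reads $w_i = \widehat{w_i}$, which is impossible. If $k$ is odd (so $k \ge 3$ and $\ell$ is even), the relation on $w$ alone is self-consistent, so I must bring in the unique defect position: taking $i = \ell$ is legitimate because $\ell + n/2 \not\equiv 0 \pmod n$ exactly when $k \ne 2$, and it yields $w_{n/2} = \widehat{w_\ell}$ after reducing the index $\ell + n/2$ modulo $\ell$; taking instead the defect index $i = n$ gives $v_n = \widehat{w_n} = \widehat{w_\ell}$, and since $n + n/2 \equiv n/2 \not\equiv 0 \pmod n$ we get $w_{n/2} = v_{n/2} = \widehat{v_n} = w_\ell$. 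The two evaluations of $w_{n/2}$ force $w_\ell = \widehat{w_\ell}$, the contradiction sought.

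I expect the genuine obstacle to be the odd-$k$ case: because $w$ by itself can satisfy $w_{i+n/2} = \widehat{w_i}$ (this just makes $w$ anti-periodic of anti-period $\ell/2$), the contradiction must come from the lone position where $v$ disagrees with its near-period $w$, and the care lies in verifying that the two indices fed in are of the right type (one a defect index, one not, with shifted index again not a defect). This bookkeeping is where $n>2$ is consumed --- via $k \ge 3$ in the odd case and $n \ge 4$ in the even case --- which is consistent with the sharpness of the hypothesis, since for $n = 2$ the itinerary $\overline{10}$ is both self-complementary and nearly imprimitive. I would also remark that the argument uses only the structural form \eqref{eq:nearly_imprimitive} and not real-admissibility, hence it in fact shows the stronger statement that a self-complementary $v$ has $v^\star$ primitive.
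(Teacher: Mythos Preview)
Your argument is correct. You pin down $j=n/2$ exactly as in Lemma~\ref{selfcomplementary}, introduce the auxiliary $\ell$-periodic sequence $w=\overline{\mu}$, and derive the bit relation $w_{i+n/2}=\widehat{w_i}$ at all non-defect indices. The even-$k$ case collapses immediately since $\ell\mid n/2$, and in the odd-$k$ case your pairing of the two evaluations of $w_{n/2}$ (one via the generic index $i=\ell$, one via the defect index $i=n$) forces $w_\ell=\widehat{w_\ell}$. The index checks you flag --- that $\ell\not\equiv 0,n/2\pmod n$ when $k\ge 3$, and that $n/2\not\equiv 0$ --- are exactly what is needed.

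The paper's proof takes a different and somewhat shorter route: it uses the disparity invariant. Self-complementarity gives $D(v)=0$, while the block form gives $D(v)=(k-1)D(\mu)+D(\mu')=kD(\mu)\pm 2$, so $kD(\mu)=\mp 2$ forces $k=2$. Then Lemma~\ref{selfcomplementary} makes $\mu'=\widehat{\mu}$, which agrees with $\mu$ in all but one digit only if $\ell=1$, i.e.\ $n=2$. The disparity argument avoids your parity split and the index bookkeeping entirely; on the other hand, your approach is self-contained at the level of individual digits and does not need the auxiliary notion of disparity. Both proofs use only the structural form~\eqref{eq:nearly_imprimitive} and not real-admissibility, so your closing remark that the conclusion is really ``$v^\star$ is primitive'' is on target.
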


In order to prove the proposition, we require the following lemma.
\begin{lemma} \label{selfcomplementary}
Let $v$ be a self-complementary itinerary of period $n$. Then $n$ is even, and $v = \overline{w\widehat{w}}$
for some binary sequence $w$ of length $n/2$.
\end{lemma}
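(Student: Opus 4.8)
The plan is to exploit the self-complementary relation twice, using the fact that the shift $\sigma$ commutes with binary complementation (so that $\sigma^k(\widehat{u}) = \widehat{\sigma^k(u)}$ for any sequence $u$). By hypothesis there is an integer $1 \le k < n$ with $\sigma^k(v) = \widehat{v}$. Applying $\sigma^k$ once more gives
\[
\sigma^{2k}(v) = \sigma^k(\widehat{v}) = \widehat{\sigma^k(v)} = \widehat{\widehat{v}} = v.
\]
Since $v$ has exact period $n$, this forces $n \mid 2k$; as $1 \le k < n$ we have $0 < 2k < 2n$, so necessarily $2k = n$. In particular $n$ is even and $k = n/2$.

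It then remains to read off the shape of $v$. I would write $v = \overline{v_1 v_2 \cdots v_n}$ and set $w = v_1 v_2 \cdots v_{n/2}$, a binary word of length $n/2$. The first $n/2$ symbols of $\sigma^{n/2}(v)$ are $v_{n/2+1}, \dots, v_n$, whereas the first $n/2$ symbols of $\widehat{v}$ are $\widehat{v_1}, \dots, \widehat{v_{n/2}}$, i.e.\ the word $\widehat{w}$. Comparing these initial segments in the identity $\sigma^{n/2}(v) = \widehat{v}$ shows that the second half of $v$ is $\widehat{w}$, and hence $v = \overline{w\widehat{w}}$, as claimed.

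The argument is entirely formal, so there is no serious obstacle here; the only point deserving care is that ``period $n$'' is meant in the \emph{exact} (primitive) sense, which is precisely what allows one to pass from $\sigma^{2k}(v) = v$ to $n \mid 2k$ and thereby pin down $k = n/2$ rather than merely $2k \equiv 0$ modulo some proper divisor of $n$. The structural form $v = \overline{w\widehat{w}}$ obtained here is what will feed into the proof of Proposition~\ref{primselfcomp}.
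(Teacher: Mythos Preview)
Your proof is correct and follows essentially the same approach as the paper: both use that $\sigma$ commutes with complementation to deduce $\sigma^{2k}(v)=v$, hence $n\mid 2k$ and $k=n/2$, and then read off the block form $v=\overline{w\widehat{w}}$. The only cosmetic difference is that the paper first observes $n$ is even via the disparity-zero remark, whereas you obtain evenness directly from $2k=n$; your version is if anything slightly more economical.
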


\begin{proof} 
That $n$ must be even follows immediately from the fact that a self-complementary itinerary has disparity zero.

Since $v$ is self-complementary, there exists some integer $k$ such that $\sigma^k(v) = \widehat{v}$; we may assume $0 < k < n$ since $v$ has period $n$. Using the fact that the shift map $\sigma$ commutes with binary complements, we therefore also have
	\[
		v = \widehat{\widehat{v}} = \widehat{{\sigma^k(v)}} = \sigma^k(\widehat{v}) = \sigma^{2k}(v).
	\]
This implies that $n$ divides $2k$, and since we assumed that $0 < k < n$, it follows that $k = n/2$.

Now write $v = \overline{wx}$, where $w$ and $x$ are binary sequences of length $n/2$. By the previous paragraph, $\overline{\widehat{w}\widehat{x}} = \widehat{v} = \sigma^{n/2}(v) = \overline{xw}$, so $x = \widehat{w}$, completing the proof.
\end{proof}

\begin{proof} [Proof of Proposition \ref{primselfcomp}] Suppose $v$ is nearly imprimitive, written as in \eqref{eq:nearly_imprimitive}. If $v$ is also self-complementary,  then $D(v) = 0$, and as disparity is additive over concatenated strings, 
$$0 = D(v) = (k-1)D(\mu) + D(\mu').$$
Since $\mu'$ and $\mu$ differ only at the $\ell$-th digit, $D(\mu') = D(\mu) \pm 2,$ hence
$$0 = k D(\mu) \pm 2.$$
It follows that $k = 2$ and $D(\mu) = \pm 1$.  But if $k = 2$, then by Lemma \ref{selfcomplementary} $\mu' = \widehat{\mu}$, and so $n/2 = \ell = 1$, contradicting our assumption that $n > 2.$
\end{proof}

\begin{cor} \label{selfcompreal} If $\mathfrak{v}$ is self-complementary, then there is a path connecting $\mathfrak{v}$ to a vertex of higher disparity which does not contain a successor edge from $\mfv$.
\end{cor}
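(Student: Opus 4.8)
The plan is to locate, adjacent to $\mfv$, a \emph{real} primitive branch point by means of the Milnor--Thurston theory, and then to climb one step in disparity if necessary using the edge at infinity.

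Since $\mfv$ is self-complementary, $n$ is even and $D(\mfv)=0$ by Lemma \ref{selfcomplementary}; the statement is vacuous unless $n\ge 4$, which I assume. By Theorem \ref{MTtheorem} some shift $w$ of a representative itinerary of $\mfv$ has $w^\star$ real-admissible, and $w$, being another representative of the self-complementary vertex $\mfv$, is still self-complementary, so Proposition \ref{primselfcomp} shows $w$ is not nearly imprimitive; hence $w^\star$ is \emph{primitive}. Thus $w^\star$ is the kneading sequence of a real primitive parabolic parameter $c_0$, which is a branch point of $\pi_0:X_0(n)\to\PP^1$. Fixing a parameter angle $\theta_0$ landing at $c_0$, so that $K(\theta_0)=w^\star$ and $w\in\{K_0(\theta_0),K_1(\theta_0)\}$, Corollary \ref{cor:x0monodromy}(1) produces an edge $e_0$ of $\Gamma(n)$ joining $\mfv$ to the vertex $\mfu$ represented by whichever of $K_0(\theta_0),K_1(\theta_0)$ is not $w$; since $D(K_0(\theta_0))=D(K_1(\theta_0))+2$, we get $\mfu\ne\mfv$ and $D(\mfu)\in\{-2,+2\}$.

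The crux is to see that $e_0$ is not a successor edge from $\mfv$. Because $D(\mfv)=0$ and $\mfv$ has exact period $n\ge 4$, the maximal representative $v_{\max}$ of $\mfv$ cannot begin $10\cdots$: maximality would force every block of $1$'s to have length $1$, and with equally many $0$'s and $1$'s this would make the period $2$. So $v_{\max}$ begins with two consecutive $1$'s, the binary expansion $.v_{\max}$ lies in $(0,1/3)\cup(2/3,1)$, and hence (Lemma \ref{complexlanding}, applied to the kneading sequence $v_{\max}^\star$ of a successor edge, as constructed in the proof of Lemma \ref{upwardsarrows}) every successor edge from $\mfv$ corresponds to a non-real branch point. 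Since $c_0$ is real, $e_0$ is not one of them. Finally, $\mfv$ is joined to $\mfu$ by $e_0$ and $\mfu$ to $\widehat{\mfu}$ by the edge at infinity (Corollary \ref{cor:x0monodromy}(2)), and one of $\mfu,\widehat{\mfu}$ has disparity $+2>0=D(\mfv)$. The path reaching that vertex --- namely $e_0$, possibly followed by the infinite edge at $\mfu$ --- never returns to $\mfv$ (as $D(\mfu),D(\widehat{\mfu})\ne 0$), so its only edge incident to $\mfv$ is $e_0$, which is not a successor edge from $\mfv$; this is the required path. I expect the one delicate point to be this identification of $e_0$; the remainder is bookkeeping with disparities and the dictionary between kneading sequences and branch points.
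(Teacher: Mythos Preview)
Your proof is correct and follows essentially the same approach as the paper's: use Theorem~\ref{MTtheorem} and Proposition~\ref{primselfcomp} to produce a real primitive branch point giving an edge $e_0$ from $\mfv$, observe that successor edges from $\mfv$ (having $D(\mfv)=0$) correspond to non-real branch points so $e_0$ is not among them, and if $e_0$ lands at disparity $-2$ follow with the infinite edge to reach disparity $+2$. Your justification that the maximal representative of $\mfv$ begins with $11$ (hence successor edges are non-real via Lemma~\ref{complexlanding}) spells out a detail the paper leaves implicit by citing Lemma~\ref{upwardsarrows}.
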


\begin{proof} Let $v$ be the representative itinerary for $\mfv$ such that $v^\star$ is real-admissible, guaranteed by Theorem~\ref{MTtheorem}. As $v$ is self-complementary, $D(v) = 0$ and Lemma \ref{primselfcomp} guarantees that $v$ is not nearly imprimitive. The real parabolic parameter $c$ with kneading sequence $v^\star$ therefore corresponds to an edge from $\mathfrak{v}$ to a vertex $\mathfrak{w}$. 

If $D(\mathfrak{w}) > D(\mathfrak{v})$, then we are done, since the successor edges from $\mfv$ correspond to complex branch points and this new edge corresponds to a real branch point. 
If $D(\mathfrak{w}) < D(\mathfrak{v})$, then $D(\mathfrak{w}) = D(\mathfrak{v}) - 2 = -2.$ Then $\mathfrak{w}$ is connected by the branch point at infinity to the vertex $\widehat{\mfw}$ of disparity $2 > D(\mfv)$.
\end{proof}

\subsection{Connectivity of the monodromy graph}

We may now prove the main result of this section.

\begin{proof}[Proof of Theorem~\ref{thm:connected}] If $n \leq 4$, the graph $\Gamma(n)$ is easily computed and checked directly to satisfy the statement of the theorem, so assume $n > 4.$ Let $\Gamma'$ be a graph obtained from $\Gamma(n)$ by removing any two finite edges.  We proceed by constructing a path from any vertex of $\Gamma'$ to the vertex $\mathfrak{m}$ of maximal disparity, which is represented by 
$$m = \overline{ \underbrace{1 \dots 1}_{n-1} 0 }.$$

If $\mfv$ has negative disparity, then its complement $\widehat{\mfv}$ has positive disparity, and there is an infinite edge connecting $\mfv$ to $\widehat{\mfv}$; we therefore assume $D(\mfv) \ge 0$. By Lemma \ref{upwardsarrows}, repeated application of the successor function yields a path (in $\Gamma(n)$) from $\mfv$ to $\mfm$ where each edge has multiplicity at least two. Therefore, we may immediately conclude that every vertex is connected to $\mathfrak{m}$ in $\Gamma'$ unless $\Gamma'$ was obtained from $\Gamma(n)$ by the removal of a pair of successor edges coming from a single vertex $\mathfrak{v}_0$ of nonnegative disparity.

To complete the proof, it suffices to show that there is a path in $\Gamma(n)$ from $\mfv_0$ to a vertex of higher disparity that avoids the successor edges from $\mfv_0$ to $s(\mfv_0)$. If $\mfv_0$ is not repetitive of self-complementary, this is precisely Corollary~\ref{cor:higher_disparity}; if $\mfv_0$ is repetitive, this follows from Proposition~\ref{prop:repetitive}; if $\mfv_0$ is self-complementary, this is the content of Corollary~\ref{selfcompreal}. In all cases, therefore, we find that $\Gamma'$ is connected.
\end{proof}

\section{Good reduction at primes dividing $2^n\pm 1$} \label{Sc0c2}

In this section, we restrict to the case $f_c(x)=x^2+c$.
We provide a geometric
explanation for good reduction of $Y_0(n)$ modulo a prime $p$ such that $p$ divides $2^n \pm 1$, even though $p$ divides $D_{n,n}$.
As an application, we
verify certain examples of good reduction when $n=5,6$
and prove new cases of good reduction when $n=7,8,11$.

\begin{thm}\label{corol:Examples}
The curve $Y_0(n)$ has good reduction modulo 
$p$ in the following cases even though $p \mid D_{n,n}$:\\
$n=4$: $Y_0(4)$ for $p=3,5,17$; \cite[page 92]{MortonX4}\\
$n=5$: $Y_0(5)$ for $p=3,11,31$; \cite[\S 7]{FPS} \\
$n=6$: $Y_0(6)$ for $p=3,5,7,13$; \cite[\S 2]{Stoll} \\
$n=7$: $Y_0(7)$ for $p=3,43,127$;\\
$n=8$: $Y_0(8)$ for $p=3,5,17,257$;\\ 
$n=11$: $Y_0(11)$ for $p=3,23,89,683$;\\
\end{thm}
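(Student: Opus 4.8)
The plan is to use the geometric strategy outlined in the introduction to \S\ref{Sc0c2}: the special parameters $c=0$ and $c=-2$ are precisely the ones for which $f_c$ is conjugate (over $\bar{\mathbb{Q}}$, or over a suitable ring) to a power map $z \mapsto z^2$ or a Chebyshev map, so the periodic points of $f_0$ and $f_{-2}$ can be written down explicitly in terms of roots of unity, and their multipliers are explicitly computable. First I would establish, for each relevant prime $p$ listed, that $p \mid 2^n \pm 1$ exactly (here the sign depends on $n$; for instance $p = 127 = 2^7 - 1$ for $n = 7$, $p = 257 = 2^8 + 1$ for $n = 8$, etc.), and track exactly how many branch points of $\pi_0 : Y_0(n) \to \mathbb{A}^1$ specialize onto $c = 0$ or $c = -2$ modulo $p$. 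The key dynamical input is that a branch point $c_0$ of $\pi_0$ (a primitive parabolic parameter) reduces modulo $p$ to $c = 0$ (resp. $c = -2$) exactly when the corresponding collision of formal-period-$n$ orbits degenerates into the periodic-point structure of the power map (resp. Chebyshev map) modulo $p$; this happens precisely because $2^n \equiv \pm 1 \pmod p$ forces the relevant multipliers of $f_0$ or $f_{-2}$ to coincide modulo $p$. I would quantify this by computing $v_p(\Delta_{n,n}(0))$ and $v_p(\Delta_{n,n}(-2))$ directly from the formula $\Delta_{n,n}(c) = \eta \prod_{i \neq j} L_j(\alpha_i)$ of Theorem \ref{discriminantOfPhi}, using the explicit periodic points of the power/Chebyshev maps, and show these valuations account for the \emph{entire} power of $p$ dividing $D_{n,n}$.

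The second and more delicate step is to show that $Y_0(n)$ does \emph{not} acquire a singularity above $c = 0$ or $c = -2$ upon reduction modulo $p$, despite branch points colliding there. For this I would apply the good-reduction criterion Corollary \ref{Cgoodredtest} to the complete local ring $\widehat{\mathcal{O}}_{\mathbb{P}^1_R, \bar{a}}$ at the specialization $\bar{a}$ of $c = 0$ (resp. $c = -2$), where $R$ is a sufficiently ramified extension of $\mathbb{Z}_p^{ur}$. The point is that over $\bar{\mathbb{Q}}$, exactly those branch points that reduce to $\bar{a}$ must be analyzed: if the collision of these branch points on the special fiber is \emph{tame} and the resulting local extension is \emph{totally ramified}, then Corollary \ref{Cgoodredtest} gives normality of the special fiber there, i.e. smoothness. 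I would use the explicit conjugacy of $f_0$ (resp. $f_{-2}$) to the power map (resp. Chebyshev map) to identify the local branched cover near $c = 0$ (resp. $c=-2$) with a well-understood model — essentially a cyclotomic-type extension — for which total tame ramification can be read off directly, using $p \nmid n$ in the cases at hand (one checks this for each listed $p$). Combined with Proposition \ref{prop:Pbadimpliesdisc}(2) applied away from these special points, and Theorem \ref{THM:v(disc)=1_implies_bad} to rule out the trivial obstruction, this yields that $Y_0(n)_{\mathbb{F}_p}$ is smooth everywhere.

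Finally I would assemble the cases: for $n = 4, 5, 6$ the result is already in \cite{MortonX4}, \cite{FPS}, \cite{Stoll}, so those lines of the theorem are verifications that our geometric method recovers known facts (a useful sanity check on the method); for $n = 7, 8, 11$ the same argument applies verbatim once one records the relevant factorizations $2^7 - 1 = 127$, $2^7 + 1 = 3^2 \cdot 43$ (so $p = 3, 43$ from the $c$-values and $p = 127$ likewise, after checking which sign each prime divides), $2^8 + 1 = 257$, $2^8 - 1 = 3 \cdot 5 \cdot 17$, $2^{11} - 1 = 23 \cdot 89$, $2^{11} + 1 = 3 \cdot 683$, and confirms $v_p(D_{n,n})$ is fully explained and the collisions are tame and totally ramified. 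The main obstacle I anticipate is the second step — controlling the local structure of $\pi_0$ at $c = 0$ and $c = -2$ precisely enough to verify the hypotheses of Corollary \ref{Cgoodredtest}. In particular, one must handle the fact that the power and Chebyshev maps have \emph{extra} automorphisms and exceptional ramification behavior (the critical point $0$ is fixed up to the dynamics), so the periodic-point scheme degenerates in a way that requires care to distinguish a genuine collision of branch points (which is fine) from a collision that would force wild ramification or non-normality (which would break the argument); getting the multiplier computations and the tameness bookkeeping exactly right for each $(n,p)$ is where the real work lies.
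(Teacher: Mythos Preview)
Your overall strategy---localize above $c=0$ and $c=-2$ and exploit the power/Chebyshev structure there---is exactly the paper's approach, but the execution has a genuine gap. You propose to invoke Corollary~\ref{Cgoodredtest}, whose hypothesis is that the generic fiber is ramified above \emph{at most one} closed point of $\Spec A_{\bar K}$. In the cases at hand, many branch points of $\pi_0$ specialize simultaneously to $\bar c=0$ or $\bar c=-2$ (for instance when $n=11$, $p=3$, one has $\rho_{-2}=93$, i.e.\ ninety-three branch points colliding), so Corollary~\ref{Cgoodredtest} simply does not apply. Your further assumptions of tameness and total ramification also fail: for example when $n=11$ and $p=23$, the fiber of $\bar\pi_0$ above $\bar c=0$ has eight points of ramification index $23$ (wild) and one of index $2$, so the extension is neither tame nor totally ramified.

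The correct mechanism is not Corollary~\ref{Cgoodredtest} but rather the discriminant-comparison criterion of Proposition~\ref{Pfultontest} and Lemma~\ref{prop:badat0-2}: compute $\rho_0$ (the multiplicity of $c$ in $\Delta_{n,n}(c)\bmod p$, equivalently the char-$0$ discriminant contribution from branch points specializing to $\bar c=0$), compute $\bar\rho_0$ (the discriminant degree of the \emph{normalization} of $\bar\pi_0$ above $\bar c=0$, via the explicit ramification indices of Lemma~\ref{LX0ramindex}), and show $\rho_0=\bar\rho_0$; similarly at $\bar c=-2$. In the wild cases one only gets a lower bound $\bar\rho_0\geq \nu(n)/n - t_0 + s_0$ from Lemma~\ref{Lwildcont}, but combined with $\bar\rho_0\leq\rho_0$ this pins down equality. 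One must also check via $\Gamma_{n,p}(c)=\gcd(\Delta_{n,n}\bmod p,\Delta'_{n,n}\bmod p)$ that no \emph{other} $\bar c$ contributes (this fails once, at $n=6$, $p=3$, requiring an extra argument). The limiting step is the explicit computation of $\Delta_{n,n}\bmod p$, not the conceptual local analysis you sketched.
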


When $n$ is prime and $p \nmid n$, then $Y_1(n)$ has good reduction 
modulo $p$ if and only if $Y_0(n)$ does by Proposition \ref{PX1goodX0good}. 
We computed that $Y_1(6)$ has bad reduction modulo $p=3,5$ and good reduction modulo $p=7,13$ and 
that $Y_1(8)$ has good reduction modulo 
$p=3,5,17,257$ (see Appendix A).

The strategy of the section is to
study the reduction $\ol{Y}_0(n):= Y_0(n) \times_\ZZ \FF_p$
above $c=0$ and $c=-2$ modulo primes such that
$p\mid (2^n \pm 1)$. 
(These values of $c$ are special because they yield maps that come from algebraic groups, namely power and Chebyshev maps, respectively.)
For particular $n$ and $p$, we prove that:
\begin{enumerate} \item $\overline{Y}_0(n)$
has no singularity above $\bar{c}=0$ and $\bar{c}=-2$,
and \item the power of $p$ dividing $D_{n,n}$ is fully explained by the points specializing to $\bar{c}=0$ and $\bar{c}=-2$.  \end{enumerate}

For part (1), we:
\begin{enumerate}[(i)]
\item determine the number of branch points which specialize to $c=0$ and $c=-2$; 
\item determine information about the degree of the discriminant above $\bar{c} =0$ and $\bar{c}=-2$ in characteristic $p$;
and
\item verify that the contribution to the degree of the discriminant above $c=0$ and $c=-2$ is the same in characteristics zero and $p$.
\end{enumerate}

For part (i), we need to compute 
$\Delta_{n,n}(c)$ modulo $p$.  
This step is the limiting factor for extending 
Theorem \ref{corol:Examples} to the case of larger $n$.

There are a few other cases that we expect can be solved using the techniques of this section, which 
we leave for an interested reader. 

\begin{qn} \label{Qtomorrow}\
\begin{enumerate}
\item
Does $Y_0(9)$ have good reduction modulo $p=3,7,19,73$?

\item Does $Y_0(10)$ have good reduction modulo $p=3,5,11,33,41$?

\item Does $Y_0(13)$ have good reduction modulo $p=3, 2731, 8191$?
\end{enumerate}
\end{qn}


\subsection{Points of formal period $n$ above $c=0$ and $c=-2$}

We describe (the orbits of) the points of formal period $n$ when $c=0$ and $c=-2$ and
their collision behavior modulo $p$.

\begin{lemma}\ \label{Lpointformalperiodn} \begin{enumerate} 
\item The points $x$ of formal period $n$ above $c=0$ occur at \[R_0:=\mu_{2^n-1}\setminus\bigcup_{\substack{d\mid n\\ d\neq n}} \mu_{2^d-1}.\] 
\item The points $x$ of formal period $n$ above $c=-2$ occur at \[R_{-2}:=\{\zeta+\zeta^{-1}:\zeta\in \mu_{2^n-1}\cup\mu_{2^n+1}\setminus\bigcup_{\substack{d\mid n\\ d\neq n}} \mu_{2^d-1}\setminus\bigcup_{\substack{d\mid n \\ d\neq n}} \mu_{2^d+1}\}.\]
\end{enumerate} \end{lemma}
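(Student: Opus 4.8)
The plan is to exploit the fact that $f_0(x)=x^2$ is a power map and $f_{-2}(x)=x^2-2$ is a Chebyshev map, so that both the periodic points and the multipliers of the periodic orbits can be written down explicitly in terms of roots of unity. The key reduction is the observation that over $\overline{\QQ}$ all of these multipliers have absolute value $>1$, hence are not roots of unity; so by Definition~\ref{defn:formal_period} (cf.\ Remark~\ref{rem:multiplier1}) no parabolic degeneration can occur at $c=0$ or $c=-2$, and for these two parameters ``formal period $n$'' coincides with ``primitive period $n$''. Granting this, each part reduces to bookkeeping inside the groups $\mu_{2^d\pm1}\subseteq\overline{\QQ}^{\times}$.

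For part (1): since $f_0^m(x)=x^{2^m}$, the points of period dividing $m$ are the roots of $x^{2^m}-x$, namely $\{0\}\cup\mu_{2^m-1}$. The fixed point $0$ has multiplier $f_0'(0)=0$, so it has formal period $1$ and does not occur for $n>1$. For $\zeta\in\mu_{2^m-1}$ the chain rule gives $\lambda_m(\zeta)=(f_0^m)'(\zeta)=\prod_{i=0}^{m-1}2\zeta^{2^i}=2^m\zeta^{2^m-1}=2^m$, which is not a root of unity. Writing $e$ for the multiplicative order of $\zeta$, one has $\zeta\in\mu_{2^d-1}$ if and only if $\ord_e(2)\mid d$; hence the primitive (equivalently, formal) period of $\zeta$ equals $n$ exactly when $\zeta\in\mu_{2^n-1}$ but $\zeta\notin\mu_{2^d-1}$ for every proper divisor $d\mid n$, which is the set $R_0$.

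For part (2): over $\overline{\QQ}$ every $x$ can be written as $x=\zeta+\zeta^{-1}$ (take $\zeta$ a root of $T^2-xT+1$), and an elementary manipulation shows that $\zeta^{2^m}+\zeta^{-2^m}=\zeta+\zeta^{-1}$ if and only if $\zeta^{2^m}=\zeta$ or $\zeta^{2^m}=\zeta^{-1}$, i.e.\ $\zeta\in\mu_{2^m-1}\cup\mu_{2^m+1}$. Combined with the induction $f_{-2}^m(\zeta+\zeta^{-1})=\zeta^{2^m}+\zeta^{-2^m}$, this gives $f_{-2}^m(x)=x \iff \zeta\in\mu_{2^m-1}\cup\mu_{2^m+1}$. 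Since the primitive period of $x$ divides every such $m$, the point $\zeta+\zeta^{-1}$ has primitive period exactly $n$ if and only if $\zeta\in\mu_{2^n-1}\cup\mu_{2^n+1}$ and $\zeta\notin\mu_{2^d-1}\cup\mu_{2^d+1}$ for every proper divisor $d\mid n$; here each excluded set is a subgroup of $\mu_{2^n-1}\cup\mu_{2^n+1}$, because $2^d-1\mid 2^n-1$ and $2^d+1$ divides $2^n-1$ or $2^n+1$ according to the parity of $n/d$. Granting the equality of formal and primitive period this set is precisely $R_{-2}$.

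The main obstacle — the only point needing a genuine computation rather than bookkeeping — is verifying that claim about multipliers at $c=-2$. For this I would use $f_{-2}'(y)=2y$, the chain rule $\lambda_d(\zeta+\zeta^{-1})=2^d\prod_{i=0}^{d-1}\bigl(\zeta^{2^i}+\zeta^{-2^i}\bigr)$, and the telescoping identity
\[
\prod_{i=0}^{d-1}\bigl(\zeta^{2^i}+\zeta^{-2^i}\bigr)=\zeta^{-(2^d-1)}\prod_{j=1}^{d}\bigl(\zeta^{2^j}+1\bigr)=\zeta^{-(2^d-1)}\,\frac{\zeta^{2^{d+1}}-1}{\zeta^2-1},
\]
valid since $\zeta^2\neq1$ for any $\zeta$ producing a point of period $n\geq2$. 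Substituting $\zeta^{2^d}=\zeta$ (the case $\zeta\in\mu_{2^d-1}$) collapses the product to $1$, giving $\lambda_d=2^d$, while substituting $\zeta^{2^d}=\zeta^{-1}$ (the case $\zeta\in\mu_{2^d+1}$) collapses it to $-1$, giving $\lambda_d=-2^d$; in either case $|\lambda_d|=2^d>1$. As a byproduct this records the orbit structure: above $c=0$ the orbits of formal period $n$ are the sets $\{\zeta,\zeta^2,\dots,\zeta^{2^{n-1}}\}$ with $\zeta$ ranging over $R_0$ (multiplier $2^n$), and above $c=-2$ they are the images of such sets under $\zeta\mapsto\zeta+\zeta^{-1}$ (multiplier $2^n$ if $\zeta\in\mu_{2^n-1}$ and $-2^n$ if $\zeta\in\mu_{2^n+1}$) — data that will be needed for the subsequent analysis of collisions modulo $p$.
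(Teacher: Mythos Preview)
Your argument is correct and essentially complete. The paper's proof takes a somewhat different route, however, so it is worth comparing.

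For part (1), the paper simply writes down $\Phi_n(x,0)=\prod_{d\mid n}(x^{2^d-1}-1)^{\mu(n/d)}$ and reads off its roots as the product of the relevant cyclotomic polynomials. Since the roots of $\Phi_n$ are \emph{by definition} the points of formal period $n$, this bypasses any discussion of multipliers. Your approach instead reduces ``formal period'' to ``primitive period'' by computing $\lambda_m(\zeta)=2^m$ explicitly, and then does the same bookkeeping in $\mu_{2^d-1}$. Both are short; the paper's is marginally more direct because it never leaves the dynatomic polynomial.

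For part (2), the paper argues by counting: it exhibits the $2^n$ values $\zeta+\zeta^{-1}$ with $\zeta\in\mu_{2^n-1}\cup\mu_{2^n+1}$, checks they are pairwise distinct, and concludes that $\Psi_n(x,-2)$ has no repeated roots. Separability of $\Psi_n$ then forces the factors $\Phi_d(x,-2)$ to partition these roots, so formal period equals primitive period without ever computing a multiplier. Your telescoping computation of $\lambda_d=\pm 2^d$ achieves the same reduction by a different mechanism, and has the advantage of producing the multipliers explicitly (information the paper will in fact use implicitly later when analyzing collisions and ramification over $\bar c=0,-2$). The paper's counting argument is slightly slicker as a proof of the lemma as stated; your multiplier computation is a genuine bonus for the downstream applications you mention in your last paragraph.
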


Note that these $(2^n\pm 1)$-st roots of unity are not necessarily primitive. 
Note that $2^d+1$ divides $2^n-1$ if $n/d$ even 
and divides $2^n+1$ if $n/d$ is odd.

\begin{proof}\ \begin{enumerate} 
\item If $c=0$, then
 \[\Phi_n(x,0)= \prod_{d\mid n} (x^{2^d}-1)^{\mu(n/d)}.\]
 So $\Phi_n(x,0)$ is the product of those cyclotomic polynomials $C_i(x)$ where $i$ is a divisor of $2^n-1$ but not of $2^d-1$ for $d$ a proper divisor of $n$.  

\item If $z\in\CC^\times$, then $(z+z^{-1})^2-2=z^2+z^{-2}$. If $\zeta\in\mu_{2^n-1}\cup\mu_{2^n+1}$, then $\Psi_n(\zeta+\zeta^{-1},-2)=0$, i.e., $\zeta+\zeta^{-1}$ is an $n$-periodic point. 
If $\zeta+\zeta^{-1}=\eta+\eta^{-1}$, then $\eta=\zeta^{\pm 1}$. Thus there are $2^{n-1}$ numbers $\zeta+\zeta^{-1}$ with $\zeta\in\mu_{2^n-1}$ and $2^{n-1}+1$ such numbers with $\zeta\in\mu_{2^n+1}$. 
Since $\mu_{2^n-1}\cap\mu_{2^n+1}=\{1\}$, $|\mu_{2^n-1}\cup\mu_{2^n+1}|=2^n$, which is the degree of $\Psi_n(x,-2)$. Thus the roots of $\Psi_n(x, -2)$ are distinct 
and all arise in this way.

If $\zeta+\zeta^{-1}$ is also a $d$-periodic point, then $f_{-2}^d(\zeta+\zeta^{-1})=\zeta+\zeta^{-1}$. 
The roots of $\Phi_n(x,-2)$ are the points of exact period $n$, which by induction are the values $\zeta+\zeta^{-1}$ 
such that $\zeta$ is not in any $\mu_{2^d-1}\cup\mu_{2^d+1}$ for $d\mid n$
and $d \not = n$. 
\end{enumerate} \end{proof}

The points of $Y_1(n)$ above $c=0$ and $c=-2$ are the points of $R_0$ and $R_{-2}$, respectively.
The points of $Y_0(n)$ are the orbits of these under the $C_n$-action defined as follows:
for $c=0$, the action is $(1\in C_n)\mapsto(\zeta\mapsto\zeta^2)$;
for $c=-2$, the action is $(1\in C_n)\mapsto(\zeta+\zeta^{-1}\mapsto\zeta^2+\zeta^{-2})$. 
In other words, the action squares the root of unity in both cases.

If the reduction of $Y_1(n)$ modulo $p$ has a singularity above $\bar{c}=0$ (resp.\ $\bar{c}=-2$),
then points of formal period $n$ above $c=0$ (resp.\ $c=-2$) collide modulo $p$.
Let $\overline{R}_0$ and $\overline{R}_{-2}$ denote the reduction of $R_0$ and $R_{-2}$, respectively, modulo $p$.

\begin{lemma}\
$|\overline{R}_0|<|R_0|$ (resp.\ $|\overline{R}_{-2}|<|R_{-2}|$) if and only if
$p \mid (2^n-1)$  (resp.\ $p\mid (2^n-1)$ or $p\mid (2^n+1)$). \end{lemma}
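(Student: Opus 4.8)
The plan is to reduce the claim to elementary statements about when roots of unity collapse modulo $p$. Recall from Lemma \ref{Lpointformalperiodn} that $R_0 \subseteq \mu_{2^n-1}$ and that $R_{-2}$ consists of elements $\zeta + \zeta^{-1}$ with $\zeta \in \mu_{2^n-1} \cup \mu_{2^n+1}$, in both cases after deleting the lower-level roots of unity. The set $R_0$ (resp.\ $R_{-2}$) has full cardinality in characteristic zero: $|R_0| = \nu(n)$ and $|R_{-2}| = \nu(n)$ by the degree count in the proof of Lemma \ref{Lpointformalperiodn}. So $|\overline R_0| < |R_0|$ exactly when two distinct elements of $R_0$ become congruent mod $p$, and similarly for $R_{-2}$.

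First I would handle $R_0$. The key input is that the reduction map $\mu_N \to \overline{\FF}_p$ is injective precisely when $p \nmid N$: if $p \nmid N$ then $x^N - 1$ is separable mod $p$, while if $p \mid N$ then (writing $N = p^a N'$ with $p \nmid N'$) we have $x^N - 1 = (x^{N'}-1)^{p^a}$ mod $p$, so every root has multiplicity $p^a > 1$ and the $2^n - 1$ distinct $N$-th roots of unity collapse onto $N'$ residues. Applying this with $N = 2^n - 1$: since $2^n - 1$ is odd, $p \mid 2^n - 1$ forces $p$ odd, and then the above shows $\mu_{2^n-1}$ collapses mod $p$. I would then need to check this collapse actually occurs \emph{within} $R_0$, not just among the discarded roots — i.e.\ that two elements of $R_0$ collide. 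This follows because $|R_0| = \nu(n) = \sum_{d\mid n} 2^d \mu(n/d)$, which for $n \ge 2$ exceeds $(2^n-1)/p$ once $p \mid 2^n - 1$ (indeed $\nu(n) \ge 2^n - 2^{n/2+1}$ roughly, while $(2^n-1)/p \le (2^n-1)/3$); a short estimate, or a direct pigeonhole argument on the $2^n - 1$ total roots versus the $\le (2^n-1)/3$ residues combined with the fact that the discarded set $\bigcup_{d\mid n, d\neq n}\mu_{2^d-1}$ has size $O(2^{n/2})$, shows the collision must involve $R_0$. Conversely if $p \nmid 2^n - 1$, reduction is injective on all of $\mu_{2^n-1}$, hence on $R_0$, so $|\overline R_0| = |R_0|$.

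Next, $R_{-2}$. The map $\zeta \mapsto \zeta + \zeta^{-1}$ is two-to-one onto its image (with $\zeta, \zeta^{-1}$ the only preimages of a given value), as shown in the proof of Lemma \ref{Lpointformalperiodn}, and this remains true mod $p$ for odd $p$ since $\zeta = \zeta^{-1}$ only at $\zeta = \pm 1$. Thus two elements $\zeta + \zeta^{-1}$ and $\eta + \eta^{-1}$ of $R_{-2}$ collide mod $p$ iff $\eta \equiv \zeta^{\pm 1} \pmod p$, i.e.\ iff two of the underlying roots of unity in $\mu_{2^n-1}\cup\mu_{2^n+1}$ collide mod $p$. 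By the injectivity criterion above, $\mu_{2^n-1}$ collapses iff $p\mid 2^n-1$ and $\mu_{2^n+1}$ collapses iff $p \mid 2^n+1$; one also checks the two groups intersect only in $\{1\}$ mod $p$ (a common residue of a $(2^n-1)$-st and a $(2^n+1)$-st root of unity would have order dividing $\gcd(2^n-1,2^n+1) = 1$), so no \emph{new} collisions arise across the two families. Hence $|\overline R_{-2}| < |R_{-2}|$ iff $p \mid 2^n - 1$ or $p \mid 2^n + 1$, again after confirming via cardinality/pigeonhole that the collapse lands inside $R_{-2}$ rather than only among the finitely many discarded roots.

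The main obstacle is the bookkeeping in the "collapse lands inside $R_{-2}$" step: one must rule out the degenerate possibility that \emph{all} pairs of colliding roots of unity lie in the discarded set $\bigcup_{d\mid n, d\neq n}(\mu_{2^d-1}\cup\mu_{2^d+1})$. I expect this to be a clean consequence of the size estimate $|R_{\pm 2}| = \nu(n) \sim 2^n$ against $|\text{image mod }p| \le \tfrac12\big((2^n-1)/p + (2^n+1)/p\big) \le (2^n+1)/3$ plus the bound $O(2^{n/2})$ on the discarded set, but it requires writing the inequality $\nu(n) - O(2^{n/2}) > (2^n+1)/3$ carefully for all $n \ge 3$ (the cases $n \le 3$ can be checked by hand). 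Once that estimate is in place, the rest is the elementary separability argument for $x^N - 1$ mod $p$ together with the two-to-one description of the Chebyshev substitution.
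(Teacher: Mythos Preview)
Your proposal is correct and follows essentially the same route as the paper: separability of $x^N-1$ modulo $p$ for the ``only if'' direction, and a pigeonhole count for the ``if'' direction. The paper's pigeonhole is slightly cleaner than yours: writing $2^n-1 = p^k m$ with $p\nmid m$, the image of $\mu_{2^n-1}$ in $\overline{\FF}_p$ has \emph{exactly} $m$ elements, so the single inequality $|R_0|=\nu(n)>m$ forces a collision inside $R_0$ directly --- there is no need to bound the discarded set by $O(2^{n/2})$ and subtract. For $R_{-2}$ the paper likewise just observes that $\zeta^i+\zeta^{-i}\equiv\zeta^j+\zeta^{-j}$ iff $i\equiv\pm j\pmod m$ and applies the same count; your cross-family gcd argument is correct but is only needed for the ``only if'' direction, which is already handled by separability.
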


\begin{proof}
The polynomial $x^k-1$ is separable modulo $p$ if and only if $p\nmid k$. Thus the $k$-th roots of unity are distinct in $\ol{\FF}_p$ if and only if $p\nmid k$. It follows that if $p\nmid (2^n-1$), then $|\ol{R}_0|=|R_0|$. 

If $p\mid (2^n-1)$, then it suffices to show that there are two $(2^n-1)$-st roots of unity \emph{that are not $(2^d-1)$-st roots for $d\mid n$ and $d<n$} that coalesce modulo $p$; that is, two elements of $R_0$ coalesce modulo $p$. To do this, let $\zeta$ be a primitive $(2^n-1)$-st root of unity. Suppose that $2^n-1=p^km$, where $p\nmid m$. Then $\zeta^i\equiv\zeta^j$ modulo some prime $\mfp$ of $\ZZ[\zeta]$ above $p$ if and only if $i\equiv j\pmod{m}$. Since $\nu(n)=|R_0|>m$, there must be two elements of $R_0$ coalescing in $\ol{\FF}_p$ if $p\mid (2^n-1)$.

For $c=-2$, then a similar argument applies, with the main difference being that $\zeta^i+\zeta^{-i}\equiv\zeta^j+\zeta^{-j}\pmod{\mfp}$ if and only if $i\equiv \pm j\pmod{m}$, since $\zeta^i+\zeta^{-i}\equiv\zeta^j+\zeta^{-j}$ is equivalent to $\zeta^{-i}(\zeta^{i+j}-1)(\zeta^{i-j}-1)\equiv 0$.
\end{proof}

\subsection{Degree of discriminant}

We now define four numbers which measure various contributions to the degree of the discriminant
of the cover $\pi_0: Y_0(n) \to {\mathbb A}^1$ and its reduction $\bar{\pi}_0$ modulo $p$.
Let $\bar{\rho}_0$ (resp.\ $\bar{\rho}_{-2}$) be the contribution to the degree of the discriminant of 
the normalization of $\bar{\pi}_0$ over $\bar{c}=0$ (resp.\ $\bar{c}=-2$). 
The number of ramification points of $\bar{\pi}_0$ above $\bar{c}=0$ and 
$\bar{c}=-2$
is easy to compute combinatorially, but wild ramification often makes it 
harder to compute $\bar{\rho}_0$ and $\bar{\rho}_{-2}$.

Define $\rho_0$ (resp.\ $\rho_{-2}$) to be the degree of the discriminant of 
$\pi_0$ above the branch points which specialize to $\bar{c}=0$ (resp.\ $\bar{c} = 2$) modulo $p$.

The values $\rho_0$ and $\bar{\rho}_0$ 
(resp.\ $\rho_{-2}$ and $\bar{\rho}_{-2}$) determine whether or not $\ol{Y}_0(n)$ 
has a singularity above $\bar{c}=0$ (resp.\ $\bar{c}=-2$).

\begin{lemma} \label{prop:badat0-2}
With notation as above, $\bar{\rho}_0 \leq \rho_0$ and $\bar{\rho}_{-2} \leq \rho_{-2}$.
The curve $\ol{Y}_0(n)$ is smooth above $\bar{c}=0$
(resp.\ $\bar{c}=-2$) if and only if $\rho_0=\bar{\rho}_0$ (resp.\ $\rho_{-2}=\bar{\rho}_{-2}$).
\end{lemma}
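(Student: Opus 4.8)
The plan is to deduce the lemma from the discriminant-comparison results of \S\ref{Sprelims}, applied to $\pi_0$ completed at the base points $\bar c=0$ and $\bar c=-2$. The only substantive work is to match $\rho_0,\bar\rho_0$ (and their $c=-2$ counterparts) with honest discriminant degrees, bearing in mind that $\bar\rho_0$ is defined using the \emph{normalization} of the special fiber.

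First I would pass to a mixed-characteristic $(0,p)$ complete discrete valuation ring $R$ with algebraically closed residue field $k=\ol{\FF}_p$ and fraction field $K$. Since $f_c(x)=x^2+c$, the curve $Y_1(n)_K$ is smooth (Theorem~\ref{Tmultibrotsmooth}) and geometrically irreducible (Theorem~\ref{thm:nonsing}), so $Y_0(n)_K$ --- a quotient of an integral normal curve over a field of characteristic zero --- is smooth; hence by Proposition~\ref{Pnormal} the scheme $Y_0(n)_R$ is normal with reduced special fiber $Y_0(n)_k=\ol Y_0(n)$. For $P\in\{0,-2\}$ I would set $T:=c-P$, $A:=\widehat{\mc O}_{\AA^1_R,P}\cong R[\![T]\!]$, and $B:=Y_0(n)_R\times_{\AA^1_R}\Spec A$. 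Then $B$ is a normal, finite, flat $A$-algebra of degree $\alpha=\nu(n)/n=\deg\pi_0$; its special fiber $B_k$ is reduced (it is the completion of the reduced excellent ring $Y_0(n)_k$ at the points over $P$), and $B_k/A_k$ is generically \'etale because $\pi_{1,k}$, which dominates it, is generically separable by Proposition~\ref{Pgensep} (valid as $p\nmid 2$; cf.\ Remark~\ref{Rxmplusc}). Thus $B/A$ is of the form treated in \S\ref{Sprelims}, and the same applies at $P=-2$.

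Next I would identify the discriminant degrees in play. The closed points of $\Spec A_{\ol K}$ are exactly the $\ol K$-points of the formal disc around $P$, i.e.\ exactly the branch points of $\pi_0:Y_0(n)_K\to\AA^1_K$ that specialize to $P$; since the discriminant ideal of this cover is $(\Delta_{n,n}(c))$ (Theorem~\ref{discriminantOfPhi}, as in the proof of Lemma~\ref{LDeltannval}), the degree $d_\eta$ of the discriminant of $B_{\ol K}/A_{\ol K}$ is precisely $\rho_0$ (resp.\ $\rho_{-2}$). By Proposition~\ref{Pfultontest}, $d_s=d_\eta$, where $d_s$ is the degree of the discriminant of $B_k/A_k$. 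On the other hand, normalization and formation of the discriminant commute with completion for excellent rings, so $\bar\rho_0$ (resp.\ $\bar\rho_{-2}$) equals the degree of the discriminant of $\widetilde{B_k}/A_k$, with $\widetilde{B_k}$ the normalization of $B_k$.

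Finally I would compare $B_k\hookrightarrow\widetilde{B_k}$ over the discrete valuation ring $A_k\cong k[\![T]\!]$. Both are free $A_k$-modules of rank $\alpha$, so with $\ell$ the length of the torsion module $\widetilde{B_k}/B_k$ one has $\disc(B_k/A_k)=(T)^{2\ell}\,\disc(\widetilde{B_k}/A_k)$, hence $\rho_0=d_s=\bar\rho_0+2\ell\ge\bar\rho_0$ (and likewise at $c=-2$), giving the asserted inequalities. Equality holds iff $\ell=0$, i.e.\ iff $B_k=\widetilde{B_k}$, i.e.\ iff $B_k$ is normal; and since the factors of $B_k$ are the completed local rings of the reduced excellent curve $Y_0(n)_k$ at its points above $P$, normality of $B_k$ amounts to regularity of $Y_0(n)_k$ there, hence --- $k$ being perfect --- to smoothness of $\ol Y_0(n)$ above $\bar c=P$. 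Taking $P=0$ and $P=-2$ gives both claims. I expect the only real obstacle to be bookkeeping rather than ideas: $\bar\rho_0,\bar\rho_{-2}$ involve the \emph{normalization} of the special fiber, whereas Green--Matignon (Proposition~\ref{Pfultontest}) equates the discriminant of the possibly singular special fiber with that of the geometric generic fiber, and the bridge is precisely the ``index-squared'' formula over $A_k$ --- which conveniently delivers the inequality and the ``if and only if'' together. Verifying that $B/A$ satisfies the hypotheses of \S\ref{Sprelims} and that $d_\eta=\rho_0$ are the routine remaining points.
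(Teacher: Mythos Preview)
Your proof is correct and follows essentially the same route as the paper: both set up the local picture $B/A$ with $A\cong R[\![T]\!]$, identify $\rho_0=d_\eta$, invoke Proposition~\ref{Pfultontest} to get $d_\eta=d_s$, and then compare $d_s$ with $\bar\rho_0$ (the discriminant of the normalization of the special fiber). The paper simply asserts that $\bar\rho_0\le d_s$ with equality iff $\ol Y_0(n)$ is normal above $\bar c=0$, whereas you make this explicit via the conductor--discriminant relation $\disc(B_k/A_k)=(T)^{2\ell}\disc(\widetilde{B_k}/A_k)$; this extra detail even yields the small bonus that $\rho_0-\bar\rho_0$ is always even.
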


\begin{proof}
Consider $\bar{c} = 0$; (the case $\bar{c} = -2$ is exactly the same).  In the notation used in Proposition~\ref{Pfultontest}, $\rho_0 = d_\eta$ and $\bar{\rho}_0 \leq d_s$, with equality holding if and only if $\ol{Y}_0(n)$ 
is normal (that is, smooth) above $\bar{c} = 0$.  Since $d_{\eta} = d_s$ (Proposition \ref{Pfultontest}), it follows that $\rho_0 = d_{\eta} = d_s \geq \bar{\rho}_0$, with equality holding if and only if 
$\ol{Y}_0(n)$ 
is smooth above $\bar{c} = 0$.  This proves the lemma.
\end{proof}

We do not have a good method to compute
$\rho_0$ and $\rho_{-2}$ in general but the following lemma is 
useful to compute them in specific cases.

\begin{lemma} \label{Lhowtochar0}
The values $\rho_0$ and $\rho_{-2}$ are the integers such that
there exists a polynomial $g(c)\in\ZZ[c]$ with $g(0) g(-2) \not \equiv 0 \pmod p$ and 
\[\Delta_{n,n}(c) \equiv c^{\rho_0}(c+2)^{\rho_{-2}}g(c) \bmod p.\]
\end{lemma}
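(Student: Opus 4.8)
The plan is to identify $\rho_0$ and $\rho_{-2}$ with the multiplicities of the roots $c=0$ and $c=-2$ of the reduction $\bar{\Delta}_{n,n}(c) := \Delta_{n,n}(c) \bmod p \in \FF_p[c]$, after which the claimed congruence is immediate.

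First I would recall, exactly as in the proof of Lemma~\ref{LDeltannval}, that over any field of characteristic zero the polynomial $\Delta_{n,n}(c)$ is, up to a nonzero constant, the discriminant of the function field extension $K(Y_0(n)_K)/K(c)$ (\cite[Proposition 9d]{Morton96}); hence the discriminant divisor of $\pi_0$ on $\AA^1$ is $\sum_{a} m_a\,[a]$, the sum over the roots $a$ of $\Delta_{n,n}$, where $m_a = v_{c-a}(\Delta_{n,n}(c))$ is the multiplicity of $a$. Working over $\ol{\QQ}_p$ via a fixed embedding $\ol{\QQ}\hookrightarrow\ol{\QQ}_p$, the definitions of $\rho_0$ and $\rho_{-2}$ then read $\rho_0 = \sum_{a:\, \bar a = 0} m_a$ and $\rho_{-2} = \sum_{a:\, \bar a = -2} m_a$, the sums over the branch points of $\pi_0$ that specialize to $\bar c = 0$, resp.\ $\bar c = -2$.

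The key point is that this root--factorization of $\Delta_{n,n}$ is compatible with reduction modulo $p$. Since $p$ is odd (recall $p\mid 2^n\pm 1$), the leading coefficient of $\Delta_{n,n}$ is a $p$-adic unit (\cite[Proposition 3.4]{MortonVivaldi}, \cite[top of p.\ 331]{Morton96}), so all roots $a$ are integral over $\ZZ_p$; writing $\Delta_{n,n}(c) = \ell\prod_a (c-a)^{m_a}$ with $\ell\in\ZZ_p^\times$ and reducing modulo the maximal ideal of the valuation ring of $\ol{\QQ}_p$ gives $\bar{\Delta}_{n,n}(c) = \bar\ell\prod_a (c-\bar a)^{m_a}$ in $\ol{\FF}_p[c]$. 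Collecting the factors with $\bar a = 0$ and those with $\bar a = -2$ shows that the multiplicity of the root $c = 0$ (resp.\ $c = -2$) of $\bar{\Delta}_{n,n}$ is exactly $\rho_0$ (resp.\ $\rho_{-2}$). Since $\bar{\Delta}_{n,n}\in\FF_p[c]$ and $c$, $c+2$ are distinct irreducibles of $\FF_p[c]$, we may therefore factor $\bar{\Delta}_{n,n}(c) = c^{\rho_0}(c+2)^{\rho_{-2}}\bar g(c)$ with $\bar g\in\FF_p[c]$ and $\bar g(0)\bar g(-2)\neq 0$; lifting $\bar g$ to any $g\in\ZZ[c]$ gives $\Delta_{n,n}(c)\equiv c^{\rho_0}(c+2)^{\rho_{-2}}g(c)\pmod p$ with $g(0)g(-2)\not\equiv 0\pmod p$. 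Uniqueness of the exponents is then forced by unique factorization in $\FF_p[c]$: in any congruence of this shape the power of $c$, resp.\ $c+2$, dividing the left-hand side is determined. The only delicate point in the argument is the compatibility of the factorization with reduction, which is precisely why the unit leading coefficient is needed; the rest is bookkeeping.
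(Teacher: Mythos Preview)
Your proof is correct and follows essentially the same approach as the paper's. The only minor difference is that the paper invokes Proposition~\ref{prop:branchedCover}(3a) to note that each branch point of $\pi_0$ has a single ramification point of index~$2$, so contributes exactly~$1$ to the discriminant degree (whence $\rho_0$ is literally the \emph{number} of branch points specializing to $\bar c=0$); you instead identify $\Delta_{n,n}$ directly with the discriminant divisor via \cite[Proposition~9d]{Morton96} and work with multiplicities, which is slightly cleaner and would also work without knowing the roots are simple. Both arguments then finish the same way, using that the leading coefficient of $\Delta_{n,n}$ is a $p$-adic unit so that the factorization over $\ol{\QQ}_p$ reduces correctly modulo $p$.
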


\begin{proof}
By Proposition \ref{prop:branchedCover}(4a), 
each branch point of $\pi_0$ has one ramification point $\eta$ in its fiber, and $\eta$ has ramification index $2$.
So $\rho_0$ equals the number of branch points which specialize to $\bar{c}=0$.
Thus $\rho_0$ is the integer such that $\Delta_{n,n}(c) \equiv c^{\rho_0} g_0(c) \bmod p$
with $g_0(0) \not \equiv 0 \bmod p$.
Similarly, $\rho_{-2}$ is the integer such that 
$\Delta_{n,n}(c) \equiv (c+2)^{\rho_{-2}}g_{-2}(c) \bmod p$ with $g_{-2}(-2) \not \equiv 0 \bmod p$.
\end{proof}

\subsection{Other contributions to the discriminant}
\label{Sother}

We also use the polynomial $\Delta_{n,n}(c)$ to check that $\ol{Y}_0(n)$ 
is smooth everywhere, not just over the points $\bar{c}=0,-2$.
In order to guarantee that $Y_0(n)$ has good reduction at a prime $p\mid (2^n\pm 1)$, it is necessary to show that no points $\alpha \in \ol{\FF}_p$ other than $0, -2$ contribute to the power of $p$ in the discriminant $D_{n,n}$.

The points $\alpha\in\ol{\FF}_p$ that contribute to 
the power of $p$ in $D_{n,n}$ are exactly those $\alpha$ for which $(c-\alpha)^2\mid(\Delta_{n,n}(c)\bmod p)$. 
These $\alpha$ can be found by factoring $\Delta_{n,n}(c)$ modulo $p$ directly, but it is easier computationally to compute 
\begin{equation} \label{EG02}
\Gamma_{n,p}(c):=\gcd(\Delta_{n,n}\bmod p,\Delta'_{n,n}\bmod p)\in\FF_p[c].
\end{equation}
Then $(c-\alpha)^2\mid(\Delta_{n,n}(c)\bmod p)$ if and only if $(c-\alpha)\mid\Gamma_{n,p}(c)$.

If $\bar{c}=0,-2$ are the only roots of $\Gamma_{n,p}(c)$,
then the curve $\ol{Y}_0$ is smooth except possibly above $\bar{c}=0,-2$.
This is the outcome in all but one of the cases we computed.
The one exception is $n=6$, $p=3$, when $\Gamma_{6,3}=c^5(c+2)^2(c^2+2c+2)$. Hence, in this case, we must check that $\ol{Y}_0$ is smooth above the roots of $c^2+2c+2$ as well, which we do in Example~\ref{ex:n=6p=3}.
Coincidentally, this is the only case we computed for which $p^2\mid(2^n-1)$.
The next such example is when $n=9$, $p=3$. 

\subsection{Degree of discriminant in characteristic $p$}

This section contains an exact value (resp.\ lower bound) for $\bar{\rho}_0$ and $\bar{\rho}_{-2}$
when the ramification is tame
(resp.\ wild).

Consider the reduction $\overline{R}_0$ of $R_0$ modulo $p$. 
Let $t_0$ be the number of orbits of $\overline{R}_0$ 
under the action of $C_n$.  For each orbit $O_i$, its ramification index $e_i$ is the number of orbits in $R_0$ specializing to it.  
Then $\nu(n)/n = \sum_{i=1}^{t_0} e_i$.  
Similarly, let $t_{-2}$ be the number of orbits of $\overline{R}_{-2}$ under the action of $C_n$; for each orbit
$O_i'$, let $e_i'$ denote the ramification index;
then $\nu(n)/n = \sum_{i=1}^{t_{-2}} e_i'$. 

The cover $\bar{\pi}_0$ is tamely ramified above $\bar{c}=0$ (resp.\ $\bar{c}=-2$) when $p \nmid e_i$ for any orbit $O_i$
(resp.\ $p \nmid e_i'$ for any orbit $O_i'$).

\begin{lemma} \label{Ltamecont}
In the tame case, the degree of the discriminant above $\bar{c}=0$ (resp.\ $\bar{c}=-2$) is $\bar{\rho}_0 = \sum (e_i-1) = \nu(n)/n -t_0$ (resp.\
$\bar{\rho}_{-2} = \sum (e_i-1) = \nu(n)/n -t_{-2}$).
\end{lemma}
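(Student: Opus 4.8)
The plan is to obtain both formulas as a direct application of Fulton's test, Theorem~\ref{Lfultontest}, applied after completing at the point $\bar c=0$ (the case $\bar c=-2$ being entirely parallel). I would fix a complete mixed-characteristic discrete valuation ring $R$ with residue field $\bar\FF_p$ and fraction field $K$, large enough that the finitely many branch points of $\pi_0\colon X_0(n)_K\to\PP^1_K$ that specialize to $\bar c=0$ become $K$-rational. Let $\widetilde Z$ be the normalization of $\bar\pi_0\colon\bar Y_0(n)\to\AA^1_{\bar\FF_p}$; by Remark~\ref{Rxmplusc} and Proposition~\ref{Psamemodel} this agrees near $\bar c=0$ with the normalization of the special fiber of $\mathfrak X_0(n)_R$, and since $\bar\FF_p$ is perfect, $\widetilde Z$ is smooth. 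Put $A=\widehat{\mathcal{O}}_{\PP^1_{\bar\FF_p},\,\bar c=0}\cong\bar\FF_p[\![c]\!]$, let $S$ be the semilocal ring of $\widetilde Z$ at its points over $\bar c=0$, and consider the finite flat morphism $\Spec S\to\Spec A$; by definition its discriminant has degree $\bar\rho_0$.

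Next I would verify the hypotheses of Theorem~\ref{Lfultontest}. The degree of $\Spec S\to\Spec A$ is $\alpha:=\nu(n)/n=\deg\pi_0$. It is generically separable because the function field of $\bar Y_0(n)$ is a subextension of that of $\bar Y_1(n)$, which is separable over $\bar\FF_p(c)$ by Proposition~\ref{Pgensep} (applicable since $p$ is odd and $f(x,c)=x^2+c$). The total space $\Spec S$ is smooth, being one-dimensional, normal, and of finite type over the perfect field $\bar\FF_p$. Finally, in the tame case the ramification indices of $\Spec S\to\Spec A$ are the $e_i$, which are prime to $p$ by hypothesis, so the cover is tamely ramified. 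Theorem~\ref{Lfultontest} then yields the \emph{equality} $\bar\rho_0=\alpha-|f^{-1}(\bar c=0)|$. Granting that $f^{-1}(\bar c=0)$ consists of exactly the $t_0$ points $O_1,\dots,O_{t_0}$, this reads $\bar\rho_0=\nu(n)/n-t_0$, and combining with $\sum_{i=1}^{t_0}e_i=\nu(n)/n$ gives $\bar\rho_0=\sum_{i=1}^{t_0}(e_i-1)$, as desired; the argument for $\bar c=-2$ is identical, with $t_0$ and the $e_i$ replaced by $t_{-2}$ and the $e_i'$.

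The step requiring the most care is the identification of $f^{-1}(\bar c=0)$ with $\{O_1,\dots,O_{t_0}\}$ together with the assertion that the ramification index of $\widetilde Z$ at $O_i$ is $e_i$ --- in other words, that $\bar Y_0(n)$ is unibranch over $\bar c=0$ and that the ramification data of $\pi_0$ reduces faithfully there. Since $\bar Y_0(n)$ already has exactly $t_0$ points over $\bar c=0$ (this is just the support of the fiber, computed from $\Phi_n(x,0)=\prod_{\zeta\in R_0}(x-\zeta)$ and its reduction), normalization can only add points, so the real content is that it adds none. To see this I would use the explicit structure: over $c=0$ the scheme $Y_1(n)_R$ is the disjoint union of the $\nu(n)$ horizontal sections $\{x=\zeta\}$, $\zeta\in R_0$, so the $\nu(n)/n$ orbit-sections of $Y_0(n)_R$ over $c=0$ reduce to the $O_i$ with exactly $e_i$ of them passing through $O_i$; a Newton-polygon analysis of the Weierstrass polynomial cutting out $\bar Y_0(n)$ near $O_i$ --- whose lowest-degree coefficient is pinned down because the degree of the discriminant is the same in characteristics $0$ and $p$ (Proposition~\ref{Pfultontest}) --- then forces, in the tame case, that this local completion is unibranch and totally ramified of index $e_i$ over $\bar c=0$ (or, alternatively, that $\bar Y_0(n)$ is smooth there, which one can try to extract from Corollary~\ref{Cgoodredtest}). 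With this bookkeeping in hand, the application of Fulton's test described above completes the proof.
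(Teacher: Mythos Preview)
The paper states this lemma without proof, treating it as the classical formula for the discriminant of a tame extension: at a point of ramification index $e$ coprime to $p$, the different has valuation $e-1$, so $\bar\rho_0=\sum_i(e_i-1)=\nu(n)/n-t_0$. Your route through Theorem~\ref{Lfultontest} applied to the normalization $\widetilde Z$ is a repackaging of the same fact.

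Your third paragraph correctly identifies the real content---that the normalization of $\bar Y_0(n)$ has exactly $t_0$ points over $\bar c=0$, so that its ramification indices coincide with the combinatorially defined $e_i$---but does not close this gap. The Newton-polygon sketch is not an argument, and Corollary~\ref{Cgoodredtest} only applies when at most one characteristic-zero branch point specializes to $\bar c=0$, which fails in every case of interest (e.g.\ $\rho_0=5$ for $n=5$, $p=31$). Appealing to smoothness of $\bar Y_0(n)$ is circular: computing $\bar\rho_0$ is precisely how the paper tests smoothness via Lemma~\ref{prop:badat0-2}. Indeed, if $\bar Y_0(n)$ had several branches at some $O_i$, the normalization would split $e_i$ into smaller indices summing to $e_i$, giving $\bar\rho_0<\sum_i(e_i-1)$; so the lemma as literally stated, with the paper's combinatorial $e_i$, presupposes unibranch-ness. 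The paper's actual applications sidestep this by checking $\rho_0=\nu(n)/n-t_0$ directly; since $d_s=d_\eta=\rho_0$ by Proposition~\ref{Pfultontest} and $\bar Y_0(n)$ has exactly $t_0$ points over $\bar c=0$, the equality case of Theorem~\ref{Lfultontest} applied to $\bar Y_0(n)$ itself (not its normalization) yields smoothness, tameness, and the formula simultaneously.
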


In the wild case, let $s_0$ (resp.\ $s_{-2}$)
be the number of 
orbits $O_i$ of $\overline{R}_0$ 
(resp.\ orbits $O_i'$ of $\overline{R}_{-2}$) 
for which $p$ divides the ramification index $e_i$ (resp.\ $e_i'$).  

\begin{lemma} \label{Lwildcont}
In the wild case, 
the degree of the discriminant above $\bar{c}=0$ (resp.\ $\bar{c}=-2$) satisfies
\[\bar{\rho}_0 \geq \sum (e_i-1) + s_0 = \nu(n)/n -t_0 + s_0,\] (resp.\ 
\[\bar{\rho}_{-2} \geq \sum (e_i-1) + s_{-2} = \nu(n)/n -t_{-2} + s_{-2}).\]
\end{lemma}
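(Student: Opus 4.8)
The plan is to run the same local computation as in the tame case (Lemma~\ref{Ltamecont}), but with the exact tame value of the different exponent replaced by the standard lower bound valid under wild ramification. As in \S\ref{Sprelims}, I work over $R = \ZZ_p^{ur}$ with residue field $k = \ol{\FF}_p$ and put $T = c$, so that $\bar c = 0$ (resp.\ $\bar c = -2$) is the specialization of the $R$-point $c = 0$ (resp.\ $c = -2$). Completing $\AA^1_R$ there gives $\Spec R[\![T]\!]$, and $Y_0(n)_R \times_{\AA^1_R} \Spec R[\![T]\!]$ is finite flat over $\Spec R[\![T]\!]$ of degree $\nu(n)/n$, normal, with reduced and generically \'etale special fiber, by Propositions~\ref{Pnormal} and~\ref{Pgensep}.

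First I would pin down the ramification data of the normalized special fiber over $\bar c = 0$. Its closed points over $\bar c = 0$ are exactly the orbits $O_1, \dots, O_{t_0}$ of $\ol{R}_0$ from Lemma~\ref{Lpointformalperiodn}, and I would check that over each $O_i$ the normalized special fiber consists of a single point $P_i$ with ramification index $e_i$. This would use the normality of $Y_0(n)_R$ (Remark~\ref{RY0nR}) together with the fact that $\pi_0$ is unramified over $c = 0$ and $c = -2$ in characteristic zero --- the period-$n$ multipliers at these two parameters equal $\pm 2^n$, which is never a root of unity in characteristic zero. Granting this, $\sum_{i=1}^{t_0} e_i = \nu(n)/n$ and $\bar\rho_0 = \sum_{i=1}^{t_0} d_i$, where $d_i$ is the different exponent of $\widehat{\mathcal{O}}_{P_i}/k[\![T]\!]$ (equivalently the local contribution to the degree of the discriminant, since $k$ is algebraically closed).

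The main step is the local estimate for $d_i$. Writing $\widehat{\mathcal{O}}_{P_i} \cong k[\![s]\!]$ with $T = u(s)\, s^{e_i}$ and $u(0) \neq 0$, generic separability forces $u'(s) \neq 0$ (otherwise $T \in k[\![s^p]\!]$ and $k(\!(s)\!)/k(\!(T)\!)$ would be inseparable), so $dT/ds = e_i\, u(s)\, s^{e_i - 1} + u'(s)\, s^{e_i}$. If $p \nmid e_i$ this has $s$-valuation $e_i - 1$; if $p \mid e_i$ the first term vanishes in characteristic $p$, so $d_i = e_i + v_s(u'(s)) \geq e_i$. Summing over the $t_0$ points, and using that exactly $s_0$ of the $e_i$ are divisible by $p$, gives
\[
\bar\rho_0 \;=\; \sum_{i=1}^{t_0} d_i \;\geq\; \sum_{i=1}^{t_0}(e_i - 1) + s_0 \;=\; \frac{\nu(n)}{n} - t_0 + s_0 ,
\]
which is the asserted inequality; the argument over $\bar c = -2$ is identical, with $\ol{R}_{-2}$ in place of $\ol{R}_0$.

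I expect the main obstacle to be the first step: justifying that the (possibly singular) special fiber is genuinely unibranch over $\bar c = 0$ and $\bar c = -2$, with ramification index exactly $e_i$ at each point, so that the combinatorial count of colliding orbits really is the ramification index entering the different. If this cannot be arranged directly, a fallback is to combine the identity $\bar\rho_0 = \rho_0 - 2\delta$ coming from Proposition~\ref{Pfultontest} and Lemma~\ref{prop:badat0-2} with an independent lower bound on the $\delta$-invariant extracted from the wild part of the ramification; but the per-point computation above is cleaner whenever it applies, and the wild local estimate itself is standard since the residue field is algebraically closed.
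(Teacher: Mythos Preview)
Your core argument is exactly the paper's: at a tamely ramified point the different exponent is $e_i-1$, and at a wildly ramified point it is an integer strictly greater than $e_i-1$, hence at least $e_i$; summing gives the stated bound. The paper's entire proof is two sentences invoking precisely this fact, whereas you spell out the $dT/ds$ computation in $k[\![s]\!]$ and add a good deal of setup that the paper leaves implicit.

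The subtlety you flag --- that one must know the normalized special fiber is unibranch over each $O_i$ with ramification index equal to the combinatorially defined $e_i$ --- is real, and the paper does not address it either: it simply \emph{declares} $e_i$ to be the ``ramification index'' in the paragraph preceding Lemma~\ref{Ltamecont} and proceeds. So this is not a defect of your argument relative to the paper's; you are being more careful than the source. Your proposed justification via normality of $Y_0(n)_R$ and unramifiedness of $\pi_0$ over $c=0,-2$ in characteristic zero is the right ingredient list, though it is not quite a proof as written (normality of the arithmetic surface does not by itself force the special fiber to be unibranch). In the applications of Theorem~\ref{corol:Examples} the issue is moot a posteriori, since once the lower bound matches $\rho_0$ one concludes smoothness from Lemma~\ref{prop:badat0-2}, and smoothness retroactively gives unibranchness; but you are right that as a freestanding lemma the statement tacitly assumes this identification. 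Your fallback via $\bar\rho_0=\rho_0-2\delta$ would not help without an independent upper bound on $\delta$, which is essentially the same missing piece.
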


\begin{proof}
At each ramification point above $\bar{c}=0$, the contribution to the degree of the discriminant of $\bar{\rho}_0$ is an integer.  At each wildly ramified point, the contribution is strictly greater than $e_i-1$.
\end{proof}

\subsection{Ramification indices modulo $p$}

In general, the combinatorics of the collisions depends on the divisors of $n$ and $2^n-1$ and their $p$-adic valuations. When $n$ is prime, the ramification indices at $\bar{c}=0$ and $\bar{c}=-2$ for $\bar{\varphi}: \overline{Y}_1(n)\to \PP^1$ and $\bar{\pi}_0: \overline{Y}_0(n)\to \PP^1$
can be computed precisely.

\begin{lemma} \label{Lramindex}
Suppose $n$ is an odd prime and $2^n \pm 1 = p^k m$ with $p \nmid m$.
Then $\bar{\pi}_1:\overline{Y}_1(n)\to \PP^1$ has the following ramification indices at $\bar{c}=0$ and $\bar{c}=-2$: 
\begin{enumerate} 
\item If $p\mid (2^n-1)$, then the fiber of $\bar{\pi}_1$ above $0$
has $m$ points,
one with ramification index $p^k-1$ and the other $m-1$ with ramification index $p^k$. 

\item If $p\mid (2^n - 1)$, 
then the fiber of $\bar{\pi}_1$ above $-2$ has one point with ramification index $e=\frac{p^k-1}{2}$ and $\frac{m-1}{2}$ points with $e=p^k$, and the remaining points are unramified. 

\item If $p \mid (2^n + 1)$ and $p \not = 3$, then the fiber of $\bar{\pi}_1$ above $-2$ has one point with ramification index $e=\frac{p^k-1}{2}$ and one
with $e=p^k-1$ and $\frac{m-3}{2}$ points with $e=p^k$, and the remaining points are unramified.

\item Let $n > 3$.  If $p \mid (2^n + 1)$ and $p = 3$, then $k=1$ (since $n$ is prime), and the fiber of $\bar{\pi}_1$ above $-2$ has 
$\frac{m-1}{2}$ points with $e=3$, and the remaining points are unramified.
\end{enumerate} \end{lemma}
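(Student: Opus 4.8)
The plan is to identify, for $\bar{c}_0\in\{0,-2\}$, the scheme-theoretic fiber of $\bar{\pi}_1$ over $\bar{c}_0$ with $\Spec \ol{\FF}_p[x]/\Phi_n(x,\bar{c}_0)$, so that the multiset of ramification indices of $\bar{\pi}_1$ above $\bar{c}_0$ is the multiset of multiplicities of the roots of $\Phi_n(x,\bar{c}_0)$ in $\ol{\FF}_p$. Equivalently, fixing a prime $\mathfrak{p}$ of $\overline{\ZZ}$ over $p$, the ramification index at a point $\bar\alpha$ of the fiber equals the number of roots of $\Phi_n(x,c_0)$ in characteristic $0$ that reduce to $\bar\alpha$ modulo $\mathfrak{p}$. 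Since $n$ is an odd prime, Lemma~\ref{Lpointformalperiodn} identifies those roots: above $c=0$ they are $R_0=\mu_{2^n-1}\setminus\{1\}$, and above $c=-2$ they are the $\nu(n)=2^n-2$ numbers $\zeta+\zeta^{-1}$ as $\zeta$ runs over a set of representatives of the $\{\zeta,\zeta^{-1}\}$-classes in $(\mu_{2^n-1}\cup\mu_{2^n+1})\setminus(\{1\}\cup\mu_3)$. Thus the lemma reduces to bookkeeping about how roots of unity, and the quantities $\zeta+\zeta^{-1}$, collapse modulo $\mathfrak{p}$. The two inputs I will use throughout are: (i) for $N=p^jm'$ with $p\nmid m'$, reduction modulo $\mathfrak{p}$ maps $\mu_N$ onto $\mu_{m'}(\ol{\FF}_p)$ with every fiber of size $p^j$ (it is the quotient by $\mu_{p^j}$), and is injective if $p\nmid N$; and (ii) $\zeta+\zeta^{-1}\equiv\eta+\eta^{-1}\pmod{\mathfrak{p}}$ if and only if $\bar\zeta\in\{\bar\eta,\bar\eta^{-1}\}$. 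I will also use the elementary facts (already recorded near Lemma~\ref{Lpointformalperiodn}) that $\gcd(2^n-1,2^n+1)=1$, that $3\mid 2^n+1$ but $3\nmid 2^n-1$ as $n$ is odd, and that $v_3(2^n+1)=1$ when $n>3$ is prime (lifting the exponent), whence $k=1$ in case (4).

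For part (1), since $n$ is prime one has the polynomial identity $\Phi_n(x,0)=(x^{2^n-1}-1)/(x-1)$; over $\ol{\FF}_p$, Frobenius gives $x^{2^n-1}-1=(x^m-1)^{p^k}$, and as $p\nmid m$ the polynomial $x^m-1$ is separable with $x-1$ dividing it exactly once, so $\Phi_n(x,0)\equiv (x-1)^{p^k-1}\bigl((x^m-1)/(x-1)\bigr)^{p^k}$, where $(x^m-1)/(x-1)$ has $m-1$ distinct roots, all different from $1$. This yields the claimed fiber: the point $x=1$ with ramification index $p^k-1$ and $m-1$ points with ramification index $p^k$. (The same conclusion follows from input (i) applied directly to $R_0$.)

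For parts (2)--(4) I will split $R_{-2}$ as the disjoint union of the classes coming from $\mu_{2^n-1}$ and those coming from $\mu_{2^n+1}$ (using the description in the proof of Lemma~\ref{Lpointformalperiodn}(2)), and analyze each family with inputs (i) and (ii); a count against $\nu(n)=2^n-2$ will confirm in every case that no point is missed. When $p\mid 2^n-1$ (so $p\nmid 2^n+1$, and automatically $p\ne 3$ because $n$ is odd): the $\mu_{2^n+1}$-classes reduce injectively to $2^{n-1}-1$ distinct unramified points, while the $\mu_{2^n-1}$-classes, over each of the $(m-1)/2$ nontrivial $\pm$-orbits in $\mu_m(\ol{\FF}_p)$, collect into $p^k$ classes, and over the trivial orbit (the point $\bar{x}=2$) into $(p^k-1)/2$ classes once the excluded class $[1]$ is removed; since $\gcd(m,2^n+1)=1$ the two families meet nowhere except at $\bar{x}=2$, which only the $\mu_{2^n-1}$-side reaches. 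This gives (2). When $p\mid 2^n+1$ (so $p\nmid 2^n-1$): the $\mu_{2^n-1}$-classes are the $2^{n-1}-1$ unramified points, and all ramification lives in the $\mu_{2^n+1}$-classes, the subtlety being the location of $\mu_3\subseteq\mu_{2^n+1}$ relative to the reduction kernel $\mu_{p^k}$. If $p\ne 3$ then $\mu_3\not\subseteq\mu_{p^k}$, so $\mu_3$ reduces to the primitive cube roots in $\mu_m(\ol{\FF}_p)$: the fiber over that $\pm$-orbit (the point $\bar{x}=-1$) has $p^k$ classes but loses the one excluded class $[\omega_3]$, the fiber over $\bar{x}=2$ loses $[1]$, and the remaining $(m-3)/2$ $\pm$-orbits each contribute $p^k$ — giving (3). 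If $p=3$ then $k=1$ and $\mu_3=\mu_{p^k}$ is exactly the reduction kernel, so the two excluded classes $[1]$ and $[\omega_3]$ are precisely the two $\pm$-classes sitting over the single point $\bar{x}=2$ (note $2\equiv -1$ in characteristic $3$, so the two excluded characteristic-$0$ values $x=2,-1$ collapse), leaving, over each of the $(m-1)/2$ nontrivial $\pm$-orbits, exactly $3$ classes — giving (4).

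The main obstacle is the combinatorial bookkeeping in parts (3) and (4): one must correctly place the two classes $[1]$ and $[\omega_3]$ removed in passing from $\Psi_n$ to $\Phi_n$ among the fibers of the reduction map, handle the coincidence $x=2\equiv x=-1$ in characteristic $3$, and keep the $\pm$-identification induced by $z\mapsto z^{-1}$ straight while counting fiber sizes and verifying the totals against $\nu(n)$. The remaining ingredients — the polynomial identity for $\Phi_n(x,0)$, the identification of ramification index with root multiplicity, and the divisibility facts — are routine.
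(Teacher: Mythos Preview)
Your proposal is correct and follows essentially the same approach as the paper: both arguments identify the ramification indices with the sizes of the collision classes of the characteristic-zero roots of $\Phi_n(x,c_0)$ under reduction modulo a prime above $p$, using Lemma~\ref{Lpointformalperiodn} to parametrize those roots by roots of unity. The paper phrases the bookkeeping via the CRT decomposition $\ZZ/(2^n\pm 1)\ZZ\cong\ZZ/p^k\ZZ\times\ZZ/m\ZZ$, indexing points by pairs $(a,b)$ that collapse when the $\ZZ/m\ZZ$-coordinates agree (up to sign in the $c=-2$ case), while you phrase the same thing as the reduction map $\mu_N\twoheadrightarrow\mu_{m'}$ with fibers of size $p^{v_p(N)}$; your additional direct factorization $\Phi_n(x,0)\equiv(x-1)^{p^k-1}\bigl((x^m-1)/(x-1)\bigr)^{p^k}$ for part~(1) is a pleasant alternative to the CRT count.
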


\begin{proof} \
\begin{enumerate} 
\item  
By Lemma~\ref{Lpointformalperiodn}(1), the points of $Y_1(n)$ above $c=0$ occur when $x \in R_0$.
Since $n$ is prime, the elements of
$R_0 = \mu_{2^n-1} \setminus\{1\}$ can be identified with pairs $(a,b) \in \ZZ/p^k\ZZ \times \ZZ/m\ZZ \setminus \{(0,0)\}$.
The pairs $(a_1,b_1)$ and $(a_2,b_2)$ coalesce modulo $p$ if and only if $b_1 = b_2$. Thus the ramification index is $e=p^k-1$ when $b=0$ and is $e=p^k$ otherwise.

\item By Lemma~\ref{Lpointformalperiodn}(2), the points of $Y_1(n)$ above $c=-2$
occur when $x \in R_{-2}$. 
Since $n$ is an odd prime, 
the elements of 
$R_{-2}$ are of the form $x=\zeta+\zeta^{-1}$, 
where $\zeta \in (\mu_{2^n-1} \setminus \{1\}) \cup (\mu_{2^n + 1} \setminus \mu_3)$.

If $p \mid (2^n-1)$, then the 
$2^{n-1}-1$ values $x=\zeta+\zeta^{-1}$ for $\zeta \in \mu_{2^n+1} \setminus \mu_3$ remain distinct modulo $p$.
Similarly to part (1), the remaining elements can be identified with pairs $(a,b) \in 
\ZZ/p^k\ZZ \times \ZZ/m\ZZ \setminus \{(0,0)\}$ 
up to equivalence $(a,b) \equiv (-a,-b)$.
Modulo $p$, two pairs $(a_1,b_1)$ and $(a_2,b_2)$ coalesce if and only if $b_1 = \pm b_2$.
Thus the ramification index is $e=(p^k-1)/2$ when $b=0$ and is $e=p^k$ for the $(m-1)/2$ choices of $\pm b \not = 0$.

\item If $p \mid (2^n+1)$, then the $2^{n-1} -1$ values $x=\zeta+\zeta^{-1}$ with $\zeta \in \mu_{2^n-1}\setminus\{1\}$ remain distinct modulo $p$.  
If $p \not = 3$, 
the other points of period $n$ can be identified with pairs $(a,b) \in 
\ZZ/p^k\ZZ \times \ZZ/m\ZZ$ such that $a \not = 0$ if $b$ has order $1$ or $3$,
up to equivalence $(a,b) \equiv (-a,-b)$.
Modulo $p$, two pairs $(a_1,b_1)$ and $(a_2,b_2)$ coalesce if and only if $b_1 = \pm b_2$.
Thus the ramification index is $e=(p^k-1)/2$ when $b=0$ and is $e=p^k-1$ when $b$ has order $3$ and is $p^k$ otherwise.

\item If $p=3 \mid (2^n+1)$, 
then the points of period $n$ of the form $\zeta+\zeta^{-1}$ where $\zeta\in\mu_{2^n+1}\cong\ZZ/3\ZZ\times\ZZ/m\ZZ$ can be identified with pairs 
$(a,b) \in 
\ZZ/3\ZZ \times \ZZ/m\ZZ$ such that $b \not = 0$, up to equivalence $(a,b) \equiv (-a,-b)$.
Thus the ramification index is 3. The other points of period $n$ remain distinct modulo $p$.
\end{enumerate} 
\end{proof}

\begin{ex}
When $n=5$ and $c=-2$, then $\zeta \in (\mu_{31} \setminus\{1\}) \cup (\mu_{33}\setminus \mu_3)$.  
When $p=11$, there is no collision of the roots of unity in $\mu_{31}$ but there is collision of the roots of unity in $\mu_{33}$.
Above $\bar{c}=-2$ in $\ol{Y}_1(5)$, there is one point of ramification index 5, one point of ramification index 10, and 15 unramified points.
The next result shows that above $
\bar{c}=-2$ in 
$\ol{Y}_0(5)$, there is one point with ramification index $2$ and $4$ unramified points.
\end{ex}

\begin{lemma}\label{LX0ramindex}
Suppose $n$ is prime and $2^n \pm 1 = p^k m$ with $p \nmid m$.
Then $\bar{\pi}_0:\overline{Y}_0(n)\to {\mathbb A}^1$ has the following ramification indices at $\bar{c}=0$ and $\bar{c}=-2$: \begin{enumerate} \item If $p\mid (2^n-1)$, then there are $1+\frac{m-1}{n}$ points of $\overline{Y}_0(n)$ above $\bar{c}=0$, one with ramification index $\frac{p^k-1}{n}$ and the other $\frac{m-1}{n}$ points with ramification index $p^k$.
\item If $p\mid (2^n-1)$, then there is one point in $\ol{Y}_0(n)$ above $\bar{c}=-2$ with ramification index $\frac{p^k-1}{2n}$, there are $\frac{m-1}{2n}$ points of ramification index $p^k$, and the remaining points are unramified.
\item If $p\mid (2^n+1)$ and $p\neq 3$, then there is one point in $\ol{Y}_0(n)$ above $\bar{c}=-2$ with ramification index $\frac{p^k-1}{2n}$, there is one point with ramification index $\frac{p^k-1}{n}$, there are $\frac{m-3}{2n}$ points with ramification index $p^k$, and the remaining points are unramified.

\item If $p\mid (2^n+1)$ and $p=3$, then there are $\frac{m-1}{2n}$ points in $\ol{Y}_0(n)$ above $\bar{c}=-2$ with ramification index 3, and the remaining points are unramified.
\end{enumerate} \end{lemma}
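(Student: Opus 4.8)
The plan is to deduce the ramification data for $\bar\pi_0 : \overline Y_0(n) \to \AA^1$ directly from the ramification data for $\bar\pi_1 : \overline Y_1(n) \to \PP^1$ established in Lemma \ref{Lramindex}, using the fact that $\overline Y_0(n)$ is the quotient of $\overline Y_1(n)$ by the cyclic group $C_n = \langle\sigma\rangle$ and that this action is faithful on the special fiber (Remark \ref{RY0nR}). Since $n$ is prime, the action of $C_n$ on the fiber of $\bar\pi_1$ over a point $\bar c \in \{0,-2\}$ is free away from fixed points, and I first observe that no point of $R_0$ or $R_{-2}$ is fixed by a nontrivial power of $\sigma$: a fixed point would have period a proper divisor of $n$, i.e. period $1$, but $x=1$ (resp.\ $x = 2$) is excluded from $R_0$ (resp.\ $R_{-2}$) by Lemma \ref{Lpointformalperiodn}. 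Hence $C_n$ acts freely on each fiber, every $C_n$-orbit upstairs has exactly $n$ points, and the fiber of $\bar\pi_0$ over $\bar c$ is the set of $C_n$-orbits of the fiber of $\bar\pi_1$ over $\bar c$.

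Next I would track how ramification indices change under the quotient. A ramification point $\bar z \in \overline Y_1(n)$ of index $e$ for $\bar\pi_1$ lying over $\bar c$ corresponds to a $C_n$-orbit $\{z_1,\dots,z_e\}$ of points of $\overline Y_1(n)_K$ (in characteristic $0$) specializing to $\bar z$; applying $\sigma$ permutes the fiber over $\bar c$ both in characteristic $0$ and in characteristic $p$ compatibly, so the $C_n$-orbit structure downstairs is governed by how $C_n$ acts on the fiber upstairs. Concretely, in the identification of $R_0$ with $\ZZ/p^k\ZZ \times \ZZ/m\ZZ \setminus \{(0,0)\}$ from the proof of Lemma \ref{Lramindex}(1), $\sigma$ acts by multiplication by $2$, hence on the second coordinate $b \in \ZZ/m\ZZ$ it acts as an order-$n$ automorphism (since $n$ is the multiplicative order of $2$ mod $2^n-1$, hence mod $m$ as $2^d - 1$ for $d \mid n$, $d\neq n$, is coprime to... ) — more precisely $2$ has order dividing $n$ on $\ZZ/m\ZZ$, and since the points of $R_0$ really do have formal period $n$, the order is exactly $n$ on the relevant orbits. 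Thus the $C_n$-action on the $b$-coordinate breaks the index sets into orbits of size $n$: the single ramification point of index $p^k - 1$ upstairs (the locus $b = 0$) descends to a single point of index $(p^k-1)/n$, and the $m-1$ points of index $p^k$ (the loci $b \ne 0$, which form $(m-1)/n$ many $C_n$-orbits) descend to $(m-1)/n$ points of index $p^k$. This gives part (1). Parts (2)–(4) are handled the same way, now using the identification of $R_{-2}$ with pairs modulo the extra equivalence $(a,b)\equiv(-a,-b)$, and tracking the $C_n$-orbits of the index sets $b=0$, $b$ of order $3$, and generic $b$: the factor of $2$ in the denominators of parts (1) vs.\ (2) reflects the fact that $x = \zeta + \zeta^{-1}$ already identifies $\zeta$ with $\zeta^{-1}$, so that the fiber of $\bar\pi_1$ over $-2$ is smaller by a factor of $2$, but the $C_n$-quotient is unaffected by that identification since $\sigma$ commutes with $\zeta \mapsto \zeta^{-1}$; hence each ramification index from Lemma \ref{Lramindex}(2)–(4) is simply divided by $n$, and unramified points remain unramified (grouped into orbits of size $n$, which we need not count for the statement).

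The main obstacle I anticipate is the bookkeeping in part (3) when $p \ne 3$: one must check that the point of index $p^k - 1$ in Lemma \ref{Lramindex}(3) (coming from $b$ of order $3$ in $\ZZ/m\ZZ$) really forms a single $C_n$-orbit rather than several — equivalently, that the order-$3$ elements of $\ZZ/m\ZZ$ constitute one orbit under multiplication by $2$. Since there are exactly two elements of order $3$ and they are swapped by inversion but one must instead see they are related by a power of $2$; this uses that $n$ is odd so that $-1$ is a power of $2$ modulo $3$... actually $-1 \equiv 2 \pmod 3$, so the two order-$3$ elements $\{b, -b\}$ are a single $\langle 2\rangle$-orbit, giving one point downstairs of index $(p^k-1)/n$, as claimed. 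One should also double-check the arithmetic that $m \equiv 1 \pmod{2n}$ (resp.\ $m \equiv 3 \pmod{2n}$) in the relevant cases so that all the displayed counts $\tfrac{m-1}{2n}$, $\tfrac{m-3}{2n}$, $\tfrac{m-1}{n}$ are integers; this follows because the fiber of $\bar\pi_0$ is a genuine set of points and the index sets are genuine $C_n$-orbits, so divisibility is automatic once the free action is established. The remaining verifications are the routine divisions already indicated.
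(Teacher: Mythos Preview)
Your approach --- descending the ramification data of $\bar\pi_1$ through the $C_n$-quotient $\varphi : \overline Y_1(n) \to \overline Y_0(n)$ --- is the paper's strategy, but there is a genuine error in the execution. Your claim that ``$C_n$ acts freely on each fiber'' is false for the special fiber: your argument that no point of $R_0$ (resp.\ $R_{-2}$) is $\sigma$-fixed applies to the characteristic-zero fiber, not to $\overline R_0$ (resp.\ $\overline R_{-2}$). In fact the point of $\overline Y_1(n)$ over $\bar c = 0$ with ramification index $p^k - 1$ is $x = 1$, which \emph{is} fixed by $\sigma(x) = x^2$; likewise the distinguished points in parts (2)--(3) reduce to $x = 2$ and $x = -1$, both fixed by $\sigma(x) = x^2 - 2$. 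Without these fixed points you cannot justify the division by $n$ in the indices $(p^k-1)/n$ and $(p^k-1)/(2n)$: that division arises precisely because $\varphi$ is totally ramified (of index $n$) at a fixed point. Your later sentence ``the $C_n$-action on the $b$-coordinate breaks the index sets into orbits of size $n$'' repeats the error, since $b=0$ is a fixed point, yet in the same breath you (correctly) assign index $(p^k-1)/n$ downstairs --- a conclusion incompatible with a free action.

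The paper's argument sidesteps this cleanly. Since $n$ is prime, the ramification index of $\varphi$ at any point is $1$ or $n$, so a point $\eta$ with $\bar\pi_1$-index $\eps$ has image of $\bar\pi_0$-index either $\eps$ or $\eps/n$. Two criteria then settle every case of Lemma~\ref{Lramindex}: if $\eta$ is the \emph{unique} point in its $\bar\pi_1$-fiber with index $\eps$, then $C_n$ must fix it and the index downstairs is $\eps/n$; and if $n \nmid \eps$ (e.g.\ $\eps = p^k$, noting $p \ne n$ since $2^n - 1 \equiv 1 \pmod n$), the index downstairs must be $\eps$. Your computation of the $C_n$-action on the $b$-coordinate essentially verifies these criteria by hand, and your observation in part~(3) that $2b = -b \sim b$ for $b$ of order $3$ is exactly the check that this point is $C_n$-fixed; but you should replace the free-action claim with the recognition that $b = 0$ and the order-$3$ class are fixed points, not size-$n$ orbits.
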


\begin{proof} Let $\eta$ be a ramification point of $\pi_1$ with ramification index $\eps$.
Let $\eta'$ be the image of $\eta$ in $\ol{Y}_0(n)$ and let $\eps'$ be its ramification index.
Since $n$ is prime, either $\eps'=\eps$ or $\eps'=\eps/n$. 
If $\eta$ is the unique point with ramification index $\eps$ in a fiber of $\pi_1$ then $\eps'=\eps/n$.
If $n \nmid \eps$, then $\eps'=\eps$.
Parts (1)--(4) then follow from Lemma~\ref{Lramindex}). 
\end{proof}

\begin{ex}
Let $n=11$, then $R_0=\mu_{2047} \setminus\{1\}$.  
Note that $R_0 \simeq {\mathbb Z}/23 \times {\mathbb Z}/89 \setminus \{(0,0)\}$, which has order $186 \cdot 11$, so $\nu(11)/11 = 186$.
The doubling map has 2 orbits of length 11 on $({\mathbb Z}/23)^\times$ and 
$8$ orbits of length 11 on  $({\mathbb Z}/89)^\times$.
So, $R_0$ has 8 orbits in 
$0 \times ({\mathbb Z}/89)^\times$ and $2$ orbits in $({\mathbb Z}/23)^\times$ 
and $176=8 \cdot 22$ other orbits containing pairs $(a,b)$ where $ab \not = 0$.

For $p=23$, we index the $176 + 8$ orbits by $(a,b_0)$ where $a \in {\mathbb Z}/23$ and $b_0$
is the smallest value in one of the $8$ orbits.
Modulo 23, $\ol{Y}_0(11)$ has 8 points with wild ramification 
of order $23$ (from the collapsing of $a$) and $1$ point with ramification of order $2$.  

For $p=89$, we index the $176+2$ orbits by $(a_0, b)$ where $b \in {\mathbb Z}/89$ 
and $a_0$ is the smallest value in one of the two orbits.
Modulo 89, $\ol{Y}_0(11)$ has $2$ points with wild ramification of order $89$ (from the collapsing of $b$) and $1$ point with ramification 
of order $8$.  
\end{ex}

\subsection{The tame case}

If $n$ is prime and $p \mid (2^n \pm 1)$, then the ramification is tame in Lemma \ref{LX0ramindex} items 1,2,4 exactly when $m=1$ and in item 3 
when $m=3$.
By Catalan's Conjecture, the condition $m=1$ occurs if and only if $n$ is prime and $p=2^n-1$ is a Mersenne prime
or when $n=p=3$. 
There are currently 49 known Mersenne primes.

When $m=1$ and $p=2^n-1$, then there is a unique $p^k\text{th}$ root of unity in $\ol{\FF}_p$.
So all $p^k-1=2^n-2$ points of $Y_1(n)$ above $c=0$ collide modulo $p$. Thus 
$t_0=1$ and $e_0=p^k-1$. Since $p\nmid e_0$, ramification at $\bar{c}=0$ of $\bar{\pi}_1$ is tame and so ramification at $\bar{c}=0$ of $\bar{\pi}_0$ is also tame.  The contribution to the degree of the 
discriminant of $\bar{\pi}_0$ is 
$\bar{\rho}_0=\frac{p^k-1}{n}-1=\frac{\nu(n)}{n}-1$.

\subsection{The wild case} \label{Swild}

In the wild case, we expect that the lower bound 
in Lemma \ref{Lwildcont} for $\bar{\rho}_{0}$ (resp.\ $\bar{\rho}_{-2}$) typically equals the upper bound $\rho_0$ (resp.\ $\rho_{-2}$).
In the data we computed, this was true except when
$n=6$ and $p=7$.
When the lower bound and upper bound are equal, 
then $\ol{Y}_0(n)$ is smooth
above $\bar{c}=0$ (resp.\ $\bar{c}=-2$).

We proved that this equality holds under 
certain algebraic and numerical conditions.  
First, under certain numerical conditions on $n$ and $p$ and when $k=1$, we proved a generalization of \cite[Proposition 1.3, Lemma 3.2]{Bouw11} to show that the contribution to the degree of the discriminant at a wildly ramified point $\eta$ for the {\it non-Galois} cover $\bar{\pi}_0$
is determined by the upper jump $\sigma$ of the ramification filtration of a pre-image of $\eta$ in the Galois closure of $\bar{\pi}_0$.
Next, under a certain algebraic condition on $\zeta \in R_0$ modulo a prime of $\ZZ[\zeta]$ above $p$, 
we determined the upper jump $\sigma$ for a point $x$ of formal period $n$ specializing to a wildly ramified point 
of $\bar{\pi}_0$. 
Specifically, we proved that the upper jump is as small as possible, namely $\sigma = 1/(p^k-1)$.
This proof relied on an algebraic analysis of valuations of differences of roots of $\Psi_n(x,c)$.

This material about the wild case is not included here since it is lengthy and not used in the proof of Theorem~\ref{corol:Examples}.
The limiting factor in extending Theorem~\ref{corol:Examples} to larger $n$ is the difficulty in computing $\Delta_{n,n}$ modulo $p$.
In Example \ref{Eweird}, we briefly describe the unusual case $n=6$ and $p=7$.

\subsection{Proof of Theorem \ref{corol:Examples}}

The proof of Theorem \ref{corol:Examples} is now a long calculation.
We calculate $\Delta_{n,n}(c)$ modulo $p$. (See the ancillary file \verb|delta_nn.txt| for the explicit polynomials $\Delta_{n,n}(c)$ for $n=5,6,7,8$, as well as $\Delta_{11,11}(c)$ modulo $p$ for $p=3, 23, 89, 683$.)
Using Lemma \ref{Lhowtochar0}, this determines $\rho_0$ and $\rho_{-2}$.
Using \eqref{EG02}, we check there is no singularity away from $\bar{c}=0,-2$.

If $n \in \{5, 7, 11\}$, we use Lemma \ref{LX0ramindex} to compute the ramification indices
for $\ol{Y}_0(n)$ above $\bar{c}=0,-2$ and determine if the ramification is tame or wild (if $n \in \{6, 8\}$ we make a similar one-off calculation).  
When tame, Lemma \ref{Ltamecont} determines $\bar{\rho}_0$
and $\bar{\rho}_{-2}$.
When wild, we use Lemma \ref{Lwildcont} to determine 
a lower bound for $\bar{\rho}_{0}$ and $\bar{\rho}_{-2}$.
We summarize these calculations in Table \ref{tab:primen}.
By Lemma~\ref{prop:badat0-2}, there is good reduction for all pairs $(n,p)$ in Table \ref{tab:primen}.

\begin{table} \label{Tred1}
\caption{Reduction data for $n$ prime. Here $e$ denotes the ramification indices, either above $\bar{c}=0$ or $\bar{c}=-2$; points with ramification index 1 are omitted, and exponents denote multiplicity. So, for example, $e=23^8,2$ means that there are 8 points with ramification index 23 and 1 point with ramification index 2, and all other points have ramification index 1.}
\label{tab:primen}
\begin{tabularx}{0.95\textwidth}{ccccccccccl}
\toprule
$n$ & $p$ & $\rho_0$ & $\bar{\rho}_0$ & $e$ & $0$ tame? & $\rho_{-2}$ & $\bar{\rho}_{-2}$ & $e$ & $-2$ tame? & \shortstack[l]{Other sing?} \\ \hline\hline 
5 & 3 & 0 & 0 & --- & yes & 3 & 3 & 3 & no & no \\ 
5 & 11 & 0 & 0 & --- & yes & 1 & 1 & 2 & yes & no \\ 
5 & 31 & 5 & 5 & 6 & yes & 2 & 2 & 3 & yes & no \\ \hline 
7 & 3 & 0 & 0 & --- & yes & 9 & 9 & $3^3$ & no & no \\ 
7 & 43 & 0 & 0 & --- & yes & 7 & 7 & $6,3$ & yes & no \\ 
7 & 127 & 17 & 17 & 18 & yes & 8 & 8 & 9 & yes & no \\ \hline 
11 & 3 & 0 & 0 & --- & yes & 93 & 93 & $3^{31}$ & no & no \\ 
11 & 23 & 185 & 185 & $23^8,2$ & no & 92 & 92 & $23^4$ & no & no \\ 
11 & 89 & 185 & 185 & $89^2,8$ & no & 92 & 92 & $89,4$ & no & no \\ 
11 & 683 & 0 & 0 & --- & yes & 91 & 91 & $62,31$ & yes & no \\
\bottomrule
\end{tabularx}
\end{table}

We give some details about the proof in a few sample cases.

\begin{ex} Let $n=5$ and $p=31$.
Then $\Delta_{5,5}(c)\equiv c^5 (c+2)^2 g(c)\bmod{31}$, where $g(0), g(-2) \not\equiv 0\bmod{31}$.
Thus $\rho_0=5$ and $\rho_{-2}=2$.
Furthermore, $\Gamma_{5,31}(c)=c^4(c+2)$, so the branch points remain distinct modulo $31$ unless they specialize to $\bar{c}=0,-2$.

By Lemma~\ref{LX0ramindex} (1), $\bar{\pi}_0:\overline{Y}_0(5)\to \PP^1$ is tamely ramified at $\bar{c}=0$, with ramification index $\frac{p-1}{n}=6$. Thus $\bar{\rho}_0=5$. 
If $\bar{c}=-2$, then $\bar{\rho}_{-2}=\frac{30}{2\times 5}-1=2$.
By Lemma~\ref{prop:badat0-2}, $\overline{Y}_0(5)$ has no singularity above $\bar{c}=0$
or $\bar{c}=-2$ modulo $31$.
Thus $Y_0(5)$ has good reduction modulo 31. 
\end{ex}

\begin{ex} When $n=7$ and $p=127$,
then $\Delta_{7,7}(c)\equiv c^{17}(c+2)^8 g(c)\bmod{127}$, where
$g(c)$ has simple roots not congruent to $0$ or $-2$ modulo $127$.  Thus $\rho_0=17$ and 
$\rho_{-2} = 8$.
Also $\bar{\rho}_0=\frac{127-1}{7}-1=17$ and $\bar{\rho}_{-2}=\frac{127-1}{2\times 7}-1=8$. So $Y_0(7)$ has good reduction modulo $127$. 
\end{ex}

\begin{ex}
Let $n=7$ and $p=43$.  Then $p \mid (2^n+1)$ with $m=3$.  Note that $43^{47} \mid D_{7,7}$.
Above $\bar{c}=-2$, there is one point with ramification index $3$ and one with ramification index $6$ so $\bar{\rho}_{-2}=7$.
Also $\rho_{-2}=7$, so $\ol{Y}_0$ is smooth above $c=-2$.  
\end{ex}

\subsection{Composite $n$} \label{Scomposite}

A similar analysis is possible when $n$ is composite.
When $n=6,8$,
we compute the data in Table~\ref{tab:compn}. 
When $n=6$, this gives a new proof of a result in 
\cite[\S 2]{Stoll}.

\begin{table} \label{Tred2}
\caption{Reduction data for $n$ composite}
\label{tab:compn}
\begin{tabularx}{0.95\textwidth}{ccccccccccl}
\toprule
$n$ & $p$ & $\rho_0$ & $\bar{\rho}_0$ & $e$ & $0$ tame? & $\rho_{-2}$ & $\bar{\rho}_{-2}$ & $e$ & $-2$ tame? & \shortstack[l]{Other sing?} \\ \hline\hline 
6 & 3 & 6 & 6 & $4^2$ & yes & 3 & 3 & 4 & yes & $c^2+2c+2$, $\rho=2$ \\ 
6 & 5 & 0 & 0 & --- & yes & 5 & 5 & 5 & no & no \\ 
6 & 7 & 9 & 9 & $7,2$ & no & 2 & 2 & 3 & yes & no \\ 
6 & 13 & 0 & 0 & --- & yes & 3 & 3 & 4 & yes & no \\ \hline 
8 & 3 & 30 & 30 & $3^{10}$ & no & 12 & 12 & $3^4$ & no & no \\ 
8 & 5 & 30 & 30 & $5^6$ & no & 12 & 12 & $5^2,2^2$ & no & no \\ 
8 & 17 & 25 & 25 & $8^3,4,2$ & yes & 11 & 11 & $8,4,2$ & yes & no \\ 
8 & 257 & 0 & 0 & --- & yes & 15 & 15 & 16 & yes & no \\
\bottomrule
\end{tabularx}
\end{table}

\subsubsection{The case $n=6$}  

The degree of $\pi_0:Y_0(6)\to {\mathbb A}^1$ is 9.
By Proposition~\ref{prop:Pbadimpliesdisc} and Tables~\ref{table:data1-7}--\ref{table:data8}, there are 9 odd primes dividing $D_{n,n}$, 
which are possibly primes of bad reduction for $Y_0(6)$.
The methods of this section are useful for analyzing four of these, namely $p=3,5,7,13$.

By Lemma \ref{Lpointformalperiodn},
the points of $Y_1(6)$ above $c=0$ are $R_0=\mu_{63}-(\mu_7\cup\mu_3)$, so that $|R_0|=54$. 
We identify the elements of $R_0$ with pairs $(a,b) \in \ZZ/9\ZZ \times \ZZ/7\ZZ$, excluding $(0,b)$ for $b \in \ZZ/7\ZZ$ and $(3,0)$ and $(6,0)$. The squaring map has 9 orbits over $c=0$, represented by:
\[A = (1,0), \ B_i = (1, i) \ {\rm for} \ 1 \le i \le 6, \ C_1=(3,1), \ C_2 = (3,5).\]
For $C_1$ and $C_2$, note that 1 is a quadratic residue and 5 is a quadratic nonresidue modulo 7. 

\begin{ex} \label{ex:n=6p=3} Let $p=3$. Then \eqref{EG02} is $\Gamma_{6,3}(c)=c^5(c+2)^2(c^2+2c+2)$.
Modulo 3, at $\bar{c}=0$, orbit $A$ stays disjoint from the other orbits, orbit $C_1$ collides with the $B_i$ orbits with $\left(\frac{i}{7}\right)=+1$, and orbit $C_2$ collides with the $B_i$ orbits with $\left(\frac{i}{7}\right)=-1$. Thus the ramification type is $(4,4,1)$. A similar analysis shows that the ramification type at $\bar{c}=-2$ is $(4,1^5)$.

There is an additional complication here, namely that there are points other than $c=0$ and $c=-2$ which contribute to $v_3(D_{6,6})$, coming from the additional factor of $\Gamma_{6,3}$. These are the roots of $c^2+2c+2$, where $\rho=2$ in characteristic 0. To compute $\bar{\rho}$ at these points, we factor the dynatomic polynomial $\Phi_6(x)$ over $\FF_3[c]$ when $c$ is a root of $c^2+2c+2$: 
\begin{multline*}(x^6+2cx^5+x^4+(c+1)x^3+(2c+1)x^2+(2c+2)x+c+1)^2 \cdot \\ (x^6+(2c+1)x^5+(2c+1)x^4+(2c+2)x^3+2cx^2+2cx+2c+2)^2\cdot g(x),\end{multline*} where $g$ has simple roots which do not collide with those of the other two factors listed. If $x$ is a root of the first square factor, then $x^2+c$ is a root of the second square factor, and vice versa. It follows that $\bar{\pi}_0$ has two points of ramification index $2$ above each of these values of $c$. This matches the degree in characteristic 0, so $Y_0(6)$ has good reduction modulo $p=3$.
\end{ex}

\begin{ex}
When $p=5$, then $\bar{c}=0$ is not a branch point.  There is one ramified point above $\bar{c}=-2$, coming from the orbits of $\zeta+\zeta^{-1}$ with $\zeta\in\mu_{65}$, with $e'=5$. Thus $\bar{\rho}_{-2}\ge 5$, and since $\rho_{-2}=5$ and $\bar{\rho}_{-2}\le\rho_{-2}$, it follows that $\bar{\rho}_{-2}=5$. Thus $Y_0(6)$ has good reduction at $p=5$. (On the other hand, $Y_1(6)$ has bad reduction modulo $p=5$.)
\end{ex}

\begin{ex} \label{Eweird}
When $p=7$, then $\Delta_{6,6}(c)\equiv c^9(c+2)^2h(c)\pmod{7}$.  One can check that $h(c)$
is squarefree modulo 7 by computing 
$\Gamma_{6,7}(c)$ from \eqref{EG02}.
Thus it suffices to check that there is no singularity above $\bar{c}=0$ and $\bar{c}=-2$.

When $p=7$ and $c=0$, then 
orbit $A$ and all the $B$ orbits collapse to a point, and the $C$ orbits collapse to a different point. The ramification type is $(7,2)$, so ramification is wild.

At the wildly ramified point, the minimal possible contribution to $\bar{\rho}_0$ is 7. However, the contribution is actually 8 because the algebraic condition mentioned in \S~\ref{Swild} is not satisfied for some $\zeta\in R_0$, namely $\zeta$ a primitive $21$-st root of unity. 
We compute that $\sigma=\frac{1}{3}$ 
so the wild contribution to the degree of the discriminant is $(p-1)(1 + \sigma) = 8$. The tame contribution is $1$.  Thus $\bar{\rho}_0=9$, so $Y_0(6)$ has no singularity over $\bar{c}=0$ when $p=7$.

If $p=7$ and $c=-2$ then $\rho_{-2} = 2$. 
The orbits from the (real parts of the) $65$-th roots of unity do not collide modulo 7. The orbits above $c=-2$ coming from the $63$-rd roots of unity are closely related to the orbits above $c=0$, but the orbit of A above $c=0$ is not included in $R_{-2}$.
The 6 B orbits turn into 3 orbits in pairs, which collide. The 2 C orbits turn into one orbit. 
Thus there is one point with ramification index $3$, yielding $\bar{\rho}_{-2}=2$, as desired. 
Thus $Y_0(6)$ has good reduction modulo $p=7$. 
\end{ex}

\begin{ex}
Let $n=6$ and $p=13$. Then $\bar{c}=0$ is unbranched. There is one ramified point above $\bar{c}=-2$, coming from $\zeta+\zeta^{-1}$ with $\zeta\in\mu_{65}$, with $e=4$. Thus $\bar{\rho}_{-2}=3$
and $Y_0(6)$ has good reduction modulo $p=13$. 
\end{ex}

\subsubsection{The case $n=8$}

The degree of $\pi_0:Y_0(8)\to {\mathbb A}^1$ is $\frac{2^8-2^4}{8}=30$.
By Proposition~\ref{prop:Pbadimpliesdisc} and Tables~\ref{table:data1-7}--\ref{table:data8}, there are many possible primes of bad reduction for $Y_0(8)$.  We use the methods of this section to show that $Y_0(8)$ has good reduction modulo $p=3,5,17,257$ below.

When $n=8$ and $p=3$, there are 10 points of $\ol{Y}_0(8)$ above $\bar{c}=0$, each of which has $e=3$. Thus all ramification points are wild, so $\bar{\rho}_0\ge 3\times 10=30$. Equality occurs because $\rho_0=30$, and $\bar{\rho}_0\le\rho_0$ by
Lemma~\ref{prop:badat0-2}. Above $\bar{c}=-2$, there are 6 points coming from orbits of points of the form $\zeta+\zeta^{-1}$ with $\zeta\in\mu_{255}$. (There are others coming from $\zeta\in\mu_{257}$, but these are unramified and thus do not contribute to $\bar{\rho}_{-2}$.) Four of these points have $e=3$, and the other two have $e=1$. Thus $\bar{\rho}_{-2}\ge 4\times 3=12$, and since $\rho_{-2}=12$, equality occurs.

The case $p=5$ is similar. Above $\bar{c}=0$, there are 6 points, each of which has ramification index 5, so $\bar{\rho}_0\ge 30$, and again equality occurs by comparison with $\rho_0$. Above $\bar{c}=-2$, there are 4 points coming from orbits of points of the form $\zeta+\zeta^{-1}$ with $\zeta\in\mu_{255}$, two with $e=5$ and two with $e=2$. Thus $\bar{\rho}_{-2}\ge 5\times 2+1\times 2=12$, and again there is equality.

Let $n=8$ and $p=17$.
Then $\Delta_{8,8} \equiv c^{25}(c+2)^{11}g(c) \bmod 17$, where $g(c)$ has simple roots and $g(0),g(-2)\not\equiv 0\pmod{17}$.
Note that $R_0 = \mu_{2^8-1} \setminus \mu_{2^4-1} = (\mu_{17} \times \mu_5 \times \mu_3) 
\setminus (\{1\} \times \mu_5 \times \mu_3)$.  
The orbits of the squaring map on $R_0$
have lengths $8$, $8$, $8$, $4$, $2$.  Thus 
$\bar{\rho}_0 = 25$ and there is no singularity above $\bar{c}=0$. When $\bar{c}=-2$, then all ramification again comes from those $\zeta+\zeta^{-1}$ with $\zeta\in\mu_{255}$. There are 3 points of this form above $\bar{c}=-2$, with ramification index 8, 4, and 2, for a contribution to $\bar{\rho}_{-2}$ of $7+3+1=11$.

When $n=8$ and $p=257$, then $\bar{c}=0$ is unramified, but $\bar{c}=-2$ is ramified, with all contribution coming from $\zeta+\zeta^{-1}$ with $\zeta\in\mu_{257}$. There is only one ramification point above $\bar{c}=-2$, with ramification index 16, so $e=16$ and $\bar{\rho}_{-2}=15$.

\bibliographystyle{alpha}
\bibliography{dyna.bib}

\begin{thebibliography}{Mor98b}

\bibitem[BDK91]{blanchard/devaney/keen:1991}
Paul Blanchard, Robert~L. Devaney, and Linda Keen.
\newblock The dynamics of complex polynomials and automorphisms of the shift.
\newblock {\em Invent. Math.}, 104(3):545--580, 1991.

\bibitem[BL14]{BuffLei14}
Xavier Buff and Tan Lei.
\newblock The quadratic dynatomic curves are smooth and irreducible.
\newblock In {\em Frontiers in complex dynamics}, volume~51 of {\em Princeton
  Math. Ser.}, pages 49--72. Princeton Univ. Press, Princeton, NJ, 2014.

\bibitem[Bou92]{bousch:thesis}
T.~Bousch.
\newblock {\em Sur quelques probl{\`{e}}mes de dynamique holomorphe}.
\newblock PhD thesis, Universit{\'{e}} de Paris-Sud, Centre d'Orsay, 1992.

\bibitem[Bou11]{Bouw11}
Irene~I. Bouw.
\newblock Covers of the affine line in positive characteristic with prescribed
  ramification.
\newblock In {\em W{IN}---women in numbers}, volume~60 of {\em Fields Inst.
  Commun.}, pages 193--200. Amer. Math. Soc., Providence, RI, 2011.

\bibitem[Bra89]{Branner89}
Bodil Branner.
\newblock The {M}andelbrot set.
\newblock In {\em Chaos and fractals ({P}rovidence, {RI}, 1988)}, volume~39 of
  {\em Proc. Sympos. Appl. Math.}, pages 75--105. Amer. Math. Soc., Providence,
  RI, 1989.

\bibitem[BS08]{BS08}
Henk Bruin and Dierk Schleicher.
\newblock Admissibility of kneading sequences and structure of {H}ubbard trees
  for quadratic polynomials.
\newblock {\em J. Lond. Math. Soc. (2)}, 78(2):502--522, 2008.

\bibitem[DH84]{DouadyHubbard}
A.~Douady and J.~H. Hubbard.
\newblock {\em \'{E}tude dynamique des polyn\^omes complexes. {P}artie {I}},
  volume~84 of {\em Publications Math\'ematiques d'Orsay [Mathematical
  Publications of Orsay]}.
\newblock Universit\'e de Paris-Sud, D\'epartement de Math\'ematiques, Orsay,
  1984.

\bibitem[FPS97]{FPS}
E.~V. Flynn, Bjorn Poonen, and Edward~F. Schaefer.
\newblock Cycles of quadratic polynomials and rational points on a genus-{$2$}
  curve.
\newblock {\em Duke Math. J.}, 90(3):435--463, 1997.

\bibitem[Ful69]{Fulton69}
William Fulton.
\newblock {\em Algebraic curves. {A}n introduction to algebraic geometry}.
\newblock W. A. Benjamin, Inc., New York-Amsterdam, 1969.
\newblock Notes written with the collaboration of Richard Weiss, Mathematics
  Lecture Notes Series.

\bibitem[GM98]{GreenMatignon98}
Barry Green and Michel Matignon.
\newblock Liftings of {G}alois covers of smooth curves.
\newblock {\em Compositio Math.}, 113(3):237--272, 1998.

\bibitem[GO14]{GaoOu}
Yan Gao and YaFei Ou.
\newblock The dynatomic periodic curves for polynomial {$z\mapsto z^d+c$} are
  smooth and irreducible.
\newblock {\em Sci. China Math.}, 57(6):1175--1192, 2014.

\bibitem[Liu02]{LiuAG}
Qing Liu.
\newblock {\em Algebraic geometry and arithmetic curves}, volume~6 of {\em
  Oxford Graduate Texts in Mathematics}.
\newblock Oxford University Press, Oxford, 2002.
\newblock Translated from the French by Reinie Ern{\'e}, Oxford Science
  Publications.

\bibitem[LL99]{LiuLorenzini}
Qing Liu and Dino Lorenzini.
\newblock Models of curves and finite covers.
\newblock {\em Compositio Math.}, 118(1):61--102, 1999.

\bibitem[LS94]{lau/schleicher:1994}
Eike Lau and Dierk Schleicher.
\newblock Internal addresses in the mandelbrot set and irreducibility of
  polynomials.
\newblock {\em SUNY Stony Brook Preprint}, 19, 1994.

\bibitem[M{\'e}z00]{Mezard}
Ariane M{\'e}zard.
\newblock Fundamental group.
\newblock In {\em Courbes semi-stables et groupe fondamental en g\'eom\'etrie
  alg\'ebrique ({L}uminy, 1998)}, volume 187 of {\em Progr. Math.}, pages
  141--155. Birkh\"auser, Basel, 2000.

\bibitem[Mor96]{Morton96}
Patrick Morton.
\newblock On certain algebraic curves related to polynomial maps.
\newblock {\em Compositio Math.}, 103(3):319--350, 1996.

\bibitem[Mor98a]{MortonX4}
Patrick Morton.
\newblock Arithmetic properties of periodic points of quadratic maps. {II}.
\newblock {\em Acta Arith.}, 87(2):89--102, 1998.

\bibitem[Mor98b]{MortonGalois}
Patrick Morton.
\newblock Galois groups of periodic points.
\newblock {\em J. Algebra}, 201(2):401--428, 1998.

\bibitem[MS94]{MortonSilverman94}
Patrick Morton and Joseph~H. Silverman.
\newblock Rational periodic points of rational functions.
\newblock {\em Internat. Math. Res. Notices}, (2):97--110, 1994.

\bibitem[MT88]{MT88}
John Milnor and William Thurston.
\newblock On iterated maps of the interval.
\newblock In {\em Dynamical systems ({C}ollege {P}ark, {MD}, 1986--87)}, volume
  1342 of {\em Lecture Notes in Math.}, pages 465--563. Springer, Berlin, 1988.

\bibitem[MV95]{MortonVivaldi}
Patrick Morton and Franco Vivaldi.
\newblock Bifurcations and discriminants for polynomial maps.
\newblock {\em Nonlinearity}, 8(4):571--584, 1995.

\bibitem[Poo07]{PoonenGonality}
Bjorn Poonen.
\newblock Gonality of modular curves in characteristic {$p$}.
\newblock {\em Math. Res. Lett.}, 14(4):691--701, 2007.

\bibitem[Poo12]{PoonenUB}
Bjorn Poonen.
\newblock Uniform boundedness of rational points and preperiodic points, 2012.

\bibitem[Ray94]{Raynaud}
M.~Raynaud.
\newblock Rev\^etements de la droite affine en caract\'eristique {$p>0$} et
  conjecture d'{A}bhyankar.
\newblock {\em Invent. Math.}, 116(1-3):425--462, 1994.

\bibitem[Sch00]{schleicher:2000}
Dierk Schleicher.
\newblock Rational parameter rays of the {M}andelbrot set.
\newblock {\em Ast\'erisque}, (261):xiv--xv, 405--443, 2000.
\newblock G\'eom\'etrie complexe et syst\`emes dynamiques (Orsay, 1995).

\bibitem[SGA03]{SGA1}
{\em Rev\^etements \'etales et groupe fondamental ({SGA} 1)}.
\newblock Documents Math\'ematiques (Paris) [Mathematical Documents (Paris)],
  3. Soci\'et\'e Math\'ematique de France, Paris, 2003.
\newblock S{\'e}minaire de g{\'e}om{\'e}trie alg{\'e}brique du Bois Marie
  1960--61. [Algebraic Geometry Seminar of Bois Marie 1960-61], Directed by A.
  Grothendieck, With two papers by M. Raynaud, Updated and annotated reprint of
  the 1971 original [Lecture Notes in Math., 224, Springer, Berlin; MR0354651
  (50 \#7129)].

\bibitem[Sil07]{SilvermanBook}
Joseph~H. Silverman.
\newblock {\em The arithmetic of dynamical systems}, volume 241 of {\em
  Graduate Texts in Mathematics}.
\newblock Springer, New York, 2007.

\bibitem[Sto08]{Stoll}
Michael Stoll.
\newblock Rational 6-cycles under iteration of quadratic polynomials.
\newblock {\em LMS J. Comput. Math.}, 11:367--380, 2008.

\bibitem[Sza09]{Szamuely}
Tam{\'a}s Szamuely.
\newblock {\em Galois groups and fundamental groups}, volume 117 of {\em
  Cambridge Studies in Advanced Mathematics}.
\newblock Cambridge University Press, Cambridge, 2009.

\bibitem[Xar12]{Xarles}
Xavier Xarles.
\newblock Squares in arithmetic progression over number fields.
\newblock {\em J. Number Theory}, 132(3):379--389, 2012.

\end{thebibliography}

\appendix
\section{Data} \label{Sdata}

The following is a complete list of odd primes of bad reduction of $Y_1(n)$ with $f(x,c) = x^2 + c$ for $6\leq n \leq8$: \begin{itemize}
\item $Y_1(6):3, 5, 67, 8029187$. (Note that $Y_0(6)$ has good reduction modulo $3$, $5$, and $67$).
\item $Y_1(7): 7, 84562621221359775358188841672549561$.
\item $Y_1(8): 1567, 18967471, 120664513, 268015967, 1751050452798629934784579,$ \\
$48452315131278500437597584581$.
\end{itemize}

 \subsection{Computation}
 
We would like to thank Michael Stoll for suggesting the following method to find candidates for primes of bad reduction.
Let $F(x,c)=\Phi_n(x,c)$ and write $F_x$ and $F_c$ for its two partial derivatives. The primes of bad reduction are those primes $p$ such that $F, 
F_x, F_c$ vanish at a common point mod $p$. A necessary condition for this is that the three resultants
  \begin{equation*}
R_1 = \Res_x(F, F_x), \quad R_2 = \Res_x(F, F_c), \quad R_3 = \Res_x(F_x, F_c)
\end{equation*}
have a common root mod $p$. 

By Theorem \ref{discriminantOfPhi}, $R_1$ factors as $\pm  \Delta_{n,n}(c)^n \prod_{\substack{d\mid n \\ d \neq n}} \Delta_{n,d}(c)^{n-d}$. In the cases that we computed, $R_2$ was irreducible. To find a list of candidate primes of bad reduction, we computed 
\begin{equation*}
\gcd(\Res(f, g), \Res(f, R_2))
\end{equation*}
for each irreducible factor $f$ of $R_3$ and each irreducible factor $g$ of $R_1$. Any bad prime $p > n$ must divide one of these greatest common divisors. For $n\leq 8$, the computations terminate in a reasonable time, giving a set $L_n$ of candidate primes of bad reduction for $Y_1(n)$. One can then check for each prime $p\in L_n$ whether $Y_1(n)$ is singular mod $p$. For $5\leq n\leq 8$, the set $L_n$ consists of precisely the bad primes greater than $n$; on the other hand, the set $L_4$ contains $107$, which is not a bad prime for $Y_1(4)$. 

\subsection{Tables}
In Tables~\ref{table:data1-7}--\ref{table:data8}, we describe the factorizations of $D_{n,d}=\disc(\Delta_{n,d})$ and of $\Res(\Delta_{n,d},\Delta_{n,e})$. The notation $(e,d)$ stands for $\disc(\Delta_{n,d})$ when $d=e$, and for $\Res(\Delta_{n,d},\Delta_{n,e})^2$ when $d\neq e$. The discriminants and resultants for $n\ge 7$ are large and could not be completely factored within a reasonable amount of time. All prime factors less than $10^{10}$ are listed, as well as some larger factors we were able to find. The odd bad primes for $X_0(n)$ are underlined, and the odd bad primes  for $X_1(n)$ are in boldface.

\begin{table}[h]
\begin{minipage}{.4\linewidth}
\caption{$n=4$}
\label{table:data1-7}
\centering
\begin{tabular}{l l}
\toprule

$(e,d)$ & Factorization \\ \midrule
$(1,1)$  &  $  2^{8} $ \\
$  (1,2)$  &  $  2^{14}\cdot 5^{2} $ \\
$  (1,4)$  &  $  2^{32}\cdot 5^{2}\cdot 17^{4} $ \\
$  (2,2)$  &  $1 $ \\
$  (2,4)$  &  $  2^{16}\cdot 5^{4} $ \\
$  (4,4)$  &  $  2^{16}\cdot 3^{9}$ \\
\bottomrule
\end{tabular}
\end{minipage}
\begin{minipage}{.5\linewidth}
\caption{$n=5$}
\label{table:data1-7.5}
\begin{tabular}{l l}
\toprule
$(e,d)$ & Factorization \\ \midrule
$  (1,1)$  &  $  2^{24}\cdot \mathbf{5}^{7}\cdot 11^{2} $ \\
$  (1,5)$  &  $  2^{232}\cdot 11^{6}\cdot 31^{18}\cdot 86131^{2} $ \\
$  (5,5)$  &  $  2^{274}\cdot 3^{12}\cdot 31^{27}\cdot \underline{\mathbf{3701}}^{1}\cdot 4217^{3} $  \\
\bottomrule
\end{tabular}
\end{minipage}
\end{table}

\begin{table}[h]
\caption{$n=6$}
\label{table:data1-7.6}
\begin{tabularx}{\textwidth}{l l}
\toprule
$(e,d)$ & Factorization \\ \midrule
$  (1,1)$  &  $  2^{4}\cdot \mathbf{3}^{1} $ \\
$  (1,2)$  &  $  2^{20}\cdot \mathbf{3}^{6}\cdot 13^{2} $ \\
$  (1,3)$  &  $  2^{40}\cdot \mathbf{3}^{4}\cdot 7^{2} $ \\
$  (1,6)$  &  $  2^{192}\cdot \mathbf{3}^{12}\cdot 7^{4}\cdot 13^{6}\cdot 211^{4}\cdot 68700493^{2} $ \\
$  (2,2)$  &  $  2^{4}\cdot \mathbf{3}^{1} $ \\
$  (2,3)$  &  $  2^{28}\cdot \mathbf{3}^{4}\cdot 157^{2} $ \\
$  (2,6)$  &  $  2^{204}\cdot \mathbf{3}^{12}\cdot 7^{18}\cdot 13^{12}\cdot 79^{2} $ \\
$  (3,3)$  &  $  2^{12}\cdot \mathbf{3}^{4}\cdot \mathbf{5}^{2}\cdot \mathbf{67}^{1} $ \\
$  (3,6)$  &  $  2^{296}\cdot \mathbf{3}^{66}\cdot 7^{6}\cdot 239^{4}\cdot 409^{2}\cdot 3331^{2} $ \\
$  (6,6)$  &  $  2^{956}\cdot \mathbf{3}^{91}\cdot \mathbf{5}^{25}\cdot 7^{66}\cdot 13^{8}\cdot 29^{3}\cdot 61^{2}\cdot \underline{\mathbf{8029187}}^{1}\cdot 55218797^{3}\cdot 47548578843011867^{2} $ \\
\bottomrule
\end{tabularx}
\end{table}
\begin{table}[h]
\caption{$n=7$}
\label{table:data1-7.7}
\begin{tabularx}{\textwidth}{l l}
\toprule
$(e,d)$ & Factorization \\ \midrule
$ (1,1)$ & $ 2^{60}\cdot \mathbf{7}^{11}\cdot 29^{2}\cdot 43^{2}$ \\
$ (1,7)$ & $ 2^{1668}\cdot 43^{32}\cdot 127^{54}\cdot 987211^{4} $ \\  & $\quad\cdot \text{\scriptsize{ 617019606746943232762656703658773289881979839340269069795489872959090140927}}^{2}$\\ 
$ (7,7)$ & $ 2^{7712}\cdot 3^{81}\cdot 43^{47}\cdot 127^{351}\cdot 10273^{2}\cdot 194003^{2}$\\
 & $\quad\cdot \underline{\mathbf{84562621221359775358188841672549561}}^{1}$\\  & $\quad\cdots\text{other factors (good primes)}$ \\
\bottomrule
\end{tabularx}
\end{table}

\begin{table}[h]
\caption{$n=8$}
\label{table:data8}
\centering
\begin{tabularx}{\textwidth}{l l}

\toprule

$(e,d)$ & Factorization \\ \midrule
 $(1,1)$ & $  2^{40} \cdot 3^{2} $ \\
$(1,2)$ & $  2^{100} \cdot 17^{8} $ \\
$(1,4)$ & $  2^{264} \cdot 7^{8} \cdot 17^{16} \cdot 1321^{4} $ \\
$(1,8)$ & $  2^{4096} \cdot 17^{156} \cdot 257^{34} \cdot 593^{4} \cdot 12073^{8}\cdot 158091133929843713^{4}  $\\
 &$\quad\cdot (\text{102-digit prime})^{4} $ \\
$(2,2)$ & $  2^{6} $ \\
$(2,4)$ & $  2^{132} \cdot 17^{16} $ \\
$(2,8)$ & $  2^{1028} \cdot 17^{152} \cdot 53^{8} \cdot 248117^{4}$\\
	 & $\quad\cdot 18205929282889572644368241959974766492471262272780397^{4} $ \\
$(4,4)$ & $  2^{80} \cdot 3^{2} \cdot 5^{2} \cdot 17^{6} \cdot \mathbf{18967471}^{1} $ \\
$(4,8)$ & $  2^{3072} \cdot 3^{48} \cdot 5^{52} \cdot 17^{180} \cdot 22129^{2}  $\\
 & $\quad \cdot 6200609^{2} \cdot 7408189^{2}\cdot 27756089^{4} \dots\text{other factors}$ \\
$(8,8)$ & $  2^{27528} \cdot 3^{1044} \cdot 5^{1052} \cdot 17^{738} \cdot 257^{224} \cdot  \underline{\mathbf{1567}}^{1} \cdot 17863^{
2} \cdot 1733999^{2} \cdot 8711621^{3}   $\\
	& $\quad\cdot 141715459^{2}\cdot \underline{\mathbf{120664513}}^{1} \cdot \underline{\mathbf{268015967}}^{1} \cdot \underline{\mathbf{1751050452798629934784579}}^{1}$\\
 & $\quad\cdot 
\underline{\mathbf{48452315131278500437597584581}}^{1}\cdots\text{other factors (good primes)}$ \\

\bottomrule
\end{tabularx}
\end{table}

\clearpage

\section{Monodromy data for $n=5$} \label{Tmono}

In Table~\ref{table:BP5}, we describe the monodromy data at the branch points of $\pi_1$ for $Y_1(5)$. As mentioned in \S \ref{Smonodromy}, sheets of the cover $\pi_1$ may be identified with sequences in $\{0,1\}^{\NN}$ of period 5; for convenience, we order the sheets 1 through 30 according to Table~\ref{table:label5}. The monodromy action at each branch point is then obtained from its kneading sequences via Proposition~\ref{prop:loops}.

There are fifteen branch points for $\pi_1$, and there are exactly two parameter rays that land at each branch point. We therefore use the landing rays to differentiate the branch points in Table~\ref{table:BP5}. The ``bifurcation type'' of a branch point $c$ is $(5,1)$ or $(5,5)$ according to whether $c$ is a satellite parabolic parameter (in which case a $5$-cycle collapses to a $1$-cycle) or a primitive parabolic parameter (in which case two $5$-cycles collide). The $(5,5)$-branch points are precisely the finite branch points of $\pi_0 : Y_1(5) \to \AA^1$, and the data in Table~\ref{table:BP5} associated to such branch points is used to construct the monodromy graph $\Gamma(5)$ in Figure~\ref{fig:n5graph}.

Though we do not give the analogous data for $n = 7$, we illustrate $\Gamma(7)$ in Figure~\ref{n7graph}.






\begin{figure}[h] \begin{minipage}{.45\linewidth}
\captionof{table}{Ordering of sheets for $n = 5$}
\label{table:label5}
\centering
	\begin{tabular}{ccccc}
	1 & 2 & 3 & 4 & 5\\
	$\overline{00001}$ & $\overline{00010}$ & $\overline{00100}$ & $\overline{01000}$ & $\overline{10000}$ \\
	6 & 7 & 8 & 9 & 10\\
	$\overline{00011}$ & $\overline{00110}$ & $\overline{01100}$ & $\overline{11000}$ & $\overline{10001}$ \\
	11 & 12 & 13 & 14 & 15\\
	$\overline{00101}$ & $\overline{01010}$ & $\overline{10100}$ & $\overline{01001}$ & $\overline{10010}$ \\
	16 & 17 & 18 & 19 & 20\\
	$\overline{00111}$ & $\overline{01110}$ & $\overline{11100}$ & $\overline{11001}$ & $\overline{10011}$ \\
	21 & 22 & 23 & 24 & 25\\
	$\overline{01011}$ & $\overline{10110}$ & $\overline{01101}$& $\overline{11010}$& $\overline{10101}$ \\
	26 & 27 & 28 & 29 & 30\\
	$\overline{01111}$ & $\overline{11110}$ & $\overline{11101}$ & $\overline{11011}$ & $\overline{10111}$
	\end{tabular}
    \end{minipage}
    \begin{minipage}{.54\linewidth}
    \begin{tikzpicture}
\tikzset{vertex/.style = {shape=rectangle,minimum size=1em}}
\tikzset{edge/.style = {-, = latex'}}
\tikzset{dash/.style = {style=thick, style=dashed, -, = latex'}}
\tikzset{around/.style = {distance = 7cm, out = 0, in = 0, style=dashed, style=thick, -, = latex'}}
\tikzset{rightaround/.style = {distance = 3.5cm, out = 0, in = 0, style=dashed, style=thick, -, = latex'}}
\tikzset{mirror/.style = {distance = 3.5cm, out = 180, in = 180, style=dashed, style=thick, color=white, -, = latex'}}
\node[vertex] (1) at  (2, 0) {$\overline{10000}$};
\node[vertex] (2a) at  (0, 1.5) {$\overline{10100}$};
\node[vertex] (2b) at  (4, 1.5) {$\overline{11000}$};
\node[vertex] (3a) at (0, 3) {$ \overline{11010} $};
\node[vertex] (3b) at (4, 3) {$ \overline{11100} $};
\node[vertex] (4) at (2, 4.5) {$ \overline{11110} $};
\draw[edge] (1) to (2b);
\draw[rightaround] (1) to (4);
\draw[mirror] (1) to (4);
\draw[edge] (2a) to[bend left=15] (3a);
\draw[edge] (2a) to[bend right=15] (3a);
\draw[dash] (2a) to (3a);
\draw[edge] (2a) to (3b);
\draw[edge] (2b) to[bend left=15] (3b);
\draw[edge] (2b) to[bend right=15] (3b);
\draw[dash] (2b) to (3b);
\draw[edge] (3a) to[bend left=8] (4);
\draw[edge] (3a) to[bend right=8] (4);
\draw[edge] (3a) to (4);
\draw[edge] (3b) to[bend left=5] (4);
\draw[edge] (3b) to[bend right=5] (4);
\end{tikzpicture}
\caption{The monodromy graph $\Gamma(5)$.
Finite edges are solid, and infinite edges are dashed.}
\label{fig:n5graph}
\end{minipage}
\end{figure}

\begin{table}[h]
\caption{Branch points for $n = 5$}
\label{table:BP5}
\centering
\begin{tabular}{ccccc}

\toprule
Landing rays & $K(\theta)$ & Bifurcation & Multiplier & Cycle structure\\
($\theta = \square/31$) & & type & ($\lambda = e^{2\pi i \square}$) \\
\midrule
1, 2 & $1111*$ & (5,1) & 1/5 & (26 27 28 29 30)\\
3, 4 & $1110*$ & (5,5) & 1 & (16 26)(17 27)(18 28)(19 29)(20 30)\\
5, 6 & $1101*$ & (5,5) & 1 & (21 26)(22 27)(23 28)(24 29)(25 30)\\
7, 8 & $1100*$ & (5,5) & 1 & (6 16)(7 17)(8 18)(9 19)(10 20)\\
9, 10 & $1111*$ & (5,1) & 2/5 & (26 29 27 30 28)\\
11, 12 & $1010*$ & (5,5) & 1 & (11 23)(12 24)(13 25)(14 21)(15 22)\\
\hline
13, 18 & $1011*$ & (5,5) & 1 & (21 29)(22 30)(23 26)(24 27)(25 28)\\
14, 17 & $1001*$ & (5,5) & 1 & (11 16)(12 17)(13 18)(14 19)(15 20)\\
15, 16 & $1000*$ & (5,5) & 1 & (1 6)(2 7)(3 8)(4 9)(5 10)\\
\hline
19, 20 & $1010*$ & (5,5) & 1 & (11 23)(12 24)(13 25)(14 21)(15 22)\\
21, 22 & $1111*$ & (5,1) & 3/5 & (26 28 30 27 29)\\
23, 24 & $1100*$ & (5,5) & 1 & (6 16)(7 17)(8 18)(9 19)(10 20)\\
25, 26 & $1101*$ & (5,5) & 1 & (21 26)(22 27)(23 28)(24 29)(25 30)\\
27, 28 & $1110*$ & (5,5) & 1 & (16 26)(17 27)(18 28)(19 29)(20 30)\\
29, 30 & $1111*$ & (5,1) & 4/5 & (26 30 29 28 27)\\
\bottomrule
\end{tabular}
\end{table}

\begin{figure}
\begin{tikzpicture}
\tikzset{vertex/.style = {shape=rectangle,minimum size=1em}}
\tikzset{edge/.style = {-, = latex'}}
\tikzset{dash/.style = {style=thick, style=dashed, -, = latex'}}
\tikzset{around/.style = {distance = 7cm, out = 0, in = 0, style=dashed, style=thick, -, = latex'}}
\tikzset{leftaround/.style = {distance = 3.5cm, out = 180, in = 180, style=dashed, style=thick, -, = latex'}}
\tikzset{rightaround/.style = {distance = 3.5cm, out = 0, in = 0, style=dashed, style=thick, -, = latex'}}
\tikzset{littlearound/.style =  {distance = 1.3cm, out = 45, in = 315, style=dashed, style=thick, -, = latex'}}
\tikzset{mirror/.style = {distance = 7cm, out = 180, in = 180, style=dashed, style=thick, color=white, -, = latex'}}
\node[vertex] (1) at  (4, 0) {$\overline{1000000}$};
\node[vertex] (2a) at  (2, 2) {$\overline{1001000}$};
\node[vertex] (2b) at  (4, 2) {$\overline{1010000}$};
\node[vertex] (2c) at (6, 2) {$ \overline{1100000} $};
\node[vertex] (3a) at (0, 4) {$ \overline{1010100} $};
\node[vertex] (3b) at (2, 4) {$ \overline{1100010} $};
\node[vertex] (3c) at (4, 4) {$ \overline{1100100} $};
\node[vertex] (3d) at (6, 4) {$ \overline{1101000} $};
\node[vertex] (3e) at (8, 4) {$ \overline{1110000} $};
\node[vertex] (4a) at (0, 6) {$ \overline{1101010} $};
\node[vertex] (4b) at (4, 6) {$ \overline{1101100} $};
\node[vertex] (4c) at (6, 6) {$ \overline{1110010} $};
\node[vertex] (4d) at (2, 6) {$ \overline{1110100} $};
\node[vertex] (4e) at (8, 6) {$ \overline{1111000} $};
\node[vertex] (5a) at (2, 8) {$ \overline{1110110} $};
\node[vertex] (5b) at (4, 8) {$ \overline{1111010} $};
\node[vertex] (5c) at (6, 8) {$ \overline{1111100} $};
\node[vertex] (6) at (4, 10) {$ \overline{1111110} $};
\draw[edge] (1) to (2c);
\draw[around] (1) to (6);
\draw[mirror] (1) to (6);
\draw[edge] (2a) to (3b);
\draw[edge] (2a) to[bend left=5] (3c);
\draw[edge] (2a) to[bend right=5] (3c);
\draw[leftaround] (2a) to (5a);
\draw[edge] (2b) to[bend left=5] (3d);
\draw[edge] (2b) to[bend right=5] (3d);
\draw[edge] (2b) to (3e);
\draw[littlearound] (2b) to (5b);
\draw[edge] (2c) to[bend left=5] (3e);
\draw[edge] (2c) to[bend right=5] (3e);
\draw[rightaround] (2c) to (5c);
\draw[edge] (3a) to[bend left=12] (4a);
\draw[edge] (3a) to[bend right=12] (4a);
\draw[edge] (3a) to (4c);
\draw[edge] (3a) to[bend left=5] (4d);
\draw[edge] (3a) to[bend right=5] (4d);
\draw[dash] (3a) to (4a);
\draw[edge] (3b) to[bend left=2] (4e);
\draw[edge] (3b) to[bend right=2] (4e);
\draw[dash] (3b) to (4d);
\draw[edge] (3c) to[bend left=8] (4b);
\draw[edge] (3c) to[bend left=5] (4c);
\draw[edge] (3c) to[bend right=5] (4c);
\draw[dash] (3c) to[bend right=8] (4b);
\draw[edge] (3d) to (4e);
\draw[edge] (3d) to[bend left=2] (4d);
\draw[edge] (3d) to[bend right=2] (4d);
\draw[dash] (3d) to (4c);
\draw[edge] (3e) to[bend left=12] (4e);
\draw[edge] (3e) to[bend right=12] (4e);
\draw[dash] (3e) to (4e);
\draw[edge] (4a) to (5a);
\draw[edge] (4a) to[bend left=2] (5b);
\draw[edge] (4a) to[bend left=6] (5b);
\draw[edge] (4a) to[bend right=2] (5b);
\draw[edge] (4a) to[bend right=6] (5b);
\draw[edge] (4b) to[bend left=10] (5a);
\draw[edge] (4b) to[bend left=3] (5a);
\draw[edge] (4b) to[bend right=3] (5a);
\draw[edge] (4b) to[bend right=10] (5a);
\draw[edge] (4b) to[bend left=5] (5c);
\draw[edge] (4b) to[bend right=5] (5c);
\draw[edge] (4c) to[bend left=2] (5a);
\draw[edge] (4c) to[bend right=2] (5a);
\draw[edge] (4c) to[bend left=8] (5c);
\draw[edge] (4c) to[bend right=8] (5c);
\draw[edge] (4d) to[bend left=5] (5b);
\draw[edge] (4d) to[bend right=5] (5b);
\draw[edge] (4d) to (5c);
\draw[edge] (4e) to[bend left=3] (5c);
\draw[edge] (4e) to[bend left=10] (5c);
\draw[edge] (4e) to[bend right=10] (5c);
\draw[edge] (4e) to[bend right=3] (5c);
\draw[edge] (5a) to[bend left=3] (6);
\draw[edge] (5a) to[bend left=10] (6);
\draw[edge] (5a) to[bend right=10] (6);
\draw[edge] (5a) to[bend right=3] (6);
\draw[edge] (5b) to[bend left=20] (6);
\draw[edge] (5b) to[bend left=10] (6);
\draw[edge] (5b) to[bend right=10] (6);
\draw[edge] (5b) to[bend right=20] (6);
\draw[edge] (5b) to (6);
\draw[edge] (5c) to[bend left=5] (6);
\draw[edge] (5c) to[bend right=5] (6);
\end{tikzpicture}
\caption{The monodromy graph $\Gamma(7)$. Finite edges are solid, and infinite edges are dashed.}
\label{n7graph}
\end{figure}

\newpage

\phantom{Some phantom text to keep addresses on last page}

\newpage

\end{document}